\numberwithin{equation}{section}
\newtheorem{theorem}{Theorem}[section]
\newtheorem{lemma}[theorem]{Lemma}
\newtheorem{corollary}[theorem]{Corollary}
\newtheorem{proposition}[theorem]{Proposition}
\theoremstyle{definition}
\newtheorem{definition}[theorem]{\sc Definition}
\newtheorem{example}[theorem]{\bf Example}
\newtheorem{remark}[theorem]{\bf Remark}
\newcommand{\supp}{{\rm supp}}
\newcommand{\Real}{{\rm Re}}
\newcommand{\Imag}{{\rm Im}}
\newcommand{\dist}{\mbox{dist}}
\newcommand{\spana}{{\mbox{span}}}
\newcommand{\Id}{{\rm Id}}
\newcommand{\trig}{{\rm 0}}
\newcommand{\Div}{{\rm Div}}
\newcommand{\Hom}{{\rm Hom_{\mathcal O}}}
\newcommand{\Rea}{{\rm \scriptscriptstyle Re}}
\DeclareRobustCommand{\gobblefour}[4]{}
\newcommand*{\SkipTocEntry}{\addtocontents{toc}{\gobblefour}}
\begin{document}


\title[Algebras of holomorphic functions]{Towards Oka-Cartan theory for algebras of holomorphic functions on coverings of Stein manifolds II}

\subjclass[2010]{32A38, 32K99}

\keywords{Algebras of holomorphic functions, Cartan's Theorems A and B, coverings of complex manifolds, fibrewise compactification}

\author{A.~Brudnyi}

\address{Department of Mathematics and Statistics, University of Calgary, 
Calgary, Canada}

\email{albru@math.ucalgary.ca}

\author{D.~Kinzebulatov}

\address{The Fields Institute, Toronto, Canada}

\email{dkinzebu@fields.utoronto.ca}

\thanks{Research of the first and the second authors were partially supported  by NSERC}

\begin{abstract}
We establish basic results of complex function theory within certain algebras of holomorphic functions
on coverings of Stein manifolds (such as algebras of Bohr's holomorphic almost
periodic functions on tube domains or algebras of all fibrewise bounded holomorphic functions
arising, e.g., in the corona problem for $H^\infty$). 
In particular, in this context we obtain results on
holomorphic extension from complex submanifolds, properties of divisors, corona type theorems, 
holomorphic analogues of the Peter-Weyl approximation theorem,
Hartogs type theorems, characterizations of uniqueness sets, etc.
Our proofs are based on analogues of Cartan theorems
A and
B for coherent type sheaves on maximal ideal spaces of these algebras proved in Part I.
\end{abstract}

\maketitle

\section{Introduction}
\label{introsect}

In 1930--1950 methods of sheaf theory radically transformed the theory of holomorphic functions of several variables which led to solution of a number of fundamental and long standing problems including problems of holomorphic interpolation, Cousin problems, the Levi problem on characterization of domains of holomorphy, etc. 
Since then the theory started to play a foundational and unifying role in modern mathematics, with implications for analytic geometry, automorphic forms, Banach algebras, etc.
Further development of the theory was motivated, in part, by the problems requiring to study 
properties of holomorphic functions satisfying additional restrictions (such as uniform boundedness along certain subsets of their domains or certain growth `at infinity'). In particular, the principal question arose of whether the fundamental problems of the function theory of several complex variables can be solved within a proper subclass of the algebra $\mathcal O(X)$ of holomorphic functions on a Stein manifold $X$.
In the present paper we address this question for subalgebras of $\mathcal O(X)$ subject to the following definition.

\begin{definition}
\label{defafunc}
A holomorphic function $f$ defined on a regular covering $p:X \rightarrow X_0$ of a connected complex manifold $X_0$ with a deck transformation group $G$ is called a \textit{holomorphic $\mathfrak a$-function} if 

(1) $f$ is bounded on subsets $p^{-1}(U_0)$, $U_0 \Subset X_0$, and

(2) for each $x \in X$ the function $G\ni g\mapsto f(g \cdot x)$ belongs to
a fixed closed unital 
subalgebra $\mathfrak a:=\mathfrak a(G)$ of the algebra $\ell_\infty(G)$ of bounded complex functions on $G$ (with pointwise multiplication and $\sup$-norm) that is invariant with respect to the action of $G$ on $\mathfrak a$ by right translations: 
$$
u \in \mathfrak a,~~g \in G \quad \Rightarrow \quad R_g u \in \mathfrak a,
$$
where $R_g(u)(h):=u(h g)$, $h\in G$.


We endow the subalgebra $\mathcal O_{\mathfrak a}(X) \subset \mathcal O(X)$ of holomorphic $\mathfrak a$-functions with the Fr\'{e}chet topology of uniform convergence on subsets $p^{-1}(U_0)$,  $U_0\Subset X_0$.

\end{definition}

The model examples of algebras $\mathcal O_{\mathfrak a}(X)$ are: 

\smallskip

(1) Algebra of Bohr's holomorphic almost periodic functions on a tube domain $T \subset \mathbb C^n$,
see Example \ref{holap} below;
\smallskip

(2) Algebra of all fibrewise bounded holomorphic functions on $X$, see Example \ref{firstex}(1) below. (If $X_0$ is a compact complex manifold, then this algebra coincides with algebra $H^\infty(X)$ of bounded holomorphic functions on $X$). 

\smallskip

See Section \ref{examples} for other examples.

\smallskip

In \cite{BK8}, using some constructions of \cite{Lemp}, we derived analogues of Cartan's theorems A and B for coherent-type sheaves on the fibrewise compactification $c_{\mathfrak a}X$ of the covering $X$ of a Stein manifold $X_0$, a topological space having certain features of a complex manifold (see Section \ref{mainsect} for details). This allows one to transfer in a systematic manner most of the significant results of
the classical theory of holomorphic functions on Stein manifolds to holomorphic functions in algebras $\mathcal O_{\mathfrak a}(X)$.
In particular, in the present paper we establish

\medskip

-- results on holomorphic interpolation on complex a-submanifolds (i.e.~complex submanifolds of $X$ determined by holomorphic $\mathfrak a$-functions) (subsection \ref{sectionext}), 

\medskip

-- tubular neighbourhood theorem for complex $\mathfrak a$-submanifoilds (subsection \ref{sectionext}),

\medskip

-- properties of holomorphic line $\mathfrak a$-bundles and their divisors (subsection \ref{sectdivisors}),

\medskip

-- characterization of uniqueness sets for functions in $\mathcal O_{\mathfrak a}(X)$ (subsection \ref{sectbohr}),

\medskip

-- Leray integral representation formulas and Hartogs theorems for functions in $\mathcal O_{\mathfrak a}(X)$ (subsection \ref{leray}), 

\medskip

-- holomorphic Peter-Weyl theorems for $\mathcal O_{\mathfrak a}(X)$ (subsection \ref{approx}),

\medskip

-- Cartan's theorems A and B for coherent-type sheaves on complex $\mathfrak a$-submanifolds (subsection \ref{cartansubmsect}),

\medskip

-- Dolbeault isomorphisms on complex $\mathfrak a$-submanifolds (subsection \ref{derhamsect}),

\medskip

-- corona theorems for algebras $\mathcal O_{\mathfrak a}(X)$ (subsection \ref{sectionmaxid}).





\begin{example}[\textit{Holomorphic almost periodic functions}]
\label{holap}
The theory of almost periodic functions was created in the 1920s by H.~Bohr and nowadays is widely used in
various areas of mathematics including number theory, harmonic analysis, differential equations (e.g.,~KdV equation), etc. 

Let us recall the S.~Bochner (equivalent) definition of almost periodicity: a function $f \in \mathcal O(T)$ on a tube domain $T=\mathbb R^n+i\Omega \subset \mathbb C^n$, $\Omega \subset \mathbb R^n$ is open and convex, is called {\em holomorphic almost periodic} if the family of its translates $\{z \mapsto f(z+s),~z \in T\}_{s \in \mathbb R^n}$ is relatively compact in the topology of uniform convergence on tube subdomains $T'=\mathbb R^n+i\Omega'$, $\Omega' \Subset \Omega$. 
The principal result of Bohr's theory (see \cite{Bo}) is the approximation theorem which states that every holomorphic almost periodic function
is the uniform limit (on tube subdomains $T'$ of $T$) of exponential polynomials
\begin{equation}
\label{expoly}
z\mapsto\sum_{k=1}^m c_ke^{i \langle z,\lambda_k\rangle}, \quad z\in T,\quad c_k \in \mathbb C, \quad \lambda_k \in \mathbb R^n,
\end{equation}
where $\langle\cdot,\lambda_k\rangle$ is the Hermitian inner product on $\mathbb C^n$.

The classical approach to study of holomorphic almost periodic functions exploits the fact that $T$ is the trivial bundle with base $\Omega$ and fibre $\mathbb R^n$ (e.g.,~as in the characterization of almost periodic functions in terms of their Jessen functions defined on $\Omega$, see \cite{Sh,Lv,JT,Rn,FR,Torn} and references therein). 
In our approach, we consider $T$ as a regular covering $p:T \rightarrow T_0\, (:=p(T) \subset \mathbb C^n)$ with the deck transformation group $\mathbb Z^n$, where
$p(z):=\bigl(e^{i z_1}, \dots, e^{i z_n}\bigr)$, $z=(z_1,\dots,z_n) \in T$
(if $n=1$, then this is a complex strip covering an annulus in $\mathbb C$), and obtain 

\begin{theorem}
\label{equivapthm} 
A function $f \in \mathcal O(T)$ is almost periodic
if and only if $f \in \mathcal O_{AP}(T)$.
\end{theorem}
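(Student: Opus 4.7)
The plan is to prove the two implications separately.

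For $(\Rightarrow)$, suppose $f$ is holomorphic almost periodic on $T$. Since $|e^{iz_j}|=e^{-\mathrm{Im}\,z_j}$, every preimage $p^{-1}(U_0)$ with $U_0\Subset T_0$ is contained in a tube subdomain $\mathbb R^n+iV$ with $V\Subset\Omega$; hence condition~(1) of Definition~\ref{defafunc} reduces to boundedness of $f$ on tube subdomains, which is immediate from relative compactness of $\{f(\cdot+s)\}_{s\in\mathbb R^n}$. For condition~(2), fix $z_0\in T$ and $\epsilon>0$, pick a tube subdomain $T'\ni z_0$, and invoke Bohr's approximation theorem to obtain an exponential polynomial $P(w)=\sum_j c_j e^{i\langle w,\lambda_j\rangle}$ with $\sup_{T'}|f-P|<\epsilon$. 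Because $T'$ is invariant under the $2\pi\mathbb Z^n$-translations, $|f(z_0+2\pi k)-P(z_0+2\pi k)|<\epsilon$ for every $k\in\mathbb Z^n$, while $k\mapsto P(z_0+2\pi k)=\sum_j\bigl(c_j e^{i\langle z_0,\lambda_j\rangle}\bigr)e^{2\pi i\langle k,\lambda_j\rangle}$ is a generalized trigonometric polynomial on $\mathbb Z^n$, and so lies in $AP(\mathbb Z^n)$. Hence $k\mapsto f(z_0+2\pi k)\in AP(\mathbb Z^n)$.

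For $(\Leftarrow)$, suppose $f\in\mathcal O_{AP}(T)$. I would first show that for each $y\in\Omega$ the slice $g_y(x):=f(x+iy)$ is almost periodic on $\mathbb R^n$. The Cauchy estimates on polydiscs inside a slightly larger tube subdomain (where $f$ is bounded by~(1)) make $g_y$ bounded and Lipschitz on $\mathbb R^n$. Given $\epsilon>0$, pick $\delta>0$ witnessing $(\epsilon/3)$-uniform continuity of $g_y$ and take a $\delta$-net $\{x_1,\dots,x_N\}$ of $[0,2\pi]^n$. By~(2), each $k\mapsto g_y(x_j+2\pi k)$ lies in $AP(\mathbb Z^n)$, so the $\mathbb C^N$-valued function $k\mapsto (g_y(x_j+2\pi k))_{j=1}^N$ is almost periodic on $\mathbb Z^n$ and its simultaneous $(\epsilon/3)$-almost periods form a relatively dense set $P\subset\mathbb Z^n$. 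A triangle-inequality argument reducing any $x\in\mathbb R^n$ modulo $2\pi\mathbb Z^n$ to the nearest $x_j$ then shows $\{2\pi m+r:m\in P,\ |r|<\delta\}\subset\mathbb R^n$ is a relatively dense set of $O(\epsilon)$-almost periods of $g_y$, so $g_y\in AP(\mathbb R^n)$.

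To pass from slicewise to tube almost periodicity, I would consider any sequence $s_j\in\mathbb R^n$. Diagonal extraction over a countable dense subset $Y\subset\Omega$ gives a subsequence $s_{j_k}$ for which $g_y(\cdot+s_{j_k})$ is $\sup$-norm Cauchy on $\mathbb R^n$ for every $y\in Y$. Fix a tube subdomain $T'=\mathbb R^n+i\Omega'\Subset T$. The Cauchy estimates give a Lipschitz constant $L$ for $f$ on $T'$, depending only on a sup-norm bound of $f$ on a slightly larger tube, so $\|g_y(\cdot+s)-g_{y'}(\cdot+s)\|_\infty\le L|y-y'|$ uniformly in $s\in\mathbb R^n$. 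Choosing a finite sub-net $\{y_1,\dots,y_M\}\subset Y$ of $\overline{\Omega'}$ of mesh $\eta$ with $L\eta$ small, the triangle inequality transfers sup-norm Cauchyness from the slices $y_i$ to all of $T'$. Consequently $\{f(\cdot+s_{j_k})\}$ converges in the tube-subdomain topology, which is Bochner's criterion for $f$ to be holomorphic almost periodic.

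The main obstacle is this last passage from slicewise to tube almost periodicity: tube subdomains are non-compact, so Arzel\`a--Ascoli does not suffice by itself, but the combination of uniform Lipschitz control in the imaginary direction (from Cauchy estimates on bounded holomorphic functions) with the slicewise $AP$ property is just strong enough to close the gap. An alternative route to $(\Leftarrow)$, sidestepping slicewise analysis entirely, is to apply the holomorphic Peter--Weyl theorem of subsection~\ref{approx} to write $f\in\mathcal O_{AP}(T)$ as a tube-subdomain uniform limit of exponential polynomials, each of which is trivially almost periodic.
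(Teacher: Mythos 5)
Your proof is correct, but it is organized quite differently from the paper's. The paper reduces the whole statement to the identity $APC(T')=C_{AP}(T')$ on closed tube subdomains $T'=\mathbb R^n+i\bar\Omega'$ and then argues entirely with the Bochner-type definitions: for the easy direction it simply restricts the relatively compact family of real translates to the lattice; for the hard direction it replaces an arbitrary real sequence $t_k$ by a lattice sequence $d_k$ with $|t_k-d_k|\to 0$ (compactness of $\mathbb R^n/\mathbb Z^n$ plus uniform continuity) and then runs a single Cantor-diagonal/finite-net argument over a countable dense subset of the fundamental domain $C=\{z\in T':0\le\Real(z_i)\le 1\}$, which is compact in the real \emph{and} imaginary directions simultaneously. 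You instead (a) prove the forward implication by invoking Bohr's approximation theorem and the closedness of $AP(\mathbb Z^n)$ under uniform limits, and (b) prove the converse by first establishing slicewise membership $g_y\in AP(\mathbb R^n)$ via a relatively-dense-almost-period argument and then gluing the slices with Cauchy-estimate Lipschitz control in the imaginary direction. Both steps are sound, but note what they cost: your forward direction uses the deepest theorem of Bohr's theory where two lines from von Neumann's definition suffice (restricting the translate family to $2\pi\mathbb Z^n$ already gives relative compactness of the orbit function's translates), and your converse silently imports the Bohr--Bochner equivalence on both $\mathbb Z^n$ and $\mathbb R^n$ together with the common-almost-period lemma for finite families, none of which the paper needs. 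What your decomposition buys is modularity -- the slicewise statement $g_y\in AP(\mathbb R^n)$ is of independent interest and the Lipschitz gluing isolates exactly where holomorphy (as opposed to mere uniform continuity) is used -- whereas the paper's argument is shorter, self-contained, and uses only uniform continuity of $f$ on $T'$, which it gets for free from Proposition \ref{besprop} in one direction and from the definition of $C_{AP}(T')$ in the other. Your closing alternative (deducing the converse from the holomorphic Peter--Weyl Theorem \ref{approxthm}) would also work and is not circular, but it rests on the Cartan-type machinery of the paper and so is disproportionate for this statement.
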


\noindent (Here $AP=AP(\mathbb Z^n)$ is the algebra of von Neumann's almost periodic functions on group $\mathbb Z^n$, see definition in Example \ref{exm}(2) below.)
This result enables us to regard holomorphic almost periodic functions on $T$ as:

\begin{itemize}
\item[(a)] holomorphic sections of a certain holomorphic Banach vector bundle on $T_0$;

\item[(b)] holomorphic-type functions on the fibrewise Bohr compactification of the covering $p:T\rightarrow T_0$.
\end{itemize}
As a result, we can apply the methods of multidimensional complex function theory (in particular, analytic sheaf theory and Banach-valued complex analysis) to study holomorphic almost periodic functions.
In particular, even in this classical setting, we obtain new results on holomorphic almost periodic
extension, recovery of almost periodicity of a holomorphic function 
from that for its trace to a real periodic hypersurface, etc.
We also show that some results known for holomorphic almost periodic functions are, in fact, valid for a general algebra $\mathcal O_{\mathfrak a}(X)$.

It is interesting to note that already in his monograph \cite{Bo} H.~Bohr uses equally often the aforementioned ``trivial fibre bundle'' and ``regular covering'' points of view on a complex strip. We mention also that the Bohr compactification  of a tube domain $\mathbb R^n +i\Omega$ in the form $b\mathbb R^n +i\Omega$, where $b\mathbb R^n$ is the Bohr compactification of group $\mathbb R^n$, was used earlier in \cite{Fav, Fav2,Gri}.
\end{example}

\begin{example}
\label{firstex}
(1) By definition, every $\mathcal O_{\mathfrak a}(X) \subset \mathcal O_{\ell_\infty(G)}(X)$; here $G$ is the deck transformation group of covering $p:X \rightarrow X_0$. 

Algebra $\mathcal O_{\ell_\infty(G)}(X)$ arises, e.g., in study of holomorphic $L^2$-functions on coverings of pseudoconvex manifolds \cite{GHS,Br2, Br5,La}, Caratheodory hyperbolicity (the Liouville property) of $X$ \cite{LS,Lin}, corona-type problems for bounded holomorphic functions on $X$ \cite{Br}. 
Earlier, some methods similar to those developed in the present paper were elaborated for algebra $\mathcal O_{\ell_\infty(G)}(X)$ 
in \cite{Br}-\cite{Br4}, \cite{Br8} in connection with corona-type problems for some subalgebras of bounded holomorphic functions on coverings of bordered Riemann surfaces, Hartogs-type theorems, integral representation of holomorphic functions of slow growth on coverings of Stein manifolds, etc.

A recent confirmation of potential productivity of the sheaf-theoretic approach to corona problem for $H^\infty$ comes from the recent paper \cite{BrBanach} on Banach-valued holomorphic functions on the unit disk $\mathbb D \subset \mathbb C$ having relatively compact images.

\smallskip

(2) Let $\mathfrak a:=c(G)\subset\ell_\infty(G)$ (with ${\rm card~} G=\infty$) be the subalgebra of bounded complex functions on $G$ that admit continuous extensions to the one-point compactification of $G$. 
Then  $\mathcal O_{c}(X)$ consists of holomorphic functions that have fibrewise limits at `infinity'.
\end{example}

For other examples of algebras $\mathfrak a$ and $\mathcal O_{\mathfrak a}(X)$ see subsections \ref{exm} and \ref{holap3}.


\medskip

In the formulation of our main results we use the following definitions.

Assume that $X_0$ is equipped with a path metric $d_0$ determined by a (smooth) hermitian metric. We can lift $d_0$ uniquely to a path metric $d$ on $X$.

A function $f \in C(X)$ is called a {\em continuous $\mathfrak a$-function} if it is bounded and uniformly continuous with respect to metric $d$ on subsets $p^{-1}(U_0)$, $U_0 \Subset X_0$, and is
such that for each $x\in X$ the function $G\ni g\mapsto f(g \cdot x)$ belongs to  $\mathfrak a$.

We denote by $C_{\mathfrak a}(X)$ the algebra of continuous $\mathfrak a$-functions on $X$. It is easily seen that $C_{\mathfrak a}(X)$ does not depend on the choice of the hermitian metric on $X_0$ and $C_{\mathfrak a}(X) \cap \mathcal O(X)=\mathcal O_{\mathfrak a}(X).$

If $D_0 \Subset X_0$ is a subdomain, we set $D:=p^{-1}(D_0) \subset X$ and define $C_{\mathfrak a}(\bar{D})$ to be the subalgebra of complex functions $f$ on $\bar{D}$ (the closure of $D$) that are bounded and uniformly continuous with respect to semi-metric $d$ and
such that for each $x\in \bar{D}_0$ functions $G\ni g\mapsto f(g \cdot x)$ belong to  $\mathfrak a$.


%

\medskip

\noindent\textbf{Acknowledgement.~}We thank Professors T. Bloom, L. Lempert, T. Ohsawa, R. Shafikov and Y.-T. Siu for their interest to this work.

\tableofcontents

\section{Elements of function theory within $\mathcal O_{\mathfrak a}(X)$} 
\label{mainsect}

\SkipTocEntry\subsection{}
\label{constrsect}
Our approach is based on analogues of the Cartan theorems A and B for coherent-type sheaves on the fibrewise compactification $c_{\mathfrak a}X$ of covering $p:X \rightarrow X_0$. 
We briefly describe its construction postponing details till Section \ref{compsect} (see also \cite{BK8}).

Let $M_{\mathfrak a}$ denote the maximal ideal space of algebra $\mathfrak a$, i.e., the space of non-zero continuous homomorphisms $\mathfrak a \rightarrow \mathbb C$ endowed with weak* topology (of $\mathfrak a^*$). 
The space $M_{\mathfrak a}$ is compact and
every element $f$ of $\mathfrak a$ determines a function $\hat{f} \in C(M_{\mathfrak a})$ by the formula
$$\hat{f}(\eta):=\eta(f), \quad \eta \in M_{\mathfrak a}.$$
Since algebra $\mathfrak a$ is uniform (i.e., $\|f^2\|=\|f\|^2$) and hence is semi-simple, the homomorphism $\hat\, :\mathfrak a \rightarrow C(M_{\mathfrak a})$ (called the {\em Gelfand transform}) is an isometric embedding (see, e.g., \cite{Gam}).
We have a continuous map $j=j_{\mathfrak a}:G \rightarrow M_{\mathfrak a}$ defined by 
associating to each point in $G$ its point evaluation homomorphism in $M_{\mathfrak a}$. This map is an injection if and only if algebra $\mathfrak a$ separates points of $G$.

Let $\hat{G}_{\mathfrak a}$ denote the closure of $j(G)$ in $M_{\mathfrak a}$ (also a compact Hausdorff space).
If algebra $\mathfrak a$ is self-adjoint, then $\mathfrak a \cong C(M_{\mathfrak a})$ and hence $\hat{G}_{\mathfrak a}=M_{\mathfrak a}$.
In a standard way the action of group $G$ on itself by right multiplication determines the right action of $G$ on $M_{\mathfrak a}$, so that $\hat{G}_{\mathfrak a}$ is invariant with respect to this action.

\begin{definition}
\label{compdefin}
{\em The fibrewise compactification} $\bar{p}:c_{\mathfrak a}X \rightarrow X_0$ is defined to be the fibre bundle with fibre $\hat{G}_{\mathfrak a}$ associated to the regular covering $p:X\rightarrow X_0$ (regarded as a principal bundle with fibre $G$).
\end{definition}

There exists a continuous map 
\begin{equation}
\label{iotamap}
\iota=\iota_{\mathfrak a}:X \rightarrow c_{\mathfrak a}X
\end{equation}
induced by the equivariant map $j$. Clearly, $\iota(X)$ is dense in $c_{\mathfrak a}X$. If $\mathfrak a$ separates points of $G$, then $\iota$ is an injection. 

\begin{definition}\label{holomo}
A function $f \in C(c_{\mathfrak a}X)$ is called {\em holomorphic} if its pullback $\iota^*f$ is holomorphic on $X$. 
The algebra of functions holomorphic on $c_{\mathfrak a}X$ is denoted by $\mathcal O(c_{\mathfrak a}X)$.
\end{definition}

For a subalgebra $\mathfrak a\subset\ell_\infty(G)$ 
we have a monomorphism $\mathcal O_{\mathfrak a}(X) \hookrightarrow \mathcal O(c_{\mathfrak a}X)$  (see Proposition \ref{basicpropthm} below) which is an isomorphism if $\mathfrak a$ is self-adjoint (in this case we can work with algebra $\mathcal O(c_{\mathfrak a}X)$ instead of $\mathcal O_{\mathfrak a}(X)$). See \cite{BK8} for other properties of $c_{\mathfrak a}X$.

Analogously to Definition \ref{holomo} we define holomorphic functions on open subsets of $c_{\mathfrak a}X$ and thus obtain the structure sheaf $\mathcal O:=\mathcal O_{c_{\mathfrak a}X}$ of germs of holomorphic functions on $c_{\mathfrak a}X$. 
Now, a {\em coherent sheaf} $\mathcal A$ on $c_{\mathfrak a}X$ is a sheaf of modules over $\mathcal O$ such that every point in $c_{\mathfrak a}X$ has a neighbourhood $U$ over which, for any $N \geq 1$, there is a free resolution of $\mathcal A$ of length $N$, i.e., an exact sequence of sheaves of modules of the form
\begin{equation}
\label{coh0}
\mathcal O^{m_{N}}|_U \overset{\varphi_{N-1}}{\to} \dots \overset{\varphi_2}{\to} \mathcal O^{m_{2}}|_U \overset{\varphi_1}{\to} \mathcal O^{m_{1}}|_U \overset{\varphi_0}{\to} \mathcal A|_U \to 0
\end{equation}
(here $\varphi_i$, $0 \leq i \leq N-1$, are homomorphisms of sheaves of modules).
If $X=X_0$ and $p=\Id$, then  this definition gives the classical definition of a coherent sheaf on a complex manifold $X_0$.

\medskip

Let $X_0$ be a Stein manifold, $\mathcal A$ a coherent sheaf on $c_{\mathfrak a}X$.

\begin{theorem}[\cite{BK8}]
\label{thmA}

Each stalk $\phantom{}_{x}\mathcal A$ ($x \in c_{\mathfrak a}X$) is generated by  sections $\Gamma(c_{\mathfrak a}X,\mathcal A)$ as an $\phantom{}_{x}\mathcal O$-module (``Cartan-type Theorem A''). 
\end{theorem}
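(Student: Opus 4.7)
My plan is to derive Theorem \ref{thmA} from the companion Cartan-type Theorem B for coherent sheaves on $c_{\mathfrak{a}}X$ (also established in \cite{BK8}), imitating the classical Serre--Cartan deduction of Theorem A from Theorem B on Stein manifolds. The two structural inputs I would rely on are (i) that the stalks $\phantom{}_{x}\mathcal{O} = \mathcal{O}_{c_{\mathfrak{a}}X, x}$ are local rings---the maximal ideal $\mathfrak{m}_x$ consisting of germs that vanish at $x$, units being the germs that do not---and (ii) that by coherence the stalk $\phantom{}_{x}\mathcal{A}$ is a finitely generated $\phantom{}_{x}\mathcal{O}$-module, so that Nakayama's lemma is available at $x$.

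Fix $x \in c_{\mathfrak{a}}X$. Using a local free resolution as in (\ref{coh0}), I would first choose a neighbourhood $U$ of $x$ and sections $\sigma_1, \ldots, \sigma_m \in \Gamma(U, \mathcal{A})$ whose germs generate $\phantom{}_{y}\mathcal{A}$ for every $y \in U$. Let $\mathcal{I}_x \subset \mathcal{O}$ denote the ideal sheaf of the point $x$, whose stalk is $\mathfrak{m}_x$ at $x$ and $\phantom{}_{y}\mathcal{O}$ for $y \neq x$. Granting that $\mathcal{I}_x$---hence $\mathcal{I}_x \cdot \mathcal{A}$---is coherent, I would form the short exact sequence
\begin{equation*}
0 \to \mathcal{I}_x \cdot \mathcal{A} \to \mathcal{A} \to \mathcal{S} \to 0,
\end{equation*}
whose quotient $\mathcal{S}$ is a skyscraper at $x$ with stalk $V := \phantom{}_{x}\mathcal{A}/\mathfrak{m}_x \cdot \phantom{}_{x}\mathcal{A}$, a finite-dimensional $\mathbb{C}$-vector space. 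Theorem B then gives $H^1(c_{\mathfrak{a}}X, \mathcal{I}_x \cdot \mathcal{A}) = 0$, and the associated long exact sequence produces a surjection $\Gamma(c_{\mathfrak{a}}X, \mathcal{A}) \twoheadrightarrow V$. Lifting a $\mathbb{C}$-basis of $V$ to global sections $s_1, \ldots, s_k \in \Gamma(c_{\mathfrak{a}}X, \mathcal{A})$ and applying Nakayama to the finitely generated module $\phantom{}_{x}\mathcal{A}$ over the local ring $(\phantom{}_{x}\mathcal{O}, \mathfrak{m}_x)$ then shows that the germs $(s_1)_x, \ldots, (s_k)_x$ generate $\phantom{}_{x}\mathcal{A}$, which is the assertion of Theorem \ref{thmA}.

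The main obstacle I foresee is verifying that $\mathcal{I}_x$ is actually a coherent subsheaf of $\mathcal{O}$. On a complex manifold this follows at once from local coordinates, but $c_{\mathfrak{a}}X$ carries only a fibrewise complex structure: its fibres $\hat{G}_{\mathfrak{a}}$ are compact Hausdorff spaces with no independent complex-analytic structure. To address this I would use the local triviality $\bar{p}^{-1}(U_0) \cong U_0 \times \hat{G}_{\mathfrak{a}}$ over small Stein neighbourhoods $U_0 \subset X_0$ of $\bar{p}(x)$; pulling back coordinate functions from $X_0$ that vanish at $\bar{p}(x)$ accounts for the base direction, while the contribution to $\mathfrak{m}_x$ coming from the ``fibre direction'' has to be handled via the explicit description of stalks of $\mathcal{O}_{c_{\mathfrak{a}}X}$ to be given in Section \ref{compsect}. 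This is precisely where the Banach-analytic Oka--Cartan machinery of \cite{Lemp}---on which the formulation of coherent sheaves on $c_{\mathfrak{a}}X$ in \cite{BK8} is built---enters the argument.
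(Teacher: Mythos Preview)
The present paper does not prove Theorem~\ref{thmA}; it is quoted from the companion paper~\cite{BK8}, so there is no proof here to compare your proposal against. Your argument must therefore stand on its own, and the obstacle you flag in your last paragraph is not a technicality to be dispatched by the machinery of~\cite{Lemp}---it is fatal to the approach.

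The ideal sheaf $\mathcal{I}_x$ of a point $x\in c_{\mathfrak{a}}X$ is in general \emph{not} coherent in the sense of~(\ref{coh0}). In the local model $\hat{\Pi}(U_0,K)\cong U_0\times K$ with $x=(z_0,\omega_0)$, the quotient of the stalk $\phantom{}_x\mathcal{O}$ by the ideal generated by the base coordinates $z_1-z_{0,1},\dots,z_n-z_{0,n}$ is the ring $\phantom{}_{\omega_0}C(K)$ of germs at $\omega_0$ of continuous functions on the fibre, and the image of $\mathfrak{m}_x$ there is the maximal ideal $\mathfrak{m}_{\omega_0}\subset \phantom{}_{\omega_0}C(K)$. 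Whenever $\omega_0$ is not an isolated point of $\hat{G}_{\mathfrak{a}}$ (as in every nontrivial example of Section~\ref{examples}) this ideal is not finitely generated: given any $f_1,\dots,f_m\in\mathfrak{m}_{\omega_0}$, the germ of $\bigl(\max_i |f_i|\bigr)^{1/2}$ lies in $\mathfrak{m}_{\omega_0}$ but not in $(f_1,\dots,f_m)$. Hence $\mathfrak{m}_x$ is not finitely generated over $\phantom{}_x\mathcal{O}$, there is no local surjection $\mathcal{O}^{m_1}\to\mathcal{I}_x$, and a single point is never a complex submanifold in the sense of Definition~\ref{defap} (the common zero locus of $n$ functions satisfying the rank condition there is, by Proposition~\ref{localstructap}, always a graph over an \emph{open} subset of the fibre, not a single point).

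Consequently neither $\mathcal{I}_x\cdot\mathcal{A}$ nor the skyscraper quotient $\mathcal{S}$ is coherent, Theorem~\ref{thmB} says nothing about $H^1(c_{\mathfrak{a}}X,\mathcal{I}_x\cdot\mathcal{A})$, and the surjection $\Gamma(c_{\mathfrak{a}}X,\mathcal{A})\twoheadrightarrow \phantom{}_x\mathcal{A}/\mathfrak{m}_x\,\phantom{}_x\mathcal{A}$ on which your Nakayama step rests is simply unavailable. The classical Serre--Cartan reduction of Theorem~A to Theorem~B does not survive the passage from complex manifolds to $c_{\mathfrak{a}}X$, precisely because the fibre $\hat{G}_{\mathfrak{a}}$ carries no complex structure; Theorem~A has to be proved by other means.
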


\begin{theorem}[\cite{BK8}]
\label{thmB}
Sheaf cohomology groups $H^i(c_{\mathfrak a}X,\mathcal A)=0$ for all $i \geq 1$ (``Cartan-type Theorem B'').
\end{theorem}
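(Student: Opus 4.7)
The strategy is to imitate the classical proof of Cartan's Theorem B on a Stein manifold, replacing scalar-valued holomorphy with holomorphy valued in the commutative Banach algebra $B:=C(\hat{G}_{\mathfrak a})$. The key structural observation is that $\bar p : c_{\mathfrak a}X \to X_0$ is locally trivial over every open $U_0 \subset X_0$ over which the covering $p$ trivializes (and such $U_0$ form a basis of Stein charts, since $X_0$ is Stein and locally simply connected). On such a trivialization $\bar p^{-1}(U_0) \cong U_0 \times \hat{G}_{\mathfrak a}$, Cauchy estimates applied to Definition \ref{holomo} identify sections of $\mathcal O$ with holomorphic maps $U_0 \to B$. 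Thus $\mathcal O$ on $c_{\mathfrak a}X$ is, locally over $X_0$, the sheaf of germs of $B$-valued holomorphic functions, and $\mathcal A$ becomes, locally, a coherent-analytic sheaf in the Banach-valued sense of Bungart, Leiterer and Lempert.

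\textbf{Step 1 (Leray cover).} Choose a locally finite cover $\{U_i\}$ of $X_0$ by Stein subdomains over which $p$ trivializes, and set $V_i := \bar p^{-1}(U_i) \cong U_i \times \hat{G}_{\mathfrak a}$. On each $V_i$ the resolution \eqref{coh0} is given by matrices of $B$-valued holomorphic functions on $U_i$, and $H^k(V_i,\mathcal A|_{V_i})$ is computed by the $B$-valued Dolbeault complex on $U_i$. Since $U_i$ is Stein, the standard $L^2$ Hörmander machinery extends to $B$-valued forms (because $B$ is a commutative $C^*$-algebra, solvability of $\bar\partial$ can be obtained fibrewise with uniform control), giving $H^k(V_i,\mathcal A|_{V_i}) = 0$ for $k \geq 1$. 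The Stein intersections $V_{i_0}\cap\cdots\cap V_{i_k}$ are of the same form, so the cover is Leray for $\mathcal A$ and $H^i(c_{\mathfrak a}X,\mathcal A)=\check H^i(\{V_i\},\mathcal A)$ for all $i\geq 1$.

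\textbf{Step 2 (global vanishing).} Exhaust $X_0$ by relatively compact Stein subdomains $X_0^{(1)} \Subset X_0^{(2)} \Subset \cdots$. On each $\bar p^{-1}(X_0^{(k)})$, combine Step 1 with vanishing on Banach polydisks (the explicit $B$-valued $\bar\partial$-Koszul complex) to get $H^i(\bar p^{-1}(X_0^{(k)}),\mathcal A)=0$ for $i\geq 1$. Then pass to all of $c_{\mathfrak a}X$ by the usual Mittag-Leffler / Runge procedure: the required Runge-type approximation for $B$-valued holomorphic sections on Stein pairs $X_0^{(k)} \Subset X_0^{(k+1)}$ reduces, fibrewise and with uniform control, to the classical Runge theorem on $X_0$ combined with equicontinuity coming from the compactness of $\hat{G}_{\mathfrak a}$. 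Taking the inverse limit of vanishing cohomologies, together with Mittag-Leffler surjectivity of the restriction maps on sections, gives $H^i(c_{\mathfrak a}X,\mathcal A)=0$ for $i \geq 1$.

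\textbf{Main obstacle.} The hard part is establishing the $B$-valued Cartan--Hörmander machinery, i.e., solvability of $\bar\partial u = f$ on pseudoconvex $U_0 \subset X_0$ for $B$-valued $(0,q)$-forms $f$, together with the accompanying Runge approximation with $B$-valued bounds. Because $\hat{G}_{\mathfrak a}$ carries no complex-analytic structure, one cannot differentiate in the fibre direction, so the scalar Hilbert-space $L^2$-theory must be carefully adapted to a Banach-valued setting (cf.\ the Lempert constructions invoked in \cite{BK8}). A secondary subtlety is that the locally defined free resolutions of $\mathcal A$ in Step 1 must patch into a genuine global Čech complex, which in the classical setup is handled by proving the two Cartan-type theorems in tandem by an intertwined Cartan--Serre induction, with Theorem A feeding the glueing of local resolutions and Theorem B feeding the acyclicity of each step.
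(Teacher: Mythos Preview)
This theorem is not proved in the present paper; it is quoted from the companion paper \cite{BK8} (Part~I), where it is established using Lempert's framework for analytic sheaves in Banach spaces together with a Cartan--Serre--type induction. Your outline is broadly in the right spirit---reduce to $B$-valued holomorphic functions on Stein charts of $X_0$ with $B=C(\hat G_{\mathfrak a})$, prove local acyclicity, then glue by a Leray argument and pass to the limit via Runge approximation---and this matches the philosophy of \cite{BK8}.

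There is, however, a concrete technical gap. You invoke ``the standard $L^2$ H\"ormander machinery'' for $B$-valued forms and justify it by ``fibrewise solvability with uniform control''. H\"ormander's method rests on Hilbert-space duality and adjoints, and does not extend directly to forms with values in a general Banach space such as $C(\hat G_{\mathfrak a})$; fibrewise $L^2$-solutions are neither unique nor canonically chosen, so there is no automatic continuity in the fibre variable. The correct substitute---and the one actually used both in \cite{BK8} and throughout the present paper (see Lemma~\ref{hl1} and the references to \cite{HL}, \cite{Bu2})---is the Henkin--Leiterer integral-representation approach, whose explicit kernels act boundedly on Banach-valued forms and thereby give $\bar\partial$-solvability and Runge approximation with the required uniformity. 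Once you replace the $L^2$ step by this, your Step~1 goes through on strictly pseudoconvex charts; the remaining ``secondary subtlety'' you flag (patching local free resolutions) is exactly what the intertwined induction on Theorems~A and~B in \cite{BK8} is designed to handle, and is not a triviality.
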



The collection of open subsets of $X$ of the form $V=\iota^{-1}(U)$, where $U \subset c_{\mathfrak a}X$ is open, determines a topology on $X$,
denoted by $\mathcal T_{\mathfrak a}$, Hausdorff if and only if $\mathfrak a$ separates points of $G$. (The base of $\mathcal T_{\mathfrak a}$ consists of interiors of sublevel sets of functions in $C_{\mathfrak a}(X)$.)
We define spaces of continuous and holomorphic $\mathfrak a$-functions on $V=\iota^{-1}(U)\in \mathcal T_{\mathfrak a}$ by
$$C_{\mathfrak a}(V):=\iota^* C(U), \quad \mathcal O_{\mathfrak a}(V):=\iota^*\mathcal O(U).$$ 
For subsets $V,W \in \mathcal T_{\mathfrak a}$ we denote $$\mathcal O_{\mathfrak a}(V,W):=\{f \in C(V,W): f^*h \in \mathcal O_{\mathfrak a}(V) \text{ for all } h \in \mathcal O_{\mathfrak a}(W)\}.$$

\medskip

\textit{In subsections \ref{sectionext}, \ref{sectdivisors} and \ref{sectbohr} we assume that algebra $\mathfrak a$ is self-adjoint.}

\SkipTocEntry\subsection{}
\label{sectionext}
In this subsection we describe our results on complex $\mathfrak a$-submanifolds, their tubular neighbourhoods and on related interpolation problems that can be derived by means of Theorem \ref{thmB}.
To formulate the results we will need



\begin{definition}
\label{tfinedef}
An open cover $\mathcal V$ of $X$ is said to be of class ($\mathcal T_{\mathfrak a}$) if it is the pullback by $\iota$ of an open cover of $c_{\mathfrak a}X$ (e.g., $\mathcal V=p^{-1}(\mathcal V_0)$, where $\mathcal V_0$ is an open cover of $X_0$ is of class ($\mathcal T_{\mathfrak a}$)).
\end{definition}

It is easy to see that any open cover of $X$  by sets in $\mathcal T_{\mathfrak a}$ is a subcover of an open cover of $X$ of class ($\mathcal T_{\mathfrak a}$).

\begin{definition}
\label{manifolddef}
A closed subset $Z \subset X$ is called a complex $\mathfrak a$-submanifold of codimension $k \leq n:=\dim_{\mathbb C}X_0$ if there exists an open cover $\mathcal V$ of $X$ of class ($\mathcal T_{\mathfrak a}$) such that for each $V\in\mathcal V$ the closure of $p(V)$ is compact and contained in a coordinate chart on $X_0$ and either $V\cap Z=\emptyset$ or there are functions $h_1,\dots,h_k \in \mathcal O_{\mathfrak a}(V)$ that satisfy:
\begin{itemize}
\item[(1)] $Z \cap V=\{x \in V\, :\, h_1(x)=\dots=h_k(x)=0\}$;
\item[(2)] maximum of moduli of determinants of all $k\times k$ submatrices of the Jacobian matrix 
of the map $x\mapsto \bigl(h_1(x),\dots,h_k(x) \bigr)$ with respect to local coordinates on $V$ pulled back from a coordinate chart on $X_0$ containing the closure of $p(V)$
is uniformly bounded away from zero on $Z \cap V$.
\end{itemize}
\end{definition}

Some examples of complex $\mathfrak a$-submanifolds are given in subsection \ref{submex} below.

We have analogues of Cartan type theorems \ref{thmA} and \ref{thmB} on complex $\mathfrak a$-submanifolds, see~subsection \ref{submanifoldsect} and Theorems \ref{thmA_}, \ref{thmB_} below.

\begin{theorem}[Characterization of complex $\mathfrak a$-submanifolds]
\label{countthm}
Suppose that $X_0$ is a Stein manifold.
Then a closed subset $Z \subset X$ is a complex $\mathfrak a$-submanifold of codimension $k \leq n$ if and only if there exist an at most countable collection of functions $f_i \in \mathcal O_{\mathfrak a}(X)$, $i \in I$, and an open cover $\mathcal V$ of $X$ of class ($\mathcal T_{\mathfrak a}$) such that 

(\textit{i}) $Z=\{x \in X: f_i(x)=0 \text{ for all } i \in I\}$,

(\textit{ii}) for each $V\in\mathcal V$ the closure $p(V)$ is compact and contained in a coordinate chart on $X_0$, and either $V\cap Z=\emptyset$ or there are functions
$f_{i_1}, \dots f_{i_k}$ such that $Z \cap V=\{x \in V: f_{i_1}=\dots=f_{i_k}=0\}$ and the maximum of moduli of determinants of all $k\times k$ submatrices of the Jacobian matrix 
of the map
$x \mapsto \bigl(f_{i_1}(x),\dots,f_{i_k}(x) \bigr)$ with respect to local coordinates on $V$ pulled back from a coordinate chart on $X_0$ containing the closure of $p(V)$
is uniformly bounded away from zero on $Z \cap V$.
\end{theorem}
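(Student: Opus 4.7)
The ``if'' direction is immediate from Definition \ref{manifolddef}: the given cover $\mathcal{V}$ and the designated $k$-tuples $(f_{i_1},\dots,f_{i_k})$ on each nonempty $V\cap Z$ supply exactly the local data required. For the ``only if'' direction the plan is to build a coherent ideal sheaf on $c_{\mathfrak{a}}X$ and apply Theorems \ref{thmA}--\ref{thmB}.

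Starting from Definition \ref{manifolddef}, write $\mathcal{V}=\{V_\alpha\}$ with $V_\alpha=\iota^{-1}(U_\alpha)$, $U_\alpha\subset c_{\mathfrak{a}}X$ open, and lift each local defining tuple $h^\alpha_1,\dots,h^\alpha_k\in\mathcal{O}_{\mathfrak{a}}(V_\alpha)=\iota^*\mathcal{O}(U_\alpha)$ to $\tilde h^\alpha_j\in\mathcal{O}(U_\alpha)$. Since $\iota(X)$ is dense in $c_{\mathfrak{a}}X$, the ideals $(\tilde h^\alpha_1,\dots,\tilde h^\alpha_k)\mathcal{O}|_{U_\alpha}$ agree on overlaps, glue to an ideal sheaf $\mathcal{I}_{\bar Z}\subset\mathcal{O}_{c_{\mathfrak{a}}X}$, and cut out a closed set $\bar Z\subset c_{\mathfrak{a}}X$ with $\iota^{-1}(\bar Z)=Z$. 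I would verify coherence of $\mathcal{I}_{\bar Z}$ via the Koszul complex: the Jacobian condition in Definition \ref{manifolddef}(2) makes $(\tilde h^\alpha_j)$ a regular sequence at every stalk of $U_\alpha$, so its Koszul complex is a finite free resolution of $\mathcal{I}_{\bar Z}|_{U_\alpha}$, and resolutions of any prescribed length $N$ are obtained by padding with trivial terms.

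Theorem \ref{thmA} then supplies global sections of $\mathcal{I}_{\bar Z}$ that generate every stalk. Since $X_0$ is Stein, hence second countable, fix a countable cover $\{W^0_j\}_{j\ge 1}$ of $X_0$ by relatively compact coordinate charts, so that each $K_j:=\bar p^{-1}(\overline{W^0_j})$ is compact in $c_{\mathfrak{a}}X$. By Theorem \ref{thmA} and compactness, finitely many global sections of $\mathcal{I}_{\bar Z}$ generate all stalks over $K_j$; collecting them over $j$ yields a countable family $\{\tilde f_i\}\subset\Gamma(c_{\mathfrak{a}}X,\mathcal{I}_{\bar Z})$, and the pullbacks $f_i:=\iota^*\tilde f_i\in\mathcal{O}_{\mathfrak{a}}(X)$ have common zero set exactly $Z$, which establishes~(\textit{i}).

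For the Jacobian condition in~(\textit{ii}), at every $y\in\bar Z$ the $\mathbb{C}$-vector space $\mathcal{I}_{\bar Z,y}/\mathfrak{m}_y\mathcal{I}_{\bar Z,y}$ has dimension $k$, so by Nakayama combined with Theorem \ref{thmA} some $k$ of the $\tilde f_i$'s have residues forming a basis at $y$---equivalently, at $y$ some $k\times k$ minor of the Jacobian matrix of $(\tilde f_{i_1},\dots,\tilde f_{i_k})$ in local coordinates is nonzero. The locus in $c_{\mathfrak{a}}X$ where a fixed such minor stays bounded away from zero is open, so by compactness of $\bar Z\cap K_j$ finitely many such loci cover $\bar Z\cap K_j$; taking the $\iota$-pullback of the resulting countable open refinement (automatically of class $(\mathcal{T}_{\mathfrak{a}})$) as the cover $\mathcal{V}$ in the theorem delivers~(\textit{ii}) with uniform positive lower bounds on each piece. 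The main obstacle is verifying coherence of $\mathcal{I}_{\bar Z}$ on $c_{\mathfrak{a}}X$: one must check that the regularity of the Koszul sequences descends to stalks of the somewhat exotic structure sheaf $\mathcal{O}_{c_{\mathfrak{a}}X}$ (in particular at points of $c_{\mathfrak{a}}X\setminus\iota(X)$), and that the resulting finite resolutions can indeed be padded to arbitrary length~$N$ as required by the coherence definition.
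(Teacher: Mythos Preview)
Your architecture is the same as the paper's: pass to $c_{\mathfrak a}X$, show the ideal sheaf of the submanifold $Y=\overline{\iota(Z)}$ is coherent, invoke Theorem~\ref{thmA} for global generators, use compact fibres over a second-countable base to extract a countable subfamily, and then do a local rank argument for~(\textit{ii}). Where you and the paper part ways is in the two technical steps.

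\medskip
\textbf{Coherence.} The paper does not use Koszul. It proves coherence as a separate statement (Proposition~\ref{idcohlem}) by first straightening $Y$ locally to $Z_0\times K$ via the inverse-function-theorem-with-parameter (Proposition~\ref{localstructap}), then tensoring a classical free resolution of $I_{Z_0}$ with $C(L)$ and appealing to Bungart's result that this tensor is an exact functor. This route is oblivious to whether the stalks of $\mathcal O_{c_{\mathfrak a}X}$ are regular local rings. Your Koszul approach would also work, but the verification that $(\tilde h^\alpha_1,\dots,\tilde h^\alpha_k)$ is a regular sequence at a point of $c_{\mathfrak a}X\setminus\iota(X)$ in practice forces you through the same straightening lemma, after which a Cauchy-integral argument shows divisibility by each $z_j$ is continuous in the fibre variable. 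So the obstacle you flag is real, and the paper resolves it by a different device rather than by confronting Koszul head-on.

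\medskip
\textbf{The gluing step.} Your sentence ``the ideals $(\tilde h^\alpha_1,\dots,\tilde h^\alpha_k)\mathcal O|_{U_\alpha}$ agree on overlaps'' is not free: two transverse $k$-tuples cutting out the same set need not \emph{a priori} generate the same ideal in the stalks of this nonstandard structure sheaf. The paper sidesteps this entirely by working with $I_Y$ defined intrinsically as the sheaf of germs vanishing on $Y$, rather than by gluing locally presented ideals. You could do the same and drop that step.

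\medskip
\textbf{Condition~(\textit{ii}).} Your Nakayama argument is morally right, but ``generation at the stalk $\Rightarrow$ generation on a neighbourhood'' needs justification in this setting. The paper instead writes the local defining functions $h_j$ explicitly as $\mathcal O(U)$-combinations $h_j=\sum_l u_{jl}f_{i_l}$ of finitely many global sections (this comes directly from Theorem~\ref{thmA}), differentiates to compare Jacobians on $Y\cap U$, selects $k$ indices where both the coefficient matrix and the corresponding $\nabla f_{i_l}$'s are independent at $y_0$, and then applies the parametric inverse function theorem to conclude that those $k$ functions alone cut out $Y$ on a smaller neighbourhood $W$. This is more hands-on than Nakayama but avoids any appeal to coherence-type spreading.
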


We prove Theorem \ref{countthm} in Section \ref{countthmsect}.

\begin{definition}
\label{holdef2}
A function $f \in \mathcal O(Z)$ on a complex $\mathfrak a$-submanifold $Z \subset X$ is called a holomorphic $\mathfrak a$-function if it admits an extension to a function in $C_{\mathfrak a}(X)$.
\end{definition}

The subalgebra of holomorphic $\mathfrak a$-functions on $Z$ is denoted by $\mathcal O_{\mathfrak a}(Z)$.
Alternatively, subalgebra $\mathcal O_{\mathfrak a}(Z)$ can be defined in terms of $\mathfrak a$-currents, see~subsection \ref{equivcur}.

\medskip

We have the following analogue of the classical tubular neighbourhood theorem:

\begin{theorem}
\label{tubularnbd}
Let $X_0$ be a Stein manifold, $Z \subset X$ be a complex $\mathfrak a$-submanifold. Then there exist an open in topology $\mathcal T_{\mathfrak a}$ neighbourhood $\Omega \subset X$ of $Z$ and a family of maps $h_t \in \mathcal O_{\mathfrak a}(\Omega,\Omega)$ continuously depending on $t \in [0,1]$ such that 
\[
h_t|_{Z}=\Id_{Z}\quad\text{ for all}\quad t\in [0,1],\qquad h_0=\Id_{\Omega}\quad\text{and}\quad h_1(\Omega)=Z.
\]
\end{theorem}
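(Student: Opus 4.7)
The plan is to follow the classical Docquier--Grauert strategy for tubular neighbourhoods of complex submanifolds of Stein manifolds, replacing the standard Cartan B with its fibrewise-compactification analogue (Theorem \ref{thmB}). Since $\mathfrak{a}$ is self-adjoint, $\mathcal{O}_{\mathfrak{a}}(X)\cong\mathcal{O}(c_{\mathfrak{a}}X)$, so the entire construction can be carried out on $c_{\mathfrak{a}}X$ and pulled back to $X$ via $\iota$.

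First I would set up the sheaf-theoretic data. By Theorem \ref{countthm}, $Z$ is cut out globally by an at most countable family $\{f_i\}\subset\mathcal{O}_{\mathfrak{a}}(X)$ whose lifts $\hat{f}_i\in\mathcal{O}(c_{\mathfrak{a}}X)$ define a closed subspace $\hat{Z}\subset c_{\mathfrak{a}}X$ with $\iota^{-1}(\hat{Z})=Z$. Let $\mathcal{I}\subset\mathcal{O}$ be its ideal sheaf. Then $\mathcal{I}$ and each $\mathcal{I}^m$ are coherent, $\mathcal{I}/\mathcal{I}^2$ is locally free of rank $k$ as an $(\mathcal{O}/\mathcal{I})$-module by the Jacobian condition of Definition \ref{manifolddef}, and the normal sheaf $\mathcal{N}$, defined as the $(\mathcal{O}/\mathcal{I})$-dual of $\mathcal{I}/\mathcal{I}^2$, is a locally free coherent $\mathcal{O}/\mathcal{I}$-module on $\hat{Z}$. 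Pushing $\mathcal{N}$ forward to $c_{\mathfrak{a}}X$ produces a coherent sheaf to which Theorem \ref{thmB} applies.

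Next I would construct a global holomorphic $\mathfrak{a}$-retraction $r\colon\Omega\to Z$. On each $V\in\mathcal{V}$ from Definition \ref{manifolddef}, the Jacobian hypothesis allows one to complete $h_1,\dots,h_k$ to local holomorphic $\mathfrak{a}$-coordinates on $V$, yielding a tautological local retraction $r_V\in\mathcal{O}_{\mathfrak{a}}(V,V\cap Z)$. The obstructions to patching the $r_V$'s together arise as successive \v{C}ech $1$-cocycles with values in $\mathcal{I}^{m}\!\cdot\!\mathcal{N}$, as one matches retractions modulo $\mathcal{I}^{m+1}$. Each such sheaf is coherent on $c_{\mathfrak{a}}X$, so Theorem \ref{thmB} gives vanishing of the relevant $H^1$, and the cocycle is a coboundary. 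Assembling these corrections inductively yields a holomorphic $\mathfrak{a}$-retraction $r$ defined on a $\mathcal{T}_{\mathfrak{a}}$-open neighbourhood $\Omega$ of $Z$.

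Finally, using $r$ together with the global defining functions of $Z$, I would build an $\mathcal{O}_{\mathfrak{a}}$-biholomorphism $\Phi$ from (a possibly smaller) $\Omega$ onto an open neighbourhood of the zero section of $\mathcal{N}$; this is essentially a relative version of the holomorphic inverse function theorem, reduced to the same coherent-sheaf machinery. The required homotopy is then $h_t:=\Phi^{-1}\circ\sigma_{1-t}\circ\Phi$, where $\sigma_s\colon\mathcal{N}\to\mathcal{N}$ is fibrewise multiplication by $s\in[0,1]$; this gives $h_0=\Id_\Omega$, $h_1=r$, $h_t|_Z=\Id_Z$, and continuous dependence on $t$ by construction. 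The principal technical obstacle will be the Cartan B step: one must verify that the sheaves of germs of retractions (equivalently, the sheaves of normal-bundle-valued differences of retractions) are coherent sheaves on $c_{\mathfrak{a}}X$ rather than merely on $X$, and that the sections produced by Theorem \ref{thmB} actually lie in $\mathcal{O}_{\mathfrak{a}}$ --- which relies crucially on self-adjointness of $\mathfrak{a}$ and the resulting identification $\mathcal{O}_{\mathfrak{a}}(X)\cong\mathcal{O}(c_{\mathfrak{a}}X)$ --- so that the retraction and homotopy land in $\mathcal{O}_{\mathfrak{a}}(\Omega,\Omega)$ and not just in $\mathcal{O}(\Omega,\Omega)$.
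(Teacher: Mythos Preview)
Your reduction to working on $c_{\mathfrak a}X$ via self-adjointness is correct and is exactly how the paper begins. But the paper takes a quite different route from your Docquier--Grauert patching scheme. Following Forstneri\v{c}, it chooses finitely many global holomorphic vector fields $V_{0,1},\dots,V_{0,m}$ spanning $TX_0$, composes their local flows into a map $F_0$ defined near $\{0\}\times X_0$ in $\mathbb C^m\times X_0$, and lifts $F_0$ to a map $\tilde F$ on $\mathbb C^m\times c_{\mathfrak a}X$ using the covering structure. The derivative $\theta=\partial_t\tilde F|_{t=0}$ surjects the trivial bundle $Y\times\mathbb C^m$ onto $Tc_{\mathfrak a}X|_Y$; a \emph{single} application of Theorem~\ref{thmB_} (vanishing of $H^1(Y,\Hom(E'',E'))$) splits off a complement $E$ to $\theta^{-1}(TY)$, and $\theta|_E\cong NY$. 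Then $F:=\tilde F\circ(\theta|_E)^{-1}:NY\to c_{\mathfrak a}X$ is shown to be a biholomorphism near the zero section via the parameter-dependent inverse function theorem plus a metric injectivity argument, and $h_t$ is fibrewise dilation, exactly as in your last step.

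Your outline has a genuine gap at the inductive step. First, the obstruction sheaf is misidentified: $\mathcal N$ is an $\mathcal O/\mathcal I$-module, so $\mathcal I\cdot\mathcal N=0$; the differences of local retractions agreeing on $\hat Z$ to order $m$ take values in $(\mathcal I^m/\mathcal I^{m+1})\otimes T_{\hat Z}\cong\mathrm{Sym}^m(\mathcal N^\vee)\otimes T_{\hat Z}$, not in $\mathcal I^m\cdot\mathcal N$. More seriously, resolving these cocycles one order at a time is an \emph{infinite} process, and you give no mechanism for convergence. On an ordinary Stein manifold this is handled by $\bar\partial$-estimates or an open-mapping argument on Fr\'echet spaces of sections, but $c_{\mathfrak a}X$ is not a complex manifold and no such analytic machinery has been established for it in the paper. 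The paper's spray construction sidesteps the issue completely: the ``exponential map'' $\tilde F$ is built from finite-dimensional data on $X_0$ and cohomology vanishing is invoked only once, for a bundle splitting. Relatedly, your passage from a retraction $r$ to a biholomorphism $\Phi:\Omega\to\mathcal N$ is precisely where an exponential-type map is needed; it does not follow from a ``relative inverse function theorem'' alone without something playing the role of the flows, which is what the paper supplies.
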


Theorem \ref{tubularnbd} is proven in Section \ref{countthmsect}.

Theorem \ref{tubularnbd} gives a linear extension operator $h_1^*:\mathcal O_{\mathfrak a}(Z)\rightarrow \mathcal O_{\mathfrak a}(\Omega)$, $f\mapsto h_1^*f$. 

\medskip

Using Theorem \ref{thmB} we prove the following interpolation result.


\begin{theorem}
\label{extapthm}
Suppose $X_0$ is a Stein manifold, $Z \subset X$ is a complex $\mathfrak a$-submanifold, and $f \in \mathcal O_{\mathfrak a}(Z)$. Then there is $F \in \mathcal O_{\mathfrak a}(X)$ such that $F|_Z=f$.
\end{theorem}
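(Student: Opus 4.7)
The strategy mirrors the classical Stein-theory extension theorem. I would realise $f$ as a global section of $\mathcal{O}/\mathcal{I}$ on the fibrewise compactification $c_{\mathfrak{a}}X$, where $\mathcal{I}$ is the ideal sheaf of $\tilde Z := \overline{\iota(Z)}\subset c_{\mathfrak{a}}X$, and then lift this section to an element of $\Gamma(c_{\mathfrak{a}}X,\mathcal{O})$ using the vanishing of $H^1(c_{\mathfrak{a}}X,\mathcal{I})$ supplied by Theorem \ref{thmB}.

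The first task is to verify that $\mathcal{I}$ is coherent. The defining cover $\mathcal{V}$ from Definition \ref{manifolddef} is of class $(\mathcal{T}_{\mathfrak{a}})$, hence comes from an open cover $\mathcal{U}$ of $c_{\mathfrak{a}}X$; over each $U\in\mathcal{U}$ the local defining functions $h_1,\dots,h_k\in\mathcal{O}_{\mathfrak{a}}(V)$ correspond, via self-adjointness of $\mathfrak{a}$, to holomorphic functions on $U$ in the sense of Definition \ref{holomo}. Using the Jacobian nondegeneracy condition of Definition \ref{manifolddef}(2) together with an implicit function theorem argument on $c_{\mathfrak{a}}X$ (applied in the coordinates pulled back from the chart on $X_0$ containing $\overline{p(V)}$), one produces local coordinates in which $\tilde Z\cap U$ is cut out by $k$ coordinate functions; the Koszul complex then supplies free resolutions of arbitrary length and thereby the coherence of $\mathcal{I}$.

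Next, I would use Theorem \ref{tubularnbd} to extend $f$ to a tubular neighbourhood: the retraction $h_1\in\mathcal{O}_{\mathfrak{a}}(\Omega,\Omega)$ yields $F_0:=h_1^* f\in\mathcal{O}_{\mathfrak{a}}(\Omega)$, with $F_0|_Z=f$. Writing $\Omega=\iota^{-1}(U)$ for an open $U\subset c_{\mathfrak{a}}X$, we obtain a corresponding $\hat F_0\in\mathcal{O}(U)$. Since $\iota(Z)\subset U$ and $U$ is open, $\tilde Z\subset U$; therefore $\hat F_0$ descends to a section of $(\mathcal{O}/\mathcal{I})|_{\tilde Z}$, and extension by zero (legitimate because $\mathcal{O}/\mathcal{I}$ is supported on $\tilde Z$) produces $\bar s\in\Gamma(c_{\mathfrak{a}}X,\mathcal{O}/\mathcal{I})$. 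The short exact sequence $0\to\mathcal{I}\to\mathcal{O}\to\mathcal{O}/\mathcal{I}\to 0$ together with $H^1(c_{\mathfrak{a}}X,\mathcal{I})=0$ gives a global lift $\hat F\in\Gamma(c_{\mathfrak{a}}X,\mathcal{O})$. Then $F:=\iota^*\hat F\in\mathcal{O}_{\mathfrak{a}}(X)$ satisfies $F|_Z=f$.

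\emph{Main obstacle.} The principal difficulty lies in establishing coherence of $\mathcal{I}$. Whereas classically this follows at once from the holomorphic implicit function theorem, its counterpart on the non-manifold space $c_{\mathfrak{a}}X$ must be extracted from the structure of the fibrewise compactification and reconciled with the sheaf-theoretic constructions of \cite{BK8} underpinning Theorems \ref{thmA}--\ref{thmB}. A secondary (but routine) verification is that the tubular neighbourhood $\Omega$ supplied by Theorem \ref{tubularnbd} is indeed of the form $\iota^{-1}(U)$ for an open $U\supset\tilde Z$, so that the corresponding section $\hat F_0$ exists on all of a neighbourhood of $\tilde Z$ in $c_{\mathfrak{a}}X$.
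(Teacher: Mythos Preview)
Your proposal is correct and follows the same overall architecture as the paper: pass to the fibrewise compactification, establish coherence of the ideal sheaf $\mathcal I=I_{\tilde Z}$ (this is Proposition~\ref{idcohlem}, proved exactly as you outline via the local-structure/implicit-function-theorem argument of Proposition~\ref{localstructap} and Theorem~\ref{ift}), and then invoke $H^1(c_{\mathfrak a}X,\mathcal I)=0$ from Theorem~\ref{thmB}.

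The one genuine difference is how you produce the global section of $\mathcal O/\mathcal I$. You go through the tubular neighbourhood theorem (Theorem~\ref{tubularnbd}), pulling $f$ back along the retraction $h_1$ to get a holomorphic extension on an open neighbourhood of $\tilde Z$. The paper instead works directly with \emph{local} extensions: Lemma~\ref{localextap} uses Proposition~\ref{localstructap} to straighten $Y$ locally to $Z_0\times K$ and then extends $f$ by making it constant in the normal directions. The differences $f_i-f_j$ form a \v{C}ech $1$-cocycle with values in $I_Y$, which is killed by Theorem~\ref{thmB}. This is lighter: Theorem~\ref{tubularnbd} (proved as Theorem~\ref{tubularnbd2}) already consumes the Cartan-B machinery on submanifolds (Theorem~\ref{thmB_}) via the bundle-splitting Lemma~\ref{splitlem}, so your route is logically heavier than necessary. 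Your secondary worry---that the tubular neighbourhood really corresponds to an open set in $c_{\mathfrak a}X$ containing $\tilde Z$---is handled precisely because the paper proves the tubular neighbourhood theorem directly on $c_{\mathfrak a}X$ (Theorem~\ref{tubularnbd2}), with $\Omega$ an open neighbourhood of $Y=\tilde Z$ there.
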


We prove Theorem \ref{extapthm} in Section \ref{countthmsect}.

\begin{example}
Suppose that $Z_1$, $Z_2 \subset T:=\mathbb R^n+i\Omega \subset \mathbb C^n$ (where $\Omega \subset \mathbb R^n$ is convex) are non-intersecting smooth complex hypersurfaces that are periodic, possibly with different periods, with respect to the usual action of $\mathbb R^n$ on $T$ by translations. Suppose also that the Euclidean distrance $\dist(Z_1,Z_2)>0$. Let $f_1 \in \mathcal O(Z_1)$, $f_2 \in \mathcal O(Z_2)$ be holomorphic functions periodic with respect to these periods. The union $Z_1 \cup Z_2$ is a complex almost periodic submanifold of $T$ in the sense of Definition \ref{manifolddef} (cf.~Example \ref{holap}), and so  by Theorem \ref{extapthm} there is a holomorphic almost periodic function $F \in \mathcal O_{AP}(T)$ such that $F|_{Z_i}=f_i$, $i=1,2$. 
\end{example}



\SkipTocEntry\subsection{} 
\label{sectdivisors}
This subsection describes our results on $\mathfrak a$-divisors.

\medskip

Let $Z \subset X$ be a complex submanifold.
Recall that a continuous line bundle $L$ on $Z$ is given by an open cover $\{U_\alpha\}$ of $Z$ and nowhere zero functions $d_{\alpha\beta} \in C(U_\alpha \cap U_\beta)$ (where $d_{\alpha\beta}:=1$ if $U_\alpha \cap U_\beta=\varnothing$) satisfying the 1-cocycle conditions:
\begin{equation}
\label{linbun1}
\forall \alpha, \beta \quad d_{\alpha\beta}=d_{\beta\alpha}^{-1} \text{ on } U_\alpha \cap U_\beta,
\end{equation}
\begin{equation}
\label{linbun2}
\forall \alpha,\beta, \gamma \quad d_{\alpha\beta}d_{\beta\gamma}d_{\gamma\alpha}=1 \text{ on } U_\alpha \cap U_\beta \cap U_\gamma \neq \varnothing .
\end{equation}
If all $d_{\alpha\beta} \in \mathcal O(U_\alpha \cap U_\beta)$, then $L$ is called a holomorphic line bundle.

In a standard way one defines continuous and holomorphic line bundles morphisms (see, e.g.,~\cite{Hirz}). 
The categories of continuous and holomorphic line bundles on $Z$ are denoted by $\mathcal L^c(Z)$ and $\mathcal L(Z)$, respectively.


An {\em effective (Cartier) divisor} $E$ on $Z$ is given by an open cover $\{U_\alpha\}$ of $Z$ and not identically  zero on open subsets of $U_\alpha$ functions $f_\alpha \in \mathcal O(U_\alpha)$ such that 
\begin{equation}
\label{divformula}
\forall \alpha,\beta\quad f_\alpha=d_{\alpha\beta} f_\beta \ \text{ on } U_\alpha \cap U_\beta  \text{ for some } d_{\alpha\beta}\in\mathcal O(U_\alpha \cap U_\beta ,\mathbb C\setminus\{0\}).
\end{equation}
Clearly, holomorphic 1-cocycle $\{d_{\alpha\beta}\}$ determines a holomorphic line bundle denoted by $L_E$.

The collection of effective divisors on $Z$ is denoted by $\Div(Z)$.

Divisors $E=\{(U_\alpha,f_\alpha)\}$ and $E'=\{(V_\beta,g_\beta)\}$ in $\Div(Z)$ are said to be {\em equivalent} (in $\Div(Z)$) if there exists a refinement $\{W_\gamma\}$ of both covers $\{U_\alpha\}$ and $\{V_{\beta}\}$ and nowhere zero functions $p_\gamma\in\mathcal O(W_\gamma)$ such that
\begin{equation}
\label{equivdivrel}
f_\alpha|_{W_\gamma}=p_\gamma\cdot g_{\beta}|_{W_\gamma} \quad \text{ for } W_\gamma\subset U_\alpha\cap V_\beta.
\end{equation}
If divisors $E$, $E'$ are equivalent, then their line bundles $L_E$, $L_{E'}$ are isomorphic.

\medskip

Now, let $Z\subset X$ be either a complex $\mathfrak a$-submanifold or $X$ itself. 

\begin{definition}
\label{tfinedef2}
An open cover of $Z$ is said to be of class ($\mathcal T_{\mathfrak a}$) if it is the pullback by $\iota$ of an open cover of the closure of $\iota(Z)$ in $c_{\mathfrak a}X$ (cf. Definition \ref{tfinedef}).
\end{definition}

\begin{definition}
\label{lineap}
A continuous line bundle $L$ on $Z$ is called an {\em $\mathfrak a$-bundle} if in its definition (see (\ref{linbun1}), (\ref{linbun2}))

(1) $\{U_\alpha\}$ is of class ($\mathcal T_{\mathfrak a}$), 

(2) $\forall \alpha, \beta\ $  $d_{\alpha\beta}\in C_{\mathfrak a}(U_\alpha \cap U_\beta)$. 

\noindent If all $d_{\alpha\beta}\in \mathcal O_{\mathfrak a}(U_\alpha \cap U_\beta)$, then $L$ is called a holomorphic line $\mathfrak a$-bundle.
\end{definition}

The categories of continuous and holomorphic line $\mathfrak a$-bundles on $Z$ are denoted by $\mathcal L_{\mathfrak a}^c(Z)$ and $\mathcal L_{\mathfrak a}(Z)$, respectively.

\begin{definition}
\label{defdivisor0}
A divisor $E \in \Div(Z)$ is called an {\em effective $\mathfrak a$-divisor} if in its definition (see \eqref{divformula}) 

(1) $\{U_\alpha\}$ is of class ($\mathcal T_{\mathfrak a}$), 

(2) $\forall\alpha\ $ $f_\alpha \in \mathcal O_{\mathfrak a}(U_\alpha)$,

(3) $\forall\alpha,\beta\ $ $f_\alpha=d_{\alpha\beta}f_\beta$ on $U_\alpha \cap U_\beta$  for some $d_{\alpha\beta} \in \mathcal O_{\mathfrak a}(U_\alpha \cap U_\beta)$. 

\end{definition}
The collection of $\mathfrak a$-divisors is denoted by $\Div_{\mathfrak a}(Z)$. 

By the definition the line bundle $L_E$ of an $\mathfrak a$-divisor $E$ is a holomorphic line $\mathfrak a$-bundle.

\begin{definition}
\label{diveqdef}
$\mathfrak a$-divisors $E=\{(U_\alpha,f_\alpha)\}$ and $E'=\{(V_\beta,g_\beta)\}$ are said to be \textit{$\mathfrak a$-equivalent} if in the above definition of equivalence in $\Div(Z)$ (see~(\ref{equivdivrel}))

(1) $\{W_\gamma\}$ is of class ($\mathcal T_{\mathfrak a}$), 

(2) $\forall \gamma $ $p_\gamma$, $p_\gamma^{-1} \in \mathcal O_{\mathfrak a}(W_\gamma)$.
\end{definition}

If divisors $E$, $E'$ are $\mathfrak a$-equivalent, then their line bundles $L_E$, $L_{E'}$ are isomorphic in $\mathcal L_{\mathfrak a}(Z)$.

\medskip

For some algebras $\mathfrak a$ (e.g., algebras of holomorphic almost periodic functions, see~Example \ref{holap} and subsection \ref{holap3})  $\mathfrak a$-divisors can be equivalently defined in terms of their currents of integration, see~subsection \ref{zeroapsect}.

The basic example of an $\mathfrak a$-divisor is divisor $E_f$ of a function  $f \in \mathcal O_{\mathfrak a}(Z)$, called an {\em $\mathfrak a$-principal divisor}.  
There are, however, divisors in $\Div_{\mathfrak a}(Z)$ that are not $\mathfrak a$-principal (see~subsection \ref{submex}(4)); because
the \v{C}ech cohomology group $H^2(\overline{\iota(Z)},\mathbb Z)$, where $\overline{\iota(Z)}$ is the closure of $\iota(Z)$ in $c_{\mathfrak a}X$, whose elements measure  deviations of $\mathfrak a$-divisors on $X$ from being $\mathfrak a$-principal is in general non-trivial (see the proof of Theorem \ref{chernthm} for details).
This naturally leads to the following question in a special case first considered in \cite{FRR}:

{\em Suppose that $X_0$ is Stein and $H^2(Z,\mathbb Z)=0$. Does there exist a class of functions $\mathfrak C_{\mathfrak a} \subset \mathcal O_{\ell^\infty}(Z)$, $\ell_\infty:=\ell_\infty(G)$, such that for each function from $\mathfrak C_{\mathfrak a}$ its divisor is equivalent to a divisor in $\Div_{\mathfrak a}(Z)$, and conversely, every divisor in $\Div_{\mathfrak a}(Z)$ is equivalent to a principal divisor determined by a function in $\mathfrak C_{\mathfrak a}$? }

If $Z=X=\{z \in \mathbb C: a<\Imag(z)<b\}$ and $\mathfrak a=AP(\mathbb Z)$ (see~Example \ref{holap}), then it was established in \cite{FRR} that the class $$\mathfrak C_{AP}=\{f \in \mathcal O(Z): |f| \in C_{AP}(Z)\}$$
satisfies this property.
(The proof in \cite{FRR} uses some properties of almost periodic currents.)  
In Proposition \ref{zeroapcor} we extend this result to $\mathfrak a$-divisors defined on certain one-dimensional complex manifolds $Z$. 
 In turn, 
the results in \cite{Fav2} show that for the algebra of holomorphic almost periodic functions $\mathcal O_{AP}(Z)$ on a tube domain $Z \subset \mathbb C^n$ with $n>1$ (see~Example \ref{holap}) the functions in $\mathfrak C_{AP}$ determine only a proper subclass of almost periodic (i.e.,~$AP$-) divisors and provide a complete description of this subclass.
Using our sheaf-theoretic approach we extend this result in Theorem \ref{zeroapthm} and Proposition \ref{zeroapcor} below. 

To formulate the results we require

%

\begin{definition}
\label{semidef}
A line bundle $L \in \mathcal L_{\mathfrak a}(Z)$ is called $\mathfrak a$-{\em semi-trivial} if there exists an isomorphism $\psi$ in category $\mathcal L_{\ell_\infty}(Z)$ of $L$ onto the trivial line bundle $L_0 \in \mathcal L_{\ell_\infty}(Z)$ such that $|\psi|^2:=\psi \otimes \bar{\psi}$ is an isomorphism in category $\mathcal L_{\mathfrak a}^c(Z)$ of $L \otimes \bar{L}$ onto $L_0 \otimes \bar{L}_0$.
\end{definition}
(Here $\bar{L}$ is the bundle defined by complex conjugation of fibres of $L$.)

This definition is related to the question raised above via the following result (where we do not assume that $X_0$ is Stein).

The proofs of the Theorems \ref{divrelprop}, \ref{zeroapthm}, \ref{chernthm} and Proposition \ref{zeroapcor} below are given in Section \ref{needs}.

\begin{theorem}
\label{divrelprop}
If the line bundle $L_E$ of an $\mathfrak a$-divisor $E$ is $\mathfrak a$-semi-trivial, then $E$ is $\ell_\infty$-equivalent to divisor $E_f \in \Div(Z)$ of a function $f \in \mathcal O(Z)$ with $|f| \in C_{\mathfrak a}(Z)$. 

Suppose that $\mathfrak a$ is such that $\hat{G}_{\mathfrak a}$ is a compact topological group and $j(G)\subset\hat{G}_{\mathfrak a}$ is a dense subgroup. Then for $Z=X$ the converse holds:

If $E\in {\rm Div}_{\mathfrak a}(X)$ is $\ell_\infty$-equivalent to $E_f \in \Div(X)$ with  $|f|\in C_{\mathfrak a}(X)$, then $L_E$ is $\mathfrak a$-semi-trivial.
\end{theorem}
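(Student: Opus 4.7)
The forward direction is a direct unwrapping of the definitions, while the converse requires the group hypothesis on $\hat{G}_{\mathfrak a}$ in order to globalize the candidate trivialization of $L_E\otimes\bar L_E$.

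\emph{Forward direction.} Write $E=\{(U_\alpha,f_\alpha)\}$ with $\{U_\alpha\}$ of class $(\mathcal T_{\mathfrak a})$ and with $\mathcal O_{\mathfrak a}$-cocycle $d_{\alpha\beta}$ defining $L_E$, so that $f_\alpha=d_{\alpha\beta}f_\beta$ on overlaps. An $\mathcal L_{\ell_\infty}(Z)$-isomorphism $\psi:L_E\to L_0$ is given by nowhere-vanishing $\psi_\alpha\in\mathcal O_{\ell_\infty}(U_\alpha)$, with $\psi_\alpha^{-1}\in\mathcal O_{\ell_\infty}(U_\alpha)$, transforming so that $\psi_\alpha f_\alpha=\psi_\beta f_\beta$ on overlaps. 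These local expressions glue to a global $f\in\mathcal O(Z)$, and setting $p_\alpha:=1/\psi_\alpha$ yields $f_\alpha=p_\alpha f$, which realizes the $\ell_\infty$-equivalence of $E$ with $E_f$. The semi-triviality hypothesis says that $|\psi_\alpha|^2\in C_{\mathfrak a}(U_\alpha)$ is nowhere zero with inverse in $C_{\mathfrak a}$; combined with $|f_\alpha|^2\in C_{\mathfrak a}(U_\alpha)$ (using self-adjointness of $\mathfrak a$), the identity $|f|^2=|\psi_\alpha|^2\cdot|f_\alpha|^2$ shows that $|f|^2\in C_{\mathfrak a}(U_\alpha)$ for every $\alpha$. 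Since $\{U_\alpha\}$ is a cover of class $(\mathcal T_{\mathfrak a})$, it follows that $|f|\in C_{\mathfrak a}(Z)$.

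\emph{Converse direction.} The $\ell_\infty$-equivalence supplies a refinement $\{W_\gamma\}$ of $\{U_\alpha\}$ of class $(\mathcal T_{\ell_\infty})$ and nowhere-vanishing $p_\gamma\in\mathcal O_{\ell_\infty}(W_\gamma)$ (with bounded inverses) such that $f_{\alpha(\gamma)}=p_\gamma f$ on $W_\gamma\subset U_{\alpha(\gamma)}$. Setting $\psi_\gamma:=1/p_\gamma$ provides an $\mathcal L_{\ell_\infty}(X)$-isomorphism $\psi:L_E\to L_0$; what is left is to promote $|\psi|^2$ to an isomorphism in $\mathcal L_{\mathfrak a}^{c}(X)$ of $L_E\otimes\bar L_E$ onto $L_0\otimes\bar L_0$. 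For this I display $|\psi|^2$ on the original class $(\mathcal T_{\mathfrak a})$ cover $\{U_\alpha\}$ by the candidate $\phi_\alpha:=|f_\alpha|^2/|f|^2$, which satisfies the correct transformation rule $\phi_\alpha/\phi_\beta=|d_{\alpha\beta}|^2$. On $W_\gamma\subset U_{\alpha(\gamma)}$ the identity $\phi_{\alpha(\gamma)}=|p_\gamma|^2$ shows that $\phi_\alpha$ extends continuously across $Z\cap U_\alpha$ and is bounded above and below by positive constants locally. The remaining and crucial task is to show $\phi_\alpha\in C_{\mathfrak a}(U_\alpha)$; by self-adjointness of $\mathfrak a$, this reduces to continuously extending $\phi_\alpha$ to the open set $V_\alpha\subset c_{\mathfrak a}X$ with $\iota^{-1}(V_\alpha)=U_\alpha$.

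\emph{Handling the extension.} The functions $\widetilde{|f|^2}$ and $\widetilde{|f_\alpha|^2}$ already exist as continuous functions on $c_{\mathfrak a}X$ and $V_\alpha$ respectively, and wherever $\widetilde{|f_\alpha|^2}>0$ the ratio extends by ordinary division; the issue is continuity at common zeros in $V_\alpha$. This is exactly where the hypothesis that $\hat{G}_{\mathfrak a}$ is a compact topological group and $j(G)$ a dense subgroup enters: under it, $c_{\mathfrak a}X\to X_0$ becomes a principal $\hat G_{\mathfrak a}$-bundle and the right $G$-action on $X$ extends to a continuous right $\hat G_{\mathfrak a}$-action on $c_{\mathfrak a}X$. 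For $\eta\in V_\alpha$, one fixes $x_0\in U_\alpha$ with $p(x_0)=\bar p(\eta)$, writes $\eta=\iota(x_0)\cdot\xi$ for $\xi\in\hat G_{\mathfrak a}$, and approximates $\xi$ by $j(g_n)$ with $g_n\in G$; then evaluating $\phi_\alpha$ along $g_n\cdot x_0$ reduces the limit to the convergence in $\mathfrak a$ of the ratio of the $\mathfrak a$-functions $g\mapsto|f(g\cdot x_0)|^2$ and $g\mapsto|f_\alpha(g\cdot x_0)|^2$, the uniform two-sided bound on $\phi_\alpha$ guaranteeing that the limit is independent of the choice of approximating sequence. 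I expect the main obstacle to be precisely this last passage: one must verify that the quotient of two $\mathfrak a$-functions with a common zero lifts to a continuous function on $M_{\mathfrak a}$, which depends essentially on the group structure of $\hat G_{\mathfrak a}$ (this is where the asymmetry between the hypotheses of the two directions is felt, and why $Z$ must equal $X$ for the converse).
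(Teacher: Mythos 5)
Your forward direction is essentially the paper's argument: the trivializing functions $\psi_\alpha$ glue $\psi_\alpha f_\alpha$ into a global $f\in\mathcal O(Z)$, and $|f|=|\psi_\alpha|\,|f_\alpha|\in C_{\mathfrak a}(U_\alpha)$ locally (the paper then globalizes this with a partition of unity on the submanifold $Y\subset c_{\mathfrak a}X$ with $\iota^{-1}(Y)=Z$ — a small step you assert rather than prove, but not the real issue).

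The converse, however, has a genuine gap at exactly the point you flag as "the main obstacle," and your proposed resolution does not work. You must show that $\phi_\alpha=|f_\alpha|^2/|f|^2$ (equivalently $|h_\alpha|^{\pm 1}$, $h_\alpha:=1/p_\gamma$) extends continuously to $V_\alpha\subset c_{\mathfrak a}X$. Your argument — approximate $\eta$ by $\iota(g_n\cdot x_0)$ and invoke "the uniform two-sided bound on $\phi_\alpha$" to conclude the limit exists and is independent of the approximating net — is not valid: a net with values in $[c,C]$ need not converge, and independence of the limit from the chosen net is precisely the statement that $g\mapsto\phi_\alpha(g\cdot x_0)$ lies in $\mathfrak a$, which is what has to be proved. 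A bounded ratio of two $\mathfrak a$-functions with common zeros is not automatically an $\mathfrak a$-function. Moreover, you have discarded the one piece of data that makes the step provable: $h_\alpha$ and $h_\alpha^{-1}$ both lie in $\mathcal O_{\ell_\infty}(U_\alpha)$, hence extend to continuous, mutually inverse (so nowhere vanishing) functions $\tilde h_\alpha^{\pm 1}$ on $V_0\times K'$, $K':=\kappa^{-1}(K)\subset\hat G_{\ell_\infty}$. The paper's proof then runs in two steps you are missing. First, using that $c_{\mathfrak a}X=\sqcup_H\iota_H(X_H)$ with \emph{every} leaf dense (this is where the compact-group hypothesis enters, not via a principal-bundle action), analytic continuation along each leaf shows that neither $\hat f_\alpha$ nor the continuous extension $F$ of $|f|$ vanishes identically on any slice $V_0\times\{\eta\}$; without this you cannot rule out that the common zero locus swallows a neighbourhood of $\eta$ inside its slice, and the ratio is nowhere directly computable near $\eta$. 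Second, on the (now dense) complement of the common zero locus the pullback of $F/|\hat f_\alpha|$ by $\tilde\kappa=\Id\times\kappa$ equals $|\tilde h_\alpha|$; by density and continuity $|\tilde h_\alpha|$ is therefore constant on the fibres of the proper map $\tilde\kappa$ between locally compact Hausdorff spaces, hence descends to a positive $g_\alpha\in C(V_0\times K)$ with $\iota^*g_\alpha=|h_\alpha|$. Your outline contains neither the non-vanishing-on-slices argument nor any mechanism (such as this descent along $\kappa$) that actually forces continuity at the common zeros, so the converse remains unproved as written.
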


The second statement of the theorem is valid, e.g., for $\mathfrak a=AP(G)$, the algebra of von Neumann almost periodic functions on the deck transformation group $G$, see~Example \ref{exm}(2) below.  In this case $\hat{G}_{\mathfrak a}:=bG$, the Bohr compactification of $G$.

Now, we characterize the class of $\mathfrak a$-semi-trivial holomorphic line $\mathfrak a$-bundles.

\begin{theorem}
\label{zeroapthm}

Suppose $X_0$ is a Stein manifold. A line bundle $L \in \mathcal L_{\mathfrak a}(Z)$ is
$\mathfrak a$-semi-trivial
if and only if

(1) $L$ is isomorphic in category $\mathcal L_{\mathfrak a}(Z)$ to a discrete line $\mathfrak a$-bundle $L'$ (i.e., a bundle determined by a locally constant cocycle) and

(2) $L'$ is trivial in the category of discrete line bundles on $Z$.

\end{theorem}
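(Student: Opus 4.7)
The plan is to prove both implications separately, using the sheaf-theoretic framework on $c_{\mathfrak a}X$ together with Theorem~\ref{thmB}.

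For the direction $(\Leftarrow)$, suppose $L\cong L'$ in $\mathcal L_{\mathfrak a}(Z)$ where $L'$ has locally constant cocycle $\{c_{\alpha\beta}\}$ given as the coboundary of locally constant $\gamma_\alpha\in\mathbb C^*$ on each $U_\alpha$. Each $\gamma_\alpha$ is the pullback via $\iota$ of a continuous function on the corresponding open set $\tilde U_\alpha\subset c_{\mathfrak a}X$ and hence lies in $\mathcal O_{\mathfrak a}(U_\alpha)^*$ (a locally constant continuous function has holomorphic pullback). Composing the $\mathcal L_{\mathfrak a}$-isomorphism $L\cong L'$ with the trivialization $\{\gamma_\alpha\}$ of $L'$ produces a trivialization $\psi$ of $L$ with $\psi_\alpha\in\mathcal O_{\mathfrak a}(U_\alpha)^*$; then $\psi\in\mathcal O_{\ell_\infty}$ by inclusion, and $|\psi|^2\in C_{\mathfrak a}$ by self-adjointness of $\mathfrak a$, yielding $\mathfrak a$-semi-triviality.

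For the harder direction $(\Rightarrow)$, suppose $\psi_\alpha\in\mathcal O_{\ell_\infty}(U_\alpha)^*$ trivialize $L$ with $|\psi_\alpha|^{\pm 2}\in C_{\mathfrak a}(U_\alpha)$. My plan is to produce, after a suitable class-$(\mathcal T_{\mathfrak a})$ refinement of the cover, functions $\chi_\alpha\in\mathcal O_{\mathfrak a}(U_\alpha)^*$ with $|\chi_\alpha|=|\psi_\alpha|$. Granted this, the ratio $c_\alpha:=\psi_\alpha/\chi_\alpha$ is a holomorphic function of modulus one on $U_\alpha$, hence locally constant on connected components, and the cocycle factors as
\[
d_{\alpha\beta}=(c_\alpha/c_\beta)(\chi_\alpha/\chi_\beta).
\]
This exhibits $L$ as $\mathcal L_{\mathfrak a}$-isomorphic via $\{\chi_\alpha\}$ to the discrete line $\mathfrak a$-bundle $L'$ with cocycle $c_{\alpha\beta}:=c_\alpha/c_\beta$; by construction $L'$ is trivial in the discrete category (as the coboundary of $\{c_\alpha\}$), verifying both (1) and (2).

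The principal obstacle is the construction of the $\chi_\alpha$. Writing $\chi_\alpha=e^{f_\alpha}$ reduces the task to finding $f_\alpha\in\mathcal O_{\mathfrak a}(U_\alpha)$ with $2\,\Re f_\alpha=\log|\psi_\alpha|^2$. The right-hand side is a pluriharmonic continuous $\mathfrak a$-function that descends to a continuous pluriharmonic function on $\tilde U_\alpha\subset c_{\mathfrak a}X$; writing it as $2\,\Re f_\alpha$ for a holomorphic $f_\alpha$ on $\tilde U_\alpha$ is governed by the short exact sequence of sheaves
\[
0\to i\,\underline{\mathbb R}\to\mathcal O\xrightarrow{\,2\,\Re\,}\mathcal P\to 0
\]
on $c_{\mathfrak a}X$, where $\mathcal P$ denotes the sheaf of real pluriharmonic functions. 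This sequence is locally exact on opens trivializing $\bar p\colon c_{\mathfrak a}X\to X_0$ over simply connected coordinate charts in $X_0$; refining the cover accordingly (while preserving the class-$(\mathcal T_{\mathfrak a})$ property) yields holomorphic $f_\alpha\in\mathcal O(\tilde U_\alpha)$ whose pullbacks $\iota^*f_\alpha$ lie in $\mathcal O_{\mathfrak a}(U_\alpha)$ as required. The Cartan-B vanishing $H^1(c_{\mathfrak a}X,\mathcal O)=0$ provided by Theorem~\ref{thmB} is what underpins the sheaf-theoretic machinery on $c_{\mathfrak a}X$ needed to make the locally constructed $f_\alpha$ fit together compatibly with the cocycle data of $L$.
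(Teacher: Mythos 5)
Your overall architecture is the same as the paper's --- split the transition data into a modulus part handled inside $\mathcal O_{\mathfrak a}$ and a unimodular part handled by the discrete trivialization --- but both directions as written have gaps, and the one in $(\Leftarrow)$ is an actual error. You assert that a locally constant nowhere-zero $\gamma_\alpha$ on $U_\alpha$ ``is the pullback via $\iota$ of a continuous function on $\tilde U_\alpha$ and hence lies in $\mathcal O_{\mathfrak a}(U_\alpha)^*$.'' This is false: $U_\alpha=\iota^{-1}(\tilde U_\alpha)$ typically has infinitely many connected components whose $\iota$-images are jointly dense in $\tilde U_\alpha$ (e.g.\ $U_\alpha\cong U_{0,\alpha}\times\mathbb Z$ sitting densely in $\tilde U_\alpha\cong U_{0,\alpha}\times b\mathbb Z$ for $\mathfrak a=AP(\mathbb Z)$), so an arbitrary assignment of constants to components neither extends continuously to $\tilde U_\alpha$ nor need even be bounded; condition (2) gives the $\gamma_\alpha$ no $\mathfrak a$-regularity whatsoever. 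Consequently your $\psi_\alpha$ need not lie in $\mathcal O_{\ell_\infty}(U_\alpha)$ and $|\psi_\alpha|^{\pm 1}$ need not lie in $C_{\mathfrak a}(U_\alpha)$; moreover your conclusion (that $L$ is trivialized by functions in $\mathcal O_{\mathfrak a}(U_\alpha)^*$) is strictly stronger than $\mathfrak a$-semi-triviality and is not what is true. The paper avoids this by writing $d_{\alpha\beta}=|d_{\alpha\beta}|e^{il_{\alpha\beta}}$, resolving the additive cocycle $\{\log|d_{\alpha\beta}|\}$ \emph{inside} $\mathcal O_{\mathfrak a}$ via Theorem \ref{thmB_}, and using condition (2) only for the unitary part, whose resolving functions $e^{il_\alpha}$ have modulus identically $1$ --- so their failure to be $\mathfrak a$-functions is invisible in $|\psi_\alpha|$.

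In $(\Rightarrow)$ your plan coincides with the paper's ($\chi_\alpha=e^{b_\alpha}$ with $\Real\, b_\alpha=\log|\psi_\alpha|$, after which $\psi_\alpha/\chi_\alpha$ is unimodular holomorphic, hence locally constant), but all of the difficulty is concentrated in the step you merely assert: local exactness of $0\to i\,\mathbb R\to\mathcal O\xrightarrow{2\Real}\mathcal P\to 0$ on $c_{\mathfrak a}X$, i.e.\ that a pluriharmonic $\mathfrak a$-function is locally $\Real$ of a holomorphic $\mathfrak a$-function. Before one can even form $\partial\log|\psi_\alpha|$ one must upgrade $|\psi_\alpha|^{\pm1}$, which are only assumed to lie in $C_{\mathfrak a}(U_\alpha)$, to the class $C^\infty_{\mathfrak a}$; the paper does this by lifting $\psi_\alpha$ to $U_0\times K'\subset U_0\times\hat G_{\ell_\infty}$ and descending along the proper map $\kappa$, and then produces $b_\alpha$ by integrating the resulting $d$-closed $C(K)$-valued holomorphic $1$-form along rays in $U_0$. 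Neither point follows from Theorem \ref{thmB}, which in fact is not needed in this direction at all: the $b_\alpha$ are used chart by chart and require no global gluing, so your closing appeal to $H^1(c_{\mathfrak a}X,\mathcal O)=0$ here is a red herring (Theorem B enters only in the converse direction, as above).
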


The argument in the proof of  Theorem \ref{zeroapthm} implies that if the line bundle $L_E$ of an $\mathfrak a$-divisor $E$ satisfies condition (1) only, then the current of integration associated with $E$ (see subsection \ref{zeroapsect}) coincides with $\frac{i}{\pi}\partial\bar{\partial} \log h$, where $h$ is a nonnegative continuous plurisubharmonic $\mathfrak a$-function on $Z$.


\begin{proposition}
\label{zeroapcor}
Suppose $X_0$ is a Stein manifold and $Z \subset X$ is one-dimensional. Then for a line bundle $L \in \mathcal L_{\mathfrak a}(Z)$ condition (1) of Theorem \ref{zeroapthm} is satisfied.
If also $H^1(Z,\mathbb C)=0$, then condition (2) is satisfied as well.
\end{proposition}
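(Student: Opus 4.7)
The plan is to analyze the cohomology on the compactification $\overline{\iota(Z)} \subset c_{\mathfrak a}X$, where (by Definition \ref{lineap}) holomorphic line $\mathfrak a$-bundles on $Z$ correspond to classes in $H^1(\overline{\iota(Z)},\mathcal O^*)$ and discrete line $\mathfrak a$-bundles to classes in $H^1(\overline{\iota(Z)},\mathbb C^*)$. The key point special to dimension one is that every holomorphic $1$-form on a Riemann surface is automatically closed and locally of the form $d\log f$ for some non-vanishing holomorphic $f$, so on $\overline{\iota(Z)}$ the sheaf sequence
\[
0 \to \mathbb C^* \to \mathcal O^* \overset{d\log}{\to} \Omega^1 \to 0
\]
(where $\Omega^1$ is the coherent sheaf of holomorphic $1$-forms) is short exact.

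For part (1), the associated long exact cohomology sequence contains
\[
H^1(\overline{\iota(Z)},\mathbb C^*) \to H^1(\overline{\iota(Z)},\mathcal O^*) \to H^1(\overline{\iota(Z)},\Omega^1).
\]
The submanifold version of Cartan's Theorem B (Theorem \ref{thmB_}) applied to the coherent sheaf $\Omega^1$ yields $H^1(\overline{\iota(Z)},\Omega^1) = 0$, so the first arrow is surjective. Hence any $L \in \mathcal L_{\mathfrak a}(Z)$ is $\mathcal L_{\mathfrak a}(Z)$-isomorphic to a bundle $L'$ coming from a locally constant $\mathbb C^*$-cocycle, i.e., a discrete line $\mathfrak a$-bundle, verifying condition~(1) of Theorem \ref{zeroapthm}.

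For part (2), I would pull $L'$ back to $Z$ and invoke the classical exponential sequence of constant sheaves $0 \to \mathbb Z \to \mathbb C \overset{\exp(2\pi i\,\cdot)}{\to} \mathbb C^* \to 0$ on $Z$, which gives
\[
H^1(Z,\mathbb C) \to H^1(Z,\mathbb C^*) \to H^2(Z,\mathbb Z) \to H^2(Z,\mathbb C).
\]
The hypothesis $H^1(Z,\mathbb C)=0$ kills the leftmost term. Since $Z$ is a (possibly disconnected) Riemann surface, each connected component has $H^2$ with $\mathbb Z$-coefficients equal either to $\mathbb Z$ (if compact) or to $0$ (if not), so $H^2(Z,\mathbb Z)$ is torsion-free and the last map is injective. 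Thus $H^1(Z,\mathbb C^*)=0$, meaning every discrete line bundle on $Z$---including $L'$---is trivial.

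The main technical obstacle I expect is justifying the sheaf-theoretic setup on $\overline{\iota(Z)}$: namely, that $\Omega^1$ and the $d\log$ map are well-defined there, that $\Omega^1$ is coherent in the sense of \eqref{coh0} so that Theorem \ref{thmB_} applies, and that the natural identification of $\mathcal L_{\mathfrak a}(Z)$ with $H^1(\overline{\iota(Z)},\mathcal O^*)$ is functorial in the expected way. These rely on the local model of $c_{\mathfrak a}X$ along complex $\mathfrak a$-submanifolds established in Part~I \cite{BK8}; once in place, everything else reduces to a routine diagram chase.
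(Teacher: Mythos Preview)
Your argument is correct, and for part~(1) it is genuinely different from---and shorter than---the paper's. The paper proceeds via the exponential sequence $0\to\mathbb Z_{\mathfrak a}\to\mathcal O_{\mathfrak a}\to\mathcal O_{\mathfrak a}^*\to 0$: it forms the Chern class $\delta(L)\in H^2(Z,\mathbb Z_{\mathfrak a})$, passes to its image in $H^2(Z,\mathbb R_{\mathfrak a})$, invokes the de~Rham and Dolbeault isomorphisms of subsection~\ref{derhamsect} to represent this image by a $d$-closed $(1,1)$-form, and then uses the one-dimensionality (no $(2,0)$- or $(1,2)$-forms) together with Corollary~\ref{dolcor2} to conclude this form is $d$-exact; only then does it resolve the resulting $\mathcal O_{\mathfrak a}$-cocycle by Theorem~\ref{thmB_}. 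Your $d\log$ sequence $0\to\mathbb C^*\to\mathcal O^*\to\Omega^1\to 0$ collapses all of this into a single application of Theorem~\ref{thmB_} to $\Omega^1$, exploiting one-dimensionality at the very start (surjectivity of $d\log$) rather than at the end. The technical points you flag---well-definedness of $\Omega^1$ and coherence---are handled in subsection~\ref{derhamsect}, and local exactness of the $d\log$ sequence follows from Proposition~\ref{localstructap} by integrating in the $\mathbb C$-direction of the local model $Z_0\times K$. For part~(2) your argument is essentially the paper's, though the paper observes more directly that $H^2(Z,\mathbb Z)=0$ outright (a one-dimensional Stein manifold is homotopy equivalent to a $1$-complex), so it never needs your torsion-freeness step; in particular no compact components of $Z$ can occur.
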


In particular, conditions (1) and (2) of Theorem \ref{zeroapthm} are satisfied if $Z=X$ is the universal covering of a non-compact Riemann surface
$X_0$ and $\mathfrak a\subset\ell_\infty\bigr(\pi_1(X_0)\bigr)$ is a self-adjoint closed subalgebra.


\medskip

The second Cousin problem for algebra $\mathcal O_{\mathfrak a}(X)$ asks about conditions for a divisor in $\Div_{\mathfrak a}(X)$ to be $\mathfrak a$-principal. Our next result provides some sufficient conditions for its solvability.

\begin{theorem}
\label{chernthm}
Let $X_0$ be a Stein manifold and $E \in \Div_{\mathfrak a}(X)$.
If $X_0$ is homotopy equivalent to an open subset $Y_0 \subset X_0$ such that the
restriction of $E$ to $Y:=p^{-1}(Y_0)$ is $\mathfrak a$-equivalent to an $\mathfrak a$-principal divisor, then $E$ is $\mathfrak a$-equivalent to an $\mathfrak a$-principal divisor as well. 

In particular, this is true if $\mathfrak a$ is such that $\hat{G}_{\mathfrak a}$ is a compact topological group and $j(G)\subset\hat{G}_{\mathfrak a}$ is a dense subgroup, and $\supp(E) \cap Y=\varnothing$; here $\supp(E)$ is the union of zero loci of holomorphic functions determining $E$. 
\end{theorem}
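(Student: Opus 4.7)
The plan is to reformulate $\mathfrak a$-principality of $E$ as triviality of its line bundle $L_E$, translate this to the fibrewise compactification $c_{\mathfrak a}X$, and reduce it to the vanishing of a Chern class in $H^2(c_{\mathfrak a}X,\mathbb Z)$ via the exponential sheaf sequence and Cartan's Theorem B. Concretely, an $\mathfrak a$-divisor $E=\{(U_\alpha,f_\alpha)\}$ is $\mathfrak a$-equivalent to an $\mathfrak a$-principal divisor if and only if $L_E$ is trivial in $\mathcal L_{\mathfrak a}(X)$: a nowhere-zero $\mathfrak a$-trivialization $\psi_\alpha=d_{\alpha\beta}\psi_\beta$ with $\psi_\alpha^{\pm 1}\in\mathcal O_{\mathfrak a}(U_\alpha)$ allows one to glue the $f_\alpha/\psi_\alpha$ into a global $f\in\mathcal O_{\mathfrak a}(X)$ with $E_f\sim_{\mathfrak a}E$. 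Under the correspondence between line $\mathfrak a$-bundles on $X$ and holomorphic line bundles on $c_{\mathfrak a}X$, isomorphism classes lie in $H^1(c_{\mathfrak a}X,\mathcal O^*)$. The exponential sequence
\[
0\to\mathbb Z\to\mathcal O\xrightarrow{\exp(2\pi i\cdot)}\mathcal O^*\to 0
\]
together with Theorem \ref{thmB} (which gives $H^i(c_{\mathfrak a}X,\mathcal O)=0$ for $i\geq 1$) yields a Chern-class isomorphism $c_1\colon H^1(c_{\mathfrak a}X,\mathcal O^*)\xrightarrow{\cong}H^2(c_{\mathfrak a}X,\mathbb Z)$, and the analogous isomorphism on $c_{\mathfrak a}Y:=\bar p^{-1}(Y_0)\subset c_{\mathfrak a}X$. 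The hypothesis on $E|_Y$ gives $c_1(L_E|_{c_{\mathfrak a}Y})=0$; it remains to propagate the vanishing to $c_{\mathfrak a}X$.

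For the propagation I would show that the inclusion $c_{\mathfrak a}Y\hookrightarrow c_{\mathfrak a}X$ is a (weak) homotopy equivalence, so that the restriction $H^2(c_{\mathfrak a}X,\mathbb Z)\to H^2(c_{\mathfrak a}Y,\mathbb Z)$ is an isomorphism. Both $\bar p\colon c_{\mathfrak a}X\to X_0$ and its restriction over $Y_0$ are fiber bundles with common fiber $\hat G_{\mathfrak a}$, arising as associated bundles to the principal $G$-bundle $p\colon X\to X_0$ and its restriction; the inclusion is fiber-preserving, covers the homotopy equivalence $Y_0\hookrightarrow X_0$, and restricts to the identity on each fiber. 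Comparing the long exact sequences of homotopy groups of the two fibrations and applying the five lemma gives $\pi_n(c_{\mathfrak a}Y)\cong\pi_n(c_{\mathfrak a}X)$ for all $n\geq 0$. Consequently $c_1(L_E)=0$ and $L_E$ is trivial in $\mathcal L_{\mathfrak a}(X)$, so $E$ is $\mathfrak a$-equivalent to an $\mathfrak a$-principal divisor.

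To deduce the ``in particular'' statement, I would verify that under the added hypotheses ($\hat G_{\mathfrak a}$ a compact topological group, $j(G)$ dense, $\supp(E)\cap Y=\varnothing$), the restriction $E|_Y$ is $\mathfrak a$-equivalent to the trivial divisor, and then invoke the first part. The defining functions $f_\alpha$ are nowhere zero on $U_\alpha\cap Y$; refining the $(\mathcal T_{\mathfrak a})$-cover so that each $\tilde U_\alpha\cap c_{\mathfrak a}Y$ has compact closure in $c_{\mathfrak a}Y$, continuity together with compactness forces $|f_\alpha|\geq c>0$ on $U_\alpha\cap Y$, provided $\tilde f_\alpha$ acquires no new zeros on the ``boundary fibers'' of $c_{\mathfrak a}Y$---a fact that genuinely relies on the compact-group structure and density of $j(G)$ within it. Consequently $f_\alpha^{\pm 1}\in\mathcal O_{\mathfrak a}(U_\alpha\cap Y)$, and the tautological relation $f_\alpha|_{U_\alpha\cap Y}=f_\alpha|_{U_\alpha\cap Y}\cdot 1$ exhibits $E|_Y$ as $\mathfrak a$-equivalent to the trivial $\mathfrak a$-principal divisor $E_1$, so the first part of the theorem applies.

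The main obstacle is the homotopy-equivalence step: while geometrically natural, the five-lemma argument must be set up carefully because the fiber $\hat G_{\mathfrak a}$ is a compact Hausdorff but not necessarily CW space, and one must first cleanly identify $c_{\mathfrak a}Y$ with the restriction of the ambient bundle $c_{\mathfrak a}X$ over $Y_0$. A secondary delicate point is the propagation of non-vanishing of $f_\alpha$ from $\iota(U_\alpha\cap Y)$ to its closure in $c_{\mathfrak a}Y$ used in the ``in particular'' step, which relies essentially on the compact-group hypothesis and fails for more general $\mathfrak a$.
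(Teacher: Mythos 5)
Your overall strategy for the first assertion coincides with the paper's: pass to $c_{\mathfrak a}X$, use the exponential sheaf sequence together with Theorem \ref{thmB} to see that the Chern class map $\delta$ is injective on $H^1(c_{\mathfrak a}X,\mathcal O^*)$, and then propagate the vanishing of $\delta(c_{L_E})$ from $c_{\mathfrak a}Y$ to $c_{\mathfrak a}X$ via a restriction isomorphism on $H^2(\cdot,\mathbb Z)$. The gap is in how you propose to obtain that restriction isomorphism. Comparing the long exact homotopy sequences of the two fibrations and applying the five lemma yields at best a \emph{weak} homotopy equivalence $c_{\mathfrak a}Y\hookrightarrow c_{\mathfrak a}X$, and a weak homotopy equivalence controls only singular cohomology. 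The groups $H^2(c_{\mathfrak a}X,\mathbb Z)$ appearing here are \v{C}ech (equivalently, sheaf) cohomology groups of the locally constant sheaf $\mathbb Z$, and the spaces $c_{\mathfrak a}X$, $c_{\mathfrak a}Y$ are not CW complexes or even locally contractible: the fibre $\hat G_{\mathfrak a}$ is typically a Bohr-type compactification, for which \v{C}ech and singular cohomology genuinely differ. So your five-lemma step does not deliver the isomorphism $H^2(c_{\mathfrak a}X,\mathbb Z)\to H^2(c_{\mathfrak a}Y,\mathbb Z)$ you need. The paper instead establishes an actual homotopy equivalence $c_{\mathfrak a}X\simeq c_{\mathfrak a}Y$ (quoting \cite[Prop.~4.2]{Br8}, whose proof carries over verbatim from the $\ell_\infty$ case) and then invokes homotopy invariance of sheaf cohomology of locally constant sheaves (Bredon, Ch.~II.11). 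You flagged the non-CW issue yourself, but the remedy is not a more careful five lemma; it is a different (strong) form of homotopy equivalence.

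On the ``in particular'' statement, your reduction is the right one, but the crucial claim --- that the extensions $\hat f_\alpha$ acquire no new zeros on $c_{\mathfrak a}Y\setminus\iota(Y)$ --- is exactly the hard point, and you leave it as an assertion. The paper proves it by showing first (using the decomposition $c_{\mathfrak a}X=\sqcup_H\iota_H(X_H)$ into dense leaves, which is where the compact-group and density hypotheses enter) that the section $s$ of $\hat L_E$ is not identically zero on any slice $\{\cdot\}\times\{\eta\}$, and then running a Montel--Hurwitz argument: a zero of $s(\cdot,\eta)$ at a non-$\iota(X)$ point would force zeros of nearby slices $s(\cdot,\xi_j)$ with $\xi_j\in j(G)$, producing a point of $\supp(E)\cap Y$ and a contradiction. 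A compactness/uniform-lower-bound argument alone, as you sketch it, does not rule out zeros appearing in the closure; the Hurwitz step (or an equivalent) is needed.
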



For the algebra of Bohr's holomorphic almost periodic functions with $X$ and $Y$ being tube domains and $\mathfrak a=AP(\mathbb Z^n)$ (see~Example \ref{holap}) this theorem is due to \cite{FRR} ($n=1$) and \cite{Fav} ($n \geq 1$). The proof in \cite{FRR} uses Arakelyan's theorem and gives an explicit construction of a holomorphic almost periodic function that determines the principal divisor. Similarly to \cite{Fav} our proof of Theorem \ref{chernthm} is sheaf-theoretic.

\SkipTocEntry\subsection{}
\label{sectbohr}
A classical result by H.~Bohr states that if a holomorphic function $f$ on a complex strip $T:=\{z \in \mathbb C: \Imag(z) \in (a,b)\}$, bounded on closed substrips, is continuous almost periodic on a horizontal line $\mathbb R+ic$, $c \in (a,b)$, then $f$ is holomorphic almost periodic on $T$. In this subsection we extend this result to algebras $\mathcal O_{\mathfrak a}(X)$.

The regular covering $p:X \rightarrow X_0$ is a principal fibre bundle over $X_0$ with structure group $G$, hence, for a cover $\{U_{0,\gamma}\}$ of $X_0$ by open simply connected subsets there exists a locally constant cocycle $\{c_{\delta\gamma}:U_{0,\gamma} \cap U_{0,\delta} \rightarrow G\}$ such that the covering $p:X \rightarrow X_0$ is obtained from the disjoint union 
$\sqcup_{\gamma}U_{0,\gamma} \times G$ by the identification
\begin{equation}
\label{extrem2}
U_{0,\delta} \times G \ni (x,g) \sim (x,g\cdot c_{\delta\gamma}(x)) \in U_{0,\gamma} \times G \quad \text{ for all } x\in U_{0,\gamma} \cap U_{0,\delta},
\end{equation}
where projection $p$ is induced by the projections $U_{0,\gamma} \times G\rightarrow U_{0,\gamma}$ (see, e.g., \cite{Hirz}). 
Local inverses $\psi_\gamma:p^{-1}(U_{0,\gamma}) \rightarrow U_{0,\gamma} \times G$ to the identification map form a system of biholomorphic trivializations of the covering. 
For a given subset $S \subset G$ denote
\begin{equation*}
\Pi_\gamma(U_{0,\gamma},S):=\psi_\gamma^{-1}(U_{0,\gamma} \times S).
\end{equation*}

Now, let $U_0 \subset X_0$ be an open simply connected set contained in some $U_{0,\gamma_*}$, $Z_0 \subset U_0$ be a uniqueness set for holomorphic functions in $\mathcal O(X_0)$, and subsets $L \subset K \subset G$ be such that
the closure of $j(L)$ in $\hat{G}_{\mathfrak a}$ is contained in the interior of the closure of $j(K)$ in $\hat{G}_{\mathfrak a}$ (see~subsection \ref{constrsect} for notation) and
$\cup_{i=1}^m\, L \cdot g_i=G$ for some $g_1,\dots,g_m \in G$. 

Consider $Z\subset X$ such that
$$
p^{-1}(Z_0) \cap \Pi_{\gamma_*}(U_0,K)\subset Z.
$$
(In particular, we can take $L=K:=G$ and $Z:=p^{-1}(Z_0)$.)

We define $C_{\mathfrak a}(Z):=C_{\mathfrak a}(X)|_Z$.

\begin{theorem}
\label{hypthm}
If $f \in \mathcal O_{\ell_\infty(G)}(X)$ and $f|_Z \in C_{\mathfrak a}(Z)$, then $f \in \mathcal O_{\mathfrak a}(X)$.
\end{theorem}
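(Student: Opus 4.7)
The plan is to view $\Phi:X\to\ell_\infty(G)$, $x\mapsto F_x$ with $F_x(g):=f(g\cdot x)$, as a Banach-valued holomorphic map: it is weakly holomorphic via pointwise evaluations and locally bounded by the $\ell_\infty$-hypothesis, and $G$-equivariant, $F_{gx}=R_g F_x$. Composing with the quotient $\pi:\ell_\infty(G)\to\ell_\infty(G)/\mathfrak a$ (well-defined by right-translation invariance of $\mathfrak a$) produces a Banach-valued holomorphic $\bar\Phi:X\to\ell_\infty(G)/\mathfrak a$; the goal is $\bar\Phi\equiv 0$, equivalently $F_x\in\mathfrak a$ for all $x\in X$.

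\textbf{Extension over $\overline{j(K)}$.} Pick $F\in C_{\mathfrak a}(X)$ realizing the given element of $C_{\mathfrak a}(Z)$ with $F|_Z=f|_Z$. In the trivialization $\psi_{\gamma_*}$ over $U_0$, set $\tilde f(z,g):=f(\psi_{\gamma_*}^{-1}(z,g))$ and $\tilde F(z,g):=F(\psi_{\gamma_*}^{-1}(z,g))$; then $\tilde F(z,\cdot)\in\mathfrak a$ extends to $\hat{\tilde F}(z,\cdot)\in C(\hat G_{\mathfrak a})$. For $z\in Z_0$ and $g\in K$ the point $\psi_{\gamma_*}^{-1}(z,g)\in \Pi_{\gamma_*}(U_0,K)\cap p^{-1}(Z_0)\subset Z$, so $\tilde f(z,g)=\tilde F(z,g)$. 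The $\ell_\infty$-hypothesis makes $\{\tilde f(\cdot,g)\}_{g\in G}\subset\mathcal O(U_0)$ locally uniformly bounded, hence Montel-normal. Given a net $g_\alpha\in K$ with $j(g_\alpha)\to\eta\in\overline{j(K)}$, any cluster limit of $\tilde f(\cdot,g_\alpha)$ in $\mathcal O(U_0')$ (for $U_0'\Subset U_0$) agrees on $Z_0\cap U_0'$ with $\hat{\tilde F}(\cdot,\eta)$; the $Z_0$-uniqueness of holomorphic functions forces the cluster limit to be unique, yielding a continuous map $\overline{j(K)}\ni\eta\mapsto h_\eta\in\mathcal O(U_0')$ extending $j(g)\mapsto\tilde f(\cdot,g)$.

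\textbf{Extension to $\hat G_{\mathfrak a}$ via the cover.} The hypothesis $\cup_{i=1}^m L\cdot g_i=G$ combined with $\overline{j(L)}\subset\mathrm{int}\,\overline{j(K)}$ and continuity of right translation on $\hat G_{\mathfrak a}$ yields $\hat G_{\mathfrak a}=\cup_{i=1}^m\overline{j(L)}\cdot g_i\subset\cup_{i=1}^m\mathrm{int}\,\overline{j(K)}\cdot g_i$. For each $i$ I would re-run the Montel + uniqueness argument on the right-shifted family $\{\tilde f(\cdot,g\cdot g_i)\}_{g\in G}$, using $R_{g_i}\hat{\tilde F}(z,\cdot)\in\mathfrak a$ (invariance of $\mathfrak a$ under right translation) as the boundary datum on $Z_0$; the buffer $\overline{j(L)}\subset\mathrm{int}\,\overline{j(K)}$ ensures that nets approaching $\eta\in\mathrm{int}\,\overline{j(K)}\cdot g_i$ can be chosen so that the normal-family limit is pinned down by the shifted datum. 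Uniqueness of the Montel limit at each $z\in U_0'$ forces the local extensions on the overlapping pieces $\mathrm{int}\,\overline{j(K)}\cdot g_i$ to agree, gluing to a continuous $H:U_0'\times\hat G_{\mathfrak a}\to\mathbb C$ with $H(z,j(g))=\tilde f(z,g)$; in particular $\tilde f(z,\cdot)\in\mathfrak a$ for every $z\in U_0'$, so $\bar\Phi\equiv 0$ on $p^{-1}(U_0)$.

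\textbf{Conclusion.} For every $\ell\in(\ell_\infty(G)/\mathfrak a)^*$ the scalar function $\ell\circ\bar\Phi\in\mathcal O(X)$ is bounded on preimages of compacta and vanishes on the open set $p^{-1}(U_0)$, hence vanishes on the connected $X$ by analytic continuation; Hahn-Banach then gives $\bar\Phi\equiv 0$ on $X$, and so $f\in\mathcal O_{\mathfrak a}(X)$. I expect the principal obstacle to be the consistency check in the cover step: verifying that the normal-family extensions constructed via different pieces $\mathrm{int}\,\overline{j(K)}\cdot g_i$ match on overlaps. The crucial ingredients are the right-translation invariance of $\mathfrak a$ (so the shifted reference $R_{g_i}\hat{\tilde F}$ is legitimate), the strict containment $\overline{j(L)}\subset\mathrm{int}\,\overline{j(K)}$ (providing a buffer to extract convergent subnets whose boundary behaviour is controlled), and $Z_0$-uniqueness (forcing cluster limits to be unique).
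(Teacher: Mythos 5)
Your first step (the Montel-plus-uniqueness-set argument producing a continuous extension of $g\mapsto\tilde f(z,g)$ over $\hat K_{\mathfrak a}:=\overline{j(K)}$, anchored by the datum $\hat{\tilde F}$ on $Z_0$) is essentially the paper's Lemma \ref{lemA}, and your concluding identity-theorem/Hahn--Banach step is a legitimate (indeed slightly slicker) replacement for the paper's Zorn-type propagation from $p^{-1}(U_0)$ to $X$. The gap is in the middle step, and it is exactly the point you flagged as ``the principal obstacle'': the shifted boundary datum $R_{g_i}\hat{\tilde F}$ is not legitimate. The hypothesis only gives $f=F$ on $Z\supset p^{-1}(Z_0)\cap\Pi_{\gamma_*}(U_0,K)$, i.e.\ on the slab $Z_0\times K$; for $h=g\cdot g_i\notin K$ the point $\psi_{\gamma_*}^{-1}(z,h)$ need not lie in $Z$, so $\tilde f(z,h)$ is not known to equal $\tilde F(z,h)$, and right-translation invariance of $\mathfrak a$ says nothing about $f$. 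Consequently the cluster limits of the shifted family $\{\tilde f(\cdot,g g_i)\}$ along $Z_0$ are not pinned down by anything, and your gluing over the translates $\mathrm{int}\,\overline{j(K)}\cdot j(g_i)$ has no anchor.

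The paper closes this gap with a monodromy argument (Lemma \ref{cylinder}) that you are missing. One chooses a loop in $X_0$ based at $x_0\in U_0$ whose lift joins $y_0$ to $g_i\cdot y_0$, covers it by a chain of charts $U_{0,\gamma_1},\dots,U_{0,\gamma_{s(i)}}$ with $\gamma_1=\gamma_{s(i)}=\gamma_*$, and sets $K_{l+1}:=K_l\cdot c_{\gamma_l\gamma_{l+1}}$ so that the slabs $\Pi_{\gamma_l}(\cdot,K_l)$ and $\Pi_{\gamma_{l+1}}(\cdot,K_{l+1})$ coincide over each overlap, with $K_{s(i)}=K\cdot g_i$. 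At each step the extension already constructed over the previous chart, restricted to the (nonempty open, hence uniqueness) overlap and continued by Tietze--Urysohn, supplies the $C_{\mathfrak a}$-datum needed to re-apply Lemma \ref{lemA} on the next chart. After traversing the loop one lands back over $U_0$ but with the slab translated to $K\cdot g_i$, which is precisely the datum your shifted Montel argument lacks; only then can the pieces over $\hat K_{\mathfrak a}\cdot j_{\mathfrak a}(g_i)$ be glued (they agree on overlaps by density of $\iota$ of the common lattice part). Without this transport along the covering's monodromy, the passage from the slab $K$ to all of $\hat G_{\mathfrak a}$ does not go through.
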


We prove Theorem \ref{hypthm} in Section \ref{hypsect}.

\begin{remark}\label{rem2.22}
(1) In the settings of the classical Bohr theorem the choice of the objects in Theorem \ref{hypthm} can be specified:
\begin{proposition}\label{prop2.23}
Suppose that $\mathfrak a$ is such that $\hat{G}_{\mathfrak a}$ is a compact topological group and $j(G)\subset\hat{G}_{\mathfrak a}$ is a dense subgroup. 
Given $K\subset G$ the following conditions are equivalent:
\begin{itemize}
\item[(a)]
There exist $g_1,\dots, g_m\in G$ such that $\cup_{i=1}^m\, K\cdot g_i=G$;
\item[(b)]
The closure of $j(K)$ in  $\hat{G}_{\mathfrak a}$ has a nonempty interior;
\item[(c)]
There exists a subset $L\subset K$ satisfying conditions of Theorem \ref{hypthm}.
\end{itemize}
\end{proposition}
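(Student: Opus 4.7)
\textbf{Proof plan for Proposition \ref{prop2.23}.}

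I would prove the cyclic chain $(a)\Rightarrow(b)\Rightarrow(c)\Rightarrow(a)$, in which $(c)\Rightarrow(a)$ is immediate: if $L\subset K$ satisfies $\bigcup_{i=1}^m L\cdot g_i=G$, the same $g_i$ witness $\bigcup_{i=1}^m K\cdot g_i=G$.

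For $(a)\Rightarrow(b)$, write $H:=\hat G_{\mathfrak a}$, apply $j$ to the identity $G=\bigcup_{i=1}^m K\cdot g_i$, and pass to closures in $H$. Since right translation by $j(g_i)\in H$ is a homeomorphism and $\overline{j(G)}=H$, one obtains
$$H=\overline{j(G)}=\bigcup_{i=1}^m\overline{j(K)}\cdot j(g_i).$$
Because $H$ is compact Hausdorff, hence Baire, some $\overline{j(K)}\cdot j(g_i)$ has nonempty interior, and homogeneity of $H$ transfers this to $\overline{j(K)}$.

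The substantive step is $(b)\Rightarrow(c)$. Set $U:=\mathrm{int}(\overline{j(K)})\neq\emptyset$, and use regularity of the compact Hausdorff space $H$ to pick nonempty open $U'\subset H$ with $\overline{U'}\subset U$. The natural candidate is $L:=K\cap j^{-1}(U')\subset K$. Since $j(K)$ is dense in $\overline{j(K)}$ and $U'\subset\overline{j(K)}$, $j(L)$ is dense in $U'$, so $\overline{j(L)}=\overline{U'}\subset U=\mathrm{int}(\overline{j(K)})$; this is the first condition in Theorem \ref{hypthm}. For the syndetic condition on $L$, I would cover $H$ by finitely many right translates of $U'$ of the form $U'\cdot j(g_i)$: the family $\{U'\cdot j(g):g\in G\}$ is an open cover of $H$ because for each $h\in H$ the open set $(U')^{-1}h$ meets $j(G)$ by density, and compactness extracts a finite subcover $H=\bigcup_{i=1}^m U'\cdot j(g_i)$. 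Then for every $g\in G$ there is $i$ with $j(g\cdot g_i^{-1})\in U'$, i.e.\ $g\cdot g_i^{-1}\in j^{-1}(U')$.

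The principal difficulty is upgrading the containment $g\cdot g_i^{-1}\in j^{-1}(U')$ to membership in $L=K\cap j^{-1}(U')$: one must ensure that $j^{-1}(U')\cap G$ lies in $K$. I expect this to be handled by a further refinement of $U'$ — exploiting density of $j(K)$ within $U'\subset\overline{j(K)}$ together with the homogeneity and compactness of $H$ — to shrink $U'$ so that every $j(g)\in U'$ with $g\in G$ already comes from $K$. This refinement is the technical core of the argument and is precisely where the hypotheses that $\hat G_{\mathfrak a}$ is a topological group and $j(G)$ is a dense subgroup are used in an essential way; without them the implication $(b)\Rightarrow(a)$ can fail.
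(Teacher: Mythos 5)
Your implications (c)$\Rightarrow$(a) and (a)$\Rightarrow$(b) are correct. In (a)$\Rightarrow$(b) the paper avoids Baire category by an elementary device: assuming $\hat{K}_{\mathfrak a}\neq\hat{G}_{\mathfrak a}$, it picks $k$ minimal with $\bigcup_{i=1}^{k+1}\hat{K}_{\mathfrak a}\cdot j(g_i)=\hat{G}_{\mathfrak a}$, so that the complement of $\bigcup_{i=1}^{k}\hat{K}_{\mathfrak a}\cdot j(g_i)$ is a nonempty open subset of $\hat{K}_{\mathfrak a}\cdot j(g_{k+1})$; your Baire argument is an equally valid substitute. Your covering argument for the syndeticity of $j^{-1}(U')$ in (b)$\Rightarrow$(c) is also exactly the paper's.

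The step you flag as the technical core --- passing from $g\cdot g_i^{-1}\in j^{-1}(U')$ to $g\cdot g_i^{-1}\in L\subset K$ --- is a genuine gap, and the refinement you hope for does not exist. The paper does not perform it either: it sets $L:=j^{-1}(U)$ for a nonempty open $U\Subset\hat{K}^{\circ}_{\mathfrak a}$ and simply asserts $L\subset K$, which is unjustified for general $K$. In fact (b) does not imply (a) as stated. Take $G=\mathbb Z$, $\mathfrak a=AP(\mathbb Z)$, so $\hat{G}_{\mathfrak a}=b\mathbb Z$, and let $K=\bigcup_{k\ge 1}\{a_k,a_k+1,\dots,a_k+k\}$ with $a_{k+1}-(a_k+k)\to\infty$. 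Every nonempty open subset of $b\mathbb Z$ pulls back under $j$ to a syndetic subset of $\mathbb Z$ (a finite intersection of sets of $\varepsilon$-almost periods is relatively dense), and a syndetic set meets every set containing arbitrarily long intervals; hence $j(K)$ is dense in $b\mathbb Z$ and (b) holds with $\hat{K}^{\circ}_{\mathfrak a}=b\mathbb Z$. But $K$ is not syndetic, so (a) fails; and since any syndetic $L\subset K$ would make $K$ itself syndetic, (c) fails as well. In particular no shrinking of $U'$ can make your $L=K\cap j^{-1}(U')$ syndetic: a nonempty open $U'$ with $j^{-1}(U')\subset K$ would again force $K$ to be syndetic.

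So your plan reproduces the paper's proof where that proof is sound, and the single step you could not complete is exactly the step the paper elides; it cannot be completed for arbitrary $K$. The argument (yours and the paper's) is correct under the additional hypothesis that $K=j^{-1}(V)$ for some open $V\subset\hat{G}_{\mathfrak a}$: then $V\subset\hat{K}^{\circ}_{\mathfrak a}$, and any nonempty open $U'$ with $\overline{U'}\subset V$ gives $L:=j^{-1}(U')\subset K$ satisfying the conditions of Theorem \ref{hypthm}. This covers the sets $K=\{g\in G:|f(g)|<1\}$ proposed in Remark \ref{rem2.22}, which is presumably the intended scope of the proposition.
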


The proof of Proposition \ref{prop2.23} is given in Section \ref{hypsect}.

Thus, for such algebras $\mathfrak a$ one can take as the set $K$ in Theorem \ref{hypthm}, e.g., any nonempty subset of the form
$\{g\in G\, :\, |f(g)|<1,\, f\in \mathfrak a\}$.
For instance, in Bohr's result the line $\mathbb R+ic$ can be replaced with a set $S+K$, where $S\Subset T$ is an infinite set (hence, it is a uniqueness set for $\mathcal O(T)$)
and $K:=\{n\in\mathbb Z\, :\, |E(n)|<1\}\ne\emptyset$, where $E$ is a univariate exponential polynomial of form \eqref{expoly}.

(2) As an example of the uniqueness set $Z_0$ in Theorem \ref{hypthm} we can take any real hypersurface in $X_0$ or, more generally,
a set of the form $\{x \in X_0: \rho_1(x)=\dots=\rho_d(x)=0\}$, where
$\rho_1,\dots,\rho_d$ are real-valued differentiable functions on $X_0$
and $\partial \rho_1(x_0) \wedge \dots \wedge \partial \rho_d(x_0) \neq 0$ for some $x_0 \in Z_0$ (see, e.g., \cite{Bog}).
\end{remark}

\SkipTocEntry\subsection{}
\label{nonselfadj}
\label{approx}
\label{leray}

\textit{In this subsection we do not assume that algebra $\mathfrak a$ is self-adjoint.}

\medskip

1. The following discussion suggests an alternative approach to study of $\mathcal O_{\mathfrak a}(X)$.
Namely, we have an equivalent presentation of functions in $\mathcal O_{\mathfrak a}(X)$ as holomorphic sections of a holomorphic Banach vector bundle $\tilde{p}:C_{\mathfrak a}X_0 \rightarrow X_0$
 associated to the principal fibre bundle $p:X \rightarrow X_0$ and having fibre $\mathfrak a$ defined as follows. 
 
The regular covering $p:X \rightarrow X_0$ is a principal fibre bundle with structure group $G$ (see~subsection \ref{sectbohr}).  
Then $\tilde p:C_{\mathfrak a}X_0\rightarrow X_0$ is a holomorphic Banach vector bundle associated to $p:X \rightarrow X_0$ and having fibre $\mathfrak a$
obtained from the disjoint union $\sqcup_{\gamma}U_{0,\gamma} \times \mathfrak a$ by the identification 
\begin{equation}
\label{extrem3}
U_{0,\delta} \times \mathfrak a \ni (x,f) \sim (x,R_{c_{\delta\gamma}(x)}(f)) \in U_{0,\gamma} \times \mathfrak a \quad \text{ for all }x\in U_{0,\gamma} \cap U_{0,\delta}.
\end{equation}
The projection $\tilde p$ is induced by projections $U_{0,\gamma}\times \mathfrak a \rightarrow U_{0,\gamma}$.

Let $\mathcal O(C_{\mathfrak a}X_0)$ be the space of holomorphic sections of $C_{\mathfrak a}X_0$. This is a Fr\'{e}chet algebra with respect to the usual pointwise operations and the topology of uniform convergence on compact subsets of $X_0$.

\begin{proposition}
\label{iso1prop}
$\mathcal O_{\mathfrak a}(X) \cong \mathcal O(C_{\mathfrak a}X_0)$.
\end{proposition}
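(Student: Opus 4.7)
The plan is to construct mutually inverse Fr\'echet-algebra isomorphisms by matching up local trivializations of the two sides. Cover $X_0$ by trivializing open sets $\{U_{0,\gamma}\}$ for $p:X\to X_0$ with local inverses $\psi_\gamma$ and locally constant transition cocycle $\{c_{\delta\gamma}\}$ as in subsection~\ref{sectbohr}. The corresponding trivializations of $\tilde p:C_{\mathfrak a}X_0\to X_0$ use the same cocycle acting on $\mathfrak a$ by right translations (see \eqref{extrem3}). Hence both $\mathcal O_{\mathfrak a}(X)$ and $\mathcal O(C_{\mathfrak a}X_0)$ should reduce to the same data: compatible families $\{s_\gamma:U_{0,\gamma}\to\mathfrak a\}$ that are holomorphic in the Banach-valued sense.

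Forward direction. For $f\in\mathcal O_{\mathfrak a}(X)$ set $s_\gamma(x)(g):=f(\psi_\gamma^{-1}(x,g))$ for $x\in U_{0,\gamma}$, $g\in G$. Condition (2) of Definition~\ref{defafunc} together with the right-translation invariance of $\mathfrak a$ guarantees $s_\gamma(x)\in\mathfrak a$. A direct calculation using \eqref{extrem2} shows that the family $\{s_\gamma\}$ satisfies the transition relation required by \eqref{extrem3}, so it glues to a global section $s$ of $\tilde p$.

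The analytic core is to show that each $s_\gamma$ is holomorphic as a map into the Banach space $\mathfrak a$; this is the step I expect to require the most care. Condition (1) of Definition~\ref{defafunc} makes $s_\gamma$ locally bounded in the $\ell_\infty(G)$-norm, and for each fixed $g\in G$ the scalar function $x\mapsto s_\gamma(x)(g)=f(\psi_\gamma^{-1}(x,g))$ is holomorphic on $U_{0,\gamma}$ because $f$ and $\psi_\gamma^{-1}$ are. Since the evaluation functionals $\delta_g\in\mathfrak a^*$ are a norm-determining family, $s_\gamma$ is holomorphic by the standard criterion that a locally bounded Banach-valued map weakly holomorphic along a norm-determining family of functionals is strongly holomorphic. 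Alternatively, applying the Cauchy integral formula on local coordinate polydisks yields a uniformly-in-$g$ convergent $\mathfrak a$-valued power series for $s_\gamma$, giving the same conclusion directly.

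Inverse direction. Given $s\in\mathcal O(C_{\mathfrak a}X_0)$ with local representations $\{s_\gamma\}$, define $f:X\to\mathbb C$ by $f(\psi_\gamma^{-1}(x,g)):=s_\gamma(x)(g)$; the cocycle relation makes this unambiguous, the composition with the continuous functional $\delta_g$ preserves holomorphy so $f$ is holomorphic on $X$, and $f\in\mathcal O_{\mathfrak a}(X)$ by construction. The two maps are visibly inverse to one another, respect the pointwise algebra operations, and exchange the two natural Fr\'echet topologies (uniform convergence on $p^{-1}(\overline{U_0})$ for $U_0\Subset X_0$ matches uniform $\mathfrak a$-norm convergence of sections on $\overline{U_0}$), yielding the desired isomorphism.
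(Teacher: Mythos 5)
Your proof is correct and follows essentially the same route as the paper: identify $f$ with the fibrewise-restriction section of $C_{\mathfrak a}X_0$, check the transition relations, and deduce Banach-valued holomorphy from the scalar holomorphy of $x\mapsto f(\psi_\gamma^{-1}(x,g))$ together with local boundedness (the paper phrases this via a uniform local Lipschitz estimate and weak holomorphy for $\varphi\in\mathfrak a^*$, you via the norming family of evaluation functionals $\delta_g$ — the same idea). No gaps.
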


We give proof of Proposution \ref{iso1prop} in Section \ref{isosect}.

Using Proposition \ref{iso1prop} we obtain the following result on extension within the class of holomorphic $\mathfrak a$-functions.  

\begin{proposition}
\label{simpleext}
Let $M_0$ be a closed complex submanifold of a Stein manifold $X_0$, $M:=p^{-1}(M_0)\subset X$, 
$D_0 \Subset X_0$ is Levi strictly pseudoconvex (see, e.g., \cite{GR}), $D:=p^{-1}(D_0)$, and $f \in \mathcal O_{\mathfrak a}(M \cap D)$ is bounded.
Then there exists a bounded function $F\in \mathcal O_{\mathfrak a}(D)$ such that $F|_{M \cap D}=f|_{M \cap D}$.
\end{proposition}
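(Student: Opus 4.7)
The plan is to translate the extension problem into one for bounded holomorphic sections of the Banach vector bundle $\tilde p: C_{\mathfrak a}X_0 \to X_0$ via Proposition \ref{iso1prop}, reduce to a standard combination of local extension plus global $\bar\partial$-correction with $L^\infty$-bounds on the strictly pseudoconvex $D_0$, and invoke the Banach-valued sheaf machinery from \cite{BK8} to handle the infinite-dimensional fibre $\mathfrak a$.

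First I would apply Proposition \ref{iso1prop} (whose construction localises over the base): the bounded function $f \in \mathcal O_{\mathfrak a}(M \cap D)$ corresponds to a uniformly bounded holomorphic section $\tilde f$ of $\mathcal B := C_{\mathfrak a}X_0|_{M_0 \cap D_0}$, and the desired bounded $F \in \mathcal O_{\mathfrak a}(D)$ corresponds to a uniformly bounded holomorphic section $\tilde F$ of $C_{\mathfrak a}X_0|_{D_0}$ restricting to $\tilde f$ on $M_0 \cap D_0$. Hence the task reduces to constructing such a $\tilde F$.

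For the local step I would invoke the Docquier--Grauert theorem: since $M_0$ is a closed complex submanifold of the Stein manifold $X_0$, there exist a Stein open neighbourhood $W_0 \supset M_0$ and a holomorphic retraction $r: W_0 \to M_0$. Setting $W_0' := W_0 \cap r^{-1}(D_0)$, an open neighbourhood of $M_0 \cap D_0$ inside $W_0$, the pullback $\tilde f_0 := r^* \tilde f$ is a bounded holomorphic section of $C_{\mathfrak a}X_0$ over $W_0'$ agreeing with $\tilde f$ on $M_0 \cap D_0$ and satisfying the same sup-bound. Next I would choose a smooth cutoff $\chi: D_0 \to [0,1]$ equal to $1$ in a smaller neighbourhood of $M_0 \cap D_0$ and supported inside $W_0' \cap D_0$, so that $\chi \tilde f_0$ extends by zero to a smooth bounded section of $C_{\mathfrak a}X_0|_{D_0}$ agreeing with $\tilde f$ on $M_0 \cap D_0$, and $\omega := \bar\partial(\chi \tilde f_0) = (\bar\partial\chi) \otimes \tilde f_0$ is a bounded $\bar\partial$-closed $(0,1)$-form with values in $C_{\mathfrak a}X_0$, vanishing on a neighbourhood of $M_0 \cap D_0$.

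The global step is then to solve $\bar\partial u = \omega$ on the strictly pseudoconvex $D_0$ with $u$ bounded and $u|_{M_0 \cap D_0} = 0$; the required extension is $\tilde F := \chi \tilde f_0 - u$. This last step is the main obstacle: one needs a Banach-valued Henkin--Skoda type $L^\infty$-estimate for $\bar\partial$ on $D_0$ together with the ideal-sheaf constraint on $u$. Both follow from the coherent-type framework of \cite{BK8}: the boundedness of $u$ is obtained from the scalar Henkin--Skoda kernels extended coordinate-wise to $\mathfrak a \subset \ell_\infty(G)$ (a dual Banach space, stable under pointwise bounded operations and uniform limits), and the vanishing of $u$ on $M_0 \cap D_0$ is arranged by solving the $\bar\partial$-equation within the ideal subsheaf $\mathcal I_{M_0 \cap D_0} \otimes C_{\mathfrak a}X_0|_{D_0}$, whose first cohomology on $D_0$ vanishes by Theorem \ref{thmB}. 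The $G$-equivariance of the construction ensures that $\tilde F$ indeed corresponds via Proposition \ref{iso1prop} to an element $F \in \mathcal O_{\mathfrak a}(D)$ with the required properties.
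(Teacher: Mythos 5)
Your opening move (Proposition \ref{iso1prop}, reduction to bounded holomorphic sections of $C_{\mathfrak a}X_0$) matches the paper, but the core of your argument has a genuine gap at the $\bar\partial$-correction step. You need a single $u$ on $D_0$ that is \emph{simultaneously} bounded and vanishing on $M_0\cap D_0$, and your proposal sources these two properties from two incompatible mechanisms: sup-norm control from Henkin--Skoda type kernels, and membership in the ideal from the vanishing of $H^1$ of $\mathcal I_{M_0\cap D_0}\otimes C_{\mathfrak a}X_0|_{D_0}$ ``by Theorem \ref{thmB}''. Theorem \ref{thmB} concerns coherent sheaves on $c_{\mathfrak a}X$ and, like any soft cohomology-vanishing statement, carries no $L^\infty$ information; conversely, the bounded $\bar\partial$-solution operator on a strictly pseudoconvex domain does not produce solutions lying in the ideal of a higher-codimension submanifold. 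Reconciling the two is exactly the nontrivial content of the bounded-extension theorem for strictly pseudoconvex domains, so as written the step ``solve $\bar\partial u=\omega$ with $u$ bounded and $u|_{M_0\cap D_0}=0$'' is asserted rather than proved. A secondary, fixable issue: $r^*\tilde f$ is a priori a section of the pullback bundle $r^*\bigl(C_{\mathfrak a}X_0|_{M_0\cap D_0}\bigr)$, not of $C_{\mathfrak a}X_0|_{W_0'}$; identifying the two (with norm control) requires using that the bundle is defined by a locally constant, norm-preserving cocycle, which you do not address.

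For comparison, the paper's proof sidesteps the $\bar\partial$-analysis entirely: since $X_0$ is Stein, by \cite{Obz} there is a holomorphic Banach vector bundle $E_1$ with $E_2:=E_1\oplus C_{\mathfrak a}X_0$ holomorphically trivial, so sections of $C_{\mathfrak a}X_0$ become $B_2$-valued holomorphic functions via the embedding $i$ and projection $q$ with $q\circ i=\Id$. One then applies the bounded extension result of \cite{HL} for the strictly pseudoconvex $D_0$ --- which transfers verbatim to Banach-valued functions because its integral kernels are scalar --- to $i(f)$, and sets $F:=q(\tilde F)$. If you want to keep your cutoff-plus-$\bar\partial$ architecture, you should at minimum perform it after this trivialization and replace the appeal to Theorem \ref{thmB} by an extension operator or a division construction that comes with explicit sup-norm estimates.
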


Indeed, subalgebra $\mathcal O_{\mathfrak a}(M)$ is isomorphic to the algebra $\mathcal O(C_{\mathfrak a}X)|_{M_0}$ of holomorphic sections of bundle $C_{\mathfrak a}X$ over $M_0$. 
Since $X_0$ is Stein,
there exist holomorphic Banach vector bundles $p_1:E_1 \rightarrow X_0$ and $p_2:E_2 \rightarrow X_0$ with fibres $B_1$ and $B_2$, respectively, such that $E_2=E_1 \oplus C_{\mathfrak a}X_0$ (the Whitney sum) and $E_2$ is holomorphically trivial, i.e., $E_2 \cong X_0\times B_2$ (see, e.g.,~\cite{Obz}). 
Thus, any holomorphic section of $E_2$ can be naturally identified with a $B_2$-valued holomorphic function on $X_0$. 
By $q:E_2 \rightarrow C_{\mathfrak a}X_0$ and $i:C_{\mathfrak a}X_0 \rightarrow E_2$ we denote the corresponding quotient and embedding homomorphisms of the bundles so that 
$q \circ i=\Id$. (Similar identifications hold for the bundle $C_{\mathfrak a}D_0$.)
Given a function $f \in \mathcal O(C_{\mathfrak a}X_0)|_{M_0}$ consider its image $\tilde{f}:=i(f)$, a $B_2$-valued holomorphic function on $M_0$, and apply to it the integral representation formula from \cite{HL} asserting the existence of a bounded function $\tilde{F} \in \mathcal O(D_0,B_2)$ such that $\tilde{F}|_{M_0 \cap D_0}=\tilde{f}|_{M_0 \cap D_0}$. Finally, we define $F:=q(\tilde{F})$. 

In fact, this method allows to obtain similar extension results for holomorphic functions on $X$ whose restrictions to each fibre belong to some Banach space, and are possibly unbounded, see \cite{Br4}.

In view of Proposition \ref{simpleext} it is natural to ask to what extent Theorems \ref{thmA}, \ref{thmB} and  \ref{extapthm} depend on the assumption that subalgebra $\mathfrak a$ is self-adjoint.

\medskip

2. Next, we show that the classical Leray integral representation formula can be extended to work within subalgebra $\mathcal O_{\mathfrak a}(X)$.

For a given $z \in X_0$ by $\mathfrak a_{z}$ we denote the subalgebra of functions $h:p^{-1}(z) \rightarrow \mathbb C$ such that for all $x \in p^{-1}(z)$ functions 
$G \ni g \mapsto h(g \cdot x)$ are in $\mathfrak a$,  endowed with $\sup$-norm. Clearly, $\mathfrak a_{z}$ is isometrically isomorphic to $\mathfrak a$.

Let 
$$D_0 \Subset X_0 \text{ be a subdomain and\ } D:=p^{-1}(D_0). $$
We denote
$
\mathcal A_{\mathfrak a}(D):=C_{\mathfrak a}(\bar{D}) \cap \mathcal O_{\mathfrak a}(D)
$
(see Introduction for definitions). This is a Banach space with respect to $\sup$-norm.

\begin{theorem}
\label{interp}
Let $X_0$ be a Stein manifold and $D_0$, $D$ be as above. There is a family of bounded linear operators $L_z:\mathfrak a_z \rightarrow \mathcal A_{\mathfrak a}(D)$, $z \in D_0$, holomorphic in $z$ and such that

(1) $L_z(h)(x)=h(x)$ for all $h \in \mathfrak a_z$, 
$x \in p^{-1}(z)$;

(2) $\sup_{z \in D_0}\|L_z\|<\infty$.
\end{theorem}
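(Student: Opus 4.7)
The plan is to reduce the theorem to a statement about holomorphic sections of the Banach vector bundle $\tilde p:C_{\mathfrak a}X_0\to X_0$ via Proposition \ref{iso1prop}, and then exploit the fact that over a Stein manifold every holomorphic Banach vector bundle is a direct summand of a holomorphically trivial one (Bungart, \cite{Obz}) to construct the extension by a ``constant section'' recipe.

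First I would translate the data. By Proposition \ref{iso1prop}, a function in $\mathcal O_{\mathfrak a}(D)$ is the same thing as a holomorphic section of $C_{\mathfrak a}X_0$ over $D_0$, with its restriction to the fibre $p^{-1}(z)$ corresponding to the value of the section at $z\in D_0$ (an element of the fibre $(C_{\mathfrak a}X_0)_z\cong \mathfrak a_z$). The task therefore reduces to producing, for each $z\in D_0$, a bounded linear map $\Lambda_z:(C_{\mathfrak a}X_0)_z\to \mathcal O(\overline{D_0},C_{\mathfrak a}X_0)$ satisfying $[\Lambda_z(\xi)](z)=\xi$, uniformly bounded in $z$, and depending holomorphically on $z$.

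Next I would invoke \cite{Obz}: since $X_0$ is Stein, there exist a holomorphic Banach vector bundle $E_1\to X_0$ and a holomorphically trivial bundle $E_2=X_0\times B_2$ with a Whitney decomposition $E_2\cong E_1\oplus C_{\mathfrak a}X_0$. Let $i:C_{\mathfrak a}X_0\hookrightarrow E_2$ and $q:E_2\twoheadrightarrow C_{\mathfrak a}X_0$ be the canonical holomorphic bundle morphisms, so that $q\circ i=\Id$. Given $\xi\in (C_{\mathfrak a}X_0)_z$, I would define $\Lambda_z(\xi)$ as the image under $q$ of the \emph{constant} $B_2$-valued section $w\mapsto i_z(\xi)$ of $E_2$; explicitly, $[\Lambda_z(\xi)](w):=q_w(i_z(\xi))$, a section defined on all of $X_0$. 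The identity $q_z\circ i_z=\Id$ yields $[\Lambda_z(\xi)](z)=\xi$, and translating back through Proposition \ref{iso1prop} gives the desired $L_z(h)$, which lies in $\mathcal O_{\mathfrak a}(X)\subset C_{\mathfrak a}(X)$ and hence restricts to an element of $\mathcal A_{\mathfrak a}(D)=C_{\mathfrak a}(\overline{D})\cap\mathcal O_{\mathfrak a}(D)$.

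Finally, the uniform bound follows from the estimate $\|L_z(h)\|_{\overline D}\le \|q\|_{\overline{D_0}}\cdot\|i\|_{\overline{D_0}}\cdot\|h\|$, which is finite because $\overline{D_0}$ is compact and $i,q$ are continuous bundle morphisms; this gives both (2) and the boundedness of $L_z$ required in (1). Holomorphic dependence on $z$ is immediate: for any holomorphic local section $s$ of $C_{\mathfrak a}X_0$ on $D_0$ the map $z\mapsto i_z(s(z))\in B_2$ is a composition of holomorphic maps, so $z\mapsto \Lambda_z(s(z))\in \mathcal A_{\mathfrak a}(D)$ is holomorphic. The principal non-routine input is Bungart's theorem \cite{Obz}, which has already been used by the authors in the discussion after Proposition \ref{simpleext}; once it is at hand the construction bypasses integral kernels entirely, which is also why no pseudoconvexity hypothesis on $D_0$ is needed in the statement.
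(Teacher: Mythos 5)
Your proposal is correct and follows essentially the same route as the paper: the authors also identify $\mathfrak a_z$ with the fibre of $C_{\mathfrak a}X_0$ at $z$, embed $C_{\mathfrak a}X_0$ as a direct summand of a trivial bundle $X_0\times B$ via Bungart's theorem, and set $L_z:=q\circ\mathcal L^B_z\circ i$, where $\mathcal L^B_z$ is exactly your ``constant section'' operator $b\mapsto(x\mapsto b)$. Your explicit remarks on the norm bound over the compact set $\overline{D}_0$ and on holomorphic dependence in $z$ merely spell out what the paper leaves as ``clearly''.
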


We prove Theorem \ref{interp} in Section \ref{isosect}.

Now, let us recall the classical Leray integral representation formula. 
For $\xi,\eta \in \mathbb C^n$ we define $\langle \eta,\xi \rangle:=\sum_{k=1}^n \eta_j \xi_j$ and 
$$\omega(\xi):=d\xi_1 \wedge \dots \wedge d\xi_n, \quad
\omega'(\eta):=\sum_{k=1}^n (-1)^{k-1} \eta_k d\eta_1 \wedge \dots \wedge d\eta_{k-1} \wedge d\eta_{k+1} \wedge \dots \wedge d\eta_n.$$

For a domain $D_0\Subset\mathbb C^n$ we set $Q:=D_0 \times \mathbb C^n$. Fix $z\in D_0$ and define a hypersurface $P_z \subset Q$ by
\begin{equation*}
P_z:=\{(\eta,\xi) \in Q: \langle \eta,\xi-z\rangle=0\}.
\end{equation*}
Let $h_z$ be a $2n-1$-dimensional cycle in $Q \setminus P_z$ whose projection to $D_0$ is homologous to $\partial D_0$. 

\medskip

\noindent \textbf{Leray integral representation formula} (see, e.g., \cite{HL}). For any function  $f \in  \mathcal O(D_0)$,
\begin{equation}
\label{leray1}
f(z)=\frac{(n-1)!}{(2\pi i)^n} \int_{h_z} f(\xi) \frac{\omega'(\eta) \wedge \omega(\xi)}{\langle \eta,\xi-z\rangle^n}.
\end{equation}

Interpreting $ z \mapsto L_z\bigl(f|_{p^{-1}(z)}\bigr)$, $z\in D_0$, $f\in \mathcal O_{\mathfrak a}(D)$, in Theorem \ref{interp} as an $\mathcal A_{\mathfrak a}(D)$-valued holomorphic function on $D_0$ and using the fact that representation (\ref{leray1}) is valid for Banach-valued holomorphic functions (because the integral kernel in this formula is continuous and bounded on $h_z$) we obtain:

\begin{theorem}[Leray type integral representation formula]
\label{intreprthm}
Let $X_0\subset \mathbb C^n$ be a Stein domain and $D_0\Subset X_0$ be a subdomain. Then for any function $f \in \mathcal O_{\mathfrak a}(D)$,
\begin{equation}
\label{leray2}
f(x)=\frac{(n-1)!}{(2\pi i)^n} \int_{h_z} L_\xi\bigl(f|_{p^{-1}(\xi)}\bigr)(x)\frac{\omega'(\eta) \wedge \omega(\xi)}{\langle \eta,\xi-z\rangle^n}, \quad \text{ for all }\quad x \in p^{-1}(z).
\end{equation}
\end{theorem}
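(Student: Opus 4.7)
The strategy is exactly the one telegraphed by the preamble to the theorem: lift the classical scalar Leray formula to the Banach-space setting using Theorem \ref{interp}, then evaluate. First, given $f \in \mathcal O_{\mathfrak a}(D)$, for each $z \in D_0$ the restriction $f|_{p^{-1}(z)}$ is by definition an element of $\mathfrak a_z$ (this is just the defining condition (2) of Definition \ref{defafunc}). So I can form
\[
F(z) := L_z\bigl(f|_{p^{-1}(z)}\bigr) \in \mathcal A_{\mathfrak a}(D),\qquad z\in D_0.
\]
By Theorem \ref{interp}, the map $z\mapsto L_z$ is holomorphic in $z$, and the family $\{L_z\}$ is uniformly bounded on $D_0$; since $z\mapsto f|_{p^{-1}(z)}$ varies holomorphically (coming from a single holomorphic function on $D$), the composition $F:D_0\to \mathcal A_{\mathfrak a}(D)$ is an $\mathcal A_{\mathfrak a}(D)$-valued holomorphic function.

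Next, I invoke the Banach-valued version of the Leray integral representation formula. The classical proof of (\ref{leray1}) goes through verbatim for functions $F\in \mathcal O(D_0,B)$ with values in a complex Banach space $B$ (here $B=\mathcal A_{\mathfrak a}(D)$): the integrand $F(\xi)\,\omega'(\eta)\wedge\omega(\xi)/\langle\eta,\xi-z\rangle^n$ is a $B$-valued continuous form on the compact cycle $h_z$ (the denominator is bounded away from zero on $h_z$), hence Bochner-integrable, and identity (\ref{leray1}) in the $B$-valued setting reduces to the scalar case by applying any continuous linear functional $\varphi\in B^*$ and using the Hahn--Banach theorem. Applying this to $F$ yields
\[
F(z)=\frac{(n-1)!}{(2\pi i)^n}\int_{h_z} F(\xi)\,\frac{\omega'(\eta)\wedge\omega(\xi)}{\langle\eta,\xi-z\rangle^n}\qquad\text{in }\mathcal A_{\mathfrak a}(D).
\]

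Finally, fix $x\in p^{-1}(z)$ and evaluate both sides at $x$, which is a continuous linear functional $\mathcal A_{\mathfrak a}(D)\to\mathbb C$, so it commutes with the Bochner integral. On the left, property (1) of Theorem \ref{interp} gives $F(z)(x)=L_z\bigl(f|_{p^{-1}(z)}\bigr)(x)=f|_{p^{-1}(z)}(x)=f(x)$; on the right, the integrand becomes $L_\xi\bigl(f|_{p^{-1}(\xi)}\bigr)(x)\cdot \omega'(\eta)\wedge\omega(\xi)/\langle\eta,\xi-z\rangle^n$. This is precisely (\ref{leray2}).

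The only non-routine point is the Banach-valued Leray formula, and it is not really an obstacle: it is a standard consequence of the scalar formula plus Hahn--Banach, provided one checks that $\xi\mapsto F(\xi)$ is (norm-)continuous on a neighbourhood of $h_z$ and that $h_z$ avoids the singular set $P_z$, both of which are already built into the hypotheses. The bulk of the work was already done in Theorem \ref{interp}, whose construction of the bounded holomorphic family $\{L_z\}$ is what makes the fibrewise data on $p^{-1}(z)$ assemble into a bona fide $\mathcal A_{\mathfrak a}(D)$-valued holomorphic object to which classical integral representations can be applied.
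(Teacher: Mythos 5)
Your proposal is correct and follows exactly the route the paper takes: it derives the theorem by viewing $z\mapsto L_z\bigl(f|_{p^{-1}(z)}\bigr)$ as an $\mathcal A_{\mathfrak a}(D)$-valued holomorphic function (via Theorem \ref{interp}), applying the Banach-valued Leray formula, and evaluating at $x\in p^{-1}(z)$ using property (1) of Theorem \ref{interp}. The only difference is that you spell out the Hahn--Banach reduction justifying the Banach-valued Leray formula, which the paper merely asserts.
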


A similar formula for functions in $\mathcal O_{\ell_\infty}(D)$ was first established in \cite{Br4}.

\medskip

3. Similarly to \cite{Br3} we obtain the following Hartogs-type theorem.

%

\begin{theorem}
\label{hartogsthm}
Suppose $n:={\rm dim}_{\mathbb C}X_0\ge 2$.  Let
$D_0 \Subset X_0$ be a subdomain with a connected piecewise $C^1$ boundary $\partial D_0$ contained in a Stein open submanifold of $X_0$ and $D:=p^{-1}(D_0)$.
Assume that $f \in C_{\mathfrak a}(\partial D)$ satisfies the tangential Cauchy-Riemann equations on $\partial D$, i.e., for any smooth $(n,n-2)$-form $\omega$ on $X$ having compact support
\begin{equation*}
\int_{\partial D}f\,\bar{\partial}\omega=0.
\end{equation*}
 Then there exists a function $F \in \mathcal O_{\mathfrak a}(D) \cap C(\bar{D})$ such that $F|_{\partial D}=f$.
\end{theorem}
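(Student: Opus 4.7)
My approach mirrors the strategy used after Proposition \ref{simpleext} and in the derivation of Theorem \ref{intreprthm}: reduce the extension problem to the classical Hartogs--Bochner theorem for Banach-valued holomorphic functions by reformulating $\mathcal O_{\mathfrak a}(X)$ as holomorphic sections of a Banach vector bundle on $X_0$. By Proposition \ref{iso1prop} we have $\mathcal O_{\mathfrak a}(D)\cong \mathcal O(C_{\mathfrak a}X_0|_{D_0})$; an analogous check, identifying bounded uniformly continuous fibrewise-$\mathfrak a$-valued functions with continuous sections of $C_{\mathfrak a}X_0$, matches $C_{\mathfrak a}(\partial D)$ with the space of continuous sections of $C_{\mathfrak a}X_0$ over $\partial D_0$. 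Thus $f$ corresponds to a continuous section $\tilde f$ of $C_{\mathfrak a}X_0$ over $\partial D_0$, and producing the required $F$ is equivalent to producing a holomorphic section of $C_{\mathfrak a}X_0|_{D_0}$, continuous on $\bar D_0$, with boundary values $\tilde f$.

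Next, let $X_0'$ be a Stein open submanifold of $X_0$ containing $\partial D_0$. As recalled after Proposition \ref{simpleext}, invoking \cite{Obz} produces holomorphic Banach bundles $E_1,E_2$ on $X_0'$ with fibres $B_1,B_2$ such that $E_2\cong X_0'\times B_2$ is holomorphically trivial and $E_2=E_1\oplus (C_{\mathfrak a}X_0|_{X_0'})$, with embedding $i$ and projection $q$ satisfying $q\circ i=\Id$. Then $i(\tilde f)$ is a continuous $B_2$-valued function on $\partial D_0$. The key verification is that $i(\tilde f)$ satisfies the $B_2$-valued tangential Cauchy--Riemann equations $\int_{\partial D_0} i(\tilde f)\,\bar\partial\omega_0=0$ (Bochner integral) for every smooth compactly supported $(n,n-2)$-form $\omega_0$ on $X_0'$. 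By Hahn--Banach this reduces to scalar CR for $\lambda\circ i(\tilde f)$, $\lambda\in B_2^*$; in a local trivialization of $p$, pairing with a point evaluation $\text{ev}_g\in\mathfrak a^*$ recovers the restriction of $f$ to a single sheet of the covering, which is scalar-CR by hypothesis. The general $\lambda$ then follows by testing the tangential CR hypothesis against forms of the type $\chi\cdot p^*\omega_0$ with fibrewise cutoffs $\chi$ and passing to a limit using the uniform continuity of $f$ on $p^{-1}$-preimages of compacts.

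With the Banach-valued CR condition for $i(\tilde f)$ in hand, I apply the classical Hartogs--Bochner theorem for Banach-valued functions (cf. \cite{HL} and its covering-adapted version in \cite{Br3,Br4}) to the subdomain $D_0\subset X_0'$: since $n=\dim_{\mathbb C}X_0\ge 2$ and $\partial D_0$ is connected piecewise $C^1$ inside the Stein manifold $X_0'$, there exists a $B_2$-valued holomorphic function $\tilde F$ on $D_0$, continuous on $\bar D_0$, with $\tilde F|_{\partial D_0}=i(\tilde f)$. The section $q(\tilde F)$ of $C_{\mathfrak a}X_0|_{\bar D_0}$ is holomorphic on $D_0$ and continuous up to $\bar D_0$ (since $q$ is a holomorphic homomorphism of Banach bundles), and $q(\tilde F)|_{\partial D_0}=q\circ i(\tilde f)=\tilde f$. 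Taking $F\in \mathcal O_{\mathfrak a}(D)\cap C(\bar D)$ to be the function corresponding to $q(\tilde F)$ under the identifications of the first paragraph gives $F|_{\partial D}=f$, as required.

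The main obstacle is the transfer of the tangential CR condition in the second paragraph: the hypothesis is stated with test forms living on the full covering $X$ with compact support, whereas the Banach-valued CR condition needs test forms on the base $X_0'$. Bridging the two requires a careful fibrewise cutoff and Hahn--Banach argument, because for a general (non-self-adjoint, non-group-algebra) subalgebra $\mathfrak a$ the point evaluations $\text{ev}_g$ need not be weak-$*$ total in $\mathfrak a^*$; one must instead use that $\tilde f$ takes values in the (separable by restriction to $\partial D_0$) closed $\mathfrak a$-submodule generated by fibrewise restrictions of $f$, on which characters detect vanishing. A secondary subtlety is that the Whitney-sum trivialization is available only over the Stein submanifold $X_0'$, so the construction must remain localized near $\partial D_0$ inside $X_0'$, with the final holomorphic extension taking place on all of $D_0$ via the extension theorem rather than directly from the bundle argument.
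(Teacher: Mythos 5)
Your overall route is the same as the paper's: identify $f$ with a continuous section of the Banach bundle $C_{\mathfrak a}X_0$ over $\partial D_0$, embed $C_{\mathfrak a}X_0$ as a direct summand of a trivial bundle $X_0\times B_2$ via \cite{Obz}, transfer the tangential CR condition to the $B_2$-valued boundary datum $h:=i(\tilde f)$, extend holomorphically, and project back with $q$. Two remarks on the details. First, your stated worry that point evaluations ${\rm ev}_g$ ``need not be weak-$*$ total in $\mathfrak a^*$'' is unfounded: elements of $\mathfrak a\subset\ell_\infty(G)$ are functions on $G$, so the evaluations separate $\mathfrak a$ and their span is automatically weak-$*$ dense in $\mathfrak a^*$; the transfer of the CR condition is the one-line assertion it is in the paper.

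The genuine gap is in the step you dismiss as ``the classical Hartogs--Bochner theorem for Banach-valued functions.'' The Hahn--Banach reduction you set up only shows that each scalar function $\varphi\circ h$, $\varphi\in B_2^*$, extends; assembling these gives a priori a weak-$*$ holomorphic extension with values in the \emph{bidual} $B_2^{**}$, continuous up to $\bar D_0$ only for the weak-$*$ topology --- not the norm-continuous $B_2$-valued extension $\tilde F\in\mathcal O(D_0,B_2)\cap C(\bar D_0,B_2)$ that you then feed into $q$. The paper closes exactly this gap: it applies the scalar Harvey--Lawson theorem to the functions $\varphi\circ h$ to get $H\in\mathcal O(D_0,B_2^{**})$, then uses Gleason's integral representation formula (a kernel $Q(x,y)$ and a positive measure $\mu$ on a compact subset of $\bar D_0\setminus D_0$, with the extension recovered as a Bochner integral $H'(x)=\int Q(x,y)h(y)\,d\mu(y)$) to show the extension actually takes values in $B_2$ and is holomorphic there; even then the boundary continuity of $F=q(H')$ as a function on $X$ is recovered only through weak continuity plus evaluation functionals at points of $G$, not from norm continuity of $\tilde F$ on $\bar D_0$. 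Without some version of this bidual-to-$B_2$ argument your proof is incomplete, and it should not be cited as classical.
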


The proof of Theorem \ref{hartogsthm} is given in Section \ref{isosect}.

In particular, Theorem \ref{hartogsthm} implies that if $n\ge 2$, then each continuous almost periodic function on the boundary $\partial T=\mathbb R^n+i\partial\Omega$ of a tube domain $T:=\mathbb R^n+i\Omega\subset\mathbb C^n$, where $\Omega\Subset\mathbb R^n$ is a domain with piecewise-smooth boundary $\partial\Omega$, satisfying the tangential Cauchy-Riemann equations on $\partial T$, admits a continuous extension to a holomorphic almost periodic function
in ${\mathcal O}_{AP}(T) \cap C(\bar{T})$.

\medskip

4. Now, we extend Bohr's approximation theorem for holomorphic almost periodic functions (see~Introduction) to an arbitrary subalgebra $\mathcal O_{\mathfrak a}(X)$.

Let $\mathfrak a_\iota$ ($\iota \in I$) be a collection of closed subspaces of $\mathfrak a$ such that

(1) $\mathfrak a_\iota$ are invariant with respect to the action of $G$ on $\mathfrak a$ by right translates (i.e., if $f \in \mathfrak a_\iota$, then $R_g(f) \in \mathfrak a_\iota$ for all $g \in G$),

(2) the family $\{\mathfrak a_\iota: \iota \in I\}$ forms a direct system ordered by inclusion, and

(3) the linear space $\mathfrak a_0:=\bigcup_{\iota \in I} \mathfrak a_\iota$ is dense in $\mathfrak a$.

The model examples of subspaces $\mathfrak a_{\iota}$ are given in subsection \ref{approxex} below.

\medskip

Let $\mathcal O_\iota(X)$ be the space of holomorphic functions $f \in \mathcal O_{\mathfrak a}(X)$ such that for every $x_0 \in X_0$ functions $$g \mapsto f(g \cdot x), \quad g \in G, \quad x \in F_{x_0},$$ 
belong to $\mathfrak a_\iota$.
Let $\mathcal O_0(X)$ be $\mathbb C$-linear hull of spaces $\mathcal O_\iota(X)$ with $\iota$ varying over $I$.

\begin{theorem}
\label{approxthm}
If $X_0$ is a Stein manifold,
then $\mathcal O_0(X)$ is dense in $\mathcal O_{\mathfrak a}(X)$.
\end{theorem}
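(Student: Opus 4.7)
The plan is to pass to the Banach-bundle picture provided by Proposition \ref{iso1prop}. Through the isomorphism $\mathcal O_{\mathfrak a}(X)\cong\mathcal O(C_{\mathfrak a}X_0)$, a holomorphic $\mathfrak a$-function $f$ becomes a holomorphic section $s$ of the Banach vector bundle $C_{\mathfrak a}X_0\to X_0$ whose transition cocycle \eqref{extrem3} acts by right translations on $\mathfrak a$. Because each $\mathfrak a_\iota$ is closed in $\mathfrak a$ and invariant under the right action of $G$, it is preserved by this cocycle and so cuts out a holomorphic Banach sub-bundle $E_\iota\hookrightarrow C_{\mathfrak a}X_0$ with fibre $\mathfrak a_\iota$; Proposition \ref{iso1prop} then restricts to $\mathcal O_\iota(X)\cong\mathcal O(E_\iota)$. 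Since $\{\mathfrak a_\iota\}_{\iota\in I}$ is directed by inclusion, $\mathcal O_0(X)=\bigcup_\iota\mathcal O_\iota(X)$, and the theorem reduces to showing that $\bigcup_\iota\mathcal O(E_\iota)$ is dense in $\mathcal O(C_{\mathfrak a}X_0)$ in the topology of uniform convergence on compacta.

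Fix a section $s\in\mathcal O(C_{\mathfrak a}X_0)$ corresponding to $f$, a compact $K_0\Subset X_0$, and $\epsilon>0$. Cover $K_0$ by finitely many open polydisks $V_{0,1},\dots,V_{0,N}\Subset X_0$ over which $C_{\mathfrak a}X_0$ trivializes, and pick smaller polydisks $W_{0,i}\Subset V_{0,i}$ still covering $K_0$. In each chosen trivialization, $s$ is represented by a holomorphic map $s_i:V_{0,i}\to\mathfrak a$; Taylor-expanding $s_i$ at the centre of $V_{0,i}$ and invoking the Cauchy estimates, a sufficiently high truncation $\sigma_i$ satisfies $\|s_i-\sigma_i\|_{\overline{W_{0,i}}}<\epsilon/4$. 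The finitely many Taylor coefficients of the $\sigma_i$ (across all $i$) are elements of $\mathfrak a$; by density of $\mathfrak a_0=\bigcup_\iota\mathfrak a_\iota$ in $\mathfrak a$ and the directed-system property, they can be replaced by close approximations lying in a single $\mathfrak a_{\iota^*}$, yielding local $\mathfrak a_{\iota^*}$-valued holomorphic polynomials $\tilde s_i$ with $\|s_i-\tilde s_i\|_{\overline{W_{0,i}}}<\epsilon/2$.

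To assemble the $\tilde s_i$ into a global section of $E_{\iota^*}$, consider the Cech 1-cocycle $c_{ij}:=\tilde s_i-\tau_{ij}(\tilde s_j)$ on $W_{0,i}\cap W_{0,j}$, where $\tau_{ij}$ denotes the transition maps of $C_{\mathfrak a}X_0$. Since $\tau_{ij}$ preserves $\mathfrak a_{\iota^*}$, the cocycle has values in $\mathcal O(E_{\iota^*})$, and it is uniformly small because both $\tilde s_i$ and $\tau_{ij}(\tilde s_j)$ approximate the same local representative of $s$. A Cartan-type Theorem B vanishing for $\mathcal O(E_{\iota^*})$ on a Stein neighbourhood $U$ of $K_0$ inside $\bigcup_i W_{0,i}$ writes $c_{ij}=r_i-\tau_{ij}(r_j)$ with cochains $r_i\in\mathcal O(E_{\iota^*}|_{W_{0,i}})$ whose norms are controlled by $\{c_{ij}\}$ via the open mapping theorem; the corrected sections $\tilde s_i-r_i$ glue to a holomorphic section of $E_{\iota^*}$ over $U$, and an Oka--Weil-type approximation for Banach bundles on the Stein manifold $X_0$ (itself a consequence of the same Cartan-type vanishing) then yields a global section $g\in\mathcal O(E_{\iota^*})$ with $\|s-g\|_{p^{-1}(K_0)}<\epsilon$. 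The corresponding function in $\mathcal O_{\iota^*}(X)\subset\mathcal O_0(X)$ is the required approximation to $f$. The principal obstacle is justifying the Cartan-type Theorem B for the \emph{infinite-dimensional} Banach sub-bundle $E_{\iota^*}$: Theorem \ref{thmB} of Part I is stated for coherent sheaves on $c_{\mathfrak a}X$, whereas $\mathcal O(E_{\iota^*})$ is locally modelled on the Banach space $\mathfrak a_{\iota^*}$. Bridging this gap, either by upgrading Theorem \ref{thmB} to the Banach-sheaf setting on $c_{\mathfrak a}X$ or by reducing the $H^1$ computation to a countably generated coherent subsheaf tailored to the specific Taylor data produced above, is the main technical point of the proof.
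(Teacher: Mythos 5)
Your reduction to the bundle picture via Proposition \ref{iso1prop}, the observation that $G$-invariance of $\mathfrak a_\iota$ yields a holomorphic sub-bundle with fibre $\mathfrak a_\iota$, and the local step (Taylor truncation on polydisks followed by replacement of the finitely many coefficients by elements of a single $\mathfrak a_{\iota^*}$ using density of $\mathfrak a_0$ and the directedness of $I$) all match the paper's argument (Lemma \ref{lem2}). The gap is exactly where you say it is, and it is fatal to the route you chose: there is no Cartan-type Theorem B available for the sheaf of holomorphic sections of the infinite-rank Banach bundle $E_{\iota^*}$. Theorem \ref{thmB} applies to coherent sheaves on $c_{\mathfrak a}X$, which by definition admit local \emph{finite} free resolutions, and $\mathcal O(E_{\iota^*})$ is locally modelled on the Banach space $\mathfrak a_{\iota^*}$, so it is not coherent in that sense (except in the special cases where $\mathfrak a_{\iota^*}$ is finite-dimensional, which the hypotheses do not guarantee). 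The subsequent appeal to the open mapping theorem for norm control presupposes the very surjectivity of the coboundary map that the missing vanishing theorem would provide, and the ``Oka--Weil-type approximation for Banach bundles'' you invoke at the end rests on the same unestablished vanishing. So as written the proof reduces the theorem to an unproven statement rather than proving it.

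The paper circumvents this entirely by never passing through abstract sheaf cohomology in the Banach-valued setting. First it localizes to a strictly pseudoconvex $D_0\Subset X_0$ (Theorem \ref{approxthm0}) and glues the local approximants $F_{\varepsilon,k}$ by an explicit $\bar{\partial}$-argument: the differences $g_{kl}=F_{\varepsilon,k}-F_{\varepsilon,l}$ are split by a partition of unity into $\tilde g_k-\tilde g_l$, the resulting $\bar{\partial}$-closed $(0,1)$-form is pushed into a trivial bundle $X_0\times B$ via the Whitney-sum embedding $i$, and the equation $\bar{\partial}\tilde\eta=\tilde\omega$ is solved with a \emph{sup-norm bound} by the bounded linear operator of Lemma \ref{hl1}, which exists because the Henkin--Leiterer integral formulas and estimates of \cite{HL} carry over verbatim to Banach-valued forms on strictly pseudoconvex domains. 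Projecting back by $q$ gives the correction $\eta$ with $\sup\|\eta\|_{\mathfrak a}\le C\varepsilon$ (Lemma \ref{lem3r}), with all constants independent of $f$ and $\varepsilon$. The passage from $D$ to all of $X$ is then handled by Proposition \ref{propapprox2}, which rests on Bungart's Theorem C in \cite{Bu2} (a Runge-type approximation theorem for Banach-valued holomorphic functions on Stein manifolds), again combined with the embedding into and projection from a trivial bundle, rather than on any cohomology vanishing for Banach sheaves. If you want to salvage your outline, replacing the \v{C}ech/Theorem-B gluing by this integral-formula-based $\bar{\partial}$-solution with uniform estimates, and citing Bungart for the global Runge step, is precisely the repair.
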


We prove Theorem \ref{approxthm} in Section \ref{isosect}.

If $\mathfrak a=AP(G)$ (see~subsections \ref{exm}(2) and \ref{holap3}), then this theorem may be viewed as a holomorphic analogue of the Peter-Weyl approximation theorem.


\section{Examples}
\label{examples}

\subsection{Examples of subalgebras $\mathfrak a$}
\label{exm}
In addition to $\ell_\infty(G)$, $c(G)$ and $AP(\mathbb Z^n)$ we list the following examples of self-adjoint subalgebras of $\ell_\infty(G)$ separating points of $G$ and invariant with respect to the action of $G$ by right translations. 


(1) If group $G$ is residually finite (respectively, residually nilpotent), i.e., for any element $t \in G$, $t \neq e$, there exists a normal subgroup $G_t \not\ni t$ such that $G/G_t$ is finite (respectively, nilpotent), we consider the closed algebra $\hat{\ell}_\infty(G) \subset \ell_{\infty}(G)$ generated by pullbacks to $G$ of algebras $\ell_\infty(G/G_t)$ for all $G_t$ as above.

(2) Recall that a (continuous) bounded function $f$ on a (topological) group $G$ is called {\em almost periodic} if the families of its left and right translates $$\{t \mapsto f(st)\}_{s \in G}, \quad \{t \mapsto f(ts)\}_{s \in G}$$ are relatively compact in $\ell_\infty(G)$ (J.~von Neumann \cite{N}). (It was proved in \cite{Ma}  that
the relative compactness of either the left or the right family of translates already gives the almost periodicity on $G$.) 
The algebra of almost periodic functions on $G$ is denoted by $AP(G)$.

The basic examples of almost periodic functions on $G$ are given by the matrix elements of the finite-dimensional irreducible unitary representations of $G$.

Recall that group $G$ is called \textit{maximally almost periodic} if its finite-dimensional irreducible unitary representations separate points.
Equivalently, $G$ is maximally almost periodic iff it admits a monomorphism into a compact topological group. 

Any residually finite group
belongs to this class. In particular, $\mathbb Z^n$, finite groups, free groups, finitely generated nilpotent groups, pure braid groups, fundamental groups of three dimensional manifolds are maximally almost periodic. 

We denote by $AP_{\trig}(G) \subset AP(G)$ the space of functions
\begin{equation}\label{ap0}
t \mapsto \sum_{k=1}^m c_k \sigma^k_{ij}(t), \quad t\in G,\quad c_k \in \mathbb C, \quad \sigma^k=(\sigma^k_{ij}),
\end{equation}
where $\sigma^k$ ($1 \leq k \leq m$) are finite-dimensional irreducible unitary representations of $G$.
The von~Neumann approximation theorem \cite{N} states that $AP_{\trig}(G)$ is dense in $AP(G)$. 

In particular, the algebra $AP(\mathbb Z^n)$ of almost periodic functions on $\mathbb Z^n$ contains as a dense subset the subalgebra of exponential polynomials
$t \mapsto \sum_{k=1}^m c_ke^{i\langle \lambda_k,t \rangle}$, $t \in \mathbb Z^n$, $\lambda_k \in \mathbb R^n$, $m\in\mathbb N.$
Here $\langle \lambda_k,\cdot \rangle$ denotes the linear functional defined by $\lambda_k$.

(3) The algebra $AP_{\mathbb Q}(\mathbb Z^n)$ of almost periodic functions on $\mathbb Z^n$ with rational spectra. This is the subalgebra of $AP(\mathbb Z^n)$ generated over $\mathbb C$ by functions $t \mapsto e^{i\langle \lambda,t \rangle}$ with $\lambda \in \mathbb Q^n$.

(4) If group $G$ is finitely generated then, in addition to the subalgebra $c(G) \subset \ell_\infty(G)$ of functions having  limits at `infinity', we can define a subalgebra $c_{E}(G) \subset \ell_\infty(G)$ of functions having limits at `infinity' along each `path'.

To make this definition precise, we will need the notion of the end compactification of a connected and locally connected topological space $T$ that admits an exhaustion by compact subsets $K_i$, $i\in\mathbb N$, whose interiors cover $T$. Recall that the set of ends $E=E_T$ of space $T$ is the inverse limit of an inverse system of discrete spaces $\{\pi_0(T \setminus K_i)\}$, where $\pi_0(T \setminus K_i)$ is the set of connected components of $T \setminus K_i$, and each inclusion $T \setminus K_j \subset T \setminus K_i$, $i \leq j$, induces projection $\pi_0(T\setminus K_j) \rightarrow \pi_0(T \setminus K_i)$. The end compactification $\bar{T}_E$ of $T$ is a compact space defined as the disjoint union $T \sqcup E_T$ endowed with the weakest topology containing all open subsets of $T$ and all open neighbourhoods of the ends: an open neighbourhood of an end $e=\{e_i \in \pi_0(T \setminus K_i)\, ,\, i\in\mathbb N\}$ is a subset $V \subset T \sqcup E_T$ such that $V\cap E_T$ and $V \cap T$ are open in the corresponding topologies and $e_i\subset V\cap T$ for some $i\in\mathbb N$, see \cite{Fre}. 

Now, suppose that group $G$ is finitely generated. By $\bar{G}_E$ we denote the end compactification of the Cayley graph ${\mathcal C}_G$ of $G$. Identifying naturally $G$ with the vertex set of ${\mathcal C}_G$ we define the subalgebra
$c_E(G) \subset \ell_\infty(G)$ of functions admitting continuous extensions to $\bar{G}_E$. 
For example, if $G=\mathbb Z$, then $E=\{\pm \infty\}$, and $c_E(\mathbb Z)$ consists of functions $\mathbb Z \rightarrow \mathbb C$ having limits at $\pm\infty$.

(5) For a finitely generated group $G$, let $SAP(G) \subset \ell_\infty(G)$ denote the minimal subalgebra containing $AP(G)$ and $c_E(G)$.  Elements of $SAP(G)$ are called \textit{semi-almost periodic functions} (this is a variant of definition in \cite{Sar} for $G=\mathbb R$), see~Example \ref{hsap} below.

(6) Let $N$ be an infinite subgroup of $G$ and $N\backslash G$ be the set of (right) conjugacy classes. For a given class $Nx \in N\backslash G$ endowed with the discrete topology by $c(Nx)$ we denote the subalgebra of bounded functions $Nx \rightarrow \mathbb C$ that admit extensions to the one-point compactification of $Nx$.
Let $c_{N}(G) \subset \ell_\infty(G)$ denote the subalgebra consisting of functions $h$ such that $$h|_{Nx} \in c(Nx) \quad \text{ for each } Nx \in N\backslash G.$$
(Thus, $h$ has limits `at infinity' along each conjugacy class.)

Every function $h \in c_{N}(G)$ can be viewed as a bounded function on $N\backslash G$ with values in Banach algebra $c(N)$, i.e., $$h \in \ell_\infty(N\backslash G, c(N)).$$ Instead of $\ell_\infty(N\backslash G,c(N))$ we may consider other Banach algebras of $c(N)$-valued functions on $N\backslash G$, e.g.,~$c(N\backslash G, c(N))$, thus obtaining other subalgebras of $\ell_\infty(G)$ satisfying assumptions of Section \ref{introsect}.

\subsection{Holomorphic almost periodic functions on coverings of complex manifolds}
\label{holap3}
In \cite{BK8} we defined holomorphic almost periodic functions on a regular covering $X \rightarrow X_0$ as
elements of algebra $\mathcal O_{AP}(X)$ (see~subsection \ref{exm}(2) for the definition of algebra $AP=AP(G)$).
Equivalently, a function $f \in \mathcal O(X)$ is called holomorphic almost periodic if each $G$-orbit in $X$ has a neighbourhood $U$ that is invariant with respect to the (left) action of $G$, such that the family of translates $\{z \mapsto f(g \cdot z), z \in U\}_{g \in G}$ is relatively compact in the topology of uniform convergence on $U$ (see \cite{BK2} for the proof of the equivalence). 

This is a variant of definition in \cite{W}, where $G$ is taken to be the group of all biholomorphic automorphisms of a complex manifold $X$ (see also \cite{Ves}).


For instance, if $X_0$ is a non-compact Riemann surface and $p:X \rightarrow X_0$ is a regular covering with a maximally almost periodic deck transformation group $G$,
then functions in $\mathcal O_{AP}(X)$ arise, e.g., as linear combinations over $\mathbb C$ of matrix entries of fundamental solutions of certain linear differential equations on $X$ (see subsection \ref{usualtrigex}(2) for details).

\medskip

We say that the covering $p:X \rightarrow X_0$ has the $\mathcal O_{\mathfrak a}$-{\em Liouville property} if $\mathcal O_{\mathfrak a}(X)$ does not contain non-constant bounded functions.

Recall that a complex manifold $X_0$ is called \textit{ultraliouville} if there are no non-constant bounded continuous plurisubharmonic functions on $X_0$
(e.g.,~connected compact complex manifolds and their Zariski open subsets are ultraliouville).

According to \cite{Lin},
if $X_0$ is ultraliouville and $G$ is virtually nilpotent (i.e., contains a nilpotent subgroup of finite index), then $X$ has the $\mathcal O_{\ell_\infty}$-Liouville property. For holomorphic almost periodic functions on $X$ this result can be strengthened, see \cite[Th. ~2.3]{BK2}:

{\em Let $p:X\rightarrow X_0$ be a regular covering of an ultraliouville complex manifold $X_0$. Then

(1) $X$ has the $\mathcal O_{AP}$-Liouville property.

(2) Let $n \geq 2$,
$D_0 \Subset X_0$ be a subdomain with a connected piecewise smooth boundary $\partial D_0$ contained in a Stein open submanifold of $X_0$, and $D:=p^{-1}(D_0)$.
Then $X \setminus D$ has $\mathcal O_{AP}$-Liouville property.}


For instance, consider the universal covering $p:\mathbb D\rightarrow \mathbb C \setminus \{0,1\}$ of doubly punctured complex plane (here the deck transformation group is free group with two generators). Although there are plenty of non-constant bounded holomorphic functions on $\mathbb D$, all bounded holomorphic almost periodic functions on $\mathbb D$ corresponding to this covering are constant because $\mathbb C \setminus \{0,1\}$ is ultraliouville.


For other properties of algebra $\mathcal O_{AP}(X)$ see subsection \ref{zeroapsect} below.

%

%
%

\subsection{Holomorphic semi-almost periodic functions}
\label{hsap}
Suppose that group $G$ is finitely generated. Elements of algebra $\mathcal O_{SAP}(X)$ (see~Example \ref{exm}(5)) are called \textit{holomorphic semi-almost periodic functions}.
By Theorem \ref{approxthm} algebra $\mathcal O_{SAP}(X)$ is generated by subalgebras $\mathcal O_{AP}(X)$ (see~Example \ref{holap3}) and $\mathcal O_{c_E}(X)$ (see~Example \ref{exm}(4)). In the case
$T \rightarrow T_0$ is a complex strip covering an annulus $T_0$ (see~Example \ref{holap}), algebra $\mathcal O_{SAP}(T)$ is related
to the subalgebra of Hardy algebra $H^\infty(\mathbb D)$ of bounded holomorphic functions on the unit disk $\mathbb D \subset \mathbb C$ generated by functions whose moduli have only the first-kind boundary discontinuities (see \cite{BrK2}).

\subsection{Examples of complex $\mathfrak a$-submanifolds}
\label{submex}
We assume that subalgebra $\mathfrak a$ is self-adjoint.

(1) If $Z_0 \subset X_0$ is a complex submanifold of codimension $k$, then $Z:=p^{-1}(Z_0) \subset X$ is a complex $\mathfrak a$-submanifold of codimension $k$. 

(2) The disjoint union of a finite collection of complex $\mathfrak a$-submanifolds $Z_i$ of $X$ separated by functions in $C_{\mathfrak a}(X)$ (i.e., for each $i$ there is $f \in C_{\mathfrak a}(X)$ such that $f=1$ on $Z_i$ and $f=0$ on $Z_j$ for  $j \neq i$) is a complex $\mathfrak a$-submanifold.

(3) Let $Z_0=\{x \in X_0: f_1(x)=\dots=f_k(x)=0\}$ for some $f_i \in \mathcal O(X_0)$ $(1 \leq i \leq k)$ be a complex submanifold of $X_0$ of codimension $k$.
Set $Z:=p^{-1}(Z_0)$.
Further, for an open subset $X_0' \Subset X_0$ and functions $h_1,\dots,h_k \in \mathcal O_{\mathfrak a}(X)$ we define $X':=p^{-1}(X_0')$, $\delta:=\sup_{x \in X'}\max_{1 \leq i \leq k}|h_i(x)|,$
and
$$Z_h:=\{x \in X: p^*f_1(x)+h_1(x)=\dots=p^*f_k(x)+h_k(x)=0\}.$$
Using the inverse function theorem with continuous dependence on parameter (Theorem \ref{ift}), it is not difficult to see that $Z_h$ is a complex $\mathfrak a$-submanifold of $X'$ provided that $\delta>0$ is sufficiently small.


(4) A complex $\mathfrak a$-submanifold of $X$ is called \textit{cylindrical} if each open set $V$ in Definition \ref{manifolddef} has form $V=p^{-1}(V_0)$ for some open $V_0 \subset X_0$ (i.e., it is determined by holomorphic $\mathfrak a$-functions on preimages by $p$ of open subsets of $X_0$).

In \cite{BK2} we constructed a non-cylindrical $\mathfrak a$-hypersurface in $X$ in the case $\mathfrak a=AP(\mathbb Z)$ (see~subsection \ref{holap3}) and $p:X \rightarrow X_0$ is a regular covering of a Riemann surface $X_0$ with deck transformation group $\mathbb Z$. We assumed that $X_0$ has finite type and is a relatively compact subdomain of a larger (non-compact) Riemann surface $\tilde{X}_0$ whose fundamental group satisfies $\pi_1(\tilde{X}_0) \cong \pi_1(X_0)$ (e.g., the covering of Example \ref{holap} with $n=1$, i.e., a complex strip covering an annulus, is a regular covering of this form).

Let us briefly describe this construction.

The covering $X$ of $X_0$ admits an injective holomorphic map into a holomorphic fibre bundle over $X_0$ having fibre $(\mathbb C^*)^2$, $\mathbb C^*:=\mathbb C \setminus \{0\}$, defined as follows. 
First, note that the regular covering $p:X \rightarrow X_0$ admits presentation as a principal fibre bundle with fibre $\mathbb Z$, see (\ref{extrem2}).
We choose two characters $\chi_1, \chi_2:\mathbb Z\rightarrow\mathbb S^1\cong \mathbb R/(2\pi\mathbb Z)$ such that the homomorphism $(\chi_1,\chi_2):\mathbb Z\rightarrow \mathbb T^2=\mathbb S^1\times\mathbb S^1$ is an embedding with dense image. Consider the fibre bundle $b_{\mathbb T^2}X$ over $X_0$ with fibre $\mathbb T^2$ associated with the principal fibre bundle $p:X \rightarrow X_0$ via the homomorphism $(\chi_1,\chi_2)$.
The bundle $b_{\mathbb T^2}X$ is embedded into a holomorphic fibre bundle $b_{(\mathbb C^*)^2}X$ with fibre $(\mathbb C^*)^2$ associated with the composite of the embedding homomorphism $\mathbb T^2\hookrightarrow (\mathbb C^*)^2$ and $(\chi_1,\chi_2)$. 
Now, the covering $X$ of $X_0$ admits an injective $C^\infty$ map into $b_{\mathbb T^2}X$ with dense image and the composite of this 
map with the embedding of $b_{\mathbb T^2}X$ into $b_{(\mathbb C^*)^2}X$ is an injective holomorphic map $X\rightarrow b_{(\mathbb C^*)^2}X$. Further, the bundle $b_{(\mathbb C^*)^2}X$ admits a holomorphic trivialization $\eta: b_{(\mathbb C^*)^2}X \rightarrow X_0 \times (\mathbb C^*)^2$. We choose $\chi_1(1)$ and $\chi_2(1)$ so close to $1\in\mathbb S^1$ that the image $\eta(b_{\mathbb T^2}X)\subset X_0 \times (\mathbb C^*)^2$ is sufficiently close to $X_0 \times \mathbb T^2$. Thus identifying $X$ (by means of holomorphic injection $X\hookrightarrow b_{(\mathbb C^*)^2}X \stackrel{\eta}{\rightarrow} X_0 \times (\mathbb C^*)^2$) with a subset of $X_0 \times (\mathbb C^*)^2$, we obtain that $X$ is sufficiently close to $X_0 \times \mathbb T^2$. 
Next, we construct a smooth complex hypersurface in $X_0 \times (\mathbb C^*)^2$ such that in each cylindrical coordinate chart $U_0 \times (\mathbb C^*)^2$ on $X_0 \times (\mathbb C^*)^2$ for $U_0 \Subset X_0$ simply connected it cannot be determined as the set of zeros of a holomorphic function on $U_0 \times (\mathbb C^*)^2$. Intersecting this hypersurface with $X$ we obtain a non-cylindrical almost periodic hypersurface in $X$.
(To construct such a hypersurface in $X_0 \times (\mathbb C^*)^2$, we determine a smooth divisor in $(\mathbb C^*)^2$ that has a non-zero Chern class (i.e., it cannot be given by a holomorphic function on $(\mathbb C^*)^2$), and whose support intersects the real torus $\mathbb T^2 \subset (\mathbb C^*)^2$ transversely. Then we take the pullback of this divisor with respect to the projection $X_0 \times (\mathbb C^*)^2 \rightarrow  (\mathbb C^*)^2$ to get the desired hypersurface.)

\subsection{Examples of spaces $\mathfrak a_\iota$ in Theorem \ref{approxthm}}
\label{approxex}

(1) Let $\mathfrak a=\ell_\infty(G)$, $I$ be the collection of all subsets of $G$ ordered by inclusion. It is easy to verify that given $\iota \in I$ we can define $\mathfrak a_{\iota}$ to be the closed linear subspace spanned by translates $\{R_g(\chi_\iota): g \in G\}$ of the characteristic function $\chi_\iota$ of subset $\iota$.

(2) Let $\mathfrak a=AP(\mathbb Z^n)$ (see~subsection \ref{exm}(2)). We can take $I$ to be the collection of all finite subsets of $\mathbb R^n$ ordered by inclusion and $\mathfrak a_\iota(\mathbb Z^n):=\spana_{\mathbb C}\{t \mapsto e^{i\langle \lambda, t \rangle}, \lambda \in \iota, \iota \in I, t \in \mathbb Z^n \}$.

We can also consider $\mathfrak a=AP_{\mathbb Q}(\mathbb Z^n)$, the algebra of almost periodic functions on $\mathbb Z^n$ having rational spectra (see~subsection \ref{exm}(3)). 
Here we take $I$ to be the collection of all finite subsets of $\mathbb Q^n$ ordered by inclusion and define spaces $\mathfrak a_\iota(\mathbb Z^n)$ similarly to the above.

(3) Let $\mathfrak a=AP(G)$ (see~subsection \ref{exm}(2)) and $I$ consist of finite collections of finite-dimensional irreducible unitary representations of group $G$. We define $\mathfrak a_{\iota}(G)$, where $\iota=\{\sigma_1,\dots,\sigma_m\} \in I$, to be the linear $\mathbb C$-hull of matrix elements $\sigma^{ij}_k \in AP(G)$ of representations $\sigma_k=(\sigma_k^{ij})$, $1 \leq k \leq m$.

\section{Comments}

\subsection{Equivalent definition of holomorphic $\mathfrak a$-functions}
\label{equivcur}
Let $\Lambda_{c}^{t,s} (X)$ denote the space of smooth $(t,s)$-forms on $X$ with compact supports endowed with the standard topology (see, e.g., \cite{Dem}). Recall that continuous linear functionals on $\Lambda_{c}^{t,s} (X)$ are called $(n-t,n-s)$-currents.

There is an equivalent definition of holomorphic $\mathfrak a$-functions on a complex $\mathfrak a$-submanifold $Z$ (see~Definition \ref{holdef2}) in terms of currents. 
Namely, let $\mathfrak a$ be self-adjoint, then
a function $f \in \mathcal O(Z)$ on a complex $\mathfrak a$-submanifold $Z \subset X$ is a holomorphic $\mathfrak a$-function if and only if 
it is bounded on subsets $Z \cap p^{-1}(U_0)$, $U_0 \Subset X_0$, and the corresponding current $c_f$, 
\begin{equation}
\label{current}
(c_f,\varphi):=\int_{Z}f\varphi, \quad \varphi \in \Lambda_{c}^{m,m}(X), \quad m:=\dim_{\mathbb C}Z,
\end{equation}
is an $\mathfrak a$-current meaning that for each $\varphi $ the function $G \ni g \mapsto\bigl(c_f,\varphi_g\bigr)$ belongs to algebra $\mathfrak a$; here $\varphi_g(x):=\varphi(g \cdot x)$ ($x \in X$). (The proof follows an argument in \cite[Prop.~2.4]{Fav2}.)

In the setting of Example \ref{holap} (holomorphic almost periodic functions on tube domains)
almost periodic currents were studied, e.g., in \cite{F2R} (see further references therein).

\subsection{Cylindrical $\mathfrak a$-divisors}
\label{divrem}

The class of $\mathfrak a$-principal divisors is contained in a larger class of cylindrical $\mathfrak a$-divisors, i.e., $\mathfrak a$-divisors determined by functions $f_\alpha \in \mathcal O_{\mathfrak a}(U_\alpha)$ with $U_\alpha=p^{-1}(U_{0,\alpha})$ for some open $U_{0,\alpha} \subset X_0$ (see~Definition \ref{defdivisor0}).

If covering dimension of the maximal ideal space $M_{\mathfrak a}$ of  $\mathfrak a$ is zero, then every $\mathfrak a$-divisor is $\mathfrak a$-equivalent to a cylindrical $\mathfrak a$-divisor (the latter follows from an equivalent definition of $\mathfrak a$-divisors as divisors on fibrewise compactification $c_{\mathfrak a}X$, see~\cite{BK2}). 
In particular, all $\ell_\infty$-, $\hat{\ell}_\infty(G)$- (for a residually finite group $G$), $AP_{\mathbb Q}$-divisors (see~(1) and (3) in subsection \ref{exm}) are $\ell_\infty$-, $\hat{\ell}_\infty(G)$-, $AP_{\mathbb Q}$-equivalent to cylindrical divisors (see~Examples  3.3(3) and 3.3(4) in \cite{BK8}). There are, however, non-cylindrical $AP$-divisors, see~subsection 4.4 in \cite{BK2}.

For an $\mathfrak a$-divisor $E$ on $X$ Theorem \ref{chernthm} implies the following:

(a)  If there exists a function $f \in \mathcal O_{\mathfrak a}(U)$, where $U=p^{-1}(U_{0})$, $U_0 \subset X_0$ is open, such that $E|_{U}$ is determined by $f$, then $E$ is $\mathfrak a$-equivalent to a cylindrical divisor (see~the argument in the proof of Theorem \ref{chernthm}).

(b) If $\mathfrak a=AP(G)$ and $E$ not $\mathfrak a$-equivalent to a cylindrical $\mathfrak a$-divisor, then the projection of $\supp(E)$ to $X_0$ is dense (the converse is not true, see subsection \ref{submex}(4)).

\subsection{Almost periodic divisors}
\label{zeroapsect}




We use notation introduced in subsection \ref{equivcur}. Let $T_E$ be the current of integration of a divisor $E \in \Div(X)$, i.e., 
\begin{equation*}
(T_E,\varphi):=\int_{E}\varphi, \qquad \varphi \in \Lambda_{c}^{n-1,n-1}(X)
\end{equation*}
(see, e.g.,~\cite{Dem}).
One can prove that if $E \in \Div_{AP}(X)$, then
current $T_E$
is almost periodic. 
Conversely, if the current of integration $T_E$ of a divisor $E \in \Div(X)$ is almost periodic, then $E$ is equivalent to an $AP$-divisor.

\subsection{Approximation of holomorphic almost periodic functions}
\label{usualtrigex}
(1) Let \penalty-10000 $\mathcal O_{\trig}(T)\subset \mathcal O_{AP(\mathbb Z^n)}(T)$ be a subspace determined by the choice of spaces $\mathfrak a_\iota=\mathfrak a_\iota(\mathbb Z^n)$ ($\iota \in I$) as in subsection \ref{approxex}(2).
We show that exponential polynomials, see (\ref{expoly}),
are dense in $\mathcal O_{\trig}(T)$.

We denote $e_\lambda(t):=e^{i\langle \lambda,t\rangle}$ ($\lambda \in \mathbb R^n$, $t \in \mathbb Z^n$).
Clearly, $e_{\lambda} \in \mathcal O_{\{\lambda\}}(T)$. 
Now, let $\iota=\{\lambda_1,\dots,\lambda_m\}$.
Since functions $e_{\lambda_k}$ ($1 \leq k \leq m$) are linearly independent in $\mathfrak a_\iota$, 
there exist linear projections $p_{\iota,\lambda_k}:\mathfrak a_\iota \rightarrow \mathfrak a_{\{\lambda_k\}}$. Since projections $p_{\iota,\lambda_k}$, $1 \leq k \leq m$, are invariant with respect to the action of $G$ on itself by right translates, they determine projections $P_{\iota,\lambda_k}:\mathcal O_{\iota}(T) \rightarrow \mathcal O_{\{\lambda_k\}}(T)$. 
(The latter follows, e.g., from the presentation of functions in $\mathcal O_{AP}(T)$ as sections of holomorphic Banach vector bundle $C_{AP}X_0$, see~(\ref{extrem3}), where projections $P_{\iota,\lambda_k}$ become bundle homomorphisms $C_{\mathfrak a_\iota}X_0 \rightarrow C_{\mathfrak a_{\{\lambda_k\}}}X_0$.)
Therefore, there exist functions $f_{\lambda_k} \in \mathcal O_{\{\lambda_k\}}(T)$, $f_{\lambda_k}:=P_{\iota,\lambda_k}(f)$, $1 \leq k \leq m$, such that 
$f(z)=\sum_{k=1}^m f_{\lambda_k}(z)$, $z \in T$. It is easy to see that for each $f_{\lambda_k}$ there exists a function $h_{\lambda_k} \in \mathcal O(T_0)$ such that $f_{\lambda_k}/e_{\lambda_k}=p^*h_{\lambda_k}$; hence,
\begin{equation}
\label{usualtrigrepr}
f(z)=\sum_{k=1}^m (p^*h_{\lambda_k})(z)e^{i \langle \lambda_k,z\rangle}, \quad z \in T.
\end{equation}
Since the base $T_0$ of the covering is a relatively compact Reinhardt domain, functions $h_{\lambda_k}$ admit expansions into Laurent series (see, e.g., \cite{Shab})
\begin{equation*}
h_{k}(z)=\sum_{|\alpha|=-\infty}^\infty b_\alpha z^{\alpha}, \quad z \in T_0, \quad b_t \in \mathbb C,
\end{equation*}
where $\alpha=(\alpha_1,\dots,\alpha_n)$ is a multiindex, $|\alpha|:=\alpha_1+\dots+\alpha_n$.
Since $p(z)=\bigl(e^{i z_1}, \dots, e^{i z_n}\bigr)$, $z=(z_1,\dots,z_n) \in T$ (see~Example \ref{holap}), each $p^*h_{\lambda_k}$ admits an approximation by finite sums 
\begin{equation}
\label{usualtrigrepr2}
\sum_{|\alpha|=-M}^M b_\alpha e^{i \langle \alpha, z\rangle}, \quad z \in T,
\end{equation}
converging uniformly on subsets $p^{-1}(W_0) \subset T$, $W_0 \Subset T_0$.
Together with (\ref{usualtrigrepr}) this implies that exponential polynomials (\ref{expoly}) are dense in $\mathcal O_0(T)$.

A similar argument shows that the algebra of holomorphic almost periodic functions with rational spectra (whose elements admit approximations by exponential polynomials (\ref{expoly}) with $\lambda_k \in \mathbb Q^n$) coincides with algebra $\mathcal O_{AP_{\mathbb Q}}(T)$ (see~subsection \ref{exm}(3)). 

\medskip

(2) Let $X_0$ be a non-compact Riemann surface, $p:X \rightarrow X_0$ be a regular covering with a maximally almost periodic deck transformation group $G$ (for instance, $X_0$ is hyperbolic, $X=\mathbb D$ is its universal covering and $G=\pi_1(X_0)$ is a free (not necessarily finitely generated) group).
Functions in $\mathcal O_{AP}(X)$ (see~subsection \ref{holap3}) arise, e.g., as linear combinations over $\mathbb C$ of matrix entries of fundamental solutions of certain linear differential equations on $X$.

Indeed, let
$\mathcal U_G$ be the set of finite dimensional irreducible unitary representations $\sigma:G \rightarrow U_m$ ($m \geq 1$), $I$ be the collection of finite subsets of $\mathcal U_G$ directed by inclusion, and for each $\iota \in I$  let $AP_{\iota}(G)$ be the (finite-dimensional) subspace generated by matrix elements of the unitary representations $\sigma \in \iota$. Then by Theorem \ref{approxthm} the $\mathbb C$-linear hull $\mathcal O_{\trig}(X)$ of spaces $\mathcal O_{\iota}(X)$ is dense in $\mathcal O_{\mathfrak a}(X)$ (note that for each $\sigma \in \mathcal U_G$ the space $\mathcal O_{\{\sigma\}}(X)$ is the $\mathbb C$-linear hull of coordinates of vector-valued functions $f$ in $\mathcal O(X,\mathbb C^m)$ having the property that $f(g \cdot x)=\sigma(g)f(x)$ for all $g \in G$, $x \in X$).
Now, a unitary representation $\sigma:G\rightarrow U_m$, $m \geq 1$, can be obtained as the monodromy of the system $dF=\omega F$ on $X_0$, where $\omega$ is a holomorphic $1$-form on $X_0$ with values in the space of $m \times m$ complex matrices $M_m(\mathbb C)$ (see, e.g., \cite{For}). In particular, the system $dF=(p^*\omega) F$ on $X$ admits a global solution $F \in \mathcal O\bigl(X,GL_m(\mathbb C)\bigr)$  such that $F \circ g^{-1}=F\sigma(g)$ $(g \in G)$. By definition, a linear combination of matrix entries of $F$ is an element of $\mathcal O_{AP}(X)$.

\subsection{Approximation property}
Recall that a (complex) Banach space $B$ is said to have the approximation property if for every compact set $K\subset B$ and every $\varepsilon>0$ there is a bounded operator $T=T_{\varepsilon,K} \in \mathcal L(B,B)$ of finite rank so that $$\|Tx-x\|_B <\varepsilon \quad \text{ for every } x \in K.$$
For example,
space $AP(G)$ of almost periodic functions on a group $G$ (see~subsection \ref{exm}(2)) has the approximation property with (approximation) operators $T$ in $\mathcal L(AP(G),AP_{\trig}(G))$ (see, e.g., an argument in \cite{Sh}).

In subsection \ref{approx} suppose additionally to conditions (1)--(3) that

(4) spaces $\mathfrak a_\iota$, $\iota \in I$, are finite-dimensional, and

(5) space $\mathfrak a$ has the approximation property with approximation operators
$S \in \mathcal L(\mathfrak a,\mathfrak a_0)$ equivariant with respect to the action of $G$ on $\mathfrak a$ by right translations, i.e., $S\bigl(R_g(f)\bigr)=R_g\bigl(S(f)\bigr)$ for all $f \in \mathfrak a$, $g \in G$.

One can show that if $X_0$ is a Stein manifold and $D_0 \Subset X_0$ is a strictly pseudoconvex domain, then the Banach space $\mathcal A_{\mathfrak a}(D):=\mathcal O_{\mathfrak a}(D) \cap C_{\mathfrak a}(\bar{D})$, $D:=p^{-1}(D_0)$, has the approximation property with approximation operators in $\mathcal L\bigl(\mathcal A_{\mathfrak a}(D),\mathcal A_{\trig}(D)\bigr)$
(here $\mathcal A_{\trig}(D)$ is defined similarly to $\mathcal O_{\trig}(D)$ in Theorem \ref{approxthm}).

\section{Structure of fibrewise compactification $c_{\mathfrak a}X$}

We refer to Section \ref{structproofsect} for the proofs of the results formulated in the present section.

\label{compsect}

\subsection{Complex structure}
\label{holfuncsect}
\label{charts}
\label{notsect}

%
%

A function $f \in C(U)$ on an open subset $U \subset c_{\mathfrak a}X$ is called {\em holomorphic}, i.e., belongs to the space $\mathcal O(U)$, if 
$\iota^* f$ is holomorphic on $V:=\iota^{-1}(U) \subset X$ in the usual sense (see subsection \ref{constrsect} for notation).

The proof of the next proposition is similar to the proof of Proposition 2.3 in \cite{BK8}.

\begin{proposition}
\label{basicpropthm}
The following is true:
\begin{itemize}

\item[(1)]
A function $f$ in $C_{\mathfrak a}(V)$ determines a unique function $\hat{f}$ in $C(U)$ such that $\iota^*\hat{f}=f$; moreover,  $f \in \mathcal O_{\mathfrak a}(V)$ if and only if $\hat{f} \in \mathcal O(U)$.
Thus, there are monomorphisms
$C_{\mathfrak a}(V) \hookrightarrow C(U)$, $\mathcal O_{\mathfrak a}(V) \hookrightarrow \mathcal O(U).$

\item[(2)]If $\mathfrak a$ is self-adjoint, then $C_{\mathfrak a}(V) \cong C(U)$ and
$\mathcal O_{\mathfrak a}(V) \cong \mathcal O(U)$.

\end{itemize}
\end{proposition}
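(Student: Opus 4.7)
The plan is to work locally via trivializations of the bundle $\bar p:c_{\mathfrak a}X\to X_0$. Over any relatively compact, simply connected $U_0\Subset X_0$ that also trivializes the covering $p$, the bundle $c_{\mathfrak a}X$ becomes $U_0\times \hat G_{\mathfrak a}$, $X$ becomes $U_0\times G$, and $\iota$ is the map $(x,g)\mapsto(x,j(g))$. Every point of $U\subset c_{\mathfrak a}X$ has such a trivializing neighbourhood contained in $U$, so it suffices to construct the extension $\hat f$ on each local piece; the local extensions will match on overlaps by uniqueness. Uniqueness itself is immediate, because $\iota(X)$ is dense in $c_{\mathfrak a}X$ implies $\iota(V)=U\cap\iota(X)$ is dense in $U$, and any two continuous extensions of $f$ agreeing on $\iota(V)$ agree on all of $U$.

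For existence, fix a trivialization $U_0\times\hat G_{\mathfrak a}$. For each $x_0\in U_0$ the function $g\mapsto f(x_0,g)$ lies in $\mathfrak a$ by hypothesis, and its Gelfand transform restricts to a continuous function $\hat f_{x_0}$ on $\hat G_{\mathfrak a}\subset M_{\mathfrak a}$ coinciding with $f(x_0,\cdot)$ on $j(G)$. Put $\hat f(x_0,\eta):=\hat f_{x_0}(\eta)$. The key step is joint continuity of this fibrewise-defined candidate. Because the Gelfand transform is an isometry and $j(G)$ is dense in $\hat G_{\mathfrak a}$,
\[
\|\hat f_{x_1}-\hat f_{x_2}\|_{C(\hat G_{\mathfrak a})}=\sup_{g\in G}|f(x_1,g)-f(x_2,g)|.
\]
Within a single sheet of the local trivialization the lifted path metric satisfies $d((x_1,g),(x_2,g))\le d_0(x_1,x_2)$, so the uniform continuity of $f$ on $p^{-1}(U_0')$ for a slightly larger $U_0'\Subset X_0$ forces the right-hand side to tend to $0$ as $d_0(x_1,x_2)\to 0$. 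Hence $x_0\mapsto\hat f_{x_0}$ is norm-continuous into $C(\hat G_{\mathfrak a})$, which immediately gives joint continuity of $\hat f$ on $U_0\times\hat G_{\mathfrak a}$. The holomorphic statement of (1) is then tautological from Definition \ref{holomo}: $\hat f\in\mathcal O(U)$ iff $\iota^*\hat f=f$ is holomorphic on $V$. Injectivity of the pullback $\hat f\mapsto f$ follows from the uniqueness step, giving the monomorphisms.

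For part (2), self-adjointness of $\mathfrak a$ combined with Stone-Weierstrass yields $\mathfrak a\cong C(M_{\mathfrak a})=C(\hat G_{\mathfrak a})$ via Gelfand. So given any $F\in C(U)$, its restriction $F(x_0,\cdot)$ to the compact fibre $\hat G_{\mathfrak a}$ in a local trivialization is automatically the Gelfand image of an element of $\mathfrak a$; consequently the orbit function $g\mapsto F(x_0,j(g))$ lies in $\mathfrak a$, so $f:=\iota^*F$ satisfies the orbit condition defining $C_{\mathfrak a}(V)$. Boundedness and uniform $d$-continuity of $f$ on $p^{-1}(U_0)$ come from continuity of $F$ on the compact set $\bar p^{-1}(\bar U_0)$ together with the geometric fact that for sufficiently small $d$-neighbourhoods any two close points lie in a common sheet of a trivialization, so compact-set uniform continuity of $F$ transfers to $d$-uniform continuity of $f$. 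The holomorphic version is again tautological.

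The main obstacle is the joint continuity argument in the construction of $\hat f$: a fibrewise Gelfand extension alone is not continuous across fibres, and the passage to joint continuity precisely exploits the uniform continuity of $f$ on $p^{-1}(U_0)$ (as opposed to mere continuity) together with the compatibility of the lifted metric $d$ with the sheet structure of the trivialization. Without either of these inputs the estimate above fails and $\hat f$ could be discontinuous in the $c_{\mathfrak a}X$-topology.
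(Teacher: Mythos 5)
Your proof is correct and takes essentially the approach the paper intends: the paper itself defers this statement to Proposition 2.3 of Part I \cite{BK8}, whose argument is precisely your fibrewise Gelfand extension, with joint continuity of $\hat f$ extracted from the isometry of the Gelfand transform on $\hat G_{\mathfrak a}$ together with the uniform $d$-continuity of $f$ over a trivializing chart, uniqueness from density of $\iota(X)$, and $\mathfrak a\cong C(M_{\mathfrak a})=C(\hat G_{\mathfrak a})$ for the self-adjoint converse. The only point worth flagging is that the inequality $d\bigl((x_1,g),(x_2,g)\bigr)\le d_0(x_1,x_2)$ holds only for the path metric computed within the trivializing neighbourhood (a globally minimizing path in $X_0$ may leave $U_0$ and lift to a different sheet), but since your continuity argument is purely local this does not affect the proof.
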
 

Let $U_0 \subset X_0$ be open. 
A function $f \in C(U)$ on an open subset $U\subset U_0 \times \hat{G}_{\mathfrak a}$ is called {\em holomorphic} if the function $\tilde j^*f$, where $\tilde j:={\rm Id}\times j :U_0\times G\rightarrow  U_0 \times \hat{G}_{\mathfrak a}$, is holomorphic on the open subset $\tilde j^{-1}(U)$ of the complex manifold $U_0\times G$ (see subsection \ref{constrsect} for the definition of the map $j$).

For sets $U$ as above, by $\mathcal O(U)$ we denote the algebra of holomorphic functions on $U$ endowed with the topology of uniform convergence on compact subsets of $U$.
Clearly, $f \in C(c_{\mathfrak a}X)$ belongs to $\mathcal O(c_{\mathfrak a}X)$ if and only if each point in $c_{\mathfrak a}X$ has an open neighbourhood $U$ such that $f|_{U} \in \mathcal O(U)$.

By $\mathcal O_{U}$ we denote the sheaf of germs of holomorphic functions on $U$.

The category $\mathcal M$ of ringed spaces of the form $(U,\mathcal O_{U})$, where $U$ is either an open subset of $c_{\mathfrak a}X$ and $X$ is a regular covering of a complex manifold $X_0$ or an open subset of $U_0 \times \hat{G}_{\mathfrak a}$ with $U_0\subset X_0$ open, contains in particular complex manifolds. 

\begin{definition}
\label{defholmap}
A morphism of two objects in $\mathcal M$, that is, a map $F \in C(U_1,U_2)$, where $(U_i,\mathcal O_{U_i})\in\mathcal M$, $i=1,2$, such that $F^*\mathcal O_{U_2} \subset \mathcal O_{U_1}$, is called a {\em holomorphic map}.  
\end{definition}

The collection of holomorphic maps $F: U_1 \rightarrow U_2$, $(U_i,\mathcal O_{U_i})\in\mathcal M$, $i=1,2$, is denoted by $\mathcal O(U_1,U_2)$. If $F\in \mathcal O(U_1,U_2)$ has inverse $F^{-1}\in\mathcal O(U_2,U_1)$, then $F$ is called a {\em biholomorphism}.




Further, over each simply connected open subset $U_0 \subset X_0$ there exists a biholomorphic trivialization
$\psi=\psi_{U_0}:p^{-1}(U_0) \rightarrow U_0 \times G$ of covering $p:X \rightarrow X_0$ which is a morphism of fibre bundles with fibre $G$ (see~subsection \ref{sectbohr}). 
Then there exists a biholomorphic trivialization $\bar{\psi}=\bar{\psi}_{U_0}:\bar{p}^{-1}(U_0) \rightarrow U_0 \times \hat{G}_{\mathfrak a}$
of bundle $c_{\mathfrak a}X$ over $U_0$ which is a morphism of fibre bundles with fibre $\hat{G}_{\mathfrak a}$ such that 
the following diagram 
\begin{equation*}
\bfig
\node a1(0,0)[p^{-1}(U_0)]
\node a2(0,-500)[U_0 \times G]
\node b1(700,0)[\bar{p}^{-1}(U_0)]
\node b2(700,-500)[U_0 \times \hat{G}_{\mathfrak a}]
\arrow[a1`a2;\psi]
\arrow[a1`b1;\iota]
\arrow[b1`b2;\bar{\psi}]
\arrow[a2`b2;\Id \times j]
\efig
\end{equation*}
is commutative.

For a given subset $S \subset G$ we denote
\begin{equation}
\label{pi1}
\Pi(U_0,S):=\psi^{-1}(U_0 \times S)
\end{equation}
and identify $\Pi(U_0,S)$ with $U_0 \times S$ where appropriate (here $\Pi(U_0,G)=p^{-1}(U_0)$).

For a subset $K \subset \hat{G}_{\mathfrak a}$ we denote
\begin{equation}
\label{pi2}
\hat{\Pi}(U_0,K)~\bigl(=\hat{\Pi}_{\mathfrak a}(U_0,K)\bigr):=\bar{\psi}^{-1}(U_0 \times K).
\end{equation}
A pair of the form $(\hat{\Pi}(U_0,K),\bar{\psi})$ will be called a \textit{coordinate chart} for $c_{\mathfrak a}X$. Similarly, sometimes we identify $\hat{\Pi}(U_0,K)$ with $U_0 \times K$.
If $K \subset \hat{G}_{\mathfrak a}$ is open, then, by our definitions, $\bar{\psi}^*: \mathcal O(U_0 \times K)\rightarrow\mathcal O(\hat{\Pi}(U_0,K))$ is an isomorphism of (topological) algebras.

\subsection{Basis of topology on $c_{\mathfrak a}X$}
\label{topsect}

By $\mathfrak Q$ we denote the basis of topology of $\hat{G}_{\mathfrak a}$ consisting of sets of the form
\begin{equation}
\label{base1}
\left\{\eta \in \hat{G}_{\mathfrak a}: \max_{1 \leq i \leq m}|h_i(\eta)-h_i(\eta_0)|<\varepsilon\right\}
\end{equation}
for $\eta_0 \in \hat{G}_{\mathfrak a}$, $h_1,\dots,h_m \in C(\hat{G}_{\mathfrak a})$, and $\varepsilon>0$.

The fibrewise compactification $c_{\mathfrak a}X$ is a paracompact Hausdorff space (as a fibre bundle with a paracompact base and a compact fibre); thus, $c_{\mathfrak a}X$ is a normal space.

It is easy to see that the family
\begin{equation}
\label{base2}
\mathfrak B:=\{\hat{\Pi}(V_0,L) \subset c_{\mathfrak a}X: V_0 \text{ is open simply connected in } X_0 \text{ and } L \in \mathfrak Q\}
\end{equation}
forms a basis of topology of $c_{\mathfrak a}X$.

%
%

\subsection{Complex submanifolds}
\label{submanifoldsect}
To formulate the required definition
note that for every $f \in \mathcal O(U_0 \times K)$, where $U_{0} \subset X_0$, $K \subset  \hat{G}_{\mathfrak a}$ are open, functions $f(\cdot,\omega)$, $\omega \in K$, are in $\mathcal O(U_0)$. Indeed, since $f \in \mathcal O(U_0 \times K)$, functions $U_0 \ni z \mapsto f(z,j(g))$ ($g \in j^{-1}(K)$) are holomorphic. Then since $j(j^{-1}(K))$ is dense in $K$ (see~Section \ref{mainsect}) and $f$ is bounded on each $S \Subset U_0 \times K$, by the Montel theorem $f(\cdot,\omega)\in\mathcal O(U_0)$ for all $\omega \in K$.

\begin{definition}
\label{defap}
A closed subset $Y \subset c_{\mathfrak a}X$ is called a {\em complex submanifold of codimension} $k$ if for every $y \in Y$ there exist its neighbourhood of the form $U=\hat{\Pi}(U_0,K) \subset c_{\mathfrak a}X$, where $U_{0} \subset X_0$ is open and simply connected, $K \subset  \hat{G}_{\mathfrak a}$ is open, and functions $h_1,\dots,h_k \in \mathcal O(U)$ such that

(1) $Y \cap U=\{x \in U: h_1(x)=\dots=h_k(x)=0\}$;

(2) Rank of the map $z \mapsto \bigl(h_1(z,\omega),\dots,h_k(z,\omega)\bigr)$ is $k$ at each point $x=(z,\omega) \in Y \cap U$.
\end{definition}

The next result on the local structure of a complex submanifold of $c_{\mathfrak a}X$ follows straightforwardly from the inverse function theorem with continuous dependence on parameter (Theorem \ref{ift}).


\begin{proposition}
\label{localstructap}
Let $Y \subset c_{\mathfrak a}X$ be a complex submanifold.
For every $y_0 \in Y$ there exist an open neighbourhood $V \subset c_{\mathfrak a}X$ of $y$, open subsets $V_0\subset X_0$ and $K\subset \hat{G}_{\mathfrak a}$, a (closed) complex submanifold $Z_0$ of $V_0$ (of the same codimension as $Y$), and a biholomorphic map $\Phi \in \mathcal O(V_0 \times K,V)$
such that $\Phi\bigl(V_0\times (K\cap j(G))\bigr)=V\cap\iota(X)$ and $\Phi^{-1} (V \cap Y)=Z_0 \times K.$
\end{proposition}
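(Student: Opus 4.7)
The plan is to apply the inverse function theorem with continuous dependence on parameter (Theorem \ref{ift}) to the defining functions of $Y$ from Definition \ref{defap}, producing a local holomorphic change of variables that straightens $Y$ into a product whose first factor is an ordinary complex submanifold of $X_0$.

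First I would fix $y_0\in Y$ and invoke Definition \ref{defap} to obtain a neighbourhood $U=\hat\Pi(U_0,K)$ of $y_0$ and functions $h_1,\dots,h_k\in\mathcal O(U)$ whose zero set in $U$ is $Y\cap U$ and whose Jacobian in the $z$-variables has rank $k$ at $y_0=(z_0,\omega_0)$. After shrinking $U_0$ to a coordinate chart of $X_0$ and relabelling coordinates, I may assume that the leftmost $k\times k$ block of the $z$-Jacobian is non-singular at $(z_0,\omega_0)$. Using the paragraph preceding Definition \ref{defap}, each slice $h_i(\cdot,\omega)$ is holomorphic on $U_0$, so that the auxiliary map
\[
\tilde F(z,\omega):=\bigl(h_1(z,\omega),\dots,h_k(z,\omega),z_{k+1},\dots,z_n\bigr)
\]
lies in $\mathcal O(U_0\times K,\mathbb C^n)$ (as a morphism in $\mathcal M$) and has non-singular $z$-Jacobian at $(z_0,\omega_0)$.

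Next I would apply Theorem \ref{ift} to $\tilde F$: it yields neighbourhoods $B\subset U_0$ of $z_0$, $V_0\subset\mathbb C^n$ of $\tilde F(z_0,\omega_0)$, and $K'\subset K$ of $\omega_0$ such that, for every $\omega\in K'$, $\tilde F(\cdot,\omega)\colon B\to V_0$ is biholomorphic and the inverse $\tilde F^{-1}$ belongs to $\mathcal O(V_0\times K',B)$. Define
\[
\Phi_1(w,\omega):=\bigl(\tilde F^{-1}(w,\omega),\omega\bigr)\in B\times K',\qquad \Phi:=\bar\psi^{-1}\circ\Phi_1\colon V_0\times K'\to V,
\]
with $V:=\bar\psi^{-1}(B\times K')$. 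Then $\Phi$ is a biholomorphism in the sense of Definition \ref{defholmap}, and I would rename $K'$ as $K$.

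Finally I would verify the two equalities. Set $Z_0:=\{w\in V_0:w_1=\dots=w_k=0\}$, a closed complex submanifold of $V_0$ of codimension $k$. Using that $\Phi_1$ preserves the $\omega$-coordinate and sends $(w,\omega)$ with $w_1=\dots=w_k=0$ exactly to points of $Y$ (since the first $k$ entries of $\tilde F$ are precisely $h_1,\dots,h_k$), the identity $\Phi^{-1}(V\cap Y)=Z_0\times K$ is immediate. The commutativity of the diagram relating $\iota$, $\bar\psi$, $\psi$ and $\mathrm{Id}\times j$ stated in subsection \ref{charts}, together with the fact that $\Phi_1$ acts by identity on the $\omega$-variable, gives $\Phi(V_0\times(K\cap j(G)))=V\cap\iota(X)$. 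The only non-routine ingredient is the parameter-dependent inverse function theorem, which is encapsulated by Theorem \ref{ift}; everything else is bookkeeping once $\tilde F$ is chosen so as to leave the $\omega$-coordinate untouched.
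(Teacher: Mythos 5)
Your proposal is correct and follows essentially the same route as the paper: form the map $(z,\omega)\mapsto\bigl(h_1(z,\omega),\dots,h_k(z,\omega),z_{k+1},\dots,z_n\bigr)$, invert it slice-wise via Theorem \ref{ift}, and set $\Phi(z,\omega)=\bigl(H(z,\omega),\omega\bigr)$ with $Z_0$ the coordinate subspace. The only minor adjustment is that $V$ should be taken as the image $\Phi(V_0\times K)$ (as the paper does) rather than all of $\bar\psi^{-1}(B\times K')$, since Theorem \ref{ift} only provides a right inverse on $W$ and does not guarantee surjectivity onto the full box.
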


(See the proof in subsection \ref{iftsect}.)

We apply this proposition to establish the following important fact (see subsection \ref{constrsect} for the definition of a coherent sheaf on $c_{\mathfrak a}X$).

\begin{proposition}
\label{idcohlem}
The ideal sheaf $I_Y$ of germs of holomorphic functions vanishing on a complex submanifold $Y \subset c_{\mathfrak a}X$ is coherent.
\end{proposition}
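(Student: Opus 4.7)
The plan is to reduce, via Proposition \ref{localstructap}, to a local product model on which a Koszul-type resolution can be built. For $y \in Y$, Proposition \ref{localstructap} provides a biholomorphism $\Phi \in \mathcal O(V_0 \times K, V)$ sending $Z_0 \times K$ onto $V \cap Y$, where $V \subset c_{\mathfrak a}X$ is an open neighbourhood of $y$ and $Z_0 \subset V_0$ is a classical complex submanifold of codimension $k$. Since $\Phi$ induces an isomorphism of structure sheaves that carries ideal sheaves to ideal sheaves, it suffices to prove coherence of the ideal sheaf $I$ of $Z_0 \times K$ in the ringed space $V_0 \times K$.

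After shrinking $V_0$, choose holomorphic coordinates $(z_1,\dots,z_n)$ on $X_0$ with $Z_0 = \{z_1 = \cdots = z_k = 0\}$. The central step is a division lemma: for any $(z^\circ,\omega^\circ) \in Z_0 \times K$ there exist a polydisc $\Delta \ni z^\circ$ and an open neighbourhood $K' \ni \omega^\circ$ such that every $f \in \mathcal O(\Delta \times K')$ vanishing on $(Z_0 \cap \Delta) \times K'$ can be written as $f = \sum_{i=1}^k z_i g_i$ with $g_i \in \mathcal O(\Delta \times K')$. The canonical choice
$$g_i(z,\omega) := \int_0^1 \frac{\partial f}{\partial z_i}(tz_1,\dots,tz_k,z_{k+1},\dots,z_n,\omega)\, dt$$
satisfies $f = \sum_{i=1}^k z_i g_i$ by the fundamental theorem of calculus applied to $t \mapsto f(tz_1,\dots,tz_k,z_{k+1},\dots,z_n,\omega)$, together with the vanishing of $f$ at $z_1 = \cdots = z_k = 0$. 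Each $g_i$ is holomorphic in $z$ (holomorphy passes through the integral) and jointly continuous in $(z,\omega)$, since $\partial f/\partial z_i$ is jointly continuous by Cauchy's integral formula applied to the jointly continuous $f$, and joint continuity is preserved under the parameter integral in $t$; hence $g_i \in \mathcal O(\Delta \times K')$.

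The division lemma shows that $I$ is locally generated by $z_1,\dots,z_k$, and iterating it --- if $z_i f$ lies in the ideal generated by $z_1,\dots,z_{i-1}$, then $f$ vanishes on $\{z_1 = \cdots = z_{i-1} = 0\}$ by continuity, hence lies in that ideal by the lemma applied in codimension $i-1$ --- shows that $z_1,\dots,z_k$ forms a regular sequence in the stalks of $\mathcal O$. The associated Koszul complex
$$0 \to \mathcal O \to \mathcal O^{\binom{k}{k-1}} \to \cdots \to \mathcal O^{\binom{k}{2}} \to \mathcal O^k \to I \to 0$$
is therefore exact and yields a free resolution of $I$ of length $k$. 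Resolutions of arbitrary length $N$ as required by \eqref{coh0} are then produced by truncating the Koszul complex to its rightmost $N+1$ terms when $N \leq k$ (exactness at the intermediate positions being preserved) and by extending on the left with zero sheaves when $N > k$.

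The main obstacle is the division lemma, specifically the verification that the coefficients $g_i$ lie in the ``mixed'' structure sheaf $\mathcal O$ of Section~\ref{holfuncsect}, which demands joint continuity in $(z,\omega)$ in addition to holomorphy in $z$. This hinges on Cauchy's integral formula to transfer joint continuity from $f$ to $\partial f/\partial z_i$ and on the preservation of both regularity properties under integration in the parameter $t$; once this is established, the remaining Koszul argument is essentially the classical proof of coherence of ideal sheaves of complex submanifolds, carried out stalkwise in the $\omega$-parameter.
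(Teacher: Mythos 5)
Your proof is correct, but it takes a genuinely different route from the paper's. Both arguments begin by invoking Proposition \ref{localstructap} to reduce to the product model $Z_0\times K\subset V_0\times K$ with $Z_0=\{z_1=\dots=z_k=0\}$. From there the paper does not construct a resolution by hand: it takes an arbitrary classical free resolution of $I_{Z_0}$ over $V_0$ (Cartan coherence), passes to the exact sequence of sections over a polydisk (classical Theorem B), identifies $\Gamma(U_0\times L,\mathcal O_{V_0\times K})$ with the completed tensor product $C(L)\otimes\Gamma(U_0,\mathcal O_{V_0})$ and $\Gamma(U_0\times L,I_{Z_0\times K})$ with $C(L)\otimes\Gamma(U_0,I_{Z_0})$ via Taylor expansion, and then invokes Bungart's theorem that $C(L)\otimes\,\cdot\,$ is an exact functor to transport the whole resolution, of any prescribed length $N$, to the mixed space. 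You instead build one explicit resolution: the Hadamard-type division formula $g_i(z,\omega)=\int_0^1\partial_{z_i}f(tz_1,\dots,tz_k,z_{k+1},\dots,z_n,\omega)\,dt$ shows $I$ is generated by $z_1,\dots,z_k$, the iteration of the same formula shows these form a regular sequence on the stalks, and the Koszul complex then resolves $I$; arbitrary lengths are obtained by truncation or padding. Your key verification --- that joint continuity survives Cauchy's formula and the parameter integral, so that the $g_i$ land in the mixed structure sheaf --- is exactly the point where the argument could have failed, and it holds (note that holomorphy of $f(\cdot,\omega)$ for \emph{all} $\omega\in K$, needed to even define $\partial f/\partial z_i$ off $j(G)$, is supplied by the Montel argument in subsection \ref{submanifoldsect}). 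What each approach buys: yours is more elementary and self-contained, avoiding the completed tensor product and Bungart's exactness theorem, and produces an explicit resolution; the paper's is shorter given that the tensor-product machinery is already set up and reused elsewhere (e.g., in the proof of Theorem \ref{cartansubm}), and it transports \emph{any} classical resolution, not just the Koszul one.
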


Now, we list other properties of complex submanifolds of $c_{\mathfrak a}X$.

\begin{proposition}
Any complex submanifold $Y$ of $c_{\mathfrak a}X$ has the following properties:

(\textit{i}) $\iota^{-1}\bigl(Y\bigr) \subset X$ is a complex submanifold of $X$ of codimension $k$.

(\textit{ii}) $Y \cap \iota(X)$ is dense in $Y$. 
\end{proposition}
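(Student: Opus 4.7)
Both assertions are local, so the argument proceeds one chart at a time and then assembles the pieces.

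For part (\textit{i}), I would fix $y_0\in Y$ and choose a neighbourhood $U=\hat\Pi(U_0,K)$ of $y_0$ as in Definition \ref{defap}, together with functions $h_1,\dots,h_k\in\mathcal O(U)$ whose common zero locus is $Y\cap U$ and whose $z$-Jacobian has rank $k$ at every point of $Y\cap U$. The commutative diagram relating $\psi$ and $\bar\psi$ in subsection \ref{charts} gives
$$
\iota^{-1}(U)=\iota^{-1}\bigl(\bar\psi^{-1}(U_0\times K)\bigr)=\psi^{-1}\bigl(U_0\times j^{-1}(K)\bigr)=\Pi(U_0,j^{-1}(K)),
$$
and on this open set $\iota^*h_i\in\mathcal O(\iota^{-1}(U))$ with $(\iota^*h_i)(z,g)=h_i(z,j(g))$ in the trivialization coordinates $(z,g)\in U_0\times j^{-1}(K)$. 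Then
$$
\iota^{-1}(Y\cap U)=\bigl\{(z,g)\in U_0\times j^{-1}(K)\, :\, \iota^*h_1(z,g)=\cdots=\iota^*h_k(z,g)=0\bigr\}.
$$
Since the $g$-variable is discrete, the holomorphic Jacobian of $(\iota^*h_1,\dots,\iota^*h_k)$ at $(z,g)\in\iota^{-1}(Y\cap U)$ in local coordinates on $X$ coincides with the $z$-Jacobian of $(h_1(\cdot,j(g)),\dots,h_k(\cdot,j(g)))$ at $z$, which has rank $k$ by condition (2) of Definition \ref{defap}. Hence $\iota^{-1}(Y)\cap\iota^{-1}(U)$ is cut out as a codimension-$k$ complex submanifold of $\iota^{-1}(U)$, and patching these local descriptions produces the desired codimension-$k$ complex submanifold structure on $\iota^{-1}(Y)\subset X$.

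For part (\textit{ii}), I would appeal directly to Proposition \ref{localstructap}. Given $y_0\in Y$, it yields open $V_0\subset X_0$, open $K\subset\hat G_{\mathfrak a}$, a closed complex submanifold $Z_0\subset V_0$, and a biholomorphism $\Phi:V_0\times K\to V\subset c_{\mathfrak a}X$ such that $\Phi(Z_0\times K)=V\cap Y$ and $\Phi(V_0\times(K\cap j(G)))=V\cap\iota(X)$. Intersecting gives
$$
V\cap Y\cap\iota(X)=\Phi\bigl(Z_0\times(K\cap j(G))\bigr).
$$
Since $j(G)$ is dense in $\hat G_{\mathfrak a}$ (by construction of $c_{\mathfrak a}X$, see subsection \ref{constrsect}) and $K$ is open in $\hat G_{\mathfrak a}$, the set $K\cap j(G)$ is dense in $K$. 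Therefore $Z_0\times(K\cap j(G))$ is dense in $Z_0\times K$, and applying the homeomorphism $\Phi$ shows $V\cap Y\cap\iota(X)$ is dense in $V\cap Y$. Since $y_0$ was arbitrary and the neighbourhoods $V$ cover $Y$, $Y\cap\iota(X)$ is dense in $Y$.

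The only mild obstacle is bookkeeping between the two parallel trivializations $\psi$ and $\bar\psi$, together with verifying that the rank condition of Definition \ref{defap} (phrased with $\omega$ frozen) transfers directly to the ordinary Jacobian rank on $X$; once the commutativity $(\Id\times j)\circ\psi=\bar\psi\circ\iota$ is exploited, both parts follow immediately with no further analysis required.
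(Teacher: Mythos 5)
Your proposal is correct and follows essentially the same route as the paper, which simply notes that (\textit{i}) is immediate from Definition \ref{defap} (via the commutativity of $\iota$ with the trivializations $\psi$, $\bar\psi$) and that (\textit{ii}) follows from the density of $\iota(X)$ in $c_{\mathfrak a}X$ combined with Proposition \ref{localstructap}; your write-up just makes these two one-line observations explicit.
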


Assertion (\textit{i}) is immediate from the definition while assertion (\textit{ii}) follows from the fact that  $\iota(X)$ is dense in $X$ combined with Proposition \ref{localstructap}.

\begin{proposition}
\label{closprop}

If $Z \subset X$ is a complex $\mathfrak a$-submanifold (see~Definition \ref{manifolddef}), then 
the closure of $\iota(Z)$ in $c_{\mathfrak a}X$ is a complex submanifold of $c_{\mathfrak a}X$.

Suppose that $\mathfrak a$ is self-adjoint. If $Y$ is a complex submanifold of $c_{\mathfrak a}X$, then $\iota^{-1}(Y) \subset X$ is a complex $\mathfrak a$-submanifold.
\end{proposition}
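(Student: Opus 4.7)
The proof splits into the two directions, with the first (closure of $\iota(Z)$ is a complex submanifold) requiring the bulk of the work. The plan is to transfer local defining functions back and forth between $X$ and $c_{\mathfrak a}X$ via Proposition \ref{basicpropthm}, using the density of $\iota(X)$ in $c_{\mathfrak a}X$ and Theorem \ref{ift} (the inverse function theorem with continuous parameter dependence).

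For the first direction, let $Z\subset X$ be a complex $\mathfrak a$-submanifold of codimension $k$ with cover $\mathcal V=\iota^{-1}(\hat{\mathcal V})$. Refining $\hat{\mathcal V}$ to basis sets $\hat V=\hat\Pi(V_0,K)\in\mathfrak B$, on each $V=\iota^{-1}(\hat V)$ the local defining functions $h_i\in\mathcal O_{\mathfrak a}(V)$ lift uniquely to $\hat h_i\in\mathcal O(\hat V)$ by Proposition \ref{basicpropthm}(1). The inclusion $\overline{\iota(Z)}\cap\hat V\subset\{\hat h_1=\cdots=\hat h_k=0\}$ follows from continuity: for $x\in Z\cap V$ one has $\hat h_i(\iota(x))=h_i(x)=0$, and the common zero set is closed. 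The reverse inclusion and the full rank condition on that zero locus require more. Working in the trivialisation $\hat V\cong V_0\times K$, each $\hat h_i$ is holomorphic in $z\in V_0$ and continuous in $\omega\in K$, and its $z$-partials share this regularity by Cauchy's formula applied fibrewise. The hypothesised lower bound $\delta>0$ on moduli of $k\times k$ minors of the $z$-Jacobian on $Z\cap V$, which is dense in $\{\hat h_i=0\}\cap\hat V$ because $j(G)\cap K$ is dense in $K$, therefore propagates to all of $\{\hat h_i=0\}\cap\hat V$, yielding rank $k$ everywhere on the zero locus. Applying Theorem \ref{ift} to $(\hat h_1,\dots,\hat h_k)$ as a parametric holomorphic family then locally exhibits $\{\hat h_i=0\}$ as a continuous family, in $\omega\in K$, of complex graphs over a subset of $V_0$; approximating any $\omega_0\in K$ by $\omega_n\in j(G)\cap K$ and using continuous dependence of the implicit solutions shows that every point of $\{\hat h_i=0\}\cap\hat V$ is a limit of points of $\iota(Z)$, verifying Definition \ref{defap}.

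For the second direction, assume $\mathfrak a$ self-adjoint and let $Y\subset c_{\mathfrak a}X$ be a complex submanifold. I would apply Proposition \ref{localstructap} around each $y\in Y$ to obtain a biholomorphism $\Phi\colon V_0\times K\to V$ with $\Phi^{-1}(V\cap Y)=Z_0\times K$, where $Z_0\subset V_0$ is a complex submanifold of codimension $k$. Shrinking to a simply connected $V_0''\Subset V_0$ lying in a coordinate chart on $X_0$ and to open $K''\subset K$, the sets $V''=\Phi(V_0''\times K'')$ cover $Y$ and extend, by adjoining open sets disjoint from $Y$, to an open cover $\hat{\mathcal V}$ of $c_{\mathfrak a}X$ whose pullback by $\iota$ is the required cover of class $(\mathcal T_{\mathfrak a})$. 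Choose holomorphic defining functions $g_1,\dots,g_k\in\mathcal O(V_0'')$ for $Z_0\cap V_0''$ (for instance coordinate functions after a biholomorphic linearisation of $Z_0$), set $h_i:=g_i\circ\pi_1\circ\Phi^{-1}\in\mathcal O(V'')$, and pull back to $\iota^*h_i\in\mathcal O_{\mathfrak a}(\iota^{-1}(V''))$ via Proposition \ref{basicpropthm}(2). Since each $h_i$ depends only on the $V_0''$-factor and $p$ restricted to a sheet of $\iota^{-1}(V'')$ is a biholomorphism onto $V_0''$, the $k\times k$ Jacobian minors of $(\iota^*h_i)$ in local $X_0$-coordinates coincide with those of $(g_1,\dots,g_k)$ on $V_0''$ and are uniformly bounded below on the relatively compact set $Z_0\cap\overline{V_0''}$, giving the required uniform bound on $\iota^{-1}(Y)\cap\iota^{-1}(V'')$.

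The main obstacle is the reverse inclusion in the first direction: showing $\{\hat h_i=0\}\cap\hat V$ contains no spurious points outside $\overline{\iota(Z)}$. This is where Theorem \ref{ift} must be combined carefully with the density of $j(G)$ in $\hat G_{\mathfrak a}$, since the compactification directions carry no complex (or even smooth) structure of their own and the argument rests entirely on holomorphic regularity in the $V_0$-factor together with only continuous dependence on the compactification parameter.
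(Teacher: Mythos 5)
Your approach is essentially the paper's: lift the local defining functions $h_i$ to $\hat{h}_i$ on $c_{\mathfrak a}X$ via Proposition \ref{basicpropthm}(1), use the parametric inverse function theorem (Theorem \ref{ift}, i.e.\ the argument of Proposition \ref{localstructap}) together with the density of $j(G)\cap K$ in $K$ to identify the lifted zero locus with $\overline{\iota(Z)}$, and handle the converse direction through the local product structure of Proposition \ref{localstructap}. The one point you must repair is an ordering problem in the first direction: you deduce the rank condition on all of $\{\hat{h}_1=\cdots=\hat{h}_k=0\}\cap\hat{V}$ from the claim that $\iota(Z\cap V)$ is dense there, but that density is exactly the reverse inclusion you only establish afterwards via Theorem \ref{ift}, whose application at a point of the zero locus presupposes non-degeneracy at that point; as written the argument is circular. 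The circle is broken as in the paper: the rank bound holds at every point of $\overline{\iota(Z)}\cap\hat{V}$ simply because such a point is a limit of points of $\iota(Z\cap V)$ and the entries of the lifted Jacobian are continuous; you then apply Theorem \ref{ift} only at a point $y\in\overline{\iota(Z)}$ and, after shrinking the neighbourhood $U$, obtain $\Phi^{-1}(\{\hat{h}_i=0\}\cap U)=Z_0\times K$ with $\Phi\bigl(Z_0\times(K\cap j(G))\bigr)=U\cap\iota(Z)$, so the local zero locus is the closure of $\iota(Z)\cap U$ and the rank condition on all of it comes for free. Since Definition \ref{defap} only demands such a neighbourhood for each point of $Y$, possible spurious components of the zero locus away from $\overline{\iota(Z)}$ are irrelevant after shrinking. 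Two minor remarks: $\hat{G}_{\mathfrak a}$ need not be metrizable, so approximate $\omega_0$ by a net in $j(G)\cap K$ rather than a sequence; and in the converse direction the Jacobian minors of $\iota^*h_i$ in coordinates pulled back from $X_0$ agree with those of $(g_1,\dots,g_k)$ only up to the Jacobian of $\Phi^{-1}(\cdot,\omega)$, which is harmless since it is uniformly bounded and bounded away from degeneracy on the relatively compact set you use.
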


\begin{definition}
\label{defa}
A function $f\in C(Y)$ is called holomorphic if  $\iota^* f \in \mathcal O\bigl(\iota^{-1}(Y)\bigr)$. 
The algebra of holomorphic functions on $Y$ is denoted by $\mathcal O(Y)$.
\end{definition}

Similarly, we define holomorphic functions $\mathcal O(U)$ on an open subset $U \subset Y$ as those continuous functions whose pullbacks by $\iota$ are holomorphic in the usual sense. 


\begin{proposition}
\label{equivprop2}
Suppose that $\mathfrak a$ is self-adjoint,
$Y$ is a complex submanifold of $c_{\mathfrak a}X$. We set $Z:=\iota^{-1}(Y)$. 
Then $\mathcal O_{\mathfrak a}(Z) \cong \iota^*\mathcal O(Y) $ (so every function in $f \in \mathcal O_{\mathfrak a}(Z)$, see~Definition \ref{holdef2}, admits a unique extension to a function $\hat{f} \in \mathcal O(Y)$ such that $f=\iota^*\hat{f}$).
\end{proposition}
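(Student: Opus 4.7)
My plan is to set up the pullback map $\iota^{*}\colon \mathcal{O}(Y)\to\mathcal{O}_{\mathfrak a}(Z)$, $\hat{f}\mapsto \iota^{*}\hat{f}$, and prove it is a bijection by combining Proposition \ref{basicpropthm}(2) (isomorphism $C_{\mathfrak a}(X)\cong C(c_{\mathfrak a}X)$ in the self-adjoint case), normality of $c_{\mathfrak a}X$ (from subsection \ref{topsect}), and density of $\iota(Z)=Y\cap \iota(X)$ in $Y$ (assertion (ii) of the proposition preceding Proposition \ref{closprop}).

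For well-definedness, given $\hat{f}\in\mathcal{O}(Y)$, Definition \ref{defa} already gives $\iota^{*}\hat{f}\in\mathcal{O}(Z)$, so the only content is to produce a $C_{\mathfrak a}(X)$-extension of $f:=\iota^{*}\hat{f}$ as required by Definition \ref{holdef2}. Since $c_{\mathfrak a}X$ is paracompact Hausdorff, hence normal, and $Y$ is closed in $c_{\mathfrak a}X$, the Tietze extension theorem gives a continuous extension $\hat{F}\in C(c_{\mathfrak a}X)$ of $\hat{f}$. Proposition \ref{basicpropthm}(2) then converts $\hat{F}$ into a function $F:=\iota^{*}\hat{F}\in C_{\mathfrak a}(X)$, and by construction $F|_{Z}=\iota^{*}(\hat{F}|_{Y})=\iota^{*}\hat{f}=f$. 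This shows $\iota^{*}\hat f\in\mathcal{O}_{\mathfrak a}(Z)$.

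For surjectivity, start with $f\in\mathcal{O}_{\mathfrak a}(Z)$ and choose any extension $F\in C_{\mathfrak a}(X)$ granted by Definition \ref{holdef2}. Proposition \ref{basicpropthm}(2) produces a unique $\hat{F}\in C(c_{\mathfrak a}X)$ with $\iota^{*}\hat{F}=F$; set $\hat{f}:=\hat{F}|_{Y}$. Since $\iota|_{Z}\colon Z\to Y$ is the restriction of $\iota$, one has $\iota^{*}\hat{f}=(\iota^{*}\hat{F})|_{Z}=F|_{Z}=f\in\mathcal{O}(Z)$, so Definition \ref{defa} gives $\hat{f}\in\mathcal{O}(Y)$ and $\iota^{*}\hat{f}=f$.

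Injectivity (and uniqueness of the lift) follows from the density statement: if $\hat{f}_{1},\hat{f}_{2}\in\mathcal{O}(Y)$ satisfy $\iota^{*}\hat{f}_{1}=\iota^{*}\hat{f}_{2}$, then $\hat{f}_{1}=\hat{f}_{2}$ on $\iota(Z)=Y\cap\iota(X)$, and since this set is dense in $Y$ while $\hat{f}_{1},\hat{f}_{2}$ are continuous on $Y$, they coincide on $Y$. I anticipate no serious obstacle — the work is bookkeeping between Definitions \ref{holdef2} and \ref{defa} — with the one point to handle carefully being that the Tietze extension need only give a function in $C(c_{\mathfrak a}X)$ (not a bounded one), since Proposition \ref{basicpropthm}(2) asserts the isomorphism $C_{\mathfrak a}(X)\cong C(c_{\mathfrak a}X)$ without any global boundedness hypothesis, and the fibrewise boundedness required by $C_{\mathfrak a}(X)$ is automatic from continuity on the compact fibres of $\bar{p}\colon c_{\mathfrak a}X\to X_{0}$ over relatively compact $U_{0}\Subset X_{0}$.
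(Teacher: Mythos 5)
Your proposal is correct and follows essentially the same route as the paper's proof: the forward direction via Proposition \ref{basicpropthm} applied to a $C_{\mathfrak a}(X)$-extension of $f$, and the reverse direction via normality of $c_{\mathfrak a}X$ and the Tietze--Urysohn theorem followed by Proposition \ref{basicpropthm}(2). Your explicit verification of injectivity from the density of $Y\cap\iota(X)$ in $Y$, and the remark that fibrewise boundedness is automatic from compactness of the fibres, are correct points that the paper leaves implicit.
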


\subsection{Cartan theorems A and B on complex submanifolds}
\label{cartansubmsect}

The notion of a coherent sheaf on $c_{\mathfrak a}X$ (see subsection \ref{constrsect})
extends to analytic sheaves on a complex submanifold of $c_{\mathfrak a}X$. It turns out that if $X_0$ is a Stein manifold, then for such coherent sheaves we have analogues of Cartan theorems A and B (Theorems \ref{thmA_} and \ref{thmB_} below).

More precisely, we have the structure sheaf $\mathcal O_Y$ of germs of holomorphic functions on a complex submanifold $Y \subset c_{\mathfrak a}X$ (see~Definition \ref{defa}). 
A {\em coherent sheaf} $\mathcal A$ on $Y$ is a sheaf of modules over $\mathcal O_Y$ such that every point in $Y$ has a neighbourhood $V\subset Y$ over which, for any $N \geq 1$, there is a free resolution of $\mathcal A$ of length $N$, i.e., an exact sequence of sheaves of modules of the form
\begin{equation*}
\mathcal O_Y^{m_{N}}|_V \overset{\varphi_{N-1}}{\to} \dots \overset{\varphi_2}{\to} \mathcal O_Y^{m_{2}}|_V \overset{\varphi_1}{\to} \mathcal O_Y^{m_{1}}|_V \overset{\varphi_0}{\to} \mathcal A|_V \to 0
\end{equation*}
(here $\varphi_i$, $0 \leq i \leq N-1$, are homomorphisms of sheaves of modules).

\medskip

Given a sheaf of modules $\mathcal A$ over $\mathcal O_Y$, we define a sheaf $\tilde{\mathcal A}$ on $c_{\mathfrak a}X$ (called the \textit{trivial extension} of $\mathcal A$) by 
the formulas $$\tilde{\mathcal A}|_{c_{\mathfrak a}X \setminus Y}:=0, \quad \tilde{\mathcal A}|_Y:=\mathcal A.$$

\begin{theorem}
\label{cartansubm}
If $\mathcal A$ is a coherent sheaf on a complex submanifold $Y \subset c_{\mathfrak a}X$, then $\tilde{\mathcal A}$ is a coherent sheaf on $c_{\mathfrak a}X$.
\end{theorem}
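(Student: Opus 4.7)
Coherence is a stalk-local condition, so the task reduces to constructing, for every point $y \in c_{\mathfrak a}X$ and every $N\ge 1$, an open neighbourhood of $y$ on which $\tilde{\mathcal A}$ admits a free resolution of length $N$. For $y \notin Y$, since $Y$ is closed there is a neighbourhood $U$ of $y$ disjoint from $Y$ on which $\tilde{\mathcal A}$ vanishes identically, and the sequence with all zero arrows serves as a resolution of any length. All the real work is at a point $y \in Y$.

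The first substantive step is to establish the auxiliary claim that the trivial extension $\tilde{\mathcal O_Y}$ of the structure sheaf of $Y$ is itself coherent on $c_{\mathfrak a}X$. This follows from the short exact sequence
$$
0 \to I_Y \to \mathcal O_{c_{\mathfrak a}X} \to \tilde{\mathcal O_Y} \to 0,
$$
since stalkwise $\mathcal O_{c_{\mathfrak a}X,y}/I_{Y,y}$ equals $\mathcal O_{Y,y}$ for $y\in Y$ and vanishes for $y\notin Y$. Proposition \ref{idcohlem} provides the coherence of $I_Y$, and splicing any local free resolution of $I_Y$ with the inclusion $I_Y\hookrightarrow \mathcal O_{c_{\mathfrak a}X}$ yields a free resolution of $\tilde{\mathcal O_Y}$ of arbitrary prescribed length. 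Consequently each $\tilde{\mathcal O_Y}^{\,m}$ is coherent as well.

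Next I would push the local resolution of $\mathcal A$ on $Y$ forward through the (exact) trivial-extension functor, and then splice in ambient free resolutions of the resulting terms. By coherence of $\mathcal A$ on $Y$, a suitable neighbourhood $V$ of $y$ carries a finite free resolution
$$
\mathcal O_Y^{m_{N_0}}|_{V\cap Y} \overset{\varphi_{N_0-1}}{\to} \dots \overset{\varphi_0}{\to} \mathcal A|_{V\cap Y} \to 0.
$$
Extension by zero from the closed subspace $Y$ preserves exactness at each stalk, so one obtains an exact sequence
$$
\tilde{\mathcal O_Y}^{\,m_{N_0}}|_V \to \dots \to \tilde{\mathcal O_Y}^{\,m_1}|_V \to \tilde{\mathcal A}|_V \to 0
$$
of coherent $\mathcal O_{c_{\mathfrak a}X}$-modules. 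Replacing each $\tilde{\mathcal O_Y}^{\,m_i}$ by an $\mathcal O_{c_{\mathfrak a}X}$-free resolution (found on a common shrinkage of $V$, as only finitely many indices $i$ occur) and applying the horseshoe lemma iteratively turns the above into an honest $\mathcal O_{c_{\mathfrak a}X}$-free resolution of $\tilde{\mathcal A}|_V$ of any prescribed length $\ge N$.

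The main obstacle is entirely in this last step: arranging a single neighbourhood of $y$ on which all the intermediate free resolutions (of $I_Y$, of the structure sheaf, and of each $\tilde{\mathcal O_Y}^{\,m_i}$) coexist, and organizing the horseshoe splicings coherently across the finite sequence. This is technical rather than deep and is handled by iterated shrinking of $V$, together with the fact that in our setting the category of coherent $\mathcal O_{c_{\mathfrak a}X}$-modules is closed under the formation of kernels, cokernels, and extensions, as dictated by the resolvability condition in Definition of coherence.
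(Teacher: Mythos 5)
Your overall architecture matches the paper's: the off-$Y$ case is handled the same way, the coherence of $\widetilde{\mathcal O_Y}$ is obtained exactly as you describe (this is the paper's Lemma \ref{cartansubmlem}, proved by splicing a free resolution of the ideal sheaf $I_V$ into $0\to I_V\to\mathcal O_U\to\widetilde{\mathcal O_Y}|_U\to 0$ using Proposition \ref{idcohlem}), and the stalkwise exactness of the trivially extended resolution of $\mathcal A$ is noted in the same way. The genuine gap is in your final step, where you invoke the horseshoe lemma and assert that coherent $\mathcal O_{c_{\mathfrak a}X}$-modules are closed under kernels, cokernels and extensions ``as dictated by the resolvability condition.'' That closure is precisely what is \emph{not} available for free here: coherence in this paper means only that local free resolutions of every finite length exist, the stalks of $\mathcal O_{c_{\mathfrak a}X}$ are not Noetherian, and the Oka-type three-condition coherence (from which the abelian-category properties classically follow) is never established. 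Concretely, the horseshoe/splicing construction requires lifting a morphism $\mathcal O^m|_U\to\mathcal B$ through a surjection $\mathcal C\to\mathcal B$, i.e.\ lifting finitely many \emph{sections} over $U$; a surjection of sheaves is only surjective on stalks, free sheaves are not projective objects in the category of sheaves of modules, and so this lifting is not automatic over any fixed neighbourhood.

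The paper closes exactly this gap with machinery from Part I: Lemmas 7.15 and 7.17 of \cite{BK8} upgrade the stalkwise exact resolution of $\mathcal A$, suitably truncated (note that the proof starts from a resolution of length $4N$ on $Y$ in order to produce one of length $N$ on $c_{\mathfrak a}X$), to a \emph{completely exact} sequence, i.e.\ one exact on sections over the chosen coordinate neighbourhood $U\cong U_0\times K$, and Lemma 9.3 of \cite{BK8} then performs the splicing you describe under this complete-exactness hypothesis. Without an argument of this kind --- or some substitute establishing section-level surjectivity over a fixed neighbourhood of the given point --- your last step does not go through.
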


It is immediate that $H^k(Y,\mathcal A) \cong H^k(c_{\mathfrak a}X,\tilde{\mathcal A})$. Therefore, Theorems \ref{thmA}, \ref{thmB} and Theorem \ref{cartansubm} imply the following analogues of Cartan theorems A and B.

\medskip

Let $\mathcal A$ be a coherent sheaf on a complex submanifold $Y \subset c_{\mathfrak a}X$ with $X_0$ Stein. 

\begin{theorem}
\label{thmA_}
Each stalk $\phantom{}_{x}\mathcal A$ ($x \in Y$) is generated by  sections $\Gamma(Y,\mathcal A)$ as an $\phantom{}_{x}\mathcal O_Y$-module (``Cartan type theorem A''). 
\end{theorem}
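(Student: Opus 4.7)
The plan is to deduce the statement directly from the Cartan-type Theorem A on $c_{\mathfrak a}X$ (Theorem \ref{thmA}) by means of the trivial extension functor, as already hinted in the text preceding the statement. The main work has been done in Theorem \ref{cartansubm}, so what remains is essentially to translate between sheaves on $Y$ and sheaves on the ambient space $c_{\mathfrak a}X$.

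First, given a coherent sheaf $\mathcal A$ on $Y$, I form its trivial extension $\tilde{\mathcal A}$ to $c_{\mathfrak a}X$, defined by $\tilde{\mathcal A}|_Y := \mathcal A$ and $\tilde{\mathcal A}|_{c_{\mathfrak a}X \setminus Y} := 0$. By Theorem \ref{cartansubm}, $\tilde{\mathcal A}$ is coherent on $c_{\mathfrak a}X$. Since $X_0$ is assumed to be Stein, Theorem \ref{thmA} applies to $\tilde{\mathcal A}$: for every $x \in c_{\mathfrak a}X$, the stalk $\phantom{}_x\tilde{\mathcal A}$ is generated by the global sections $\Gamma(c_{\mathfrak a}X, \tilde{\mathcal A})$ as an $\phantom{}_x\mathcal O$-module.

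Next, I need two identifications for a point $x \in Y$. On the one hand, restriction to $Y$ gives a natural map $\Gamma(c_{\mathfrak a}X, \tilde{\mathcal A}) \to \Gamma(Y, \mathcal A)$; this is an isomorphism because $Y$ is closed in $c_{\mathfrak a}X$ and sections of $\tilde{\mathcal A}$ over any open $U\subset c_{\mathfrak a}X$ are supported in $U\cap Y$, so extension by zero provides the inverse. On the other hand, for $x\in Y$ one has $\phantom{}_x\tilde{\mathcal A}=\phantom{}_x\mathcal A$ as abelian groups, and the $\phantom{}_x\mathcal O$-module structure factors through the surjection $\phantom{}_x\mathcal O \twoheadrightarrow \phantom{}_x\mathcal O_Y$ induced by restriction of germs, since the ideal sheaf $I_Y$ of germs vanishing on $Y$ (coherent by Proposition \ref{idcohlem}) annihilates $\phantom{}_x\mathcal A$. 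Combining these two identifications with Theorem \ref{thmA} applied to $\tilde{\mathcal A}$ yields the required conclusion: $\phantom{}_x\mathcal A$ is generated by $\Gamma(Y,\mathcal A)$ as a $\phantom{}_x\mathcal O_Y$-module.

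The main obstacle is really absorbed into Theorem \ref{cartansubm}, whose content is that trivial extension preserves coherence; once one has that, the deduction above is essentially formal. The only subtlety that remains is the bookkeeping in the second paragraph---verifying that the $\phantom{}_x\mathcal O$-module generation statement descends to an $\phantom{}_x\mathcal O_Y$-module generation statement---but this follows at once from the fact that $\mathcal A$ is by hypothesis a module over $\mathcal O_Y$, so any $\phantom{}_x\mathcal O$-linear combination of sections is automatically an $\phantom{}_x\mathcal O_Y$-linear combination after passing to stalks.
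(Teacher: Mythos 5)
Your proposal is correct and follows exactly the paper's route: the paper derives Theorem \ref{thmA_} by forming the trivial extension $\tilde{\mathcal A}$, invoking Theorem \ref{cartansubm} for its coherence, applying Theorem \ref{thmA} on $c_{\mathfrak a}X$, and identifying $\Gamma(c_{\mathfrak a}X,\tilde{\mathcal A})$ with $\Gamma(Y,\mathcal A)$ and the stalks at points of $Y$. Your second paragraph merely spells out the bookkeeping the paper leaves implicit, and it is accurate.
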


\begin{theorem}
\label{thmB_}
Sheaf cohomology groups $H^i(Y,\mathcal A)=0$ for all $i \geq 1$ (``Cartan type theorem B'').
\end{theorem}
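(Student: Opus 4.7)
The plan is to reduce Theorem \ref{thmB_} to the already established Theorem \ref{thmB} by means of the trivial extension functor that was just introduced. First, I would observe that the inclusion $\kappa:Y\hookrightarrow c_{\mathfrak a}X$ is a closed embedding, and that the trivial extension $\tilde{\mathcal A}$ coincides, as a sheaf of abelian groups, with the direct image $\kappa_{*}\mathcal A$ (stalks equal $\mathcal A_{x}$ for $x\in Y$ and $0$ otherwise, which matches the definition of $\tilde{\mathcal A}$). Then I would invoke Theorem \ref{cartansubm} to upgrade $\tilde{\mathcal A}$ from a mere sheaf of $\mathcal O_{c_{\mathfrak a}X}$-modules to a \emph{coherent} sheaf on $c_{\mathfrak a}X$. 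Since $X_{0}$ is assumed to be Stein, Theorem \ref{thmB} applies to this coherent sheaf and yields $H^{i}(c_{\mathfrak a}X,\tilde{\mathcal A})=0$ for every $i\ge 1$.

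The remaining step is the cohomology comparison $H^{i}(Y,\mathcal A)\cong H^{i}(c_{\mathfrak a}X,\tilde{\mathcal A})$. This is a purely formal fact about pushforward along a closed embedding of (paracompact Hausdorff) topological spaces: $\kappa_{*}$ is exact on the abelian category of sheaves on $Y$, it sends flabby sheaves to flabby sheaves, and it preserves global sections. Concretely, one picks any flabby resolution $\mathcal A\to\mathcal F^{\bullet}$ on $Y$ and checks that $\kappa_{*}\mathcal F^{\bullet}$ is a flabby resolution of $\kappa_{*}\mathcal A=\tilde{\mathcal A}$ on $c_{\mathfrak a}X$ whose complex of global sections is canonically identified with $\Gamma(Y,\mathcal F^{\bullet})$. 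Taking cohomology of both gives the desired isomorphism for all $i\ge 0$. Chaining the three steps, one concludes $H^{i}(Y,\mathcal A)\cong H^{i}(c_{\mathfrak a}X,\tilde{\mathcal A})=0$ for every $i\ge 1$.

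The argument is entirely formal once Theorem \ref{cartansubm} is available, and the substantive content of the result is packaged into that coherence statement together with the ambient Cartan-B theorem \ref{thmB}. The only points requiring care are ensuring that the set-theoretic description of $\tilde{\mathcal A}$ genuinely agrees with $\kappa_{*}\mathcal A$ as $\mathcal O_{c_{\mathfrak a}X}$-modules (so the ringed-space structure does not introduce extra stalks), and that the sheaf-cohomological identification relies only on $c_{\mathfrak a}X$ being paracompact Hausdorff, which was recorded in subsection \ref{topsect}. Hence the main obstacle was resolved in advance by Theorem \ref{cartansubm}; no new analytical input is required here.
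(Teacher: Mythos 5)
Your proposal is correct and follows exactly the paper's route: the paper likewise passes to the trivial extension $\tilde{\mathcal A}$, invokes Theorem \ref{cartansubm} for its coherence, applies Theorem \ref{thmB} on $c_{\mathfrak a}X$, and uses the identification $H^{i}(Y,\mathcal A)\cong H^{i}(c_{\mathfrak a}X,\tilde{\mathcal A})$ (which the paper states as immediate and you justify in slightly more detail via pushforward along the closed embedding). No discrepancies.
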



Let $Y$ be either $c_{\mathfrak a}X$ or a complex submanifold of $c_{\mathfrak a}X$.
The definition of coherence on $Y$ extends directly to open subsets of $Y$. It is natural to call such subset $W\subset Y$ a \textit{Stein manifold} if all higher cohomology groups of coherent sheaves on $W$ vanish (i.e.,~Cartan type theorem B holds on $W$). One can ask about characterization of Stein open submanifolds $W\subset Y$ (e.g.,~in terms of appropriately defined plurisubharmonic exhaustion $\mathfrak a$-functions on $W$).

\subsection{Dolbeault-type complex}

\label{derhamsect}


In this part we describe a Dolbeault-type complex and analogues of Dolbeault isomorphisms used
in the proof of Proposition \ref{zeroapcor}.

Let $Y\subset c_{\mathfrak a}X$ be a complex submanifold. We define the {\em holomorphic tangent bundle} $TY$ of $Y$ as a holomorphic bundle on $Y$ whose pullback by $\iota$ to $\iota^{-1}(Y)$ coincides with  the holomorphic tangent bundle of the complex submanifold $\iota^{-1}(Y)\subset X$ (see the proof of Theorem \ref{tubularnbd} in Section \ref{countthmsect} for existence and uniqueness of $TY$). 

The definition of the antiholomorphic tangent bundle $\overline{TY}$ of $Y$ is analogous. 

We define the complexified 
tangent bundle of $Y$ as the Whitney sum
\begin{equation}
\label{tdecomp}
T^{\mathbb C}Y:=TY \oplus \overline{TY}.
\end{equation}

By $\Lambda_c^m(Y):=\Gamma(Y,\wedge^m(T^{\mathbb C}Y)^*)$ we denote the space of continuous sections of the vector bundle $\wedge^m(T^{\mathbb C}Y)^*$ ($0 \leq m \leq n:=\dim_{\mathbb C}X_0$), where $(T^{\mathbb C}Y)^*$ is the dual bundle of $T^{\mathbb C}Y$. Elements of $\Lambda_c^m(Y)$ will be called continuous \textit{$m$-forms}.

\medskip

By Proposition \ref{localstructap}  for every point $x \in Y$ there exist a neighbourhood $U \subset c_{\mathfrak a}X$, a biholomorphism $\varphi:U \rightarrow U_0 \times K$, where $U_0 \subset \mathbb C^n$, $K \subset \hat{G}_{\mathfrak a}$, $K \in \mathfrak Q$ (see~(\ref{base1})) are open, and a complex submanifold $Y_0 \subset U_0$ such that $\varphi(Y \cap U)=Y_0 \times K$ and $\varphi(Y\cap U\cap \iota(X))=Y_0\times( K\cap j(G))$.
By $\wedge^m T^{\mathbb C}(Y_0\times K)^*$ we denote the pullback to $Y_0\times K$ of the bundle  $\wedge^m(T^{\mathbb C}Y_0)^*$ under the natural projection $Y_0\times K\rightarrow Y_0$. 
Since $\varphi^{-1}|_{Y_0\times( K\cap j(G))}: Y_0\times( K\cap j(G))\rightarrow Y\cap U\cap \iota(X)$ is a biholomorphism of usual complex manifolds, 
\[
\bigl(\varphi^{-1}|_{Y_0\times( K\cap j(G))}\bigr)^*\bigl(\wedge^m(T^{\mathbb C}Y)^*\bigr)=\wedge^mT^{\mathbb C}(Y_0\times K)^*|_{Y_0\times( K\cap j(G))}.
\] 
Since $Y_0\times( K\cap j(G))$ is dense in $Y_0\times K$, the latter bundle is dense in the bundle $\wedge^mT^{\mathbb C}(Y_0\times K)^*$. Thus the above identity and the continuity of $\varphi^{-1}$ imply that  $(\varphi^{-1})^*\bigl(\wedge^m(T^{\mathbb C}Y)^*\bigr)=\wedge^mT^{\mathbb C}(Y_0\times K)^*$. In particular, $(\varphi^{-1})^*$  maps $\Lambda_c^m(Y \cap U)$ to $\Lambda_c^m(Y_0 \times K)$, the space of continuous sections of $\wedge^mT^{\mathbb C}(Y_0\times K)^*$. Clearly,
\begin{equation}
\label{formdecomp}
 \Lambda_c^m(Y_0 \times K)=C(Y_0 \times K) \otimes \Lambda_c^m(Y_0),
\end{equation}
where $\Lambda_c^m (Y_0)$ is the space of continuous $m$-forms on $Y_0$ and $C(Y_0 \times K)$ is the space of continuous functions on $Y_0 \times K$ endowed with the Fr\'{e}chet topology of uniform convergence on compact subsets of $Y_0 \times K$.

By $\Lambda^m(Y) \subset \Lambda_c^m(Y)$ we denote the subspace of $C^\infty$ $m$-forms, that is, forms $\omega$ such that for each ``coordinate map'' $\varphi:U \rightarrow U_0 \times K$, 
\begin{equation}\label{cinfty}
(\varphi^{-1})^* \omega|_{Y \cap U} \in \Lambda^m(Y_0 \times K):=C^\infty(Y_0 \times K) \otimes \Lambda^m(Y_0),
\end{equation}
where $\Lambda^m(Y_0)$ is the space of $C^\infty$ $m$-forms on $Y_0$ and $C^\infty(Y_0 \times K) \subset C(Y_0 \times K)$ is the subspace of continuous functions that are $C^\infty$ when viewed as functions on $Y_0$ taking values in the Fr\'{e}chet space $C(K)$. 

We denote $C^\infty(Y):=\Lambda^0(Y)$.

\begin{lemma}
\label{doesnotdependlem}
$\Lambda^m(Y)$ is correctly defined by (local) conditions \eqref{cinfty}.
\end{lemma}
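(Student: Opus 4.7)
The plan is to verify that the condition (\ref{cinfty}) is invariant under change of ``coordinate map'' $\varphi$. Specifically, for any two such charts $\varphi_i:U_i\to U_{0,i}\times K_i$ ($i=1,2$) with $U_1\cap U_2\ne\emptyset$, I must show that $(\varphi_1^{-1})^*\omega|_{Y\cap U_1\cap U_2}\in\Lambda^m(Y_{0,1}\times K_1)$ if and only if $(\varphi_2^{-1})^*\omega|_{Y\cap U_1\cap U_2}\in\Lambda^m(Y_{0,2}\times K_2)$. Since these two pullbacks are related by $\alpha=T^*\beta$ for $T:=\varphi_2\circ\varphi_1^{-1}$, which is a biholomorphism of ringed spaces in $\mathcal M$, the task reduces to checking that $T$ and $T^{-1}$ pull back $\Lambda^m$-forms to $\Lambda^m$-forms.

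First I would establish that $\mathcal O(U_0\times K)\subset C^\infty(U_0\times K)$ for every open $U_0\subset X_0$ and open $K\subset\hat{G}_{\mathfrak a}$. By the remark preceding Definition \ref{defap}, every $f\in\mathcal O(U_0\times K)$ satisfies $f(\cdot,\omega)\in\mathcal O(U_0)$ for all $\omega\in K$; the Cauchy integral formula in the $z$-variables, applied on a compact exhaustion of $U_0$, then shows that all partial $z$-derivatives of $f$ exist and depend continuously on $\omega\in K$ in the topology of uniform convergence on compacts. Consequently $f$, regarded as a map $U_0\to C(K)$, is holomorphic, hence belongs to $C^\infty(U_0\times K)$ in the sense of (\ref{cinfty}).

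Next I would write $T=(T_1,T_2)$ in product coordinates. Each component of $T_1$, being the $T$-pullback of a coordinate function on $U_{0,2}$, belongs to $\mathcal O(V_1)$ with $V_1:=\varphi_1(U_1\cap U_2)$, and hence lies in $C^\infty(V_1)$ and is holomorphic in the $z$-variables by the previous step. By (\ref{formdecomp}), an element of $\Lambda^m(Y_{0,2}\times K_2)$ has the form $\beta=\sum_{I,J}f_{IJ}\,dz'_I\wedge d\bar z'_J$ with $f_{IJ}\in C^\infty(Y_{0,2}\times K_2)$. Since the complexified tangent bundle of $Y$ contains only the $Y_0$-directions, the pullback reduces to
\[
T^*\beta=\sum_{I,J}(f_{IJ}\circ T)\,d(T_1)_I\wedge d\overline{(T_1)_J},
\]
with $d(T_1)_i=\sum_j\bigl(\partial(T_1)_i/\partial z_j\bigr)\,dz_j$ having $C^\infty$ coefficients on $V_1$. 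It remains only to show that $f_{IJ}\circ T\in C^\infty(V_1)$.

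The main obstacle is this last point: the second component $T_2$ takes values in $K_2\subset\hat{G}_{\mathfrak a}$, which carries no differential structure, so a naive chain rule is unavailable. To overcome this I would use the explicit form of the charts supplied by Proposition \ref{localstructap}, obtained from the inverse function theorem with continuous dependence on parameter (Theorem \ref{ift}): such a chart preserves the splitting between the $Y_0$-direction, where a genuine holomorphic coordinate system exists, and the $K$-direction, where transitions are merely continuous but enter the composition only through evaluation of $C(K)$-valued smooth functions. Consequently each $z$-partial derivative of $f_{IJ}\circ T$ can be computed by a chain rule restricted to the $z$-variables, and iteratively produces derivatives that are jointly continuous in $(z,\omega)$ and smooth in $z$, giving $f_{IJ}\circ T\in C^\infty(V_1)$. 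Applying the same reasoning to $T^{-1}$ yields the reverse implication, so the two conditions (\ref{cinfty}) coming from $\varphi_1$ and $\varphi_2$ coincide on $Y\cap U_1\cap U_2$; covering $Y$ by such overlapping charts completes the consistency check.
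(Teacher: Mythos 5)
Your overall architecture is the right one — reduce correctness of definition \eqref{cinfty} to invariance under the transition map $T=\varphi_2\circ\varphi_1^{-1}$, observe that $\mathcal O\subset C^\infty$, and then push a form through $T^*$ using the product decomposition \eqref{formdecomp} and a chain rule for $C(K)$-valued smooth functions. This matches the paper's strategy. But there is a genuine gap exactly where you flag "the main obstacle": you never actually establish the fact that makes the chain rule legitimate, namely that the fibre component $T_2$ of the transition map is \emph{independent of} $z$. If $T_2$ depended on $z$, then differentiating $f_{IJ}\circ T$ in $z$ would produce derivatives of $f_{IJ}$ in the $\hat{G}_{\mathfrak a}$-direction, which do not exist; your phrase ``a chain rule restricted to the $z$-variables'' silently assumes there is no such contribution. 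The paper isolates this as Lemma \ref{le21.1}: any $F\in\mathcal O(V_{0,1}\times K_1,V_{0,2}\times K_2)$ splits as $F(z,\omega)=(f_\omega(z),h(\omega))$ with $h\in C(K_1,K_2)$, a consequence of Theorem 4.6 in \cite{BK8} (the statement that $\pi^2\circ F(\cdot,\omega)$ is constant on the connected set $V_{0,1}$). Since $T$ is a biholomorphism, $h$ is a homeomorphism, and after composing with $(z,\omega)\mapsto(z,h^{-1}(\omega))$ one may assume $h=\Id$; only then does $(F^*p)(z,\omega)=p(f_\omega(z),\omega)$ reduce to composing a $C(K)$-valued $C^\infty$ function with the holomorphic (hence $C^\infty$) map $z\mapsto f_\cdot(z)$.

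Your attribution of the splitting to Proposition \ref{localstructap} and Theorem \ref{ift} does not fill this hole: those results describe the structure of a \emph{single} chart (where the construction indeed takes the form $\Phi(z,\eta)=(H(z,\eta),\eta)$), but the transition between two charts attached to different local trivializations of $c_{\mathfrak a}X$ is an arbitrary biholomorphism in the category $\mathcal M$, and the claim that \emph{every} such map preserves the base/fibre splitting is precisely the nontrivial content of Lemma \ref{le21.1}. (One could alternatively try to track the explicit transitions — the bundle cocycle $c_{\delta\gamma}$ is locally constant, so acts on fibres by a fixed right translation over each connected overlap — but that argument is not made in your proposal either.) Supplying the splitting lemma, or an explicit cocycle argument in its place, is what is needed to close the proof.
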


Let $a \in C^\infty(Y_0 \times K)$. We define the differential $da \in \Lambda^1(Y_0 \times K)$ as follows. 
(To simplify notation, we may assume without loss of generality that $Y_0$ is an
open subset of $\mathbb C^{n-k}$.)

Define
$
da:=\sum_{i=1}^{n-k} \frac{\partial a}{\partial z_i} dz_i,
$
where $\frac{\partial a}{\partial z_i} \in C^\infty(Y_0 \times K)$ is the derivative of the Fr\'{e}chet-valued map
$z\mapsto a(z,\cdot)\in C(K)$, $z=(z_1,\dots,z_{n-k})\in Y_0$, with respect to $z_i$.

We have the operator of differentiation
$
d:\Lambda^m(Y_0 \times K) \rightarrow \Lambda^{m+1}(Y_0 \times K)$ defined by the formula 
\begin{equation}
\label{dop}
d\left(\sum_{i=}^l a_i  \omega_i \right):=\sum_{i=1}^l a_i  d\omega_i+\sum_{i=1}^l da_i \wedge \omega_i, \quad a_i \in C^\infty(Y_0 \times K), \quad \omega_i \in \Lambda^m(Y_0).
\end{equation}
Now, we define the operator of differentiation $d: \Lambda^m(Y)\rightarrow \Lambda^{m+1}(Y)$:

For each coordinate map $\varphi:U \rightarrow U_0 \times K$ and $\omega \in \Lambda^m(Y)$ the form $d\omega\in \Lambda^{m+1}(Y)$ satisfies  
\begin{equation}\label{differ}
(\varphi^{-1})^*d\omega=d \bigl((\varphi^{-1})^*\omega\bigr),
\end{equation}
where the right-hand side is defined by (\ref{dop}). 

Existence of $d\omega$ satisfying local conditions \eqref{differ} follows from the facts that due to these conditions $\iota^*\circ d|_{\Lambda^m(Y)|_U}=d\circ\iota^*|_{\Lambda^m(Y)|_U}$, where the $d$ on the right denotes the standard differentiation on the space of differential forms defined on the complex submanifold $\iota^{-1}(Y) \subset X$, and that $\iota(\iota^{-1}(Y))$ is dense in $Y$ (see Proposition \ref{localstructap}). By the same reason we have $d\circ d=0$.

Further, (\ref{tdecomp}) induces decomposition 
$(T^{\mathbb C}Y)^*=TY^* \oplus \overline{TY}^*$ and, hence,
\begin{equation}
\label{tdecomp2}
\Lambda^m(Y)=\oplus_{p+k=m}\, \Lambda^{p,k}(Y), 
\end{equation}
where
$$
\Lambda^{p,k}(Y):=\Gamma(Y,\wedge^p TY^* \otimes \wedge^q \overline{TY}^*)\cap\Lambda^m(Y).
$$
Since the pullback by $\iota$ of $TY^*$ coincides with the holomorphic cotangent bundle of complex submanifold $\iota^{-1}(Y) \subset X$, the pullback by $\iota$ of decomposition (\ref{tdecomp2}) agrees with the usual type decomposition of differential forms on $\iota^{-1}(Y)$.

Using the natural projections $\pi^{p,k}:\Lambda^m(Y) \rightarrow \Lambda^{p,k}(Y)$ ($m=p+k$), we define
$$
\partial:=\pi^{p+1,k} \circ d, \quad \bar{\partial}:=\pi^{p,k+1} \circ d.
$$
Since pullbacks by $\iota$ of these operators coincide with their usual counterparts on the complex submanifold $\iota^{-1}(Y) \subset X$ and the image by $\iota$ of the latter is dense in $Y$, we have $\partial\circ\partial=0$, $\bar{\partial}\circ\bar{\partial}=0$ and $d=\partial+\bar{\partial}$.

The above definitions and notation transfer naturally to open subsets of $Y$. In particular, we can define the sheaf $\Lambda^{p,k}$ of germs of $C^\infty$ $(p,q)$-forms on $Y$.
\begin{lemma}
\label{finesheaflem}
For any open cover $\mathcal U$ of $Y$ there is a subordinate $C^\infty$ partition of unity.
\end{lemma}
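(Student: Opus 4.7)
My plan is to reduce to the standard partition-of-unity construction on the base manifold $X_0$ tensored with continuous bump functions in the compact Hausdorff fibre $\hat{G}_{\mathfrak{a}}$. Since $c_{\mathfrak{a}}X$ is a fibre bundle over the paracompact base $X_0$ with compact Hausdorff fibre, it is paracompact Hausdorff, and hence so is its closed subset $Y$. Using Proposition \ref{localstructap} together with a standard refinement argument in the paracompact Hausdorff space $Y$, I first produce locally finite open covers $\{W_\beta'\}_{\beta\in B}$ and $\{W_\beta\}_{\beta\in B}$ of $Y$ with $\overline{W_\beta'}\subset W_\beta$, each $W_\beta$ contained in some element of $\mathcal{U}$ and in a coordinate chart $\Phi_\beta:V_{0,\beta}\times K_\beta\to\tilde V_\beta$ with $\Phi_\beta^{-1}(\tilde V_\beta\cap Y)=Z_{0,\beta}\times K_\beta$, and with $\Phi_\beta^{-1}(W_\beta\cap Y)=\Omega_\beta\times L_\beta$, $\Phi_\beta^{-1}(W_\beta'\cap Y)=\Omega_\beta'\times L_\beta'$, $\overline{\Omega_\beta'}\subset\Omega_\beta$ compact in $Z_{0,\beta}$, $\overline{L_\beta'}\subset L_\beta$.

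For each $\beta$, build a $C^\infty$ bump $\rho_\beta$ as follows. Choose a standard $C^\infty$ bump $\tau_\beta$ on the complex manifold $Z_{0,\beta}$ with compact support in $\Omega_\beta$ and $\tau_\beta\equiv 1$ on $\overline{\Omega_\beta'}$. Apply Urysohn's lemma on the normal compact Hausdorff space $\hat{G}_{\mathfrak{a}}$ to select $\chi_\beta\in C(\hat{G}_{\mathfrak{a}},[0,1])$ with $\supp(\chi_\beta)\subset L_\beta$ and $\chi_\beta\equiv 1$ on $\overline{L_\beta'}$. Set $\rho_\beta:=(\tau_\beta\otimes\chi_\beta)\circ\Phi_\beta^{-1}$ on $\tilde V_\beta\cap Y$ and $\rho_\beta:=0$ elsewhere on $Y$. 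The crucial analytic point is that in the Fr\'echet-valued formulation \eqref{cinfty}, $\tau_\beta\otimes\chi_\beta$ on $Z_{0,\beta}\times K_\beta$ corresponds to the map $z\mapsto\tau_\beta(z)\cdot\chi_\beta\in C(K_\beta)$, which is $C^\infty$ as a Banach-valued function because it is the product of a $C^\infty$ scalar function with a fixed element of $C(K_\beta)$; hence $\tau_\beta\otimes\chi_\beta\in C^\infty(Z_{0,\beta}\times K_\beta)$, and because $\supp(\rho_\beta)\subset W_\beta$ (product of compact supports, pulled back by $\Phi_\beta$), Lemma \ref{doesnotdependlem} yields $\rho_\beta\in C^\infty(Y)$.

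Finally, the family $\{W_\beta\}$ being locally finite and $\supp(\rho_\beta)\subset W_\beta$, the sum $\sigma:=\sum_\beta\rho_\beta$ is a locally finite sum and lies in $C^\infty(Y)$; moreover $\rho_\beta\equiv 1$ on $W_\beta'\cap Y$ and $\bigcup_\beta W_\beta'=Y$ force $\sigma\geq 1$ throughout $Y$. Since inversion is smooth on the open set of invertible elements of the Banach algebra $C(K_\beta)$, $1/\sigma\in C^\infty(Y)$, and the functions $\psi_\beta:=\rho_\beta/\sigma$ form the desired $C^\infty$ partition of unity subordinate to $\mathcal{U}$. The main obstacle in the plan is the combinatorial step of producing the paired locally finite refinement $\{W_\beta',W_\beta\}$ simultaneously compatible with the product structure of the charts; the analytic content beyond that reduces to the elementary Fr\'echet-valued smoothness observation above, combined with classical smooth bumps on $X_0$ and Urysohn's lemma on $\hat{G}_{\mathfrak{a}}$.
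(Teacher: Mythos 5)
Your outline is correct in conception and rests on the same essential mechanism as the paper's proof: work in the product charts $Y_0\times K$ supplied by Proposition \ref{localstructap} and build functions that are $C^\infty$ in the base direction but merely continuous in the fibre direction, which is exactly what membership in $C^\infty(Y)$ requires by \eqref{cinfty}. The paper organizes the construction differently at two points, and in both places your version leaves work undone. First, the refinement: the paper never produces the paired refinement $\{W_\beta'\subset W_\beta\}$ inside $Y$ that you flag as your main obstacle. Instead it adjoins $c_{\mathfrak a}X\setminus Y$ to the lifted cover and refines on $c_{\mathfrak a}X$ itself by sets $\hat{\Pi}(V_{0,\alpha},K_{\alpha,\beta})$, using compactness of the fibres to arrange that $\{V_{0,\alpha}\}$ is a locally finite open cover of $X_0$ and that only finitely many distinct $K_{\alpha,\beta}$ occur over each $V_{0,\alpha}$, with $\cup_{\beta}\hat{\Pi}(V_{0,\alpha},K_{\alpha,\beta})=\bar{p}^{-1}(V_{0,\alpha})$; local finiteness and compatibility with the product structure then come for free. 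Second, the normalization: rather than summing bumps and dividing, the paper takes $\nu_{\alpha,\beta}:=\bar{p}^*\rho_\alpha\cdot\pi_\alpha^*\mu_{\alpha,\beta}$, the product of a smooth partition of unity $\{\rho_\alpha\}$ on $X_0$ with a continuous partition of unity $\{\mu_{\alpha,\beta}\}_\beta$ on a single fibre $\bar{p}^{-1}(x_\alpha)$ pulled back by the fibrewise projection $\pi_\alpha$, so that $\sum_{\alpha,\beta}\nu_{\alpha,\beta}\equiv 1$ by construction. This sidesteps your step $1/\sigma\in C^\infty(Y)$, whose stated justification is slightly off: for $K_\beta\subset\hat{G}_{\mathfrak a}$ open, $C(K_\beta)$ is a Fr\'echet algebra rather than a Banach algebra and its group of invertibles need not be open, so you would have to verify smoothness of $z\mapsto 1/\sigma(z,\cdot)$ directly from the lower bound $\sigma\ge 1$ --- doable, but an extra argument. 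Neither issue is fatal; both are gaps of execution rather than of conception, and the paper's bundle-level refinement plus product-of-partitions normalization is the cleaner route.
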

This lemma implies that $\Lambda^{p,k}$ is a fine sheaf, and therefore 
cohomology groups $H^r(Y,\Lambda^{p,k})=0$ for all $r \geq 1$ (see, e.g.,~\cite{GR}).

\medskip

Let $Z^{p,k} \subset \Lambda^{p,k}$ denote the subsheaf of germs of $\bar{\partial}$-closed forms. We have the following analogue of $\bar{\partial}$-Poincar\'{e} lemma for sections of $Z^{p,k}$.

\begin{proposition}
\label{poincarelem}
Let $Y \subset c_{\mathfrak a}X$ be a complex submanifold. For every point $x \in Y$ there are neighbourhoods $W, V \subset Y$, $W \Subset V$, of $x$ such that restriction to $W$ of any $\bar{\partial}$-closed form in $\Lambda^{p,k+1}(V)$ is $\bar{\partial}$-exact.
\end{proposition}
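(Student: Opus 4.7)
The plan is to reduce the statement to a Fr\'echet-valued $\bar\partial$-Poincar\'e lemma on a polydisk via the local biholomorphic model supplied by Proposition \ref{localstructap}. Pick a neighbourhood $U$ of $x$ in $c_{\mathfrak a}X$, open sets $U_0 \subset X_0$ and $K \subset \hat{G}_{\mathfrak a}$, a complex submanifold $Y_0 \subset U_0$, and a biholomorphism $\varphi: U \to U_0 \times K$ with $\varphi(Y \cap U) = Y_0 \times K$. Composing with a local holomorphic chart on $Y_0$, one may assume $Y_0 = P$, a polydisk in $\mathbb C^{n-k}$. It then suffices to produce, for a given $\bar\partial$-closed $\omega \in \Lambda^{p,k+1}(P \times K)$ and a suitable subpolydisk $W_0 \Subset P$, a form $\eta \in \Lambda^{p,k}(W_0 \times K)$ with $\bar\partial\eta = \omega$; pulling back by $\varphi$ and taking $W := \varphi^{-1}(W_0 \times K)$, $V := U$ will conclude the proof.

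Using the identifications \eqref{formdecomp} and \eqref{cinfty}, the form $\omega$ corresponds to a $(p,k+1)$-form on $P$ whose coefficients are $C^\infty$ maps $P \to C(K)$ in the Fr\'echet sense. Because $K$ carries no complex structure in the definition of $\overline{TY}$ (the pullback $\varphi^*\overline{TY}$ involves only the $P$ factor, by Proposition \ref{localstructap} combined with density of $Y\cap\iota(X)$ in $Y$), the operator $\bar\partial$ differentiates only in the $z \in P$ variables and treats $C(K)$ as scalars. The task is thus the Fr\'echet-valued $\bar\partial$-Poincar\'e lemma on a polydisk: solve $\bar\partial\eta = \omega$ on $W_0$ with $C(K)$-valued $C^\infty$ coefficients.

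I would solve it by applying the classical iterated Cauchy-Green integral operators from the scalar Dolbeault lemma for polydisks (see, e.g., \cite{Dem}) coefficientwise to the $C(K)$-valued form. These operators are defined by integration against explicit kernels that do not depend on $\omega$, so integration commutes with point-evaluation on $K$ and the output lies in $C(K)$. The standard differentiation-under-the-integral argument, aided by the compactly supported cut-off built into the polydisk Dolbeault construction, yields $C^\infty$-regularity of $\eta$ as a map $P \to C(K)$, since the kernels are locally integrable and difference quotients of $\omega$ converge uniformly in the $K$-parameter.

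The main technical obstacle I anticipate is verifying that the scalar Dolbeault primitive preserves the Fr\'echet-valued $C^\infty$-regularity required by \eqref{cinfty}, rather than merely joint continuity in $P \times K$. This reduces to careful differentiation under the integral sign in a Fr\'echet-valued setting together with a norm-uniform (in the $C(K)$-topology) Taylor expansion of $\omega$ in the base variables; both are standard facts, but they have to be checked against the precise definition of $C^\infty(P \times K)$ adopted in the paper, and constitute the only nontrivial point beyond the classical polydisk case.
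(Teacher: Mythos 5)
Your reduction is the same as the paper's: both proofs invoke Proposition \ref{localstructap} to flatten $Y$ locally into a product $Y_0\times K$ and then treat a $\bar\partial$-closed form as a form on the base with values in a space of continuous functions on the fibre. Where you diverge is in the final solvability step. The paper shrinks in \emph{both} directions, taking $\varphi(W)=Z_0'\times K'$ with $Z_0'\Subset Z_0$ a concentric ball and $K'\Subset K$, so that the coefficients land in the \emph{Banach} space $C_b(K')$ with $\sup$-norm; it then simply cites Lemma \ref{hl1}, the Henkin--Leiterer bounded solution operator for Banach-valued $(0,q)$-forms on a strictly pseudoconvex domain, and is done in three lines. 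You instead keep the full Fr\'echet space $C(K)$ and rebuild the polydisk Dolbeault lemma with iterated Cauchy--Green kernels, which forces you to confront exactly the regularity issue you flag (differentiation under the integral for Fr\'echet-valued maps against the paper's definition of $C^\infty(P\times K)$); this is doable but is precisely the work the paper's Banach-space shrinking trick is designed to avoid. Two small points to fix in your version: (i) with $W:=\varphi^{-1}(W_0\times K)$ you do not get $W\Subset V$ as the statement requires, since $\overline K$ need not be contained in $K$ inside the compact fibre $\hat G_{\mathfrak a}$ --- you must shrink $K$ to some $K'\Subset K$ as well, at which point you may as well pass to $C_b(K')$ and quote Lemma \ref{hl1}; (ii) your composition ``with a local holomorphic chart on $Y_0$'' to reduce to a polydisk is unnecessary, since Proposition \ref{localstructap} already produces $Z_0$ of the explicit form $\{z_1=\dots=z_k=0\}$ in a ball, so $Y_0$ can be taken to be a ball (or polydisk) from the outset.
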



Let $Z^{p,k}(Y) \subset \Lambda^{p,k}(Y)$ denote the subspace of $\bar{\partial}$-closed forms. We define the {\em Dolbeault cohomology groups} of $Y$ as 
$$
H^{p,k}(Y):=Z^{p,k}(Y)/\bar{\partial} \Lambda^{p,k-1}(Y), \quad p \geq 0, \quad k \geq 1, $$ 
$$
H^{p,0}(Y):=Z^{p,0}(Y).
$$

We set $\Omega^p:=Z^{p,0}$. Then $\Omega^p$ is the sheaf of germs of holomorphic $p$-forms on $Y$, i.e., holomorphic sections of the bundle $\wedge^p TY^*$. (Note that $\iota^*\Omega^p$ is the sheaf of germs of usual holomorphic $p$-forms on the complex submanifold $\iota^{-1}(Y)\subset c_{\mathfrak a}X$.)

Since $\Lambda^{p,k}$ is a fine sheaf, from Proposition \ref{poincarelem} and a standard result in \cite[Ch.B \S 1.3]{GrRe} we obtain

\begin{corollary}[Dolbeault-type isomorphism]
\label{dolcor1}
$\forall p,k \geq 0$, $H^{p,k}(Y) \cong H^k(Y,\Omega^p)$.
\end{corollary}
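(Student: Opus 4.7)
\medskip

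\noindent\textbf{Proof plan for Corollary \ref{dolcor1}.}  The plan is to realize the Dolbeault complex $\Lambda^{p,\bullet}(Y)$ as the complex of global sections of a fine resolution of the sheaf $\Omega^p$, and then invoke the abstract de Rham theorem (which is what is meant by the reference to \cite{GrRe}). All the ingredients are already in place in subsection \ref{derhamsect}, so the argument should be essentially formal.

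First I would construct the candidate resolution. For each $p\ge 0$ consider the sequence of sheaves
\[
0\to \Omega^p \xrightarrow{\;\iota\;} \Lambda^{p,0}\xrightarrow{\;\bar\partial\;} \Lambda^{p,1}\xrightarrow{\;\bar\partial\;} \Lambda^{p,2}\to\cdots\to \Lambda^{p,n}\to 0
\]
on $Y$, where $\iota$ is the natural inclusion. By definition $\Omega^p=Z^{p,0}=\ker(\bar\partial\colon \Lambda^{p,0}\to\Lambda^{p,1})$, so the sequence is exact at $\Lambda^{p,0}$. For exactness at $\Lambda^{p,k}$ with $k\ge 1$, I would apply the $\bar\partial$-Poincar\'{e}-type lemma (Proposition \ref{poincarelem}): a germ of a $\bar\partial$-closed $(p,k)$-form at $x\in Y$ is represented by some $\omega\in \Lambda^{p,k}(V)$ on a neighbourhood $V$, and the proposition furnishes $W\Subset V$ and $\eta\in\Lambda^{p,k-1}(W)$ with $\bar\partial\eta=\omega|_W$, giving the desired exactness of stalks. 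Thus the displayed sequence is a resolution of $\Omega^p$.

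Next I would note that each $\Lambda^{p,k}$ is a fine sheaf on $Y$: this is exactly Lemma \ref{finesheaflem}, which provides $C^\infty$ partitions of unity subordinate to any open cover, allowing the usual construction of sheaf endomorphisms supported in the elements of a cover. Consequently $H^r(Y,\Lambda^{p,k})=0$ for all $r\ge 1$ (see, e.g., \cite{GR}), so the above is a resolution of $\Omega^p$ by acyclic sheaves.

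Finally, by the standard abstract de Rham theorem (\cite[Ch.B \S 1.3]{GrRe}), the cohomology $H^k(Y,\Omega^p)$ is computed as the cohomology of the global-sections complex
\[
\Lambda^{p,0}(Y)\xrightarrow{\bar\partial} \Lambda^{p,1}(Y)\xrightarrow{\bar\partial} \Lambda^{p,2}(Y)\to\cdots,
\]
whose $k$-th cohomology is $Z^{p,k}(Y)/\bar\partial\Lambda^{p,k-1}(Y)=H^{p,k}(Y)$ for $k\ge 1$, and whose $0$-th cohomology is $Z^{p,0}(Y)=H^{p,0}(Y)$, as required. The one step that genuinely requires content from the paper is the local $\bar\partial$-exactness (Proposition \ref{poincarelem}); once that is granted, everything else is the formal machinery of sheaf cohomology, so no further obstacle is expected.
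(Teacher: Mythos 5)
Your argument is correct and is exactly the paper's proof: the paper likewise deduces the corollary from the fineness of $\Lambda^{p,k}$ (Lemma \ref{finesheaflem}), the local $\bar\partial$-exactness of Proposition \ref{poincarelem}, and the abstract de Rham theorem of \cite[Ch.B \S 1.3]{GrRe}, merely compressing these steps into one sentence. Your write-up simply makes the resolution $0\to\Omega^p\to\Lambda^{p,\bullet}$ and the stalk-exactness argument explicit.
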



Since $\Omega^p$ is the sheaf of germs of sections of a holomorphic vector bundle on $Y$, it is coherent (see subsection \ref{cartansubmsect}). Then the previous corollary and Theorem \ref{thmB_} imply

\begin{corollary}
\label{dolcor2}
Suppose that $X_0$ is a Stein manifold, $Y \subset c_{\mathfrak a}X$ is a complex submanifold. Then
$$
H^{p,k}(Y)=0 \quad \text{for all}\quad p \geq 0, \quad k \geq 1
$$
(i.e., any $\bar{\partial}$-closed form in $\Lambda^{p,k}(Y)$ is $\bar{\partial}$-exact).
\end{corollary}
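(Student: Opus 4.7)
The plan is to combine the two preparatory results stated immediately before the corollary: the Dolbeault-type isomorphism of Corollary \ref{dolcor1} and the Cartan type theorem B on complex submanifolds (Theorem \ref{thmB_}). The bridge between them is the assertion that $\Omega^p$ is a coherent sheaf on $Y$, which the excerpt already flags but does not unpack.

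First I would invoke Corollary \ref{dolcor1} to replace the Dolbeault group by sheaf cohomology:
\[
H^{p,k}(Y)\ \cong\ H^k(Y,\Omega^p),\qquad p\ge 0,\ k\ge 1.
\]
Thus it is enough to verify $H^k(Y,\Omega^p)=0$ for $k\ge 1$ under the assumption that $X_0$ is Stein.

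Next I would verify that $\Omega^p$ fits into the framework of coherent sheaves on $Y$ to which Theorem \ref{thmB_} applies. Recall that $\Omega^p$ is, by construction, the sheaf of germs of holomorphic sections of the holomorphic vector bundle $\wedge^p TY^*$ on $Y$. Locally, in a coordinate chart of the form $\varphi:U\to U_0\times K$ with $\varphi(Y\cap U)=Y_0\times K$ as in Proposition \ref{localstructap}, the bundle $\wedge^p TY^*$ pulls back to the pullback of $\wedge^p TY_0^*$ under the projection $Y_0\times K\to Y_0$, which is holomorphically trivial after further shrinking $U_0$. Consequently $\Omega^p|_{Y\cap U}\cong \mathcal O_Y^{r}|_{Y\cap U}$ for $r=\binom{\dim_{\mathbb C}Y_0}{p}$, so the trivial free resolution $0\to \mathcal O_Y^{r}|_{Y\cap U}\to\Omega^p|_{Y\cap U}\to 0$ (padded on the left by zero maps to arbitrary length $N$) exhibits $\Omega^p$ as a coherent sheaf on $Y$ in the sense of subsection \ref{cartansubmsect}.

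With $\Omega^p$ coherent and $X_0$ Stein, Theorem \ref{thmB_} (Cartan type theorem B on complex submanifolds of $c_{\mathfrak a}X$) yields $H^k(Y,\Omega^p)=0$ for every $k\ge 1$, and combining with the Dolbeault isomorphism above gives $H^{p,k}(Y)=0$, i.e., every $\bar\partial$-closed form in $\Lambda^{p,k}(Y)$ is $\bar\partial$-exact. The main (and only non-bookkeeping) obstacle is the coherence check for $\Omega^p$; once the local triviality argument above is in place everything else is immediate from the two cited results.
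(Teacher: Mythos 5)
Your proof is correct and follows essentially the same route as the paper: the paper likewise deduces the corollary by noting that $\Omega^p$, being the sheaf of germs of sections of a holomorphic vector bundle on $Y$, is coherent, and then combining Corollary \ref{dolcor1} with Theorem \ref{thmB_}. Your local triviality argument merely spells out the coherence check that the paper asserts in one line.
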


Similarly one can define the de Rham cohomology groups of $Y$ and obtain an analogue of the classical de Rham isomorphism (see the proof of Proposition \ref{zeroapcor}).

\subsection{Characterization of $c_{\mathfrak a}X$ as the maximal ideal space of $\mathcal O_{\mathfrak a}(X)$}
\label{sectionmaxid}

Now we relate the fibrewise compactification $c_{\mathfrak a}X$ of covering $p: X\rightarrow X_0$ to the maximal ideal space $M_X$ of algebra $\mathcal O_{\mathfrak a}(X)$, i.e., the space of non-zero continuous homomorphisms $\mathcal O_{\mathfrak a}(X) \rightarrow \mathbb C$ endowed with weak* topology (of $\mathcal O_{\mathfrak a}(X)^*$).

\begin{theorem}
\label{corona}
Suppose that algebra $\mathfrak a$ is self-adjoint, and $X_0$ is a Stein manifold. Then $M_{X}$ is homeomorphic to $c_{\mathfrak a}X$. 
\end{theorem}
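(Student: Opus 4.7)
The plan is to invoke Proposition \ref{basicpropthm}(2) to identify $\mathcal O_{\mathfrak a}(X)$ with $\mathcal O(c_{\mathfrak a}X)$ as Fr\'echet algebras, and then to show that the canonical evaluation map $\Phi:c_{\mathfrak a}X\to M_X$, $\Phi(y)(\hat f):=\hat f(y)$, is a homeomorphism. Continuity of $\Phi$ is immediate from the weak$^*$ topology of the target together with continuity of each $\hat f$ on $c_{\mathfrak a}X$. The remaining content — injectivity, surjectivity, and openness of $\Phi$ — will all be extracted from Theorems \ref{thmA} and \ref{thmB}.

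\textbf{Injectivity.} For distinct $y_1,y_2\in c_{\mathfrak a}X$ with $\bar p(y_1)\ne\bar p(y_2)$, separation comes from a pullback by $\bar p^*$ of a holomorphic function on the Stein manifold $X_0$. Otherwise both lie in a common fibre $Y:=\bar p^{-1}(x_0)$, which is a complex submanifold of $c_{\mathfrak a}X$ of codimension $n=\dim_{\mathbb C}X_0$ in the sense of Definition \ref{defap}: in a chart $\hat\Pi(U_0,\hat G_{\mathfrak a})$ with $U_0$ a coordinate chart of $X_0$ at $x_0$, $Y$ is the zero set of the pulled-back coordinate differences $z_i-z_i(x_0)$, $i=1,\dots,n$, whose Jacobian has full rank. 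Its ideal sheaf $I_Y$ is coherent by Proposition \ref{idcohlem}, and Theorem \ref{thmB} applied to the short exact sequence $0\to I_Y\to\mathcal O\to\mathcal O/I_Y\to 0$ makes the restriction $\mathcal O(c_{\mathfrak a}X)\twoheadrightarrow\Gamma(Y,\mathcal O_Y)$ surjective. Since $\iota^{-1}(Y)=p^{-1}(x_0)$ is discrete, $\mathcal O_Y$ coincides with the sheaf of continuous functions on $Y\cong\hat G_{\mathfrak a}$, whence $\Gamma(Y,\mathcal O_Y)\cong C(\hat G_{\mathfrak a})\cong\mathfrak a$ (Gelfand transform combined with self-adjointness, which forces $\hat G_{\mathfrak a}=M_{\mathfrak a}$). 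Since $\mathfrak a$ separates points of $\hat G_{\mathfrak a}$, we obtain an element of $\mathcal O(c_{\mathfrak a}X)$ separating $y_1$ and $y_2$.

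\textbf{Surjectivity.} Given a continuous character $\varphi:\mathcal O(c_{\mathfrak a}X)\to\mathbb C$, the composition $\varphi\circ\bar p^*$ is a continuous character of $\mathcal O(X_0)$, hence $\varphi\circ\bar p^*=\mathrm{ev}_{x_0}$ for a unique $x_0\in X_0$ by the classical identification of the maximal ideal space of $\mathcal O(X_0)$ with the Stein manifold $X_0$. Choose global sections $g_1,\dots,g_N\in\mathfrak m_{x_0}:=\Gamma(X_0,\mathcal I_{x_0})$ generating the coherent ideal sheaf $\mathcal I_{x_0}\subset\mathcal O_{X_0}$ (such finite generating sets exist on Stein manifolds by Cartan A combined with Forster's theorem on generators of coherent sheaves). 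The pullbacks $\bar p^*g_1,\dots,\bar p^*g_N$ generate the coherent ideal $I_Y$ on $c_{\mathfrak a}X$: near $Y$ by a fibrewise Weierstrass division in the chart coordinates with continuous $\hat G_{\mathfrak a}$-dependent coefficients, and away from $Y$ trivially because $(I_Y)_y=\mathcal O_y$ there. Letting $\mathcal K$ be the coherent syzygy kernel in
\[
0\to\mathcal K\to\mathcal O^N\xrightarrow{(h_1,\dots,h_N)\mapsto\sum h_i\bar p^*g_i} I_Y\to 0,
\]
Theorem \ref{thmB} gives $H^1(c_{\mathfrak a}X,\mathcal K)=0$, so $\mathcal O(c_{\mathfrak a}X)^N\twoheadrightarrow\Gamma(I_Y)$. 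Consequently every $f\in\Gamma(I_Y)$ decomposes as $f=\sum h_i\bar p^*g_i$ and $\varphi(f)=\sum\varphi(h_i)\,g_i(x_0)=0$, so $\varphi$ descends to a continuous character of the quotient $\mathcal O(c_{\mathfrak a}X)/\Gamma(I_Y)\cong\mathfrak a$ (cf.\ Injectivity). By Gelfand–Naimark this descended character is $\mathrm{ev}_{\omega_0}$ for a unique $\omega_0\in\hat G_{\mathfrak a}$, and the point $y_0\in Y$ corresponding to $\omega_0$ under $Y\cong\hat G_{\mathfrak a}$ satisfies $\Phi(y_0)=\varphi$.

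\textbf{Openness and main obstacle.} For the inverse continuity, suppose $\Phi(y_\alpha)\to\Phi(y)$ in $M_X$. Testing against pullbacks from $\mathcal O(X_0)$ forces $\bar p(y_\alpha)\to\bar p(y)$ on the Stein manifold $X_0$, so eventually $y_\alpha\in\bar p^{-1}(\overline{V_0})$ for a relatively compact open $V_0\ni\bar p(y)$; this set is compact in $c_{\mathfrak a}X$ because the fibre $\hat G_{\mathfrak a}$ is compact. On this compactum the weak topology generated by $\mathcal O(c_{\mathfrak a}X)$ — point-separating by Injectivity — agrees with the subspace topology (two comparable Hausdorff topologies on a compactum coincide), so $y_\alpha\to y$. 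The principal obstacle is the global-generation step at the heart of Surjectivity: reducing $\Gamma(c_{\mathfrak a}X,I_Y)$ to the algebraic ideal generated by $\bar p^*\mathfrak m_{x_0}$ requires (i) finite global generation of the maximal ideal sheaf $\mathcal I_{x_0}$ on Stein $X_0$, (ii) lifting these generators to $I_Y$ by Weierstrass-type division respecting the continuous $\hat G_{\mathfrak a}$-parameter, and (iii) verifying that the resulting syzygy kernel $\mathcal K$ is coherent in the sense of Section \ref{constrsect} so that Theorem \ref{thmB} applies. Only once these three points are settled does the proof close.
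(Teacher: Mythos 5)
Your overall strategy (reduce to $\mathcal O(c_{\mathfrak a}X)$ via Proposition \ref{basicpropthm}(2), push a character $\varphi$ down to a point $x_0\in X_0$ by $\bar p_*$, show $\varphi$ annihilates the global sections vanishing on the fibre $Y=\bar p^{-1}(x_0)$, and finish with Gelfand theory on $C(\hat{G}_{\mathfrak a})$) matches the paper's, and your injectivity and openness arguments are sound. The gap --- which you flag but do not close --- is the surjectivity step, and it is not a routine verification: the route you propose cannot be completed with the results available here. To conclude that every $f\in\Gamma(c_{\mathfrak a}X,I_Y)$ lies in the ideal generated by $\bar p^*g_1,\dots,\bar p^*g_N$, you must lift $f$ through the sheaf epimorphism $\mathcal O^N\to I_Y$, which requires $H^1(c_{\mathfrak a}X,\mathcal K)=0$ for the syzygy kernel $\mathcal K$; but Theorem \ref{thmB} applies only to sheaves that are coherent in the sense of \eqref{coh0}, i.e.\ admitting local free resolutions of arbitrary finite length, and nothing in the paper shows that the kernel of an arbitrary surjection of coherent sheaves is again coherent. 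In this non-Noetherian, Banach-analytic setting that closure property is exactly the delicate point (it is why coherence is defined via resolutions rather than via finite generation of relation sheaves), so item (iii) of your list is a genuinely missing ingredient, not bookkeeping.

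The paper's proof sidesteps syzygies entirely. Instead of killing the whole maximal ideal of $x_0$ at once, it chooses (by Forstneri\v{c}'s theorem) a flag of smooth hypersurfaces $X_0^{n-1}\supset\cdots\supset X_0^{0}\ni x_0$, each stage cut out by a single function $h$ with $dh\neq 0$ on its zero set, and invokes the division Lemma \ref{divlem}: any $f\in\mathcal O(c_{\mathfrak a}X)$ vanishing on $c_{\mathfrak a}X^{n-1}:=\bar p^{-1}(X_0^{n-1})$ factors as $f=\tilde f\cdot\bar p^*h$ with $\tilde f$ holomorphic, whence $\varphi(f)=\varphi(\tilde f)\,\delta_{x_0}(h)=0$ with no cohomological lifting at all --- the relevant ideal of global sections is literally principal at each stage. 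Combined with the restriction isomorphism $\mathcal O(c_{\mathfrak a}X^{k+1})/I_{c_{\mathfrak a}X^{k}}\cong\mathcal O(c_{\mathfrak a}X^{k})$ supplied by Theorem \ref{extapthm}, the character descends one codimension at a time until it lives on $C(\bar p^{-1}(x_0))$, where your concluding Gelfand argument applies verbatim. Replacing your global-generation step by this flag-and-division argument closes the proof.
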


Since $\iota(X)$ is dense in $c_{\mathfrak a}X$, 
and the natural mapping of $X$ into $M_{X}$, sending each point of $X$ to its point evaluation homomorphism, coincides with $\iota$ under the homeomorphism of Theorem \ref{corona}, we obtain the following corona-type theorem.

\begin{corollary}
Let $\mathfrak a$ be self-adjoint, $X_0$ be a Stein manifold.  
Then $\iota(X)$ is dense in $M_{X}$.  
\end{corollary}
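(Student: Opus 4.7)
The statement is essentially immediate from Theorem \ref{corona}; the plan is to unpack the identification of $\iota$ with the natural point-evaluation map $X \to M_X$.

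First, I would invoke Theorem \ref{corona} to obtain a homeomorphism $\Phi: c_{\mathfrak a}X \to M_X$. By the construction on which Theorem \ref{corona} is based, for each $\xi \in c_{\mathfrak a}X$ the homomorphism $\Phi(\xi) \in M_X$ is the composition of the isomorphism $\mathcal O_{\mathfrak a}(X) \cong \mathcal O(c_{\mathfrak a}X)$ (available because $\mathfrak a$ is self-adjoint, by Proposition \ref{basicpropthm}(2)) with the evaluation functional $\hat f \mapsto \hat f(\xi)$ on $\mathcal O(c_{\mathfrak a}X)$.

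Next I would verify that under $\Phi$ the map $\iota: X \to c_{\mathfrak a}X$ corresponds to the canonical embedding $\mathrm{ev}: X \to M_X$ sending $x$ to the point-evaluation homomorphism $f \mapsto f(x)$. Concretely, for $x \in X$ and $f \in \mathcal O_{\mathfrak a}(X)$ with corresponding $\hat f \in \mathcal O(c_{\mathfrak a}X)$ (so that $\iota^* \hat f = f$ by Proposition \ref{basicpropthm}),
$$
\Phi(\iota(x))(f) \,=\, \hat f(\iota(x)) \,=\, (\iota^* \hat f)(x) \,=\, f(x) \,=\, \mathrm{ev}_x(f),
$$
hence $\Phi \circ \iota = \mathrm{ev}$.

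Finally, since $\iota(X)$ is dense in $c_{\mathfrak a}X$ by construction (following from density of $j(G)$ in $\hat G_{\mathfrak a}$ combined with the local trivializations of $\bar p$ in subsection \ref{charts}) and $\Phi$ is a homeomorphism, $\mathrm{ev}(X) = \Phi(\iota(X))$ is dense in $M_X$, which is the required conclusion. No significant obstacle arises here; the only point deserving care is confirming that the homeomorphism produced in Theorem \ref{corona} is indeed the one induced by evaluation on functions, so that the identification $\Phi \circ \iota = \mathrm{ev}$ is automatic rather than something to be re-established.
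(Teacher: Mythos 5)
Your proposal is correct and follows essentially the same route as the paper: the paper likewise deduces the corollary from Theorem \ref{corona} by noting that the point-evaluation map $X\rightarrow M_X$ coincides with $\iota$ under the homeomorphism and that $\iota(X)$ is dense in $c_{\mathfrak a}X$. Your explicit verification that $\Phi\circ\iota=\mathrm{ev}$ via Proposition \ref{basicpropthm} is exactly the identification the paper asserts.
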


%





%
%

\section{Proofs: Preliminaries}

\SkipTocEntry\subsection{\v{C}ech cohomology}
\label{proofnotation}

For a topological space $X$ and a sheaf of abelian groups $\mathcal S$ on $X$ by $\Gamma(X,\mathcal S)$ we denote the abelian group of continuous sections of $\mathcal S$ over $X$.

Let $\mathcal U$ be an open cover of $X$.
By $C^i(\mathcal U,\mathcal S)$ we denote the space of \v{C}ech $i$-cochains with values in $\mathcal S$, by $\delta:C^i(\mathcal U,\mathcal S) \rightarrow C^{i+1}(\mathcal U,\mathcal R)$ the \v{C}ech coboundary operator, by
$Z^i(\mathcal U,\mathcal S):=\{\sigma \in C^i(\mathcal U,\mathcal S): \delta \sigma=0\}$
the space of $i$-cocycles, and by $B^i(\mathcal U,\mathcal S):=\{\sigma \in  Z^i(\mathcal U,\mathcal S): \sigma=\delta( \eta), \eta \in \mathcal C^{i-1}(\mathcal U,\mathcal S)\}$
the space of $i$-coboundaries (see, e.g., \cite{GR} for details).
The \v{C}ech cohomology groups $H^i(\mathcal U,S)$, $i \geq 0$, are defined by $$H^i(\mathcal U,S):=Z^i(\mathcal U,\mathcal S)/B^i(\mathcal U,\mathcal S), \quad i \geq 1,$$ and $H^0(\mathcal U,\mathcal S):=\Gamma(\mathcal U,\mathcal S)$.

\SkipTocEntry\subsection{$\bar{\partial}$-equation}
\label{bformsect}

Let $B$ be a complex Banach space, $D_0 \subset X_0$ be a strictly pseudoconvex domain. We fix a system of local coordinates on $D_0$. Let $\{W_{0,i}\}_{i \geq 1}$ be the cover of $D_0$ by the coordinate patches.
By $\Lambda_b^{(0,q)}(D_0,B)$, $q \geq 0$, we denote the space of bounded continuous $B$-valued $(0,q)$-forms on $D_0$ endowed with norm
\begin{equation}
\label{approxsup}
\|\omega\|_{D_0}=\|\omega\|_{D_0,B}^{(0,q)}:=\sup_{x \in W_{0,i}, i \geq 1, \alpha}\|\omega_{\alpha,i} (x)\|_B, 
\end{equation}
where $\omega_{\alpha,i}$ ($\alpha$ is a multiindex) are coefficients of forms $\omega|_{W_{0,i}} \in  \Lambda_b^{(0,q)}(W_{0,i},B)$ in local coordinates on $W_{0,i}$. 

The next lemma follows easily from results in \cite{HL} (proved for $B=\mathbb C$) because all integral representations and estimates are preserved when passing to the case of Banach-valued forms.

\begin{lemma}
\label{hl1}
There exists a bounded linear operator $$R_{D_0,B}\in \mathcal L\left(\Lambda_b^{(0,q)}(D_0,B), \Lambda_b^{(0,q-1)}(D_0,B)\right), \quad q \geq 1,$$ such that if $\omega \in \Lambda_b^{(0,q)}(D_0,B)$ is $C^\infty$ and $\bar{\partial}$-closed on $D_0$, then $\bar{\partial} R_{D_0,B}\,\omega=\omega$.
\end{lemma}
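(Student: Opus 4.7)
The plan is to lift the scalar Henkin--Leiterer $\bar\partial$-solution operator on strictly pseudoconvex domains to the Banach-valued setting via Bochner integration, using the fact that all the relevant kernel estimates are uniform and $L^1$-type in the integration variable.

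First, I would recall the construction from \cite{HL} in the scalar case: for a strictly pseudoconvex $D_0$ there is an explicit operator $R_{D_0}=R_{D_0,\mathbb C}$ of the form
\begin{equation*}
(R_{D_0}\omega)(z) \;=\; \int_{D_0} K_q(z,\zeta)\wedge\omega(\zeta) \;+\; \int_{\partial D_0} K_q'(z,\zeta)\wedge\omega(\zeta),
\end{equation*}
where $K_q$, $K_q'$ are universal Leray-type kernels, smooth off the diagonal with integrable singularities, whose $L^1$-norms in $\zeta$ are uniformly bounded in $z\in D_0$. Consequently $\|R_{D_0}\omega\|_{D_0}\le C\|\omega\|_{D_0}$ with $C$ depending only on $D_0$, and $\bar\partial R_{D_0}\omega=\omega$ whenever $\omega$ is a $C^\infty$ $\bar\partial$-closed bounded $(0,q)$-form.

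Second, I would define $R_{D_0,B}\omega$ by the same integral expression interpreted as a Bochner integral: for each fixed $z$, the integrand $\zeta\mapsto K_q(z,\zeta)\wedge\omega(\zeta)$ is a $B$-valued function dominated (in the $\|\cdot\|_B$-norm) by $\|\omega\|_{D_0}\cdot|K_q(z,\cdot)|$, hence Bochner-integrable, and likewise for the boundary term. The resulting section is continuous in $z$ by dominated convergence. The scalar estimate transfers verbatim, because Bochner integration satisfies $\|\int f\,d\mu\|_B\le\int\|f\|_B\,d\mu$; applying this coefficientwise in the local coordinate patches $W_{0,i}$ and taking $\sup$ over multiindices $\alpha$ in (\ref{approxsup}) yields
\begin{equation*}
\|R_{D_0,B}\omega\|_{D_0,B}^{(0,q-1)} \;\le\; C\,\|\omega\|_{D_0,B}^{(0,q)}
\end{equation*}
with the same constant $C$ as in the scalar case, so $R_{D_0,B}\in\mathcal L\bigl(\Lambda_b^{(0,q)}(D_0,B),\Lambda_b^{(0,q-1)}(D_0,B)\bigr)$.

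Third, I would verify $\bar\partial R_{D_0,B}\omega=\omega$ for $C^\infty$ $\bar\partial$-closed $\omega$ by a standard Hahn--Banach reduction. For any $\ell\in B^*$ the scalar form $\ell\circ\omega\in\Lambda_b^{(0,q)}(D_0,\mathbb C)$ is $C^\infty$ and $\bar\partial$-closed, and $\ell$ commutes with both the Bochner integral and with $\bar\partial$; hence $\ell\circ R_{D_0,B}\omega=R_{D_0}(\ell\circ\omega)$ and the scalar identity of \cite{HL} gives $\bar\partial\bigl(\ell\circ R_{D_0,B}\omega\bigr)=\ell\circ\omega$, i.e.\ $\ell\circ\bigl(\bar\partial R_{D_0,B}\omega-\omega\bigr)=0$. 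Since this holds for every $\ell\in B^*$, the Hahn--Banach theorem forces $\bar\partial R_{D_0,B}\omega=\omega$. There is no real obstacle here: the only mild point worth spelling out is measurability/Bochner-integrability of the $B$-valued integrand, which is immediate because $\omega$ is continuous and bounded and the scalar kernels are integrable uniformly in $z$.
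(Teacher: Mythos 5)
Your proposal is correct and follows essentially the same route as the paper, which simply observes that the lemma "follows easily from results in \cite{HL} (proved for $B=\mathbb C$) because all integral representations and estimates are preserved when passing to the case of Banach-valued forms." You have merely spelled out the details (Bochner integration of the Leray-type kernels, the $L^1$-kernel bound giving operator boundedness, and the Hahn--Banach reduction for the identity $\bar\partial R_{D_0,B}\,\omega=\omega$) that the paper leaves implicit.
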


\SkipTocEntry\subsection{Inverse function theorem with continuous dependence on parameter}
\label{iftsect}

\begin{theorem}
\label{ift}

Let $K$ be a topological space, $B_1, B_2 \Subset \mathbb C^n$ be open balls centered at the origin. Fix a point $(x_0,\eta_0) \in B_1 \times K$. Suppose that a continuous map $G:B_1 \times K \rightarrow B_2$ satisfies
\begin{itemize}
\item[(1)] $G(\cdot,\eta):B_1 \rightarrow B_2$ is holomorphic for every $\eta \in K$, 

\item[(2)] Jacobian matrix $D_xG(x_0,\eta_0)$ is non-degenerate. 
\end{itemize}
Then there exist an open subset $W \subset B_2 \times K$ and a continuous map $H:W \rightarrow B_1$ such that 
\begin{itemize}
\item[(a)]
$\bigl(G(x_0,\eta_0),\eta_0\bigr) \in W$, 
\item[(b)]
$H(\cdot,\eta)$ is holomorphic on $W\cap \bigl(B_2\times\{\eta\}\bigr)$ for all $\eta$ for which this set is non-empty, 
\item[(c)]
$G\bigl(H(z,\eta),\eta\bigr)=z$ for all $(z,\eta)\in W$.
\end{itemize}

\end{theorem}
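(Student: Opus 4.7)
The strategy is the classical Banach contraction proof of the inverse function theorem, carried out with the parameter $\eta\in K$ along for the ride, so that the joint continuity of $H$ falls out of the joint continuity of the iteration map and the uniformity of the contraction constant.

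First I would set $A:=D_xG(x_0,\eta_0)$ (invertible by hypothesis) and define
\[
\Phi(x,z,\eta):=x+A^{-1}\bigl(z-G(x,\eta)\bigr),\qquad (x,z,\eta)\in B_1\times B_2\times K.
\]
Fixed points of $\Phi(\cdot,z,\eta)$ correspond to solutions of $G(x,\eta)=z$. A preliminary observation is that $D_xG(x,\eta)$ is jointly continuous in $(x,\eta)$: since $G$ is continuous on $B_1\times K$ and $G(\cdot,\eta)$ is holomorphic, the Cauchy integral formula expresses $D_xG(x,\eta)$ as an integral of $G$ over a small polydisc boundary, and continuity under the integral sign gives the claim. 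Consequently $D_x\Phi(x,z,\eta)=I-A^{-1}D_xG(x,\eta)$ is jointly continuous and vanishes at $(x_0,\eta_0)$ (independent of $z$), so there exist $r>0$ and an open neighborhood $V\subset K$ of $\eta_0$ with $\overline{B(x_0,r)}\subset B_1$ and $\|D_x\Phi(x,z,\eta)\|\le\tfrac12$ for all $(x,z,\eta)\in\overline{B(x_0,r)}\times B_2\times V$.

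Next I would carry out the standard estimate showing $\Phi(\cdot,z,\eta)$ maps $\overline{B(x_0,r)}$ into itself and is a $\tfrac12$-contraction, uniformly for $(z,\eta)$ in a small enough neighborhood of $(G(x_0,\eta_0),\eta_0)$. The contraction estimate is immediate from the mean-value inequality and the derivative bound. For the self-mapping property, write
\[
\bigl|\Phi(x,z,\eta)-x_0\bigr|\le\bigl|\Phi(x_0,z,\eta)-x_0\bigr|+\bigl|\Phi(x,z,\eta)-\Phi(x_0,z,\eta)\bigr|\le\bigl|A^{-1}(z-G(x_0,\eta))\bigr|+\tfrac{r}{2},
\]
and choose $W\subset B_2\times V$ to be an open neighborhood of $(G(x_0,\eta_0),\eta_0)$ so small that the first term is $\le r/2$ (using joint continuity of $(z,\eta)\mapsto z-G(x_0,\eta)$). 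On $W$ the Banach fixed point theorem then produces a unique $H(z,\eta)\in\overline{B(x_0,r)}$ with $\Phi(H(z,\eta),z,\eta)=H(z,\eta)$, i.e.\ $G(H(z,\eta),\eta)=z$, establishing property (c) and (after possibly shrinking $W$ so it is open in $B_2\times K$) property (a).

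To get joint continuity of $H$ I would use the parametric form of the Banach fixed point theorem: if $(z_n,\eta_n)\to(z,\eta)$ in $W$, then from $H(z_n,\eta_n)=\Phi(H(z_n,\eta_n),z_n,\eta_n)$ together with the uniform contraction
\[
\bigl|H(z_n,\eta_n)-H(z,\eta)\bigr|\le\bigl|\Phi(H(z_n,\eta_n),z_n,\eta_n)-\Phi(H(z_n,\eta_n),z,\eta)\bigr|+\tfrac12\bigl|H(z_n,\eta_n)-H(z,\eta)\bigr|,
\]
one rearranges to $\tfrac12|H(z_n,\eta_n)-H(z,\eta)|\le|\Phi(H(z_n,\eta_n),z_n,\eta_n)-\Phi(H(z_n,\eta_n),z,\eta)|$, and the right side tends to $0$ by joint continuity of $\Phi$ (the family $\{H(z_n,\eta_n)\}$ lies in the compact $\overline{B(x_0,r)}$, on which continuity is uniform). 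Finally, property (b): for each fixed $\eta$ such that $W\cap(B_2\times\{\eta\})\ne\emptyset$, the condition $\|I-A^{-1}D_xG(x,\eta)\|\le\tfrac12$ forces $D_xG(x,\eta)$ to be invertible on $\overline{B(x_0,r)}$, so the classical holomorphic inverse function theorem applied to $G(\cdot,\eta)$ shows that its local inverse is holomorphic; by uniqueness of the fixed point, this local inverse coincides with $H(\cdot,\eta)$ on $W\cap(B_2\times\{\eta\})$.

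The only mildly delicate point is the joint continuity of $D_xG$ in $(x,\eta)$ used to get a uniform derivative bound; everything else is the standard parametric contraction-mapping argument. Once that is in hand, holomorphy in $z$ is automatic from the classical theorem because for each $\eta$ the equation is again just the scalar problem on a complex manifold.
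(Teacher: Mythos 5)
Your proof is correct and follows exactly the route the paper takes: the paper disposes of Theorem \ref{ift} in one line by citing the contraction principle with continuous dependence on a parameter (Kantorovich--Akilov, Ch.~XVI), and your argument is precisely that parametric contraction-mapping proof written out in full, including the one genuinely non-routine ingredient (joint continuity of $D_xG$ via the Cauchy integral formula) needed to make the contraction constant uniform in $\eta$. The only cosmetic point is to choose $W$ so that $|A^{-1}(z-G(x_0,\eta))|<r/2$ strictly, ensuring the fixed point lies in the open ball before invoking uniqueness to identify $H(\cdot,\eta)$ with the classical local holomorphic inverse.
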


Theorem \ref{ift} follows easily from the contraction principle with continuous dependence on parameter, see e.g.,~\cite[Ch.XVI]{KA}.

\medskip

As an application of Theorem \ref{ift} we prove Proposition \ref{localstructap} on the local structure of complex submanifolds of $c_{\mathfrak a}X$. 

\begin{proof}[Proof of Proposition \ref{localstructap}] Let $Y \subset c_{\mathfrak a}X$ be a complex submanifold and $y_0 \in Y$.
In notation of Definition \ref{defap}, there exists a neighbourhood $U:=\hat{\Pi}(U_0,L) \subset c_{\mathfrak a}X$ of $y_0$, where 
$U_0 \subset X_0$ is a simply connected coordinate chart and $L \subset \hat{G}_{\mathfrak a}$ is open, such that
$Y \cap U=\{y \in U: h_1(y)=\dots=h_k(y)=0\}$
with $h_i \in \mathcal O(U)$ ($1 \leq i \leq k$) satisfying non-degeneracy condition (2) of the definition.

Since sets $U$ and $U_0 \times L$ are biholomorphic (see~subsection \ref{charts}), in what follows we identify them. Next,
since functions $h_i$ satisfy the non-degeneracy condition of Definition \ref{defap}, we may choose coordinates $x_1,\dots, x_n$ on $U_0$ so that the Jacobian matrix $D_xG(x_0,\eta_0)$, $y:=(x_0,\eta_0)$, of the map
\begin{equation*}
G(x,\eta):= \bigl(h_{1}(x,\eta),\dots,h_{k}(x,\eta),x_{k+1},\dots,x_n\bigr), \quad x:=(x_1,\dots,x_n),\ (x,\eta) \in U_0 \times L,
\end{equation*}
is non-degenerate. Also, we may assume without loss of generality that $U_0=B_1$ and $G(B_1,\eta) \subset B_2$ for all $\eta \in L$, where $B_i \Subset \mathbb C^n$, $i=1,2$, are open balls centered at the origin. Hence, we can apply Theorem \ref{ift}. In its notation, shrinking $W$, if necessary, we may assume that $W=V_0 \times K$ for some open $V_0 \subset B_2$, $K \subset L$ which we take as the required sets in the formulation of Proposition \ref{localstructap}. We also take $\Phi(z,\eta):=\bigl(H(z,\eta),\eta\bigr)$, $(z,\eta) \in V_0 \times K$, and $V:=\Phi(V_0 \times K) \subset U$. By definition, $\Phi \in \mathcal O(V_0 \times K,V)$ (see~subsection \ref{charts}).
Further, since $\bigl( G \circ \Phi(z),\eta \bigr)=(z,\eta)$ for all $(z,\eta) \in V_0 \times K$, 
$$(h_i \circ  \Phi)(z_1,\dots,z_n,\eta)=z_i, \quad (z,\eta) \in V_0 \times K, \quad z=(z_1,\dots,z_n), \quad 1 \leq i \leq k.$$
Therefore, $\Phi^{-1}(V \cap Y)=Z_0 \times K$, where $Z_0:=\{(0,\dots,0,z_{k+1},\dots,z_n) \in V_0: (z_1,\dots,z_n) \in V_0\}$ is a complex submanifold of codimension $k$. 

By our construction we also have $\Phi\bigl(V_0\times (K\cap j(G))\bigr)=V\cap\iota(X)$.

The proof of the proposition is complete.
\end{proof}

\section{Proof of Theorem \ref{equivapthm}}

Fix some $\Omega' \Subset \Omega$ and denote $T':=\mathbb R^n+i\bar{\Omega}' \subset \mathbb C^n$. We endow $T'$ with the Euclidean metric induced from $\mathbb C^n$.

We will need the following definition.

\begin{definition}
A function $f\in C(T')$ is called \textit{continuous almost periodic} if the family of its translates $\{T' \ni z \mapsto f(z+t)\}_{t \in \mathbb R^n}$ is relatively compact in $C_b(\bar{T}')$ (the space of bounded continuous functions on $T'$ endowed with $\sup$-norm).
\end{definition}

\begin{proposition}[see, e.g.,~\cite{Bes}]
\label{besprop}
Any continuous almost periodic function on $T'$ is bounded and uniformly continuous.
\end{proposition}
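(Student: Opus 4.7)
The plan is to derive boundedness immediately from the normed-space meaning of relative compactness, and to obtain uniform continuity via an $\varepsilon/3$-net argument that exploits translation invariance of the family of translates together with the compactness of $\bar\Omega'$ in order to replace the non-compact domain $T'$ by a compact slab.

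Boundedness is essentially automatic: a relatively compact subset of the normed space $C_b(\bar T')$ is norm-bounded, so $\sup_{t\in\mathbb R^n}\|f_t\|_\infty<\infty$, and in particular $f=f_0$ is bounded.

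For uniform continuity, I would fix $\varepsilon>0$ and, by total boundedness of $\{f_t\}_{t\in\mathbb R^n}$ in $C_b(\bar T')$, pick finitely many $t_1,\dots,t_N\in\mathbb R^n$ such that for every $t\in\mathbb R^n$ there is an index $i$ with $\|f_t-f_{t_i}\|_\infty<\varepsilon/3$. Each $f_{t_i}$ is continuous on $T'$, hence uniformly continuous on the compact slab $K_1:=\{z\in\bar T':|\Real\,z|\le 1\}$, which is compact as the product of a closed Euclidean ball in $\mathbb R^n$ with $\bar\Omega'$. I would choose a modulus $\delta\in(0,1]$ common to all $f_{t_1},\dots,f_{t_N}$ on $K_1$. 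Given $z,w\in T'$ with $|z-w|<\delta$, the key move is to set $t:=\Real\,z\in\mathbb R^n$; then $z-t=i\Imag\,z\in K_1$ and $|\Real(w-t)|=|\Real(w-z)|\le|w-z|<\delta\le 1$ forces $w-t\in K_1$ as well. Selecting $i$ with $\|f_t-f_{t_i}\|_\infty<\varepsilon/3$, and using $f_t(z-t)=f(z)$, $f_t(w-t)=f(w)$, the triangle inequality yields
\[
|f(z)-f(w)|\le 2\|f_t-f_{t_i}\|_\infty+|f_{t_i}(z-t)-f_{t_i}(w-t)|<\varepsilon.
\]

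The main obstacle is that $\bar T'$ is not compact in the real direction, so one cannot directly invoke Arzel\`a--Ascoli to extract equicontinuity of the family of translates on the whole domain, and each individual $f_{t_i}$ need not be uniformly continuous on $T'$ a priori. The trick that circumvents this difficulty is translation invariance of the family $\{f_t\}_{t\in\mathbb R^n}$: any pair of $\delta$-close points in $T'$ can be moved by a suitable real translation into the fixed compact slab $K_1$ where uniform continuity of the finite $\varepsilon/3$-net is available, and the $\varepsilon/3$-approximation by some $f_{t_i}$ survives the translation, upgrading uniform continuity on $K_1$ to genuine uniform continuity on all of $T'$.
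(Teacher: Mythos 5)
Your proof is correct. The paper gives no proof of this proposition --- it is stated with a citation to Besicovitch's book --- and your argument (boundedness from total boundedness of the orbit of translates in $C_b(\bar T')$, and uniform continuity by translating an arbitrary pair of $\delta$-close points of $T'$ into the fixed compact slab $\{|\Real z|\le 1\}\cap \bar T'$ and comparing with a finite $\varepsilon/3$-net of translates) is exactly the standard classical argument found in that reference, with all steps (compactness of the slab via $\bar\Omega'$, the choice $\delta\le 1$, the identity $f_t(z-t)=f(z)$) checking out.
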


By $APC(T')$ we denote the Banach algebra of continuous almost periodic functions on $T'$ endowed with $\sup$-norm.

We set 
\begin{equation*}
p(z):=\bigl(e^{i z_1}, \dots, e^{i z_n}\bigr), \quad z=(z_1,\dots,z_n) \in T',\quad \text{and}\quad T'_0:=p(T').
\end{equation*}
Then Banach algebra $C_{AP}(T')$, $AP:=AP(\mathbb Z^n)$, $\mathbb Z^n \cong p^{-1}(x_0)$ ($x_0 \in X_0$), associated to covering $p: T'\rightarrow T'_0$ and endowed with $\sup$-norm, is well defined (see~Introduction).

To prove the theorem it suffices to show that $APC(T')=C_{AP}(T')$. (Because the space of \textit{holomorphic almost periodic functions} on $T$ consists of all functions in $\mathcal O(T)$ whose restrictions to each tube subdomain $T' \subset T$ are in $APC(T')$, and $\mathcal O_{AP}(T):=\mathcal O(T) \cap \{f \in C(T): f|_{T'} \in C_{AP}(T') \text{ for each } T' \subset T \}$.)

\medskip

First, let $f \in APC(T')$, i.e., for any sequence $\{t_k\}\subset \mathbb R^n$ there exists a subsequence of $\{T' \ni z \mapsto f(z+t_{k_l})\}$ that converges uniformly on $T'$. 
In particular, it follows that for each fixed $z_0 \in T'$ and a sequence $\{d_k\} \subset \mathbb Z^n$ the family of translates $\{\mathbb Z^n \ni g \mapsto  f(z_0+g+d_k)\}$ has a convergent subsequence which implies (since $C_{AP}(T')$ is a metric space) that it is relatively compact in the topology of uniform convergence on $\mathbb Z^n$. This means that the function $\mathbb Z^n \ni g \mapsto  f(z_0+g)$ belongs to $AP(\mathbb Z^n)$. Also, by Proposition \ref{besprop} function $f$ is bounded and uniformly continuous on $T'$. Hence, by definition, $f \in C_{AP}(T')$.

\medskip

Now, let $f \in C_{AP}(T')$. We must show that $f \in APC(T')$. To this end we fix some sequence $\{t_k\} \subset \mathbb R^n$.
Let $\mu:\mathbb R^n \rightarrow \mathbb R^n/\mathbb Z^n$ be the natural projection. Since $\mathbb R^n/\mathbb Z^n$ is compact, $\{\mu(t_k)\}$ has a convergent subsequence. We may assume without loss of generality that $\{\mu(t_k)\}$ itself converges and has limit $0$. Hence, there exists a sequence $\{d_k\} \subset \mathbb Z^n$ such that $|t_k-d_k| \rightarrow 0$ as $k \rightarrow \infty$. Since $f$ is uniformly continuous on $T'$, functions $$h_k(z):=|f(z+t_k)-f(z+d_k)| \rightarrow 0 \quad \text{ uniformly on }T' \text{ as } k \rightarrow \infty.$$ Hence, it suffices to show that sequence $\{T' \ni z \mapsto f(z+d_k)\}$ has a convergent subsequence.

Let $C:=\{z=(z_1,\dots,z_n) \in T': 0 \leq \Real(z_i) \leq 1, 1 \leq i \leq n \}$. Since $f \in C_{AP}(T')$, for each fixed $w \in C$ the family of translates $\{\mathbb Z^n \ni g \mapsto f(w+g+d_k)\}$ is relatively compact in the topology of uniform convergence on $\mathbb Z^n$. 
Let $S\subset C$ be a countable dense subset. Using Cantor's diagonal argument we find a subsequence $\{d_{k_l}\}$ of $\{d_k\}$ such that for each $w\in S$ the family of translates $\{\mathbb Z^n \ni g \mapsto f(w+g+d_{k_l})\}$ converges in the topology of uniform convergence on $\mathbb Z^n$. 

Now, since $f$ is uniformly continuous on $T'$, for every $\varepsilon>0$ there exists $\delta>0$ such that for all $w_1, w_2 \in C$ satisfying $|w_1-w_2|<\delta$ and all  $h \in \mathbb Z^n$,
$$
|f(w_1+h)-f(w_2+h)|<\frac{\varepsilon}{3}.
$$
Since $C$ is compact, it can be covered by finitely many $\delta$-neighbourhoods of points, say, $w_1,\dots, w_p$, in $S$. Then we can find 
$N\in\mathbb N$ so that for all $l,m>N$, $w_j$, $1\le j\le p$, and $g \in \mathbb Z^n$,
$$
|f(w_j+g+d_{k_l})-f(w_j+g+d_{k_m})|<\frac{\varepsilon}{3}.
$$
The last two inequalities together with the triangle inequality imply that for all $l,m>N$, $z \in C$ and $g \in \mathbb Z^n$,
$$
|f(z+g+d_{k_l})-f(z+g+d_{k_m})|<\varepsilon .
$$
Since $\{z+g:z \in C,\, g \in \mathbb Z^n\}=T'$, the latter implies that
for all $l,m>N$ and $z\in T'$,
$$
|f(z+d_{k_l})-f(z+d_{k_m})|<\varepsilon .
$$
Thus $\{T' \ni z \mapsto f(z+d_{k_l})\}$ is a Cauchy sequence in the topology of uniform convergence on $T'$, i.e., it converges uniformly on $T'$.

The proof is complete.

\section{Proofs of Theorems \ref{countthm}, \ref{tubularnbd} and \ref{extapthm}}
\label{countthmsect}

\SkipTocEntry\subsection{Proof of Theorem \ref{countthm}}
Our proof is based on Theorem \ref{thmA} and the equivalence of notions of a complex $\mathfrak a$-submanifold of $X$ and a complex submanifold of $c_{\mathfrak a}X$ (see subsection \ref{submanifoldsect} for the corresponding definitions and results). 

Thus, it suffices to prove that given a complex submanifold $Y \subset c_{\mathfrak a}X$ of codimension $k$ there exists an at most countable collection of functions $f_i \in \mathcal O(c_{\mathfrak a}X)$, $i \in I$, such that 

(\textit{i}) $Y=\{y \in c_{\mathfrak a}X: f_i(y)=0 \text{ for all } i \in I\}$, and

(\textit{ii}) for each $y_0 \in Y$ there exist a neighbourhood $W=\hat{\Pi}(W_0,L)$ (see~(\ref{pi2}) for notation) and functions $f_{i_1},\dots,f_{i_k}$ such that $Y \cap W=\{y \in U: f_{i_1}(y)=\dots=f_{i_k}(y)=0\}$
and the rank of map $z \mapsto \bigl(f_1(z,\omega),\dots,f_k(z,\omega) \bigr)$, $(z,\omega) \in W$, is maximal at each point of $Y \cap W$.

By Proposition \ref{idcohlem} the ideal sheaf $I_Y$ of $Y$ is coherent, hence by Theorem \ref{thmA}, there exists an at most countable collection of sections $f_i \in \Gamma(c_{\mathfrak a}X,I_Y)$ ($\subset \mathcal O_{c_{\mathfrak a}X}$), $i \in I$, that generate $I_Y$ at each point of $c_{\mathfrak a}X$. (This collection is at most countable because any open cover of $c_{\mathfrak a}X$ admits an at most countable refinement as fibres of the bundle $\bar{p}: c_{\mathfrak a}X\rightarrow X_0$ are compact and any open cover of complex manifold $X_0$ admits an at most countable refinement.) 
Therefore condition (\textit{i}) is valid for this collection of functions. In addition, for every point $y_0 \in Y$ there exist a neighbourhood  $U=\hat{\Pi}(U_0,K)$ of $y_0$, sections $f_{i_1},\dots,f_{i_m}$ and functions $u_{jl} \in \mathcal O(c_{\mathfrak a}X)$,
$1\le j\le k$, $1\le l\le m$, such that
\begin{equation}
\label{matridcount}
h_j=u_{j1}f_{i_1}+\dots+u_{jm}f_{i_m}, \quad 1 \leq j \leq k,
\end{equation}
where $h_j$ are generators of $I_Y|_{U}$ from Definition \ref{defap}
(modulo a biholomorphic transformation of Proposition \ref{localstructap} we may identify $h_j$ with $z_j$, the $j$-th coordinate of $z\in\mathbb C^n$). 

Equation \eqref{matridcount} implies that
$Y \cap U=\{y \in U: {f}_{i_1}(y)=\dots={f}_{i_m}(y)=0\}$.

Next, let $\nabla h_j$, $\nabla {f}_{i_l}$ denote the vector-valued functions $\nabla_z h_j(z,\omega)$, $\nabla_z {f}_{i_l}(z,\omega)$, $(z,\omega) \in U$. Then
\begin{equation*}
\nabla h_j=u_{j1}\nabla{f}_{i_1}+\dots+u_{jm}\nabla{f}_{i_m} \quad \text{ on } Y \cap U, \quad 1 \leq j \leq k.
\end{equation*}
Since $(\nabla h_j)_{j=1}^k$ has rank $k$ on $U$, we obtain that $k \leq m$, and $(u_{jl})_{1 \leq j \leq k, 1 \leq l \leq m}$, $(\nabla {f}_{i_l})_{l=1}^m$ have rank $k$ at each point of $U$. Thus there exist two subfamilies of vector-valued functions $(u_{jl_1})_{j=1}^k,\dots,(u_{jl_k})_{j=1}^k$ and $\nabla \tilde{f}_{l_1},\dots,\nabla\tilde{f}_{l_k}$, $\tilde{f}_{l}:=f_{i_l}$, that are linearly independent at $y_0$. Now, we apply the holomorphic inverse function theorem (see Theorem \ref{ift}) to the matrix identity (\ref{matridcount}) to find a neighbourhood $W=\hat{\Pi}(W_0,L)\Subset U$ of $y_0$ such that functions $\tilde{f}_l|_W$, $l \neq l_i$, $1 \leq i \leq k$, belong to the ideal in $\mathcal O(W)$ generated by  $\tilde{f}_{l_1}|_W,\dots,\tilde{f}_{l_k}|_W$, and the rank of map $z \mapsto \bigl(\tilde{f}_{l_1}(z,\omega),\dots,\tilde{f}_{l_k}(z,\omega) \bigr)$, $(z,\omega) \in W$, is maximal at each point of $Y \cap W$. Clearly,
$Y \cap W=\{y \in U: \tilde{f}_{l_1}(y)=\dots=\tilde{f}_{l_k}(y)=0\}$.

This completes the proof of the theorem.

\SkipTocEntry\subsection{Proof of Theorem \ref{tubularnbd}}

The proof follows the lines of the proof of the classical tubular neighbourhood theorem (see, e.g.,~\cite{Forst2}).

We use notation and results of Section \ref{compsect}. Clearly, Theorem \ref{tubularnbd} is a corollary of

\begin{theorem}
\label{tubularnbd2}
Let $X_0$ be a Stein manifold, and $Y \subset c_{\mathfrak a}X$ a complex submanifold (see~subsection \ref{submanifoldsect}). Then there exists an open neighbourhood $\Omega \subset c_{\mathfrak a}X$ of $Y$ and maps $h_t \in \mathcal O(\Omega,\Omega)$ continuously depending on $t \in [0,1]$, such that $h_t|_{Y}=\Id_{Y}$ for all $t\in [0,1]$, $h_0=\Id_{\Omega}$ and $h_1(\Omega)=Y$.
\end{theorem}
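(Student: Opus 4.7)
My plan is to follow the classical proof of the holomorphic tubular neighbourhood theorem (as in Forstneri\v{c} \cite{Forst2}), replacing the usual appeals to Cartan's Theorem B on a Stein manifold with Theorem \ref{thmB} on $c_{\mathfrak a}X$. The proof splits into producing a global holomorphic retraction $r\colon\Omega\to Y$ from a neighbourhood $\Omega$ of $Y$ together with a "normal bundle structure" on $\Omega$, and then defining $h_t$ as the fibrewise contraction toward $Y$ along $r$.

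First, by Proposition \ref{localstructap}, cover $Y$ by a locally finite family of open sets $\{U_\alpha\}\subset c_{\mathfrak a}X$ together with biholomorphisms $\Phi_\alpha\colon V_{0,\alpha}\times K_\alpha\to U_\alpha$ satisfying $\Phi_\alpha^{-1}(Y\cap U_\alpha)=Z_{0,\alpha}\times K_\alpha$, where $Z_{0,\alpha}\subset V_{0,\alpha}\subset\mathbb C^n$ is a codimension $k$ submanifold. After shrinking and a coordinate change, assume $Z_{0,\alpha}=\{z_1=\cdots=z_k=0\}$. In these coordinates set
\[
\tilde h_{\alpha,t}(z,\eta):=\bigl((1-t)z_1,\dots,(1-t)z_k,z_{k+1},\dots,z_n,\eta\bigr),\qquad \tilde r_\alpha:=\tilde h_{\alpha,1},
\]
and transport them via $\Phi_\alpha$ to local homotopies $h_{\alpha,t}\colon U_\alpha\to U_\alpha$ and local retractions $r_\alpha\colon U_\alpha\to Y\cap U_\alpha$, which agree with $\Id$ on $Y\cap U_\alpha$ and satisfy $h_{\alpha,0}=\Id_{U_\alpha}$, $h_{\alpha,1}=r_\alpha$.

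The heart of the proof is gluing these local data into a global object using Theorem \ref{thmB}. Define the normal bundle $N_Y$ of $Y$ in $c_{\mathfrak a}X$ as the rank-$k$ holomorphic vector bundle on $Y$ whose transition cocycle is the $k\times k$ upper-left block of the Jacobian of $\Phi_\alpha^{-1}\circ\Phi_\beta$ restricted to $Y\cap U_\alpha\cap U_\beta$. The sheaf of germs of biholomorphisms from a neighbourhood of the zero section of $N_Y$ into $c_{\mathfrak a}X$ that restrict to $\Id$ on $Y$ and induce the identity on $N_Y$ is a torsor under a coherent sheaf on $c_{\mathfrak a}X$ (the trivial extension from $Y$ of $\mathcal O(N_Y^*\otimes TY)$, or equivalently $\mathcal{H}om_{\mathcal O}(\mathcal I_Y/\mathcal I_Y^2,\mathcal I_Y^2/\mathcal I_Y^3)$ in infinitesimal form), whose first cohomology vanishes by Theorem \ref{thmB} (coherence of the relevant sheaves on $c_{\mathfrak a}X$ and its submanifold $Y$ uses Proposition \ref{idcohlem} and Theorem \ref{cartansubm}). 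Hence the local biholomorphisms $\Phi_\alpha$ can be adjusted to produce a global biholomorphism $\Psi$ from a neighbourhood $\mathcal N$ of the zero section of $N_Y$ onto a neighbourhood $\Omega\subset c_{\mathfrak a}X$ of $Y$ with $\Psi|_Y=\Id_Y$. Finally, transport the scalar multiplication $(v,t)\mapsto(1-t)v$ on $N_Y$ from $\mathcal N$ to $\Omega$ via $\Psi$ to define the required $h_t\in\mathcal O(\Omega,\Omega)$.

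The main obstacle will be Step 3, the gluing via Cartan B: one must identify the correct coherent sheaf (or its associated torsor) that governs the obstruction to patching the $\Phi_\alpha$ into a global tubular chart, and verify that the iterative modification of the local charts produces a well defined biholomorphism $\Psi$ on a neighbourhood $\Omega$ of $Y$. Once $\Psi$ exists, continuity of $h_t$ in $t$ is automatic from the explicit fibrewise formula, but checking that $\Omega$ can be chosen invariant under the family $h_t$ (so that $h_t\in\mathcal O(\Omega,\Omega)$ and not just into a larger neighbourhood) requires a mild additional shrinking argument ensuring the scaled neighbourhoods remain inside $\mathcal N$.
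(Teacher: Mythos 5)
The gluing step (your Step 3) is a genuine gap, and it is exactly the step the paper avoids. The collection of local tubular charts $\Phi_\alpha$ is not a torsor under a coherent sheaf of abelian groups: two tubular charts for $Y$ that agree on $Y$ and induce the identity on $N_Y$ differ by a biholomorphism that is the identity to first order along $Y$, and the composition of such differences is non-abelian. The sheaf $\mathcal O(N_Y^*\otimes TY)$ (or $\mathcal Hom_{\mathcal O}(\mathcal I_Y/\mathcal I_Y^2,\cdot)$) only controls the first-order obstruction; to patch the charts you would need an infinite jet-by-jet iteration, each step governed by the $H^1$ of a different coherent sheaf, followed by a convergence argument. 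None of this is supplied, and a single application of Theorem \ref{thmB} does not do it. This is precisely why the Docquier--Grauert/Forstneri\v{c} proof in the classical Stein setting does not glue local charts either.

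What the paper does instead is linearize the problem from the start using a spray. Since $X_0$ is Stein, $TX_0$ is generated by finitely many global holomorphic vector fields; composing their local flows gives a holomorphic map $F_0$ from a neighbourhood of $\{0\}\times X_0$ in $\mathbb C^m\times X_0$ whose differential in the $\mathbb C^m$-direction is surjective onto $TX_0$. This lifts (slice by slice over the decomposition $c_{\mathfrak a}X=\sqcup_H \iota_H(X_H)$) to a global holomorphic map $\tilde F$ on a neighbourhood of $\{0\}\times c_{\mathfrak a}X$, producing a surjective bundle morphism $\theta:\mathbb C^m\times c_{\mathfrak a}X\to Tc_{\mathfrak a}X$. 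The only use of Cartan B is then to split the short exact sequence $0\to\theta^*(TY)\to Y\times\mathbb C^m\to E''\to 0$ of holomorphic vector bundles over $Y$ --- an abelian, linear problem, where $H^1(Y,\Hom(E'',E'))=0$ by Theorem \ref{thmB_} --- giving a complement $E\cong NY$ inside the trivial bundle. The tubular chart is the single globally defined map $F=\tilde F\circ(\theta|_E)^{-1}$; no patching of local charts ever occurs. After that, the paper still has to prove that $F$ is injective on a neighbourhood of the zero section, which it does by a pseudo-metric argument with continuous functions $b,r\in C(Y)$ controlling the size of the injectivity neighbourhoods fibrewise; your proposal does not address this global injectivity question at all (local biholomorphicity of the glued $\Psi$ would not give injectivity on a full neighbourhood of a noncompact $Y$). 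To repair your argument you would essentially have to import both the spray construction and the injectivity estimate, i.e., rewrite the proof along the paper's lines.
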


\begin{proof}

In the proof of Theorem \ref{tubularnbd2} we use the following notation and definitions.

Let $U$ be an open subset of $c_{\mathfrak a}X$ or of a complex submanifold $Y \subset c_{\mathfrak a}X$. In the category of ringed spaces $(U,\mathcal O_U)$ (see subsections \ref{charts}, \ref{submanifoldsect}) 
we define in a standard way holomorphic vector bundles on $U$, their subbundles, the Whitney sum of bundles, holomorphic bundle morphisms, etc (see~\cite{Hirz}).

Now, we define the (holomorphic) tangent bundle $Tc_{\mathfrak a}X$ on $c_{\mathfrak a}X$ as the pullback $\bar{p}^*TX_0$ of the (holomorphic) tangent bundle $TX_0$ of $X_0$. We denote by $T_xc_{\mathfrak a}X$ the fibre of $Tc_{\mathfrak a}X$ at $x \in c_{\mathfrak a}X$.

Next, we define a Hermitian metric on $Tc_{\mathfrak a}X$ as the pullback by $\bar{p}$ of a (complete) Hermitian metric on $TX_0$. 

Let $Y \subset c_{\mathfrak a}X$ be a complex submanifold. Every point $x \in Y$ has a neighbourhood $U=\hat{\Pi}(U_0,K) \subset c_{\mathfrak a}X$, where $U_0 \subset X_0$, $K \subset \hat{G}_{\mathfrak a}$ are open, so that $Y \cap U$ is the set of common zeros of functions $h_1,\dots,h_k \in \mathcal O(U)$ such that the maximum of moduli of determinants of square submatrices of the Jacobian matrix of the map $z \mapsto (h_1(z,\omega),\dots,h_k(z,\omega))$, $(z,\omega) \in U$, is uniformly bounded away from zero (see Definition \ref{defap}). We define the tangent bundle $TY$ of $Y$ as the subbundle of $Tc_{\mathfrak a}X|_Y$ whose fibres are orthogonal to the local vector fields $(z,\omega) \mapsto D_{z}h_1(z,\omega),\dots,D_zh_k(z,\omega)$, $(z,\omega) \in Y \cap U$.
Namely, in local coordinates $(z,\omega) \in U$ the metric
has a form
$$
ds^2(z,\omega)=\sum_{l,j} g_{lj}(z)dz_l \otimes d\bar{z}_j, \quad g_{lj}(z):=\left(\frac{\partial}{\partial z_l},\frac{\partial}{\partial z_j}\right)_{(z,\omega)};
$$
hence, if vector fields $D_{z}h_{i}$ ($1 \leq i \leq k$) are given by
$$
D_{z}h_{i}(z,\omega)=\sum_l a_{li}(z,\omega)\frac{\partial }{\partial z_l},
$$
then $T_{(z,\omega)}Y$ consists of vectors $\sum_l b_l \frac{\partial }{\partial z_l}$ such that $\sum_{l,j} a_{li}(z,\omega)\bar{b}_j g_{lj}(z)=0$ for all $1 \leq i \leq k$.

It is easily seen that $\iota^* TY$ coincides with the holomorphic tangent bundle of the complex submanifold $\iota^{-1}(Y)\subset X$. Since $\iota(\iota^{-1}(Y))$ is dense in $Y$, the bundle $TY$ is uniquely defined by the latter condition (see~subsection \ref{derhamsect}).

Consequently, we obtain the notion of the (holomorphic) normal bundle $NY \subset Tc_{\mathfrak a}X|_Y$ of $Y$.

\medskip

The tangent bundle $TX_0$ is generated by finitely many holomorphic vector fields $V_{0,k}$, $1 \leq k \leq m$, which determine holomorphic local flows $\varphi_k:O_k \rightarrow X_0$, where $O_k$ is an open neighbourhood of $\{0\} \times X_0$ in $\mathbb C \times X_0$ such that the differential of map
$$
F_0(t,\cdot):=(\varphi_m(s_m,\cdot) \circ \dots \circ \varphi_1(s_1,\cdot)):X_0 \rightarrow X_0, \quad t=(s_1,\dots,s_m),
$$
at $t=0$ is non-degenerate. The map $F_0$ is defined and holomorphic in a neighbourhood $W_0$ of $\{0\} \times X_0$ in $\mathbb C^m \times X_0$.

We will need notation and results of Section 4 in \cite{BK8}. There, we have established that $c_{\mathfrak a}X$ (as a set) is the disjoint union of connected complex manifolds $X_H$ ($H \in \Upsilon$) each is a covering of $X_0$. Using the lifting property, for every $H \in \Upsilon$ we can lift $F_0$ to a unique map $\tilde{F}_H(t,\cdot):W_H \rightarrow X_H$ that is defined and holomorphic on the neighbourhood $W_H:=(\Id_{\mathbb C^m} \times p_H)^{-1}(W_0)$ of $\{0\} \times X_H$ in $\mathbb C^m \times X_H$, where $p_H:X_H \rightarrow X_0$ is the covering projection. It is not difficult to show that these maps constitute 
a holomorphic map $$\tilde{F}:W \rightarrow c_{\mathfrak a}X,$$
where $W:=(\Id_{\mathbb C^m} \times \bar{p})^{-1}(W_0)$ is a neighbourhood of $\{0\} \times c_{\mathfrak a}X$ in $\mathbb C^m \times c_{\mathfrak a}X$. (Alternatively, one can define the map $\tilde{F}$ using the covering homotopy theorem and the local structure of $c_{\mathfrak a}X$, see~subsection \ref{charts}. Note that in local coordinates lifted from $X_0$ the map $\tilde{F}$ looks exactly the same as $F_0$.)

Next, for a fixed $x \in c_{\mathfrak a}X$ we consider a linear map
$$
\theta_x=\partial_t|_{t=0} \tilde{F}(t,x):\mathbb C^m \rightarrow T_x c_{\mathfrak a}X.
$$
We denote by $\theta$ the corresponding holomorphic bundle morphism $\mathbb C^m \times c_{\mathfrak a}X \rightarrow T c_{\mathfrak a}X$.

Since vector fields $\bar{p}^*V_{0,j}$ span $Tc_{\mathfrak a}X$, the maps $\theta_x$ are surjective, for every $x \in c_{\mathfrak a}X$. Since $TY \subset Tc_{\mathfrak a}X|_Y$, we can define a holomorphic vector bundle over $Y$,
$$
E':=\theta^*(TY) \subset Y \times \mathbb C^m.
$$

\begin{lemma}
\label{splitlem}
There exists a holomorphic vector bundle $E$ over $Y$ such that
\begin{equation*}
E' \oplus E=Y \times \mathbb C^m.
\end{equation*}
\end{lemma}
\begin{proof}
We have an exact 
sequence of holomorphic vector bundles over $Y$
\begin{equation}
\label{splits}
0 \rightarrow E' \rightarrow Y \times \mathbb C^m \overset{q}{\rightarrow} E'' \rightarrow 0
\end{equation}
(here $E''$ is the quotient bundle) which induces an exact sequence of \v{C}ech cohomology groups with values in the sheaves of germs of holomorphic sections of the corresponding holomorphic vector bundles
\begin{multline*}
0 \rightarrow \Gamma(Y,\Hom(E'',E')) \rightarrow \Gamma(Y,\Hom(E'',Y \times \mathbb C^m)) \rightarrow \Gamma(Y,\Hom(E'',E'')) \overset{\delta}{\rightarrow} \\ H^1(Y,\Hom(E'',E')) \rightarrow \cdots .
\end{multline*}
Recall that sequence (\ref{splits}) splits if and only if $\delta(I)=0$, where $I:E'' \rightarrow E''$ is the identity homomorphism (see, e.g.,~\cite[Ch.\,1,\,4.1d-f]{Hirz}). Since the sheaf $\Hom(E'',E')$ is locally free, Theorem \ref{thmB_} implies that $H^1(Y,\Hom(E'',E'))=0$. Hence there is a holomorphic homomorphism $u:E'' \rightarrow Y \times \mathbb C^m$ such that $q\circ u={\rm Id}$, i.e., $Y \times \mathbb C^m=E' \oplus E$ with $E:=u(E'')$.
\end{proof}

It follows from this lemma that the restriction $\theta:E \rightarrow Tc_{\mathfrak a}X|_E$ is an injective holomorphic bundle morphism such that $Tc_{\mathfrak a}X|_Y=TY \oplus \theta(E)$. Therefore, we have

\begin{lemma}
\label{thlem}
 $\theta|_E$ determines a holomorphic isomorphism between bundles $E$ and $NY$.
\end{lemma}

Further, we define $$F:=\tilde{F} \circ (\theta|_E)^{-1}:NY \rightarrow c_{\mathfrak a}X$$
and show that there exists a neighbourhood $V$ of the zero section of $NY$ that is mapped by $F$ biholomorphically to a neighbourhood of $Y$ in $c_{\mathfrak a}X$. Using this and replacing $V$ by a smaller neighbourhood of the zero section of $NY$ with convex fibres over $Y$, we can define the required maps $h_t$ by dilations along images of the fibres of this smaller neighbourhood under $F$. This would complete the proof of the theorem.

\medskip

We prove that $F$ is a biholomorphism near the zero section of $NY$ in two steps.

(1) First, we show that $F$ is a local biholomorphism, i.e., every point of the zero section of $NY$ has a neighbourhood $V$ such that the restriction $F|_{V}$ determines a biholomorphism between $V$ and $F(V) \subset c_{\mathfrak a}X$.

Indeed, by Proposition \ref{localstructap} for every $x \in Y$ one can find its neighbourhood $U_x \subset c_{\mathfrak a}X$ biholomorphic to $U_0 \times K$, where $U_0 \subset X_0$, $K \subset \hat{G}_{\mathfrak a}$ are open, such that (a) $Y \cap U$ is biholomorphic to $Y_0 \times K$ for $Y_0 \subset U_0$ a complex submanifold of $U_0$; (b)
$NY|_{Y \cap U} \cong NY_0 \times K$; (c)
$F: NY_0 \times K \rightarrow U_0 \times K$ is determined by a collection of maps $NY_0 \rightarrow U_0$ continuously depending on variable in $K$ such that the maximum of moduli of determinants of square submatrices of their Jacobian matrices are uniformly bounded away from zero. 

The required result now follows from the inverse function theorem with continuous dependence on parameter (Theorem \ref{ift}).

(2) Now, we show that there is a neighbourhood $V$ of the zero section of $NY$ such that 
$F|_{V}$ is an injection; since $F$ is holomorphic, this would imply the required.

We have defined $F$ in such a way that it maps the fibres of $E$ that lie over the points of $Y \cap X_H$ into $X_H$, for every $H \in \Upsilon$ (see the definition of $\tilde{F}$ above).

Let $\rho_H$ be the path metric determined by the pullback to $X_H$ of the (complete) Hermitian metric on $X_0$ (that we fixed previously). We define a pseudo-metric $\rho$ on $c_{\mathfrak a}X$ by
$$
\rho(x_1,x_2):=\rho_H(x_1,x_2)<\infty \text{ if } x_1,x_2 \in X_H, \quad \rho(x_1,x_2):=\infty \text{ otherwise}.
$$
Let $\|\cdot\|_x$ denote the norm on the fibres of bundle $NY$ determined by the restriction to $NY$ of the Hermitian metric on $Tc_{\mathfrak a}X$, defined above. 
For $y \in Y$ by $v_y$ we denote an element of $N_yY$.
For $x \in Y$ we set
$$
V_\delta(x):=\{v_y \in NY: \rho(x,y)<\delta, \|v_y\|_y<\delta\}.
$$
Using the construction of part (1) based on the inverse function theorem with continuous dependence on parameter, one can easily show that there is a positive function $b \in C(Y)$ such that
\begin{equation}
\label{metest}
\rho(x,F(v_x)) \leq b(x) \|v_x\|_x
\end{equation}
for all $v_x$ in a neighbourhood of the zero section of $NY$.
Also, using the assertion of part (1), one can show that there is a positive function $r \in C(Y)$ such that $F|_{V_{r(x)}(x)}$ is a biholomorphism for all $x \in Y$. Now, we set
$$
V:=\left\{v_y \in NY: \|v_y\|_y<\frac{r(y)}{2 \max\{1,b(y)\}}\right\}.
$$
This is an open neighbourhood of the zero section of $NY$. Let us show that $F|_V$ is injective. Indeed, assume that $v_x$, $v_y \in V$ and $F(v_x)=F(v_y)$. Without loss of generality we may assume that $r(y) \leq r(x)$. 
Thus, using the triangle inequality and (\ref{metest}) we obtain:
\begin{multline*}
\rho(x,y) \leq \rho(x,F(v_x))+\rho(F(v_x),F(v_y))+\rho(y,F(v_y))=\\ \rho(x,F(v_x))+\rho(y,F(v_y)) \leq \frac{1}{2} r(x)+\frac{1}{2} r(y) \leq r(x).
\end{multline*}
It follows that $v_x$, $v_y \in V_{r(x)}(x)$. Since $F|_{V_{r(x)}(x)}$ is a biholomorphism, we arrive to a contradiction with the assumption $F(v_x)=F(v_y)$. Therefore, $F|_V$ is injective.

The proof of the theorem is complete.
\end{proof}

\begin{remark}
In the classical tubular neighbourhood theorem the neighbourhood of a closed submanifold is chosen to be a Stein open submanifold (see, e.g.~\cite{Forst2}). 
The following question naturally arises: is it possible to choose $\Omega$ in Theorem \ref{tubularnbd2} to be a Stein open submanifold of $c_{\mathfrak a}X$ (see the definition in subsection \ref{cartansubmsect})?
\end{remark}

\SkipTocEntry\subsection{Proof of Theorem \ref{extapthm}}
\label{extsect2}

In view of Propositions \ref{basicpropthm}, \ref{equivprop2} and \ref{closprop}, Theorem \ref{extapthm} follows from

\begin{theorem}
\label{extapthm2}
Let $X_0$ be a Stein manifold, $Y \subset c_{\mathfrak a}X$ be a complex submanifold, $f \in \mathcal O(Y)$. Then there exists a function $F \in \mathcal O(c_{\mathfrak a}X)$ such that $F|_Y=f$.
\end{theorem}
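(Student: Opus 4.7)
The plan is to imitate the classical Cartan--Oka extension argument, now carried out on $c_{\mathfrak a}X$ using the sheaf-theoretic toolkit established in Section \ref{compsect}. The central object is the ideal sheaf $I_Y$ of germs of holomorphic functions on $c_{\mathfrak a}X$ vanishing on $Y$, which is coherent by Proposition \ref{idcohlem}. The target is to realize $f$ as the image of some $F\in\Gamma(c_{\mathfrak a}X,\mathcal O)$ under the restriction map.

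First I would set up the short exact sequence of sheaves on $c_{\mathfrak a}X$
\begin{equation*}
0\longrightarrow I_Y\longrightarrow \mathcal O\longrightarrow \mathcal O/I_Y\longrightarrow 0
\end{equation*}
and identify the quotient $\mathcal O/I_Y$ with the trivial extension $\widetilde{\mathcal O_Y}$ of the structure sheaf of $Y$ (as defined just before Theorem \ref{cartansubm}). Off $Y$ both sheaves vanish tautologically. On $Y$, use Proposition \ref{localstructap}: near any $y_0\in Y$ there is a biholomorphism $\Phi\colon V_0\times K\to V$ with $\Phi^{-1}(V\cap Y)=Z_0\times K$, where $Z_0\subset V_0$ is an ordinary complex submanifold. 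In these local coordinates a holomorphic function on $V$ vanishes on $V\cap Y$ iff its defining functions $h_1,\dots,h_k$ (whose existence is guaranteed by Definition \ref{defap}) divide it, exactly as in the classical case; and restriction to $V\cap Y\cong Z_0\times K$ realizes $\mathcal O_V/I_Y|_V\cong \mathcal O_{V\cap Y}$. Hence $\mathcal O/I_Y\cong\widetilde{\mathcal O_Y}$ as sheaves on $c_{\mathfrak a}X$.

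Next I would pass to the long exact cohomology sequence and write
\begin{equation*}
\Gamma(c_{\mathfrak a}X,\mathcal O)\longrightarrow \Gamma\bigl(c_{\mathfrak a}X,\widetilde{\mathcal O_Y}\bigr)\longrightarrow H^1(c_{\mathfrak a}X,I_Y).
\end{equation*}
Since $I_Y$ is coherent and $X_0$ is Stein, Theorem \ref{thmB} gives $H^1(c_{\mathfrak a}X,I_Y)=0$, so the left arrow is surjective. Combining this with the canonical isomorphism $\Gamma(c_{\mathfrak a}X,\widetilde{\mathcal O_Y})\cong\Gamma(Y,\mathcal O_Y)=\mathcal O(Y)$ produces an $F\in\mathcal O(c_{\mathfrak a}X)$ with $F|_Y=f$, which is exactly the assertion of the theorem.

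The one point that needs care, and which I regard as the main technical obstacle, is the local identification $\mathcal O/I_Y\cong\widetilde{\mathcal O_Y}$. Writing this down requires knowing that any germ in $I_{Y,y}$ is an $\mathcal O_y$-linear combination of the local defining functions $h_1,\dots,h_k$. In the classical setting this is a direct consequence of the analytic Nullstellensatz / straightening of a submanifold; here it is essentially built into the proof of Proposition \ref{idcohlem}, but I would verify it by pulling back to the product chart $V_0\times K$ via $\Phi$ and invoking the classical result fibrewise in $V_0$, using that holomorphy in the sense of Definition \ref{holomo} on $V_0\times K$ is fibrewise holomorphy in $V_0$ with continuous dependence on the $K$-parameter. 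Once this is in place the extension is a formal consequence of Cartan B, exactly as in the classical Stein theory.
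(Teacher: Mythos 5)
Your argument is correct and is essentially the paper's own proof: both rest on local extension across $Y$ via the product charts of Proposition \ref{localstructap}, coherence of $I_Y$ from Proposition \ref{idcohlem}, and the vanishing $H^1(c_{\mathfrak a}X,I_Y)=0$ from Theorem \ref{thmB}. The only cosmetic difference is that you package the patching as surjectivity in the long exact sequence of $0\to I_Y\to\mathcal O\to\widetilde{\mathcal O_Y}\to 0$, whereas the paper resolves the \v{C}ech $1$-cocycle of differences of local extensions directly; these are the same argument, and your worry about the local division statement is subsumed in the coherence proof of Proposition \ref{idcohlem} (for the quotient identification itself, local extendability plus the definition of $I_Y$ already suffice).
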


\begin{proof}
We will need

\begin{lemma}
\label{localextap}
Let $f \in \mathcal O(Y)$.
For every point $y_0 \in Y$ there exist a neighbourhood $V \subset c_{\mathfrak a}X$ of $y_0$ and a function $F_V \in \mathcal O(V)$ such that $F_V|_{V \cap Y}=f|_{V \cap Y}$.
\end{lemma}
\begin{proof}
By Proposition \ref{localstructap} there exist an open neighbourhood $V \subset c_{\mathfrak a}X$ of $y_0$, open subsets $V_0 \subset \mathbb C^n$, $K \subset \hat{G}_{\mathfrak a}$, and a biholomorphic map $\Phi \in \mathcal O(V_0 \times K,V)$ such that $$\Phi^{-1}(V \cap Y)=Z_0 \times K, \quad \text{ where }
Z_0=\{(0,\dots,0,z_{k+1},\dots,z_n): (z_1,\dots,z_n) \in V_0\}.$$
Let $\tilde{f}:=\Phi^*f \in \mathcal O(Z_0 \times K)$. We define
$$\tilde{F}_V(z_1,\dots,z_n,\omega):=\tilde{f}(z_{k+1},\dots,z_n,\omega), \quad (z_1,\dots,z_n,\omega) \in V_0 \times K,$$
and $F_V:=(\Phi^{-1})^*\tilde{F}_V$.
\end{proof}

Now, by Lemma \ref{localextap} there exist an open cover $\mathcal U=\{U_j\}$ of $c_{\mathfrak a}X$ and functions $f_j \in \mathcal O(U_j)$ such that $f_j|_{Y \cap U_j}=f|_{Y \cap U_j}$ if $Y \cap U_j \neq \varnothing$; if $Y \cap U_j=\varnothing,$ we define $f_j:=0$. Then $\{g_{ij}:=f_i-f_j \text{ on } U_i \cap U_j \neq \varnothing\}$ is a $1$-cocylce with values in $I_Y$. 
By Proposition \ref{idcohlem} sheaf $I_Y$ is coherent, so by Theorem \ref{thmB} $H^1(c_{\mathfrak a}X,I_Y)=0$. Thus, $\{g_{ij}\}|_{\mathcal V}$ represents $0$ in  $H^1(\mathcal V,I_Y)$ for a refinement $\mathcal V$ of $\mathcal U$. To avoid abuse of notation we may assume without loss of generality that $\mathcal V=\mathcal U$.
Therefore, we can find holomorphic functions $h_j \in \Gamma(U_j,I_Y)$ such that $g_{ij}=h_i-h_j$ on $U_i \cap U_j \neq \varnothing$. Now, we define $F|_{U_j}:=f_j-h_j$ for all $j$.
\end{proof}

\section{Proofs of Theorems \ref{divrelprop}, \ref{zeroapthm}, \ref{chernthm} and Proposition  \ref{zeroapcor}}
\label{needs}

\SkipTocEntry\subsection{} In the proofs we use the following definitions.

(1) Let $\{U_\alpha\}$ be an open cover of $Z\subset X$ and $L$ and $L'$ be line bundles on $Z$ in one of the categories introduced in subsection~\ref{sectdivisors} defined on $\{U_\alpha\}$ by cocycles $d_{\alpha\beta}$ and $d'_{\alpha\beta}$, respectively. 
Recall that an isomorphism between $L$ and $L'$ is given by nowhere zero functions $h_\alpha$ on $U_\alpha$ (of the same category as  $d_{\alpha\beta}$, $d_{\alpha\beta}'$) such that $d'_{\alpha\beta}=h_\alpha d_{\alpha\beta} h_{\beta}^{-1}$ on $U_\alpha \cap U_\beta$ for all $\alpha$, $\beta$.

(2) In the proofs below we work with the \v{C}ech cohomology groups of sheaves on $X$ or complex $\mathfrak a$-submanifolds of $X$ associated to  presheaves of functions defined on subsets of $X$ open in topology $\mathcal T_{\mathfrak a}$ or their intersections with the submanifolds. By definition (see, e.g.,~\cite{Gun3}), these groups are inverse limits of the \v{C}ech cohomology groups defined on open covers of $X$ or of its complex $\mathfrak a$-submanifolds of class ($\mathcal T_{\mathfrak a}$) (see~Definitions \ref{tfinedef} and \ref{tfinedef2}).

(3) Recall that $\mathcal O_{\mathfrak a}(V)=\iota^*\mathcal O(U)$, where $V=\iota^{-1}(U) \in \mathcal T_{\mathfrak a}$, $U \subset c_{\mathfrak a}X$ are open. In particular, $\mathcal O_{\mathfrak a}=\iota^*\mathcal O$, where $\mathcal O$ is the structure sheaf of $c_{\mathfrak a}X$, and $\iota:X \rightarrow c_{\mathfrak a}X$ is the canonical map (see~Section \ref{mainsect}). Since $\iota(X)$ is dense in $c_{\mathfrak a}X$,
spaces $\mathcal O_{\mathfrak a}(V)$ and $\mathcal O(U)$ are isomorphic. 
It follows from the definition of cohomology groups that $H^p(X,\mathcal O_{\mathfrak a})=H^p(c_{\mathfrak a}X,\mathcal O)$, $p\in\mathbb N$.
A similar argument yields $H^p(Z,\mathcal O_{\mathfrak a})=H^p(Y,\mathcal O)$, $p\in\mathbb N$, where $Y \subset c_{\mathfrak a}X$ is a complex submanifold and $Z:=\iota^{-1}(Y) \subset X$.

\SkipTocEntry\subsection{Proof of Theorem \ref{divrelprop}}

Let us prove the first assertion.

Let $E=\{f_\alpha \in \mathcal O_{\mathfrak a}(U_\alpha)\}$ so that $L_E$ is determined by the cocycle $\{d_{\alpha\beta}:=f_\alpha f_\beta^{-1} \in \mathcal O_{\mathfrak a}(U_\alpha \cap U_\beta)\}$. By Definition \ref{semidef} 
there exist nowhere zero functions $h_\alpha \in \mathcal O_{\ell_\infty}(U_\alpha)$ with $|h_\alpha| \in C_{\mathfrak a}(U_\alpha)$ such that $d_{\alpha\beta}=h_\alpha^{-1} h_\beta$ for all $\alpha$, $\beta$, see subsection \ref{needs}.1 (1). We define $f:=f_\alpha h_\alpha$ on $U_\alpha$. This is a function in $\mathcal O(Z)$ such that $|f|_{U_\alpha}|=|f_\alpha|| h_\alpha| \in C_{\mathfrak a}(U_\alpha)$ for each $\alpha$.
One can easily show using a partition of unity on the complex submanifold $Y \subset c_{\mathfrak a}X$ such that $Z=\iota^{-1}(Y)$ (see subsection \ref{submanifoldsect}) that the latter implies $|f| \in C_{\mathfrak a}(Z)$. By our construction, divisor $E_f \in \Div(X)$ is $\ell_\infty$-equivalent to $E$.

\medskip

Conversely, suppose that $\mathfrak a$ is such that $\hat{G}_{\mathfrak a}$ is a compact topological group and $j(G)\subset\hat{G}_{\mathfrak a}$ is a dense subgroup, and let $E=\{f_\alpha \in \mathcal O_{\mathfrak a}(U_\alpha)\}\in {\rm Div}_{\mathfrak a}(X)$. 
By our assumption there exist nowhere zero functions $h_\alpha \in \mathcal O_{\ell_\infty}(U_\alpha)$ with $h_\alpha^{-1} \in \mathcal O_{\ell_\infty}(U_\alpha)$ such that
$f|_{U_\alpha}=h_\alpha f_\alpha$ for all $\alpha$ (see~Definition \ref{diveqdef}). It is clear that the family $\{h_\alpha\}$ determines an $\ell_\infty$-isomorphism of the line $\mathfrak a$-bundle $L_E:=\{(U_\alpha \cap U_\beta,d_{\alpha\beta}:=f_\alpha f_\beta^{-1})\}$ of $E$ onto the trivial bundle $\{(U_\alpha \cap U_\beta,1)\}$ (see subsection \ref{needs}.1 (1)); to conclude that $L_E$ is $\mathfrak a$-semi-trivial it remains to show that $|h_\alpha|, |h_\alpha|^{-1} \in C_{\mathfrak a}(U_\alpha)$ for all $\alpha$ (see~Definition \ref{semidef}). 

By Proposition \ref{basicpropthm}(2) there exist open subsets $V_\alpha \subset c_{\mathfrak a}X$ and functions $\hat{f}_\alpha \in \mathcal O(V_\alpha)$ such that $U_\alpha=\iota^{-1}(V_\alpha)$ and $f_\alpha=\iota^*\hat{f}_\alpha$ for all $\alpha$; also, there exists a function $F \in C(c_{\mathfrak a}X)$ such that $|f|=\iota^*F$. We will show that there exist positive functions $g_\alpha\in C(V_\alpha)$ such that 
$\iota^*g_{\alpha}=|h_\alpha|$. Then by Proposition \ref{basicpropthm}(1) $|h_\alpha|, |h_\alpha|^{-1} \in C_{\mathfrak a}(U_\alpha)$. 

Let us fix $\alpha$. First, note that since the required inclusion is a local property, we may assume without loss of generality that $V_\alpha$ is biholomorphic to $V_0 \times K$, where $V_0\subset X_0$ is an open coordinate chart and $K \subset \hat{G}_{\mathfrak a}$ is open, $K \in \mathfrak Q$ (see subsection \ref{holfuncsect}). In particular, we can identify $V_\alpha$ with  $V_0\times K$.

The proof of the required inclusion consists of three parts.

(1) Let us show that $\hat{f}_\alpha(\cdot,\eta) \not\equiv 0$ for every $\eta \in K$. 

By definition, the family of functions $\hat{f}:=\{\hat{f}_{\alpha}\}$ determines a not identically zero holomorphic section of a holomorphic line bundle $\hat{L}_E$ on $c_{\mathfrak a}X$ (see~subsection \ref{sectdivisors}).
Based on results in \cite{BK8} we have:

$c_{\mathfrak a}X=\sqcup_{H \in \Upsilon}\, \iota_H(X_H)$, where $X_H=X$, $\iota_H:X_H \rightarrow c_{\mathfrak a}X$ is holomorphic (see~subsection \ref{charts}) and $\iota_H(X_H)$ is dense in $c_{\mathfrak a}X$ for every $H \in \Upsilon$ (cf.~subsection 4.1 and Example 4.2 in \cite{BK8}). 

In particular, since $\iota_H(X_H)$ is dense in $c_{\mathfrak a}X$, assuming that section $\hat{f} \equiv 0$ on $\iota_H(X_H)$ for some $H \in \Upsilon$ we obtain $\hat{f} \equiv 0$ on $c_{\mathfrak a}X$, a contradiction.

Suppose, on the contrary, that $\hat{f}_\alpha(\cdot,\eta) \equiv 0$ for some $\eta \in K$. Then 
there exists a unique $H \in \Upsilon$ such that $V_0 \times \{\eta\}~(\cong \hat{\Pi}(V_0,\{\eta\})) \subset \iota_H(X_H)$. 
We set $\hat{f}_{\eta}:=\iota_H^*\hat{f}$. This is a holomorphic section of holomorphic line bundle $\iota_H^*\hat{L}_E$ on $X_H$. By our assumption $\hat{f}_\eta$ is zero on an open subset of $X_H~(=X)$. Since $\hat{f}_\eta$ is holomorphic and $X_H$ is connected, $\hat{f}_\eta \equiv 0$; hence $\hat{f}|_{\iota_H(X_H)} \equiv 0$, a contradiction, i.e., $\hat{f}_\alpha(\cdot,\eta) \not\equiv 0$.

(2) Next, we show that $F|_{V_\alpha}(\cdot,\eta) \not\equiv 0$ for every $\eta \in K$. 

Assume on the contrary that there exists $\eta_0 \in K$ such that $F|_{V_\alpha}(\cdot,\eta) \equiv 0$. By part (1) $\hat{f}_\alpha(\cdot,\eta_0) \not \equiv 0$, so we can choose an open $V_0' \Subset V_0$ such that $|\hat{f}_\alpha(\cdot,\eta_0)| \geq c>0$ on $V_0'$.
Now, under the identification $V_\alpha$ with $V_0\times K$ the set $U_\alpha=\iota^{-1}(V_\alpha)$ is identified with $U_\alpha=V_0 \times L$, where $L:=j^{-1}(K) \subset G$, and so $\iota|_{U_\alpha}=\Id_{V_0} \times j|_{L}$. 
Since $F|_{V_\alpha}$ is continuous and $j(L)$ is dense in $K$ (see~subsection \ref{constrsect}), there exists a net $\{g_\gamma\} \subset L$ such that the net $\{j(g_\gamma)\}\subset K$ converges to $\eta_0$. By continuity $\hat{f}_\alpha(\cdot,j(g_\gamma))$ converges to $\hat{f}_\alpha(\cdot,\eta_0)$ uniformly on $V_0'$ (so we may assume without loss of generality that $|\hat{f}_\alpha(\cdot,j(g_\gamma))| \geq \frac{c}{2}>0$ for all $\gamma$), while $F|_{V_\alpha}(\cdot,j(g_\gamma))$ converges to $0$ uniformly on $V_0'$. Since
$\bigl|f|_{U_\alpha}\bigr|=|h_\alpha| |f_\alpha|$, where $|f|=\iota^*F$, $f_\alpha=\iota^*\hat{f}$, the latter implies that $|h_\alpha(\cdot,g_\gamma)| \rightarrow 0$ uniformly over $V_0'$. We will show that this leads to a contradiction with our assumption. 

Indeed, due to results of subsection 4.1 of \cite{BK8} there exists an equivariant with respect to right actions of $G$ continuous proper map $\kappa:\hat{G}_{\ell_\infty} \rightarrow \hat{G}_{\mathfrak a}$. Set $K':=\kappa^{-1}(K)$. Passing to the corresponding subnets, if necessary, we may assume without loss of generality that there
exists a net $\{\xi_\gamma\} \subset K'$ having limit $\xi_0 \in K'$  such that $\kappa(\xi_0)=\eta_0$ and $\kappa(\xi_\gamma)=j(g_\gamma)$ for all $\gamma$. Further, since by our assumption $h_\alpha \in \mathcal O_{\ell_\infty}(U_\alpha)$, by Proposition \ref{basicpropthm}(2) there exists a function $\tilde{h}_\alpha \in \mathcal O(V_0 \times K')$ such that $(\Id_{V_0} \times j_{\ell_\infty})^*\tilde{h}_\alpha=h_\alpha$ (see~subsection \ref{constrsect} for notation). Now, since $|h_\alpha(\cdot,g_\gamma)| \rightarrow 0$ uniformly on $V_0'$, we obtain that $|\tilde{h}_\alpha(\cdot,\xi_\gamma)| \rightarrow 0$ uniformly on $V_0'$; so by continuity $\tilde{h}_\alpha(\cdot,\xi_0) \equiv 0$ on $V_0'$. However, by Proposition \ref{basicpropthm}(2) function $h^{-1}_\alpha \in \mathcal O_{\ell_\infty}(U_\alpha)$ admits a continuous extension to $V_0 \times K'$ such that its product with $\tilde{h}_\alpha$ is identically $1$ (because $h_\alpha h_\alpha^{-1} \equiv 1$ on $U_\alpha$ and $(\Id_{V_0} \times j_{\ell_\infty})(U_\alpha)$ is dense in $V_0 \times K'$). This contradicts the identity $\tilde{h}_\alpha(\cdot,\xi_0) \equiv 0$ on $V_0'$ and completes the proof of step (2).

(3) Finally, we show that there exists a positive function $g_\alpha\in C(V_\alpha)$, $V_\alpha=V_0 \times K$, such that 
$\iota^*g_{\alpha}=|h_\alpha|$.

Let $\mathcal Z\subset V_\alpha$ be the union of zero loci of functions $F|_{V_\alpha}$ and $|\hat{f}_{\alpha}|$. By parts (1) and (2) we obtain that the open set $\mathcal Z^c:=V_\alpha\setminus \mathcal Z$ is dense in $V_\alpha$ and, moreover, $F|_{V_\alpha}/|\hat{f}_\alpha|$ and $|\hat{f}_\alpha|/F|_{V_\alpha}$ are continuous on $\mathcal Z^c$. We set $\tilde\kappa:=(\Id_{V_0} \times \kappa): V_0\times \hat{G}_{\ell_\infty} \rightarrow V_0\times \hat{G}_{\mathfrak a}$. By the definition pullbacks by $\tilde \kappa$ of $\bigl(F/|\hat{f}_\alpha|\bigr)|_{\mathcal Z^c}$ and $\bigl(|\hat{f}_\alpha|/F\bigr)|_{\mathcal Z^c}$ to $\tilde\kappa^{-1}(\mathcal Z^c)\subset V_0\times K'$ coincide with $|\tilde h_\alpha|$ and $|\tilde h_\alpha|^{-1}$ there (see~part (2)).
This, the fact that the open set $\tilde\kappa ^{-1}(\mathcal Z^c)$ is dense in $V_0\times K'$ and the definition of $\tilde\kappa$ imply that
$|\tilde h_\alpha|$ is constant on fibres of $\tilde\kappa$. Since $\tilde\kappa$ is a proper continuous map and $V_0\times K$, $V_0\times K'$ are 
locally compact Hausdorff spaces, the latter implies that there exists a positive function $g_\alpha\in C(V_0\times K)$ such that $\tilde\kappa^* g_\alpha=|\tilde h_\alpha|$. By the definition $g_\alpha=F|_{V_\alpha}/|\hat{f}_\alpha|$ on $\mathcal Z^c$. This yields $\iota^*g_{\alpha}=|h_\alpha|$, as required.

\SkipTocEntry\subsection{Proof of Theorem \ref{zeroapthm}}

Suppose that conditions (1), (2) are satisfied. Let us show that the holomorphic line $\mathfrak a$-bundle $L$ is $\mathfrak a$-semi-trivial.

Suppose that $L$ is defined by a holomorphic $1$-cocycle $\{c_{\alpha\beta}\}$ on an open cover $\{U_\alpha\}$ of $Z$ of class $(\mathcal T_{\mathfrak a})$.  In what follows, we may need to pass several times to refinements of class ($\mathcal T_{\mathfrak a}$) of the cover $\{U_\alpha\}$. To avoid abuse of notation we may assume without loss of generality that $\{U_\alpha\}$  is acyclic with respect to the corresponding sheaves so that according to the classical Leray lemma we can work with cover $\{U_\alpha\}$ itself only.

By (1) we can find functions $c_\alpha \in \mathcal O_{\mathfrak a}(U_\alpha)$ such that $c^{-1}_\alpha \in \mathcal O_{\mathfrak a}(U_\alpha)$ and $c_\alpha^{-1} c_{\alpha\beta} c_\beta=d_{\alpha\beta}$ is locally constant on $U_\alpha \cap U_\beta$ for all $\alpha,\beta$; hence, $\{d_{\alpha\beta}\}$ determines an equivalent discrete ${\mathfrak a}$-bundle $L'$ on $Z$. 
Now, we have polar representation
$$
d_{\alpha\beta}=|d_{\alpha\beta}|e^{il_{\alpha\beta}}  \quad \text{ for all }\quad \alpha,\beta.
$$
Then $\{|d_{\alpha\beta}|\}\in  Z^1(\{U_\alpha\},\mathbb R_+)$, $\{e^{il_{\alpha\beta}}\}  \in Z^1(\{U_\alpha\},\mathbb U_1)$, where $\mathbb U_1$ is the 1-dimensional unitary group, are multiplicative locally constant cocycles.

Since $|d_{\alpha\beta}| \neq 0$ are locally constant and belong to $\mathcal O_{\mathfrak a}(U_\alpha\cap U_\beta)$, functions $\log|d_{\alpha\beta}|\in\mathcal O_{\mathfrak a}(U_\alpha\cap U_\beta)$ as well and form an additive holomorphic 1-cocyle on $\{U_\alpha\}$. We can resolve this cocycle by Theorem \ref{thmB_} (see definitions in subsection \ref{needs}.1), i.e.,~there exist functions
$g_\alpha \in \mathcal O_{\mathfrak a}(U_\alpha)$ such that $e^{g_\alpha}\cdot e^{-g_\beta}=|d_{\alpha\beta}|$ for all $\alpha,\beta$.

Further, by condition (2) bundle $L'$ is trivial in the category of discrete line bundles on $Z$. This implies existence of functions $e^{il_\alpha} \in \mathcal O(U_\alpha)$, where $l_\alpha$ are real-valued locally constant, such that $$e^{il_{\alpha\beta}}=e^{il_\alpha}\cdot e^{-il_\beta} \quad \text{ on }\quad U_\alpha \cap U_\beta.$$ 
Now, we define 
$$
\psi_\alpha:=e^{-g_\alpha}\cdot e^{-il_\alpha}\cdot c_\alpha \in \mathcal O_{\ell_\infty}(U_\alpha).
$$
Then $d_{\alpha\beta}=\psi_\alpha\psi_\beta^{-1}$, so the family of functions $\{\psi_\alpha\}$ determines an isomorphism in category $\mathcal L_{\ell_\infty}(Z)$ of $L$ onto the trivial line bundle (see subsection \ref{needs}.1 (1)).
Moreover, 
$|\psi_\alpha|=e^{-\Rea\, g_\alpha}|c_\alpha|$, $|\psi_\alpha|^{-1}=e^{\Rea\, g_\alpha}|c_\alpha^{-1}| \in C_{\mathfrak a}(U_\alpha)$ for all $\alpha$, as required.

\medskip

Conversely, suppose that the holomorphic line $\mathfrak a$-bundle $L$ is $\mathfrak a$-semi-trivial. Let us show that conditions (1) and (2) are satisfied. As before, we assume that $L$ is determined by a cocycle
$\{c_{\alpha\beta} \in \mathcal O_{\mathfrak a}(U_\alpha\cap U_\beta)\}$ on cover $\{U_\alpha\}$ of $Z$. 
By Definition \ref{semidef} there exist nowhere zero functions $\psi_\alpha \in \mathcal O(U_\alpha)$ with $|\psi_\alpha|,|\psi_\alpha|^{-1} \in C_{\mathfrak a}(U_\alpha)$ such that $\psi_\alpha c_{\alpha\beta}\psi_\beta^{-1} \equiv 1$ on $U_\alpha\cap U_\beta\ne\emptyset$ (see definition (1) in subsection \ref{needs}.1).

We will use notation and results of subsection \ref{derhamsect}.
Denote $$\Lambda_{\mathfrak a}^{p,k}(U_\alpha):=\iota^*\Lambda^{p,k}(V_\alpha), \quad Z_{\mathfrak a}^{p,k}(U_\alpha):=\iota^*Z^{p,k}(V_\alpha),$$ where $V_\alpha \subset Y$ is open and such that $U_\alpha=\iota^{-1}(V_\alpha)$, $Y \subset c_{\mathfrak a}X$ is the closure of $\iota(Z)$ (a complex submanifold of $c_{\mathfrak a}X$, see~Proposition \ref{closprop}), $\Lambda^{p,k}(Y)$ is the space of $(p,k)$-forms on $Y$ and $Z^{p,k}(Y)$ is the space of $\bar{\partial}$-closed form on $Y$. Also, denote $C^\infty_{\mathfrak a}(U_\alpha):=\iota^*C^\infty(V_\alpha)$. 

Let us show that $|\psi_\alpha|, |\psi_\alpha|^{-1} \in C^\infty_{\mathfrak a}(U_\alpha)$. We may assume without loss of generality that $U_\alpha=U_0\times j^{-1}(K)$, $V_\alpha=U_0\times K$, where $U_0 \subset \mathbb C^m$, $m:={\rm dim}_{\mathbb C} Z$, is an open ball and $K \subset \hat{G}_{\mathfrak a}$, $K \in \mathfrak Q$, see~(\ref{base1}), is open 
(see~Proposition \ref{localstructap} and subsection \ref{charts}). Then there exist $\tilde{\psi}_\alpha$, $\tilde{\psi}_\alpha^{-1}\in \mathcal O(U_0\times K')$, $K':=\kappa^{-1}(K)\subset\hat{G}_{\ell_\infty}$, such that $({\rm Id}_{U_0}\times j_{\ell_\infty})^*(\tilde{\psi}_\alpha)^{\pm 1}=(\psi_\alpha)^{\pm 1}$ (see part (2) in the proof of Theorem \ref{divrelprop} and subsection \ref{charts}) which can be viewed as holomorphic functions on $U_0$ taking values in the Fr\'{e}chet space $C(K')$. In particular, these are $C(K')$-valued $C^\infty$ functions. Further, since $|\psi_\alpha|,|\psi_\alpha|^{-1} \in C_{\mathfrak a}(U_\alpha)$, there exist nowhere zero functions $\hat{\psi}_\alpha$, $\hat{\psi}_\alpha^{-1}\in C(U_0\times K)$ whose pullbacks by ${\rm Id}_{U_0}\times\kappa$ to $U_0\times K'$ coincide with $\tilde{\psi}_\alpha$ and $\tilde{\psi}_\alpha^{-1}$, respectively. The last two facts imply easily that $\hat{\psi}_\alpha, \hat{\psi}_\alpha^{-1}\in C^\infty(U_0 \times K)$ (see~subsection \ref{derhamsect} for the definition). Pullbacks of $\hat{\psi}_\alpha, \hat{\psi}_\alpha^{-1}$ by $\iota:=\Id_{U_0} \times j$ are functions $|\psi_\alpha|, |\psi_\alpha|^{-1}$. Thus these functions are in $C^\infty_{\mathfrak a}(U_\alpha)$.

Now, since $\hat{\psi}_\alpha$, $\hat{\psi}_\alpha^{-1}$ are nowhere zero, $\log|\psi_\alpha| \in C^\infty_{\mathfrak a}(U_\alpha)$; hence, forms $\partial\bigl( \log |\psi_\alpha|\bigr)$ belong to $\Lambda_{\mathfrak a}^{1,0}(U_\alpha)$ 
and satisfy $\bar{\partial } \partial\bigl( \log |\psi_\alpha|\bigr)=0$, that is, $\partial \bigl(\log |\psi_\alpha|\bigr) \in Z_{\mathfrak a}^{1,0}(U_\alpha)$. 
Identifying $U_\alpha$ with $U_0\times j^{-1}(K)$ by a biholomorphism (see Proposition \ref{localstructap}) we obtain that
$\partial \bigl(\log |\psi_\alpha|\bigr)$ is the pullback by $\iota$ of the $d$-closed holomorphic 1-form $\partial\bigl(\log\hat{\psi}_\alpha\bigr)$ on $U_0$ with values in the Fr\'{e}chet space $C(K)$ (see subsection \ref{derhamsect} for notation). 
Integrating the latter form along rays in $U_0$ emanating from the center and taking the pullback of the obtained function by $\iota$ we obtain a function $u_\alpha \in \mathcal O_{\mathfrak a}(U_\alpha)$ such that $\partial u_\alpha=\partial\bigl(\log |\psi_\alpha|\bigr)$. Hence, $\log |\psi_\alpha|-u_\alpha=\bar{v}_\alpha$ for some 
$v_\alpha \in \mathcal O_{\mathfrak a}(U_\alpha)$. We define $b_\alpha:=u_\alpha+v_{\alpha} \in \mathcal O_{\mathfrak a}(U_\alpha)$.
Then $\log|\psi_\alpha|=\Real\, b_\alpha$. Now, set 
$$
d_{\alpha\beta}:=e^{b_\alpha} c_{\alpha\beta} e^{-b_\beta} \in \mathcal O_{\mathfrak a}(U_\alpha\cap U_\beta) \quad \text{ for all }\quad \alpha,\beta.
$$
Then $|d_{\alpha\beta}|=|\psi_\alpha c_{\alpha\beta}\psi_\beta^{-1}| \equiv 1$, i.e.,~$\{d_{\alpha\beta}\}$ is a locally constant 1-cocycle on the cover $\{U_\alpha\}$ of $Z$ with values in the unitary group $\mathbb U_1$. Therefore condition (1) is satisfied. 

Further, $\psi_{\alpha}e^{-b_\alpha} d_{\alpha\beta} \psi_{\beta}^{-1}e^{b_\beta} \equiv 1$ for all $\alpha$, $\beta$ and
$|\psi_{\alpha}e^{-b_\alpha}| \equiv 1$ on $U_\alpha$, that is, functions $\psi_{\alpha}e^{-b_\alpha}$ are locally constant for all $\alpha$. Hence the discrete line $\mathfrak a$-bundle $L':=\{(U_\alpha \cap U_\beta,d_{\alpha\beta})\}$ is trivial in the category of discrete line bundles on $Z$, i.e.,~condition
(2) is satisfied as well. 

The proof of the theorem is complete.

\SkipTocEntry\subsection{Proof of Proposition \ref{zeroapcor}}
\label{sub15.3}


We use notation and results of subsection \ref{derhamsect}. Suppose that algebra $\mathfrak a$ is self-adjoint. Then $Z:=\iota^{-1}(Y)$ is a complex $\mathfrak a$-submanifold of $X$ (see~Definition \ref{manifolddef} and subsection \ref{submanifoldsect}). 
We set
$\Lambda_{\mathfrak a}^{p,k}(Z):=\iota^*\Lambda^{p,k}(Y)$, $Z_{\mathfrak a}^{p,k}(Z):=\iota^*Z^{p,k}(Y)$, and define
$$H^{p,k}_{\mathfrak a}(Z):=Z^{p,k}_{\mathfrak a}(Z)/\bar{\partial} \Lambda^{p,k-1}_{\mathfrak a}(Z), \quad p \geq 0, \quad k \geq 1, \qquad H_{\mathfrak a}^{p,0}(Z):=Z^{p,0}_{\mathfrak a}(Z).$$ 
These spaces of forms and cohomology groups are isomorphic to their counterparts on $Y$, so we have analogues of Proposition \ref{poincarelem} and Corollaries \ref{dolcor1}, \ref{dolcor2} on $Z$ (see definitions (2) and (3) of subsection \ref{needs}.1).

We will also need an analogue of the de Rham complex on $Y$. 

Let $Z^{m}(Y) \subset \Lambda^{m}(Y)$ denote the subspace of $d$-closed forms. Define $$H^{m}(Y):=Z^{m}(Y)/d \Lambda^{m-1}(Y), \quad p \geq 0, \quad m \geq 1, $$ 
$$
H^{0}(Y):=Z^{0}(Y).
$$
(``de Rham cohomology groups of $Y$'').
Now, set $\Lambda^m_{\mathfrak a}(Z):=\iota^*\Lambda^m(Y)$, $Z^m_{\mathfrak a}(Z):=\iota^*Z^m(Y)$, 
$$H^{m}_{\mathfrak a}(Z):=Z_{\mathfrak a}^{m-1}(Y)/d \Lambda_{\mathfrak a}^{m}(Z), \quad p \geq 0, \quad m \geq 1, \qquad H_{\mathfrak a}^{0}(Z):=Z_{\mathfrak a}^{0}(Z).$$  Then $H^{m}_{\mathfrak a}(Z)$ and $H^m(Y)$ are isomorphic.

Let us denote by $\mathcal O_{\mathfrak a}$ the sheaf associated to the presheaf of functions $\mathcal O_{\mathfrak a}(U)$, $U\subset Z$, $U \in \mathcal T_{\mathfrak a}$ (see~Definition \ref{tfinedef2}).  
Let $\mathbb Z_{\mathfrak a}$, $\mathbb R_{\mathfrak a} \subset \mathcal O_{\mathfrak a}$ denote subsheaves of
locally constant functions with values in groups $\mathbb Z$, $\mathbb R$, respectively.
Using an argument similar to that of the proof of Proposition \ref{poincarelem}, where instead of Lemma \ref{hl1} we use the Poincar\'{e} $d$-lemma for Banach-valued $d$-closed forms on a ball (see subsection \ref{poincare}), one obtains an analogue of the $d$-Poincar\'{e} lemma on $Y$ (i.e., a $d$-closed $C^\infty$ $m$-form, $m\ge 1$, on an open subset of $Y$ is locally $d$-exact). Then since sheaves $\Lambda^m$ of germs of $C^\infty$ $m$-forms on $Y$ are fine, see Lemma \ref{finesheaflem}, by a standard result about cohomology groups of sheaves admitting acyclic resolutions, see, e.g., \cite[Ch.B \S 1.3]{GR}, we obtain
\begin{equation}
\label{derhamiso}
H_{\mathfrak a}^{m}(Z) \cong H^m(Z,\mathbb R_{\mathfrak a}), \quad m \geq 0.
\end{equation}

Finally, by $\mathcal O^*_{\mathfrak a} \subset \mathcal O_{\mathfrak a}$ we denote a multiplicative subsheaf associated to the presheaf of functions $f\in\mathcal O_{\mathfrak a}(U)$, $U\subset Z$, $U \in \mathcal T_{\mathfrak a}$,
such that $f^{-1}\in \mathcal O_{\mathfrak a}(U)$ as well.

\begin{proof}[Proof of Proposition \ref{zeroapcor}]

First, we show that condition (1) of Theorem \ref{zeroapthm} is satisfied. 

We have an exact sequence of sheaves
$$
0 \rightarrow \mathbb Z_{\mathfrak a} \rightarrow \mathcal O_{\mathfrak a} \overset{e^{2\pi i \cdot }}{\rightarrow} \mathcal O^*_{\mathfrak a} \rightarrow 0
$$ 
which induces an exact sequence of cohomology groups
$$
\dots \rightarrow H^1(Z,\mathbb Z_{\mathfrak a}) \rightarrow H^1(Z,\mathcal O_{\mathfrak a}) \rightarrow H^1(Z,\mathcal O^*_{\mathfrak a}) \overset{\delta}{\rightarrow} H^2(Z,\mathbb Z_{\mathfrak a}) \rightarrow \cdots .
$$
By definition the class of holomorphic $\mathfrak a$-bundles isomorphic to the line $\mathfrak a$-bundle $L:=L_E$ of a divisor $E\in {\rm Div}_{\mathfrak a}(Z)$ determines an element of group $H^1(Z,\mathcal O^*_{\mathfrak a})$; its image under $\delta$ in $H^2(Z,\mathbb Z_{\mathfrak a})$ is denoted by $\delta(L)$ and is called the Chern class of $L$. On a suitable open cover $\{U_\alpha\}$ of $Z$ of class $(\mathcal T_{\mathfrak a})$ element $\delta(L)$ is defined by a locally constant 2-cocycle $\{m^L_{\alpha\beta\gamma}\} \in Z^2(\{U_\alpha\},\mathbb Z_{\mathfrak a})$ given by the formula (see, e.g.,~\cite{GH})
$$
m^L_{\alpha\beta\gamma}=\frac{1}{2\pi i} \left(\log c_{\alpha\beta}+\log c_{\beta\gamma}+\log c_{\gamma\alpha} \right) \quad \text{ on }\quad U_\alpha \cap U_\beta \cap U_\gamma,
$$
where $L$ is determined on $\{U_\alpha\}$ by 1-cocycle $\{c_{\alpha\beta} \in \mathcal O_{\mathfrak a}^*(U_\alpha \cap U_\beta)\}$.

Let $c(L)$ denote the image of $\delta(L)$ in $H^2(Z,\mathbb R_{\mathfrak a})$ under the natural homomorphism $H^2(Z,\mathbb Z_{\mathfrak a}) \rightarrow H^2(Z,\mathbb R_{\mathfrak a})$. We identify the last group with $H_{\mathfrak a}^2(Z)$, see \eqref{derhamiso}. Since ${\rm dim}_{\mathbb C}Z=1$, element $c(L)$ is determined by a $d$-closed $(1,1)$-form $\eta\in Z_{\mathfrak a}^2(Z)$.

\begin{lemma}
$\eta=d\lambda$ for some $\lambda \in \Lambda^1_{\mathfrak a}(Z)$.
\end{lemma}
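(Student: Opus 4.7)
The plan is to reduce the statement to a $\bar\partial$-solvability question and then exploit the fact that $Z$ has complex dimension one to upgrade $\bar\partial$-exactness to $d$-exactness. Since $Z$ is one-dimensional, every form of type $(2,0)$ or of type $(2,1)$, $(1,2)$, etc.\ vanishes identically. In particular, the given $(1,1)$-form $\eta\in Z^2_{\mathfrak a}(Z)$ is automatically $\bar\partial$-closed: one has $\bar\partial\eta\in\Lambda^{1,2}_{\mathfrak a}(Z)=0$. So $\eta$ represents a class in the Dolbeault cohomology group $H^{1,1}_{\mathfrak a}(Z)$.

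Next I would invoke the Dolbeault-type vanishing theorem available on complex $\mathfrak a$-submanifolds of $X$ when $X_0$ is Stein. Concretely, by Corollary \ref{dolcor2} applied to the complex submanifold $Y\subset c_{\mathfrak a}X$ that is the closure of $\iota(Z)$ (see Proposition \ref{closprop}), we have $H^{p,k}(Y)=0$ for all $p\ge 0$ and $k\ge 1$. Pulling back by $\iota$, as described at the top of subsection \ref{sub15.3}, yields $H^{p,k}_{\mathfrak a}(Z)=0$ for the same range. In particular $H^{1,1}_{\mathfrak a}(Z)=0$, so there exists $\mu\in\Lambda^{1,0}_{\mathfrak a}(Z)$ with
\[
\bar\partial\mu=\eta.
\]

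Finally, I would set $\lambda:=\mu\in\Lambda^{1,0}_{\mathfrak a}(Z)\subset\Lambda^1_{\mathfrak a}(Z)$ and observe that $\partial\mu\in\Lambda^{2,0}_{\mathfrak a}(Z)$; but $\Lambda^{2,0}_{\mathfrak a}(Z)=0$ because $\dim_{\mathbb C}Z=1$, so $\partial\mu=0$. Consequently
\[
d\lambda=\partial\mu+\bar\partial\mu=\bar\partial\mu=\eta,
\]
which is the desired conclusion. The whole argument is short; the only point requiring care is the transfer of the Dolbeault vanishing from $Y\subset c_{\mathfrak a}X$ to $Z\subset X$ via pullback along $\iota$, but this is essentially the content of the setup of subsection \ref{sub15.3} together with Corollary \ref{dolcor2}, so no real obstacle arises. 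There is no need to invoke the de Rham isomorphism \eqref{derhamiso} at this stage—that will be used later to show $[c(L)]=0$ in $H^2_{\mathfrak a}(Z)$ and thereby verify condition (2) of Theorem \ref{zeroapthm}.
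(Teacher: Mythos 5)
Your argument is correct and is essentially identical to the paper's own proof: both use that $\dim_{\mathbb C}Z=1$ forces $\bar{\partial}\eta=0$, invoke the analogue of Corollary \ref{dolcor2} on $Z$ to write $\eta=\bar{\partial}\lambda$ with $\lambda\in\Lambda^{1,0}_{\mathfrak a}(Z)$, and then note $\partial\lambda=0$ since $\Lambda^{2,0}_{\mathfrak a}(Z)=0$, so $d\lambda=\eta$. No differences worth noting.
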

\begin{proof}
Since $Z$ is $1$-dimensional, $\bar{\partial} \eta=0$. Hence, by the analogue of Corollary \ref{dolcor2} on $Z$ we have $\eta=\bar{\partial} \lambda$ for some $\lambda \in \Lambda_{\mathfrak a}^{1,0}(Z)$. We have $\partial \lambda=0$, as $\Lambda_{\mathfrak a}^{2,0}(Z)=0$, so  $d\lambda=(\bar{\partial}+\partial)\lambda=\bar{\partial} \lambda=\eta$, as required.
\end{proof}

The lemma implies that $c(L)=0$.
Replacing cover $\{U_\alpha\}$ by its  refinement of class ($\mathcal T_{\mathfrak a}$), if necessary, we may assume without loss of generality that there exists a locally constant $1$-cochain $\{s_{\alpha\beta}\in C_{\mathfrak a}(U_\alpha \cap U_\beta, \mathbb R_{\mathfrak a})\}$ on $\{U_\alpha\}$ such that for all $\alpha,\beta,\gamma$
$$
m^L_{\alpha\beta\gamma}=s_{\alpha\beta}+s_{\beta\gamma}+s_{\gamma\alpha} \quad \text{on }\quad U_\alpha\cap U_\beta\cap U_\gamma.
$$
Then $\{\log c_{\alpha\beta}-2\pi i\cdot s_{\alpha\beta}\} \in Z^1(\{U_\alpha\},\mathcal O_{\mathfrak a})$. According to Theorem \ref{thmB} this cocycle represents 0 in $H^1(Z,\mathcal O_{\mathfrak a})$ (as $H^1(Z,\mathcal O_{\mathfrak a})=H^1(Y,\mathcal O)$, see~the discussion in subsection \ref{needs}.1). Again, passing to a refinement of cover $\{U_\alpha\}$ of class ($\mathcal T_{\mathfrak a}$), if necessary, we may assume without loss of generality that this cocycle can be resolved on $\{U_\alpha\}$, that is, there exist $h_\alpha \in \mathcal O_{\mathfrak a}(U_\alpha)$ such that
$$
\log c_{\alpha\beta}-2\pi i\cdot s_{\alpha\beta}=h_\alpha-h_\beta \quad \text{ on }\quad U_\alpha \cap U_\beta .
$$
We set $d_{\alpha\beta}:=e^{-h_\alpha} c_{\alpha\beta}e^{h_\beta}$ on $U_\alpha\cap U_\beta$. Then cocycle $\{d_{\alpha\beta}\}$ determines a discrete line $\mathfrak a$-bundle $L'$ isomorphic to $L$.
Therefore, condition (1) of Theorem \ref{zeroapthm} is satisfied.

\medskip

Now, we show that under the additional hypothesis $H^1(Z,\mathbb C)=0$ condition (2) of Theorem \ref{zeroapthm} is satisfied as well.  

First, note that $H^2(Z,\mathbb Z)=0$. Indeed, $Z$ is a complex submanifold of a Stein manifold $X$, and hence itself is a Stein manifold. Therefore, since $\dim_{\mathbb C}Z=1$, $Z$ is homotopically equivalent to a 1-dimensional CW-complex, which implies the required.

A discrete line bundle on $Z$ is determined (up to an isomorphism in the corresponding category) by an element of group $H^1(Z,\mathbb C^*)$, where $\mathbb C^*:=\mathbb C \setminus \{0\}$. Therefore, to show that the discrete line $\mathfrak a$-bundle $L'$ is trivial in the category of  discrete bundles on $Z$, it suffices to show that $H^1(Z,\mathbb C^*)=0$. 
In turn, the exact sequence of locally constant sheaves $0 \rightarrow \mathbb Z \rightarrow \mathbb C \overset{\exp}{\rightarrow} \mathbb C^* \rightarrow 0$ on $Z$ induces an exact sequence of cohomology groups
$$
\dots \rightarrow H^1(Z,\mathbb Z) \rightarrow H^1(Z,\mathbb C) \rightarrow H^1(Z,\mathbb C^*) \rightarrow H^2(Z,\mathbb Z) \rightarrow \cdots .
$$
Since $H^1(Z,\mathbb C)=H^2(Z,\mathbb Z)=0$, group $H^1(Z,\mathbb C^*)=0$, as required.
\end{proof}

\SkipTocEntry\subsection{Proof of Theorem \ref{chernthm}}
\label{divisors2sect}
Since $X_0$ is homotopy equivalent to open subset $Y_0\subset X_0$, $\pi_1(X_0)=\pi_1(Y_0)$ and the space $c_{\mathfrak a} X$ is homotopy equivalent to open subset $c_{\mathfrak a}Y\subset c_{\mathfrak a}X$, $Y:=p^{-1}(Y_0)\subset X$ (for $\mathfrak a=\ell_\infty$ the proof is given in \cite[Prop.~4.2]{Br8}; the proof in the general case repeats it word-for-word).

We retain notation of subsection \ref{sub15.3}. For the exact sequence of locally constant sheaves on $X$
\begin{equation*}
0 \rightarrow \mathbb Z_{\mathfrak a} \rightarrow \mathcal O_{\mathfrak a} \overset{e^{2\pi i \cdot }}{\rightarrow} \mathcal O_{\mathfrak a}^* \rightarrow 0
\end{equation*}
consider the induced exact sequences of cohomology groups
\[
\cdots \rightarrow  H^1(X,\mathbb Z_{\mathfrak a}) \rightarrow  H^1(X,\mathcal O_{\mathfrak a}) \rightarrow  H^1\bigl(X,\mathcal O^*_{\mathfrak a}) \overset{\delta}{\rightarrow } H^2(X,\mathbb Z_{\mathfrak a}) \rightarrow  \cdots . 
\] 
We have similar exact sequences over $Y=p^{-1}(Y_0)$ so that the embedding $Y\hookrightarrow X$ induces a commutative diagram of these sequences.

Since $X_0$ is a Stein manifold, by Theorem \ref{thmB}
$H^1(X,\mathcal O_{\mathfrak a})=H^1(c_{\mathfrak a}X,\mathcal O)=0$. Thus, $\delta$ is an injection. Also, since  $c_{\mathfrak a} X$ is homotopy equivalent to $c_{\mathfrak a}Y$,
by the homotopy invariance for cohomology of locally constant sheaves (see, e.g., \cite[Ch.~II.11]{Bre}), $H^k(Y,\mathbb Z_{\mathfrak a})=H^k(c_{\mathfrak a}Y,\mathbb Z) \cong H^{k}(c_{\mathfrak a}X,\mathbb Z)=H^k(X,\mathbb Z_{\mathfrak a})$, $k\ge 0$
(see~definitions of the corresponding cohomology groups in subsection \ref{needs}.1). 

Let $c_L \in H^1(X,\mathcal O^*_{\mathfrak a})$ be the cohomology class determined  by the line $\mathfrak a$-bundle $L=L_E$ of the $\mathfrak a$-divisor $E$. 
We show that $\delta(c_L)=0$; since $\delta$ is an injection, this would imply that $L$ is isomorphic to the trivial line $\mathfrak a$-bundle, and hence $E$ is $\mathfrak a$-equivalent to an $\mathfrak a$-principal divisor.

Indeed, the restriction $\delta(c_{L})|_Y \in H^2(Y,\mathbb Z_{\mathfrak a})$ of $\delta(c_L)$ to $Y$ is, by definition, the Chern class of the restriction $L|_Y$. Since $E|_Y$ is $\mathfrak a$-equivalent to an $\mathfrak a$-principal divisor on $Y$, the line $\mathfrak a$-bundle $L|_Y$ is isomorphic to the trivial line $\mathfrak a$-bundle in $\mathcal L_{\mathfrak a}(Y)$, so we have $\delta(c_{L})|_Y=0$. Since the restriction homomorphism $H^2(X,\mathbb Z_{\mathfrak a})\rightarrow H^2(Y,\mathbb Z_{\mathfrak a})$ is an isomorphism (see above), $\delta(c_L)=0$, as required.

\medskip

Let us prove the second assertion of the theorem. Assume that $\mathfrak a$ is such that $\hat{G}_{\mathfrak a}$ is a compact topological group and $j(G)\subset\hat{G}_{\mathfrak a}$ is a dense subgroup, and ${\rm supp}(E)\cap Y=\emptyset$. We retain notation and results of parts (1) and (2) of the proof of Theorem \ref{divrelprop}. By definition divisor $E$ determines a holomorphic line bundle $\hat{L}_E$ on $c_{\mathfrak a}X$ and a holomorphic section $s$ of $\hat{L}_E$ such that $s$ is not identically zero on each `slice` $\iota_H(X_H)\subset c_{\mathfrak a}X$ and $\iota^*\hat{L}_E=L_E$. If ${\rm supp}(s)\subset c_{\mathfrak a}X$ is zero loci of $s$, then $\iota^{-1}({\rm supp}(s))={\rm supp}(E)$. Let us show that ${\rm supp}(s)\cap c_{\mathfrak a}Y=\emptyset$. Indeed, assuming the contrary we find a point $x\in\iota_H(X_H)\cap c_{\mathfrak a}Y$ for some $H\in\Upsilon$ such that $s(x)=0$. Since $c_{\mathfrak a}Y\subset c_{\mathfrak a}X$ is open, there exists an open neighbourhood of $x$ which is contained in $c_{\mathfrak a}Y$. Without loss of generality we may identify this neighbourhood with $V_0\times K$, where $V_0\subset Y_0$ is an open coordinate chart and $K \subset \hat{G}_{\mathfrak a}$ is open, $K \in \mathfrak Q$ (see subsection \ref{holfuncsect}). Then $s(z,\eta)=0$ for some $(z,\eta)\in V_0\times K$. Let $S\subset K$ be a dense subset such that $j^{-1}(S)\subset G$, the deck transformation group of $X$ (see subsection \ref{constrsect} for notation). By definition, $\iota^{-1}(V_0\times S)\subset Y$. Also, $s(\cdot,\xi)\in \mathcal O(V_0)$ for all $\xi\in K$ and $s(\cdot,\eta)$ is not identically zero. Since
$s\in C(V_0\times K)$, by the Montel theorem there exists a sequence $\{s(\cdot,\xi_j)\}_{j\in\mathbb N}$, $\{\xi_j\}_{j\in\mathbb N}\subset S$, converging to $s(\cdot,\eta)$ uniformly on compact subsets of $V_0$. Then according to the Hurwitz theorem (on zeros of a sequence of univariate holomorphic functions uniformly converging to a nonidentically zero holomorphic function), there exists $(w,\xi_i)\in V_0\times S$ such that
$s(w,\xi_i)=0$. This implies that $\iota^{-1}((w,\xi_i))\in {\rm supp}(E)\cap Y$, a contradiction proving the required claim.

Thus we obtain that $s|_{c_{\mathfrak a}Y}$ is nowhere zero, i.e., $\hat{L}_{E}|_{c_{\mathfrak a}Y}$ is holomorphically trivial. In turn,
$\iota^*\bigl(\hat{L}_{E}|_{c_{\mathfrak a}Y}\bigr):=\bigl(L_E\bigr)|_Y=L_{E|_Y}$ is the trivial $\mathfrak a$-bundle on $Y$. Hence, the restriction of $E$ to $Y$
is $\mathfrak a$-equivalent to an $\mathfrak a$-principal divisor. The first part of the theorem then implies that $E$ is $\mathfrak a$-equivalent to an $\mathfrak a$-principal divisor on $X$.

The proof of the theorem is complete.

\section{Proofs of Theorem \ref{hypthm} and Proposition \ref{prop2.23}}
\label{hypsect}

\begin{proof}[Proof of Theorem \ref{hypthm}] We will use notation and results of subsection \ref{charts} and Example 4.4 in \cite{BK8}.

Using the axiom of choice we construct a (not necessarily continuous) right inverse $\lambda:\hat{G}_{\mathfrak a} \rightarrow \hat{G}_{\ell_\infty}$ to $\kappa$, i.e., $\kappa \circ \lambda=\Id$. Given a subset $K \subset G$,
by $\hat{K}_{\mathfrak a} \subset \hat{G}_{\mathfrak a}$ and $\hat{K}_{\ell_\infty} \subset \hat{G}_{\ell_\infty}$ we denote the closures of sets $j_{\mathfrak a}(K)$ and $j_{\ell_\infty}(K)$ in $\hat{G}_{\mathfrak a}$ and $\hat{G}_{\ell_\infty}$, respectively. 

For $\Pi(U_0,K):=\Pi_{\gamma_*}(U_0,K)$ we have a commutative diagram
\begin{equation}
\label{comdiag44}
\bfig
\node a1(0,0)[\Pi(U_0,K)]
\node a2(0,500)[\Pi(U_0,K)]
\node b1(1000,0)[\hat{\Pi}_{\ell_\infty}(U_0,\hat{K}_{\ell_\infty})]
\node b2(1000,500)[\hat{\Pi}_{\mathfrak a}(U_0,\hat{K}_{\mathfrak a})]
\node c2(2000,500)[\hat{\Pi}_{\ell_\infty}(U_0,\hat{K}_{\ell_\infty})]
\arrow[a1`a2;=]
\arrow[a1`b1;\Id \times j_{\ell_\infty}]
\arrow[b1`b2;\kappa]
\arrow[a2`b2;\Id \times j_{\mathfrak a}]
\arrow[b2`c2;\lambda]
\efig
\end{equation}
All maps, except possibly for $\lambda$, are continuous.


\medskip

We will need the following results.

\begin{lemma}
\label{lemA}
Under the hypotheses of the theorem there exists a unique function $\hat{f} \in \mathcal O\bigl(\hat{\Pi}_{\mathfrak a}(U_0,\hat{K}_{\mathfrak a}) \bigr)$ such that 
\begin{equation}
\label{repr5}
f|_{\Pi(U_0,K)}=(\Id \times j_{\mathfrak a})^* \hat{f}.
\end{equation}
\end{lemma}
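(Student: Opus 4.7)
The strategy is to lift $f$ to a holomorphic function on $c_{\ell_\infty}X$, lift the $\mathfrak a$-extension of $f|_Z$ to a continuous function on $c_{\mathfrak a}X$, compare them over $Z_0$, and use the uniqueness set property together with continuity to force descent along the canonical projection $\kappa:\hat G_{\ell_\infty}\to\hat G_{\mathfrak a}$.

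First I would set up the two lifts. Since $f\in\mathcal O_{\ell_\infty(G)}(X)$, Proposition~\ref{basicpropthm} gives a unique $\tilde f\in\mathcal O(c_{\ell_\infty}X)$ with $\iota_{\ell_\infty}^*\tilde f=f$. The hypothesis $f|_Z\in C_{\mathfrak a}(Z)=C_{\mathfrak a}(X)|_Z$ produces some $h\in C_{\mathfrak a}(X)$ with $h|_Z=f|_Z$; because $\mathfrak a$ is self-adjoint, Proposition~\ref{basicpropthm}(2) yields a unique $\hat h\in C(c_{\mathfrak a}X)$ with $\iota_{\mathfrak a}^*\hat h=h$. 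Set $\mathcal F:=\kappa^*\hat h\in C(c_{\ell_\infty}X)$; then $\iota_{\ell_\infty}^*\mathcal F=h$, so $\tilde f=\mathcal F$ on $\iota_{\ell_\infty}\bigl(p^{-1}(Z_0)\cap \Pi(U_0,K)\bigr)\subset \iota_{\ell_\infty}(Z)$.

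Next, I would transfer everything to the local trivialization \eqref{comdiag44}. Under $\bar\psi_{\gamma_*}$, the restriction of $\tilde f$ is a continuous function $\tilde g$ on $U_0\times\hat G_{\ell_\infty}$, holomorphic in the first variable for each fixed second variable, and $\hat h$ corresponds to $\hat H\in C(U_0\times\hat G_{\mathfrak a})$. The identity $\tilde f=\mathcal F$ on $p^{-1}(Z_0)\cap \Pi(U_0,K)$ reads
\[
\tilde g(z,j_{\ell_\infty}(g))=\hat H(z,j_{\mathfrak a}(g))=\hat H\bigl(z,\kappa(j_{\ell_\infty}(g))\bigr),\qquad (z,g)\in Z_0\times K.
\]
By continuity of $\tilde g$ and $\hat H\circ(\mathrm{Id}\times\kappa)$ together with density of $j_{\ell_\infty}(K)$ in $\hat K_{\ell_\infty}$, this extends to
\[
\tilde g(z,\xi)=\hat H(z,\kappa(\xi)),\qquad (z,\xi)\in Z_0\times\hat K_{\ell_\infty}.
\]

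The descent is the heart of the argument. For any $\xi_1,\xi_2\in\hat K_{\ell_\infty}$ with $\kappa(\xi_1)=\kappa(\xi_2)$, the difference $\varphi(z):=\tilde g(z,\xi_1)-\tilde g(z,\xi_2)$ is holomorphic on $U_0$ and vanishes identically on $Z_0$ by the displayed identity. Since $Z_0\subset U_0$ is a uniqueness set for $\mathcal O(X_0)$ (and hence, for the relevant local forms of $Z_0$ described in Remark~\ref{rem2.22}(2), for holomorphic functions on the connected open set $U_0$), $\varphi\equiv 0$ on $U_0$. Hence $\tilde g(z,\xi)$ depends only on $\kappa(\xi)$ for $(z,\xi)\in U_0\times\hat K_{\ell_\infty}$. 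Since $\kappa(\hat K_{\ell_\infty})=\hat K_{\mathfrak a}$ (continuous image of a compact set containing $j_{\mathfrak a}(K)$), I can define $\hat f$ on $U_0\times \hat K_{\mathfrak a}$ by
\[
\hat f(z,\eta):=\tilde g(z,\xi)\quad\text{for any $\xi\in\hat K_{\ell_\infty}$ with $\kappa(\xi)=\eta$,}
\]
and transport $\hat f$ to $\hat\Pi_{\mathfrak a}(U_0,\hat K_{\mathfrak a})$ via $\bar\psi_{\gamma_*,\mathfrak a}$.

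Finally I would verify regularity, the required identity, and uniqueness. Continuity of $\hat f$ follows from the fact that $\mathrm{Id}\times\kappa:U_0\times\hat K_{\ell_\infty}\to U_0\times\hat K_{\mathfrak a}$ is a continuous surjection with compact fibres onto a Hausdorff space, hence a quotient map, so $\tilde g$ being constant on its fibres descends to a continuous function. Holomorphy in the sense of Definitions~\ref{holmap} and \ref{defholmap} follows from
\[
(\mathrm{Id}\times j_{\mathfrak a})^*\hat f(z,g)=\hat f(z,j_{\mathfrak a}(g))=\tilde g(z,j_{\ell_\infty}(g))=f(z,g),\quad(z,g)\in U_0\times K,
\]
the right-hand side being holomorphic in $z\in U_0$; on the interior of $\hat K_{\mathfrak a}$ this gives the local definition of holomorphy on an open set. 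This display simultaneously proves the relation \eqref{repr5}. Uniqueness is immediate, since any two candidates $\hat f_1,\hat f_2$ would have to agree on $U_0\times j_{\mathfrak a}(K)$, a dense subset of $U_0\times\hat K_{\mathfrak a}$.

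The main obstacle will be the descent step: extracting enough information from the merely continuous extension $\hat h$, together with the uniqueness set hypothesis on $Z_0$, to force the holomorphic function $\tilde g(z,\cdot)$ to factor through the generally non-injective map $\kappa$ over the full disk $U_0$ (and not only over $Z_0$). The other technical care point will be verifying that the resulting $\hat f$ has the regularity demanded by the paper's definition of $\mathcal O$ on the possibly non-open set $\hat\Pi_{\mathfrak a}(U_0,\hat K_{\mathfrak a})$; this is where the assumption that $\hat L_{\mathfrak a}$ lies in the interior of $\hat K_{\mathfrak a}$ is used implicitly, giving a nonempty open core on which $\hat f$ is genuinely holomorphic in the open-set sense of Subsection~\ref{charts}.
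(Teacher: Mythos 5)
Your argument is correct and reaches the same conclusion as the paper, but the two proofs diverge at the decisive step. The paper first \emph{defines} $\hat f:=(\Id\times\lambda)^*\tilde f$ using an arbitrary (in general discontinuous) right inverse $\lambda$ of $\kappa$ obtained from the axiom of choice, observes that $\hat f(\cdot,\omega)\in\mathcal O(U_0)$ for each $\omega$ and that $\hat f$ restricted to $\hat\Pi_{\mathfrak a}(Z_0,\hat K_{\mathfrak a})$ agrees with the continuous extension $F$ of $f|_Z$, and then proves continuity of $\hat f$ by contradiction: a net witnessing discontinuity yields, via the Montel theorem, a normal-families limit $g\in\mathcal O(U_0)$ that must coincide with $\hat f(\cdot,\omega)$ because the two agree on the uniqueness set $Z_0$, which is absurd. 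You instead prove \emph{descent}: you show that $\tilde g(z,\xi_1)-\tilde g(z,\xi_2)$ is holomorphic in $z$ and vanishes on $Z_0$ whenever $\kappa(\xi_1)=\kappa(\xi_2)$, hence vanishes identically, so $\tilde g$ is constant on the fibres of $\Id\times\kappa$ and factors through it; continuity of the quotient $\hat f$ then comes from the topology of the projection rather than from compactness of a family of holomorphic functions. Both proofs rest on exactly the same two ingredients (fibrewise holomorphy of the $\ell_\infty$-lift and the uniqueness-set hypothesis on $Z_0$), but your version dispenses with the choice function and the proof by contradiction, at the price of having to know that $\Id_{U_0}\times\kappa$ is a quotient map. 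On that last point your stated justification (``continuous surjection with compact fibres onto a Hausdorff space, hence a quotient map'') is not literally a theorem; the correct reason is that $\kappa|_{\hat K_{\ell_\infty}}$ is a continuous map from a compact space to a Hausdorff space, hence proper and universally closed, so $\Id_{U_0}\times\kappa$ is closed and therefore a quotient map onto its image --- precisely the argument the paper itself uses for $\tilde\kappa$ in part (3) of the proof of Theorem \ref{divrelprop}. With that one sentence repaired, your proof is a clean and arguably more structural alternative to the paper's.
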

\begin{proof}
Since $f \in \mathcal O_{\ell_\infty}(X)$, there exists a function $\tilde{f} \in \mathcal O\bigl(\hat{\Pi}_{\ell_\infty}(U_0,\hat{K}_{\mathfrak a}) \bigr)$ such that $f|_{\Pi(U_0,K)}=(\Id \times j_{\ell_\infty})^* \tilde{f}.$ We set $\hat{f}:=(\Id \times \lambda)^*\tilde{f}:\hat{\Pi}_{\mathfrak a}(U_0,\hat{K}_{\mathfrak a}) \rightarrow \mathbb C.$ Clearly, (\ref{repr5}) is satisfied. Identifying $\hat{\Pi}_{\mathfrak a}(U_0,\hat{K}_{\mathfrak a})$ with $U_0 \times \hat{K}_{\mathfrak a}$ (see~(\ref{pi2})), we obtain that $\hat{f}(\cdot,\omega) \in \mathcal O(U_0)$ for all $\omega \in \hat{K}_{\mathfrak a}$. It remains to show that $\hat{f}$ is continuous.
Since $f\in C_{\mathfrak a}(Z)$, there exists a function $F \in C(\hat{\Pi}_{\mathfrak a}(Z_0,\hat{K}_{\mathfrak a}))$ such that $f|_{\Pi(Z_0,K)}=(\Id \times j_{\mathfrak a})^*F$. 
Also, since $(\Id \times j_{\mathfrak a})\bigl(\Pi(Z_0,K)\bigr)$ is dense in $\hat{\Pi}_{\mathfrak a}(Z_0,\hat{K}_{\mathfrak a})$ and diagram (\ref{comdiag44}) is commutative, 
\begin{equation}
\label{id700}
\hat{f}|_{\hat{\Pi}_{\mathfrak a}(Z_0,\hat{K}_{\mathfrak a})}=F.
\end{equation}
We identify $\hat{\Pi}_{\mathfrak a}(U_0,\hat{K}_{\mathfrak a})$ with $U_0 \times \hat{K}_{\mathfrak a}$, and $\hat{\Pi}_{\mathfrak a}(Z_0,\hat{K}_{\mathfrak a})$ with $Z_0 \times \hat{K}_{\mathfrak a}$. Suppose that $\hat{f}$ is discontinuous, i.e., there exists a  net $\{(z_\alpha,\omega_\alpha)\} \subset U_0\times\hat{K}_{\mathfrak a}$, $(z_\alpha,\omega_\alpha) \rightarrow (z,\omega) \in U_0\times\hat{K}_{\mathfrak a}$, such that $\lim_\alpha \hat{f}(z_\alpha,\omega_\alpha)$ exists but does not coincide with $\hat{f}(z,\omega)$.
Using the Montel theorem we find a subnet $\{\hat{f}(\cdot,\omega_{\alpha_\beta})\}$ of the net $\{\hat{f}(\cdot,\omega_\alpha)\}\subset \mathcal O(U_0)$ which converges to a function $g$. Since $\hat{f}|_{Z_0 \times \hat{K}_{\mathfrak a}}$ is continuous and $Z_0$ is a uniqueness set for functions in $\mathcal O(U_0)$, $g=\hat{f}(\cdot,\omega)$. But $g(z)=\lim_\alpha \hat{f}(z_\alpha,\omega_\alpha)$, a contradiction showing that
$\hat{f} \in C(\hat{\Pi}_{\mathfrak a}(Z_0,\hat{K}_{\mathfrak a}))$. Hence, $\hat{f} \in \mathcal O(\hat{\Pi}_{\mathfrak a}(Z_0,\hat{K}_{\mathfrak a}))$.
\end{proof}



\begin{lemma}
\label{lemB}
We have $\cup_{i=1}^m \hat{L}_{\mathfrak a} \cdot j_{\mathfrak a}(g_i)=\hat{G}_{\mathfrak a}$.
\end{lemma}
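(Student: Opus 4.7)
The plan is to exploit the three standing facts: (i) $j_{\mathfrak a}(G)$ is dense in $\hat{G}_{\mathfrak a}$; (ii) the right action of $G$ on itself extends to a continuous right action of $G$ on $M_{\mathfrak a}$, and hence on the invariant subset $\hat{G}_{\mathfrak a}$, satisfying $j_{\mathfrak a}(h)\cdot g = j_{\mathfrak a}(hg)$ for all $h,g\in G$; (iii) the covering assumption $\cup_{i=1}^{m} L\cdot g_i=G$ on the subset $L\subset K\subset G$ fixed before Theorem \ref{hypthm}. Given these, the proof reduces to a density-plus-finite-pigeonhole argument.

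Concretely, I would fix $\eta\in\hat{G}_{\mathfrak a}$ and, by density, choose a net $\{h_\alpha\}\subset G$ with $j_{\mathfrak a}(h_\alpha)\to\eta$. For each $\alpha$, write $h_\alpha = l_\alpha\cdot g_{i_\alpha}$ with $l_\alpha\in L$ and $i_\alpha\in\{1,\dots,m\}$, using (iii). Since $\{1,\dots,m\}$ is finite, pass to a subnet along which $i_\alpha\equiv i$ is constant; the convergence $j_{\mathfrak a}(h_\alpha)\to\eta$ is preserved. Then $l_\alpha=h_\alpha g_i^{-1}\in L$ and, using the equivariance in (ii),
\[
j_{\mathfrak a}(l_\alpha)=j_{\mathfrak a}(h_\alpha)\cdot g_i^{-1}.
\]
Since right multiplication by $g_i^{-1}$ is a homeomorphism of $\hat{G}_{\mathfrak a}$ (with inverse right multiplication by $g_i$), we may pass to the limit to obtain $j_{\mathfrak a}(l_\alpha)\to\eta\cdot g_i^{-1}$. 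Because $\{l_\alpha\}\subset L$, the limit lies in $\overline{j_{\mathfrak a}(L)}=\hat{L}_{\mathfrak a}$, hence
\[
\eta=(\eta\cdot g_i^{-1})\cdot g_i\ \in\ \hat{L}_{\mathfrak a}\cdot g_i\ =\ \hat{L}_{\mathfrak a}\cdot j_{\mathfrak a}(g_i),
\]
which proves $\hat{G}_{\mathfrak a}\subset\bigcup_{i=1}^{m}\hat{L}_{\mathfrak a}\cdot j_{\mathfrak a}(g_i)$; the reverse inclusion is immediate since $\hat{G}_{\mathfrak a}$ is invariant under the $G$-action.

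There is no real obstacle here; the only care needed is to interpret the expression $\hat{L}_{\mathfrak a}\cdot j_{\mathfrak a}(g_i)$ consistently as the image of $\hat{L}_{\mathfrak a}$ under right multiplication by $g_i$ (so that this is well defined even though $\hat{G}_{\mathfrak a}$ need not be a group), and to verify that the pigeonhole subnet extraction preserves the original convergence $j_{\mathfrak a}(h_\alpha)\to\eta$. Both are standard.
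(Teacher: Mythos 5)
Your proof is correct and is essentially the paper's argument unpacked: the paper simply observes that $\cup_{i=1}^m \hat{L}_{\mathfrak a}\cdot j_{\mathfrak a}(g_i)$ is a finite union of closed sets (hence closed) containing the dense set $j_{\mathfrak a}(G)$, which is exactly what your net-plus-pigeonhole argument verifies point by point. No substantive difference.
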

(Recall that $\hat{L}_{\mathfrak a}$ is the closure of $j_{\mathfrak a}(L)$ in $\hat{G}_{\mathfrak a}$.)
\begin{proof}
Indeed, $\cup_{i=1}^m \hat{L}_{\mathfrak a} \cdot j_{\mathfrak a}(g_i)$ is a closed subset of $\hat{G}_{\mathfrak a}$ containing $j_{\mathfrak a}(G)$, as by the assumption of the theorem $\cup_{i=1}^m  L \cdot g_i=G$. Since set $j_{\mathfrak a}(G)$ is dense in $\hat{G}_{\mathfrak a}$, we obtain that $\cup_{i=1}^m  \hat{L}_{\mathfrak a} \cdot j_{\mathfrak a}(g_i)=\hat{G}_{\mathfrak a}$.
\end{proof}

We now complete the proof of Theorem \ref{hypthm} by means of an analytic continuation-type argument.


Let us consider the open cover $\mathcal U=\{U_{0,\gamma}\}$ of $X_0$
and the corresponding system of trivializations $\psi_\gamma:p^{-1}(U_{0,\gamma} \times G) \rightarrow U_{0,\gamma} \times G$ of the covering $p:X_0\rightarrow X$ introduced in subsection \ref{sectbohr}. (Recall that
$\Pi_\gamma(U_{0,\gamma}, S):=\psi_\gamma^{-1}(U_0 \times S)$, $S \subset G$.) 
This system determines a system of trivializations $\bar{\psi}_\gamma: \hat{\Pi}_{\mathfrak a,\gamma}(U_{0,\gamma},L) \rightarrow U_{0,\gamma} \times L$, $L \subset \hat{G}_{\mathfrak a}$, of the fibrewise compactification $\bar{p}:c_{\mathfrak a}X\rightarrow X_0$, see~subsection \ref{charts}.
Passing to a refinement of $\mathcal U$, if necessary, we may and will assume without loss of generality that all nonempty sets $U_{0,\gamma}\cap U_{0,\delta}$ are connected and simply connected, and that
$U_0=U_{0,\gamma_*}$ for $\gamma_*$ from the statement of the theorem.

First, we will prove that

\medskip

(*)\quad {\em there exists a function $\hat{f}_{U_0}\in\mathcal O(\bar{p}^{-1}(U_0))$ such that $\iota^*\hat{f}_{U_0}=f|_{p^{-1}(U_0)}$.}

\medskip

Let us fix $1 \leq i \leq m$. 

\begin{lemma}
\label{cylinder}

There exist families $\{U_{0,\gamma_l}\}_{l=1}^{s(i)}\subset \mathcal U$ and $\{K_l\}_{l=1}^{s(i)} \subset G$ such that 

\begin{itemize}
\item[(1)]  $\gamma_1=\gamma_{s(i)}=\gamma_*$, $K_1:=K$ and $K_{s(i)}=K \cdot g_{i}$,

\item[(2)] $U_{0,\gamma_l} \cap U_{0,\gamma_{l+1}} \neq \varnothing$ for all $1\le l\le s(i)-1$, and

\item[(3)] $\Pi_{\gamma_l}(U_{0,\gamma_l} \cap U_{0,\gamma_{l+1}},K_l)=\Pi_{\gamma_{l+1}}(U_{0,\gamma_l} \cap U_{0,\gamma_{l+1}},K_{l+1})$ for all $1 \leq l \leq s(i)-1$.
\end{itemize}
\end{lemma}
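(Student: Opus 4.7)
\medskip

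\noindent\textbf{Proof plan for Lemma \ref{cylinder}.} The idea is to realize the element $g_i\in G$ as the monodromy of a loop in $X_0$ and then chain together trivializations along a cover of this loop.

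First, I would fix a base point $x_0\in U_0=U_{0,\gamma_*}$ and a lift $\tilde x_0\in p^{-1}(x_0)$. Since $p:X\to X_0$ is a regular covering with deck transformation group $G$, there is a surjective monodromy homomorphism $\pi_1(X_0,x_0)\to G$, so $g_i$ is represented by some continuous loop $\sigma:[0,1]\to X_0$ based at $x_0$. By compactness of $\sigma([0,1])$ and the assumption that elements of $\mathcal U$ have connected simply connected pairwise intersections, I can choose a partition $0=t_0<t_1<\dots<t_{s(i)}=1$ and sets $U_{0,\gamma_1},\dots,U_{0,\gamma_{s(i)}}\in\mathcal U$ with $\gamma_1=\gamma_{s(i)}=\gamma_*$ such that $\sigma([t_{l-1},t_l])\subset U_{0,\gamma_l}$ for $1\le l\le s(i)$. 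In particular $U_{0,\gamma_l}\cap U_{0,\gamma_{l+1}}\ni \sigma(t_l)$ is nonempty, giving condition (2).

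Next, I would use the description of $X$ as the quotient of $\sqcup_\gamma U_{0,\gamma}\times G$ by the identification \eqref{extrem2}. On the simply connected overlap $U_{0,\gamma_l}\cap U_{0,\gamma_{l+1}}$ the locally constant cocycle $c_{\gamma_l\gamma_{l+1}}$ takes a single value $a_l\in G$, and an easy computation with the trivializations $\psi_{\gamma_l},\psi_{\gamma_{l+1}}$ shows
\begin{equation*}
\Pi_{\gamma_l}(U_{0,\gamma_l}\cap U_{0,\gamma_{l+1}},S)=\Pi_{\gamma_{l+1}}(U_{0,\gamma_l}\cap U_{0,\gamma_{l+1}},S\cdot a_l)\qquad\text{for every }S\subset G.
\end{equation*}
I would therefore define $K_1:=K$ and inductively $K_{l+1}:=K_l\cdot a_l$ for $1\le l\le s(i)-1$. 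This choice immediately gives condition (3).

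Finally, I would verify $K_{s(i)}=K\cdot g_i$, i.e.\ condition (1). By construction
\begin{equation*}
K_{s(i)}=K\cdot a_1a_2\cdots a_{s(i)-1},
\end{equation*}
and the product $a_1a_2\cdots a_{s(i)-1}$ is precisely the monodromy along the loop $\sigma$ obtained by following the trivializations $\psi_{\gamma_1},\dots,\psi_{\gamma_{s(i)}}$; this is a standard fact about principal bundles and their associated locally constant cocycles (see, e.g., \cite{Hirz}). Since $\sigma$ represents the class in $\pi_1(X_0,x_0)$ mapped to $g_i$ under the monodromy homomorphism, the product equals $g_i$, completing the proof.

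\medskip

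\noindent The only subtle point is the correct bookkeeping of left/right multiplication in the cocycle identification \eqref{extrem2}; if needed one passes to $g_i^{-1}$ or reverses the loop to match conventions. Apart from this, each step is routine once the loop representing $g_i$ is chosen.
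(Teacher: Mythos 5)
Your proposal is correct and follows essentially the same route as the paper: represent $g_i$ by a loop based at $x_0\in U_0$, cover the loop by finitely many elements of $\mathcal U$ via a partition of $[0,1]$, and propagate $K$ by right-multiplying with the (constant) cocycle values $c_{\gamma_l\gamma_{l+1}}$, so that $K_{s(i)}=K\cdot g_i$ by the monodromy interpretation of the cocycle product. Your write-up is in fact somewhat more explicit than the paper's, which simply asserts that conditions (1)--(3) follow from the construction.
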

\begin{proof}
Take $x_0 \in U_0$  and  define $y_0:=\psi_{\gamma_*}^{-1}(x_0,1)$.
Since covering $p:X \rightarrow X_0$ is regular, there exists
a continuous path joining $y_0$ and $g_{i}\cdot y_0$ obtained as the lift of a loop $\gamma_0: [0,1]\rightarrow X_0$ with basepoint $x_0$. Then there exist a partition $0=t_0<t_1<\dots < t_{s(i)}=1$ of $[0,1]$ and a family $\{U_{0,\gamma_l}\}_{l=1}^{s(i)}\subset \mathcal U$ such that \[
\gamma_0([0,1])\subset\cup_{i=1}^{s(i)}U_{0,\gamma_i};\ U_{0,\gamma_1}=U_{0,\gamma_{s(i)}}:=U_0~(=U_{0,\gamma_*});\ \gamma_0([t_i,t_{i+1}])\subset U_{0,\gamma_{i+1}}\ \forall i\ge 0.
\] 
Now, we define $K_1=K$ and
$K_{l+1}:=K_l \cdot c_{\gamma_l\gamma_{l+1}}$ for all $1 \leq l \leq s(i)-1$ (note that $U_{0,\gamma_l} \cap U_{0,\gamma_{l+1}} \neq \varnothing$ by our construction), where $\{c_{\delta\gamma}\}$ is the $1$-cocycle on $\mathcal U$ determining covering $p:X\rightarrow X_0$
(see subsection \ref{sectbohr}). Clearly, conditions (1)-(3) are satisfied.
\end{proof}

Further, using Lemma \ref{lemA} 
we can find a function  $\hat{f}_1 \in \mathcal O\bigl(\hat{\Pi}_{\mathfrak a,\gamma_1}(U_{0,\gamma_1},\hat{K}_{1\mathfrak a})\bigr)$ such that $\iota^*\hat{f}_1=f$ on $\Pi_{\gamma_1}(U_{0,\gamma_1},K_1)$.
Since the open set $U_{0,\gamma_1} \cap U_{0,\gamma_2}\, ( \neq \varnothing)$ is a uniqueness set for functions in $\mathcal O(U_{0,\gamma_2})$, we can apply Lemma \ref{lemA}  to $f|_{\Pi_{\gamma_2}(U_{0,\gamma_2},K_2)}$ to find a function $\hat{f}_2 \in \mathcal O\bigl( \hat{\Pi}_{\mathfrak a,\gamma_2}(U_{0,\gamma_2},\hat{K}_{2\mathfrak a})\bigr)$ such that $\iota^*\hat{f}_2=f$ on $\Pi_{\gamma_2}(U_{0,\gamma_2},K_2)$. (Indeed, as the set $Z$ in the lemma we can take $\Pi_{\gamma_2}(V,K_2)$, where $V$ is a compact subset of 
$U_{0,\gamma_1} \cap U_{0,\gamma_2}$ with nonempty interior. Then $f=\iota^*\hat f_1$ on $Z$ and the continuous function $\hat f_1$ defined on compact subset $\hat{\Pi}_{\mathfrak a,\gamma_1}(U_{0,\gamma_1},\hat{K}_{1\mathfrak a})$ of $c_{\mathfrak a}X$ admits a continuous extension to $c_{\mathfrak a}X$ by the Tietze-Urysohn theorem. Thus, $f\in C_{\mathfrak a}(Z)$, as required in the lemma.)
We repeat this construction for $3 \leq l \leq s(i)$ to obtain functions $\hat{f}_l \in \mathcal O\bigl( \hat{\Pi}_{\mathfrak a,\gamma_{l}}(U_{0,\gamma_l}, \hat{K}_{l\mathfrak a})\bigr)$ such that $\iota^*\hat{f}_l=f$ on $\Pi_{\gamma_l}(U_{0,\gamma_l},K_l)$.
Using these arguments for all $1 \leq i \leq m$ we obtain functions $\hat{f}_{s(i)}\in \mathcal O\bigl(\hat{\Pi}_{\mathfrak a,\gamma_{*}}(U_{0,\gamma_*}, \hat{K}_{\mathfrak a} \cdot j_{\mathfrak a}(g_i))\bigr)$ such that $\iota^*\hat{f}_{s(i)}=f|_{\Pi_{\gamma_*}(U_{\gamma_*}, K \cdot g_i)}$. 

Let $K^{\circ}_{\mathfrak a}$ denote the interior of $\hat{K}_{\mathfrak a}$. Then $K^{\circ}_{\mathfrak a}\cdot j_{\mathfrak a}(g_i)$ is the interior of
$\hat{K}_{\mathfrak a} \cdot j_{\mathfrak a}(g_i)$ for all $i\ge 1$. We have $\hat{f}_{s(i)}=\hat{f}_{s(j)}$ on $\hat{\Pi}_{\mathfrak a,\gamma_{*}}(U_{0,\gamma_*}, K^\circ_{\mathfrak a} \cdot j_{\mathfrak a}(g_i))\cap \hat{\Pi}_{\mathfrak a,\gamma_{*}}(U_{0,\gamma_*}, K^\circ_{\mathfrak a} \cdot j_{\mathfrak a}(g_j))\ne\emptyset$ 
since by our construction these functions are continuous and coincide on dense subset 
$\iota\bigl(\Pi_{\gamma_*}(U_{0,\gamma_*},K\cdot g_i) \cap \Pi_{\gamma_*}(U_{0,\gamma_*},K\cdot g_j)\bigr)$ of the latter set.
Finally, by ~Lemma \ref{lemB} $\cup_{i=1}^m \hat{L}_{\mathfrak a} \cdot j_{\mathfrak a}(g_i)=\hat{G}_{\mathfrak a}$, 
and since $\hat{L}_{\mathfrak a} \subset K^{\circ}_{\mathfrak a}$ by the assumption of the theorem, we have $\cup_{i=1}^m K^\circ_{\mathfrak a} \cdot j_{\mathfrak a}(g_i)=\hat{G}_{\mathfrak a}$. This shows that $\bar{p}^{-1}(U_0)=\cup_{i=1}^m\hat{\Pi}_{\mathfrak a,\gamma_{*}}(U_{0,\gamma_*}, K^\circ_{\mathfrak a} \cdot j_{\mathfrak a}(g_i))$. Therefore $\hat{f}_{U_0}|_{\hat{\Pi}_{\mathfrak a,\gamma_{*}}(U_{0,\gamma_*}, K^\circ_{\mathfrak a} \cdot j_{\mathfrak a}(g_i))}:=\hat{f}_{s(i)}$, $1\le i\le m$, is a function in $\mathcal O(\bar{p}^{-1}(U_0))$ satisfying (*) as required.
By Proposition \ref{basicpropthm}(2) $f|_{p^{-1}(U_0)} \in \mathcal O_{\mathfrak a}(p^{-1}(U_0))$.

Let $W_0\subset X_0$ be the maximal connected open subset which consists of unions of elements of the cover $\mathcal U$ and such that $f|_{p^{-1}(W_0)} \in \mathcal O_{\mathfrak a}(p^{-1}(W_0))$. (Existence of $W_0$ follows from Zorn's lemma; also, $U_0\subset W_0$.) Let us show that $W_0=X_0$. Assuming the contrary, we find (because of connectedness of $X_0$) a subset $U_0'\in\mathcal U$ such that $W_0\cap U_0'\ne\emptyset$ and $W_0$ is a proper subset of the open connected set $W_0\cup U_0'$.
Now in conditions of Theorem \ref{hypthm} we replace $U_0$, $Z$ and $K$ by sets $U_0'$, $Z':=p^{-1}(Z_0')$, where $Z_0' \Subset U_0' \cap U_0$ is  compact with nonempty interior (hence, a uniqueness set for functions in $\mathcal O(U'_0)$), and $K':=G$, respectively. Since $f|_{p^{-1}(U_0)} \in \mathcal O_{\mathfrak a}(p^{-1}(U_0))$, we have $f|_{Z'} \in C_{\mathfrak a}(Z')$. Therefore claim (*) in this setting gives a function $\hat{f}_{U_0'}\in\mathcal O(\bar{p}^{-1}(U_0'))$ such that $\iota^*\hat{f}_{U_0'}=f|_{p^{-1}(U_0')}$, i.e., 
$f|_{p^{-1}(U'_0)} \in \mathcal O_{\mathfrak a}(p^{-1}(U'_0))$. Since $f|_{p^{-1}(W_0)} \in \mathcal O_{\mathfrak a}(p^{-1}(W_0))$, this implies that $f|_{p^{-1}(W_0\cup\, U_0')} \in \mathcal O_{\mathfrak a}(p^{-1}(W_0\cup U_0'))$ contradicting the maximality of $W_0$. Thus, 
$W_0=X_0$ and  $f \in \mathcal O_{\mathfrak a}(X)$. 

The proof of the theorem is complete.
\end{proof}


\begin{proof}[Proof of Proposition \ref{prop2.23}]
(a)$\Rightarrow$(b). Suppose that there exist $g_1,\dots, g_m\in G$ such that $\cup_{i=1}^m\, K\cdot g_i= G$. Let us show that the closure $\hat{K}_{\mathfrak a}$ of $j(K)$, $j:=j_{\mathfrak a}$, in $\hat{G}_{\mathfrak a}$ has a nonempty interior $\hat{K}^{\circ}_{\mathfrak a}$. Indeed, by Lemma \ref{lemB} $\cup_{i=1}^m\, \hat{K}_{\mathfrak a}\cdot j(g_i)=\hat{G}_{\mathfrak a}$. Assuming that $\hat{K}_{\mathfrak a}\neq \hat{G}_{\mathfrak a}$ (in this case the statement is trivial) we may choose $1\le k\le m-1$ such that $K'=\cup_{i=1}^{k}\, \hat{K}_{\mathfrak a}\cdot j(g_i)$ does not cover $\hat{G}_{\mathfrak a}$ but $\cup_{i=1}^{k+1}\, \hat{K}_{\mathfrak a}\cdot j(g_i)=\hat{G}_{\mathfrak a}$. Thus the complement of $K'$ is a nonempty open subset of $\hat{K}_{\mathfrak a}\cdot j(g_{k+1})$. This implies that $\hat{K}_{\mathfrak a}^\circ\ne\emptyset$.

(b)$\Rightarrow$(c). Let $U\subset\hat{G}_{\mathfrak a}$ be open. Since $j(G)$ is a dense subgroup of the compact topological group $\hat{G}_{\mathfrak a}$, the set $\cup_{g\in G}\,U\cdot j(g)$ coincides with $\hat{G}_{\mathfrak a}$. (For otherwise, there exists $v\in \hat{G}_{\mathfrak a}$ such that the closure in $\hat{G}_{\mathfrak a}$ of the set $\{v\cdot j(g)\}_{g\in G}$ is a proper subset of $\hat{G}_{\mathfrak a}$ which contradicts the density of $j(G)$ in $\hat{G}_{\mathfrak a}$.) Thus there exist $g_1,\dots, g_m\in G$ such that $\cup_{i=1}^m\,U\cdot j(g_i)=\hat{G}_{\mathfrak a}$. This implies that $\cup_{i=1}^m\,j^{-1}(U)\cdot g_i=G$. 

Now, suppose that $K\subset G$ is such that $\hat{K}^{\circ}_{\mathfrak a}\neq\emptyset$. Choose an open set $U\Subset \hat{K}^{\circ}_{\mathfrak a}$ and define $L:=j^{-1}(U)\subset G$. The previous argument shows that the pair $L\subset K$ satisfies conditions of Theorem \ref{hypthm}.

(c)$\Rightarrow$(a). Follows from the definitions.
\end{proof}

\section{Proofs of Proposition \ref{iso1prop} and Theorems \ref{interp}, \ref{hartogsthm} and \ref{approxthm}}

\label{isosect}

\SkipTocEntry\subsection{Proof of Proposition \ref{iso1prop}}

%
%
%

It is easy to see that any function $f \in \mathcal O_{\mathfrak a}(X)$ is locally Lipschitz with respect to the semi-metric $d$ (see Introduction), i.e.,
\begin{equation}
\label{uc2}
|f(x_1,g)-f(x_2,g)| \leq C d\bigl((x_1,g),(x_2,g)\bigr):= Cd_0(x_1,x_2)
\end{equation}
for all $(x_1,g)$, $(x_2,g) \in W_0 \times G \cong p^{-1}(W_0)$, where $W_0 \Subset X_0$ is a simply connected coordinate chart. (Here $C$ depends on $d_0$ and $W_0$ only.) We set $f_{x_0}:=f|_{p^{-1}(x_0)} \in \mathfrak a$, $x_0\in X_0$, and define
\begin{equation*}
\tilde{f}(x_0):=f_{x_0}, \quad x_0 \in X_0.
\end{equation*}
Then $\tilde{f}$ is a section of bundle $C_{\mathfrak a}X_0$. Using \eqref{uc2} for any linear functional $\varphi\in \mathfrak a^*$ we have $\varphi(\tilde f(x)(g)):=\varphi(f(x,g))\in\mathcal O(W_0)$, $g\in G$, $x\in W_0\Subset X_0$, a simply connected coordinate chart, see \cite{Lin} or \cite{Br4} for similar arguments. Thus $\tilde f$ is a holomorphic section of $C_{\mathfrak a}X_0$. Reversing these arguments we obtain that any holomorphic section of $C_{\mathfrak a}X_0$ determines a holomorphic $\mathfrak a$-function on $X$.

\SkipTocEntry\subsection{Proof of Theorem \ref{interp}}

Let $B$ be a (complex) Banach space. We define
$$\mathcal A(D_0,B):=C(\bar{D}_0,B) \cap \mathcal O(D_0,B).$$
Consider a family of bounded linear operators $\mathcal L^B_z:B \rightarrow \mathcal A(D_0,B)$, $z \in D_0$, holomorphic in $z$ such that $\mathcal L_z^B(b)=b$ for every $b \in B$ and $\sup_{z \in D_0}\|\mathcal L^B_z\|=1$ defined by the formula $$\mathcal L^B_z(b)(x):=b \quad \text{for all } x \in D_0.$$
We use notation and results of subsection \ref{nonselfadj}.1. Namely, we identify functions in algebra $\mathfrak a_z$, $z\in D_0$, with sections over $z$ of the holomorphic Banach vector bundle $\tilde p:C_{\mathfrak a}X_0\rightarrow X_0$ associated to the principal fibre bundle $p:X\rightarrow X_0$ and having fibre $\mathfrak a$, and functions in $\mathcal O_{\mathfrak a}(D)$ with holomorphic sections of $\mathcal O(C_{\mathfrak a}X_0)|_{D_0}$. Recall that there is a holomorphic Banach vector bundle $E$ such that
$
C_{\mathfrak a}X_0 \oplus E=X_0 \times B
$
for some Banach space $B$. By $q:X_0 \times B \rightarrow C_{\mathfrak a}X_0$ and $i:C_{\mathfrak a}X_0 \rightarrow X_0 \times B$, $q \circ i=\Id$, we denote the corresponding bundle morphisms.
Now, for every $h \in \mathfrak a_z$ we define 
$$
L_z(h):=(q \circ \mathcal L_z^B \circ i)(h)\in \mathcal A_{\mathfrak a}(D).
$$
Clearly, the family $\{L_z\}_{z \in D_0}$ satisfies conditions (1), (2) of Theorem \ref{interp}.

\SkipTocEntry\subsection{Proof of Theorem \ref{hartogsthm}}

The arguments below are analogous to those in \cite{Br3}.

Using the construction of subsection \ref{nonselfadj}.1 we identify functions in $C_{\mathfrak a}(X)$ and $\mathcal O_{\mathfrak a}(X)$ with continuous and holomorphic sections of the holomorphic Banach vector bundle $\tilde p:C_{\mathfrak a}X_0\rightarrow X_0$ associated to the principal fibre bundle $p:X\rightarrow X_0$ and having fibre $\mathfrak a$. 
Further, there exist holomorphic Banach vector bundles $p_1:E_1 \rightarrow X_0$ and $p_2:E_2 \rightarrow X_0$ with fibres $B_1$ and $B_2$ such that $E_2=E_1 \oplus C_{\mathfrak a}X_0$ and $E_2$ is holomorphically trivial, i.e., $E_2 \cong X_0\times B_2$ (see, e.g.,~\cite{Obz}); so  continuous and holomorphic sections of $E_2$ can be identified with $B_2$-valued continuous and holomorphic functions on $X_0$.
By $q:E_2 \rightarrow C_{\mathfrak a}X_0$ and $i:C_{\mathfrak a}X_0 \rightarrow E_2$ we denote the corresponding quotient and embedding homomorphisms of the bundles so that 
$q \circ i=\Id$. 

As before we identify function $f$ satisfying the hypothesis of Theorem \ref{hartogsthm} with a continuous section of $C_{\mathfrak a}X_0$ over
$\partial D_0$. Then $h:=i(f) \in C(\partial D_0,B_2)$. 
Since $f\in C_{\mathfrak a}(\partial D)$ satisfies the tangential Cauchy-Riemann equations, 
$h$ satisfies the weak tangential Cauchy-Riemann equations on $\partial D_0$: $$\int_{\partial D_0} (\varphi\circ h) \bar{\partial }\omega =0$$
for any smooth form $\omega \in \Lambda^{n,n-2}(X_0)$ having compact support and any $\varphi\in B_2^*$.
Hence, applying the Hartogs-type theorem of \cite{HarvLaw} to functions $\varphi\circ h$ we obtain that there exists a function $H \in \mathcal O(D_0,B^{**}_2) \cap C(\bar{D}_0,B^{**}_2)$, where the second dual $B^{**}_2$ of $B_2$ is considered with weak* topology, such that $H|_{\partial D_0}=h$ (here $B_2$ is naturally identified with its isometric copy in $B_2^{**}$). 

Now, we use the integral representation result of \cite[Corollary 5.4]{Gle} asserting that
there exist a compact subset $S \subset \bar{D}_0 \setminus D_0$, a positive Radon measure $\mu$ on $S$
and a function $Q$ on $D_0 \times S$ such that 
(a) $Q(\cdot,y)$ is holomorphic for all $y\in S$; (b) $Q(x,\cdot)$ is $\mu$-integrable for all $x\in D_0$; (c) $x \mapsto \int_S|Q(x,y)|d\mu(y)$ is continuous; (d)
 for any function $w \in \mathcal O(D_0) \cap C(\bar{D}_0)$ 
$$
w(x)=\int_S Q(x,y)f(y)d\mu(y) \quad \text{ for all } x \in D_0.
$$
Using the Bochner integration we define
\begin{equation}
\label{hprime}
H'(x):=\int_M Q(x,y)h(y)d\mu(y), \quad x \in D_0.
\end{equation}
Then $H' \in C(D_0,B_2)$. Since the Bochner integral commutes with the action of bounded linear functionals,  $\varphi\circ H'=\varphi\circ H$ on $D_0$ for all $\varphi \in B_2^*$. Thus, $H'=H$ on $D_0$ and so $H \in \mathcal O(D_0,(B_2,w)) \cap C(\bar{D}_0,(B_2,w))$, where $(B_2,w)$ is $B_2$ equipped with weak topology, and $H \in \mathcal O(D_0,B_2)$.

Now, the required holomorphic extension of $f$ is given by $F:=q(H')$. Indeed, by our construction $F|_{D_0} \in \mathcal O(D_0,C_{\mathfrak a}X_0)$. By Proposition \ref{iso1prop} $F|_{D_0}$ can be viewed as a function in $\mathcal O_{\mathfrak a}(D)$. Further, since map $q$ is  continuous also if we equip fibres of the corresponding bundles with weak topologies, $F$ is a continuous section of $(C_{\mathfrak a}X_0,w)$ over  $\bar{D}_0$, i.e., of $C_{\mathfrak a}X_0$ with fibres endowed with weak topology. 
Using presentation (\ref{extrem3}) of $C_{\mathfrak a}X_0$ and evaluation functionals at points of $G$, we easily obtain from the weak continuity
of $F$ that considered as a function on $X$ it is continuous up to the boundary. Hence, $F \in \mathcal O_{\mathfrak a}(D)\cap C(\bar{D})$ and $F|_{\partial D}=f$, as required.


\SkipTocEntry\subsection{Proof of Theorems \ref{approxthm}}

Let $D_0$ be a relatively compact subdomain of $X_0$, $D:=p^{-1}(D_0)$. We set $\mathcal A_{\mathfrak a}(D):=\mathcal O_{\mathfrak a}(D) \cap C_{\mathfrak a}(\bar{D})$.
By $\mathcal A_\iota(D)$ we denote the space of holomorphic functions $f \in \mathcal A_{\mathfrak a}(D)$ such that 
for every $x_0 \in \bar{D}_0$ the function $g \mapsto f(g \cdot x)$ ($g \in G$, $x \in p^{-1}(x_0)$) is in $\mathfrak a_\iota$,
and by $\mathcal A_0(D)$ the $\mathbb C$-linear hull of spaces $\mathcal A_\iota(D)$, $\iota \in I$. 

Theorem \ref{approxthm} is a corollary of the following result.

\begin{theorem}
\label{approxthm0}
If $X_0$ is a Stein manifold and $D_0 \subset X_0$ is a strictly pseudoconvex domain, then $\mathcal A_{\trig}(D)$ is dense in $\mathcal A_{\mathfrak a}(D)$.
\end{theorem}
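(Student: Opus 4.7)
The plan is to combine the Leray-type integral representation of Theorem \ref{intreprthm} with a careful approximation of the $\mathcal A_{\mathfrak a}(D)$-valued integrand taking values in $\mathcal A_\iota(D)$ for a single suitably chosen $\iota$. First, after embedding $X_0$ as a closed submanifold of some $\mathbb C^N$ (using that $X_0$ is Stein) and choosing, by the strict pseudoconvexity of $D_0$, a $z$-independent Leray section so that the cycle in Theorem \ref{intreprthm} is parameterized by $\partial D_0$, every $f \in \mathcal A_{\mathfrak a}(D)$ admits the representation
\[
f(x) = \int_{\partial D_0} L_\xi\bigl(f|_{p^{-1}(\xi)}\bigr)(x) \cdot K(z,\xi)\, d\sigma(\xi), \quad x \in p^{-1}(z),\; z \in D_0,
\]
with a continuous kernel $K(z,\xi)$ holomorphic in $z$ for each fixed $\xi$. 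Because the integrand $\xi \mapsto L_\xi(f|_{p^{-1}(\xi)}) \cdot p^*K(\cdot,\xi)$ is a continuous $\mathcal A_{\mathfrak a}(D)$-valued function on the compact set $\partial D_0$, the integral is approximated arbitrarily well in the $\sup$-norm of $\mathcal A_{\mathfrak a}(D)$ by Riemann sums
\[
\sum_{j=1}^N c_j \cdot L_{\xi_j}\bigl(f|_{p^{-1}(\xi_j)}\bigr) \cdot p^*K(\cdot,\xi_j).
\]

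Second, using the density of $\bigcup_\iota \mathfrak a_\iota$ in $\mathfrak a$, I would approximate each value $f|_{p^{-1}(\xi_j)} \in \mathfrak a_{\xi_j} \cong \mathfrak a$ by an element $h_j \in \mathfrak a_{\iota_j}$ to within a prescribed $\delta$, and then enlarge via the directed system to a single $\iota \geq \iota_1,\ldots,\iota_N$, so that all $h_j \in \mathfrak a_\iota \cap \mathfrak a_{\xi_j}$. The uniform operator-norm bound $\sup_\xi \|L_\xi\| < \infty$ from Theorem \ref{interp}, together with linearity of $L_{\xi_j}$, gives
\[
\bigl\|L_{\xi_j}(f|_{p^{-1}(\xi_j)}) - L_{\xi_j}(h_j)\bigr\|_{\mathcal A_{\mathfrak a}(D)} \leq \sup_\xi \|L_\xi\| \cdot \delta,
\]
and multiplication by $p^*K(\cdot,\xi_j)$ acts fiberwise as a scalar, so preserves $\mathfrak a_\iota$. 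Hence, provided the extension $L_{\xi_j}(h_j)$ lies in $\mathcal A_\iota(D)$ for $h_j \in \mathfrak a_\iota \cap \mathfrak a_{\xi_j}$, the sum $\sum_j c_j L_{\xi_j}(h_j) p^*K(\cdot,\xi_j)$ lies in $\mathcal A_\iota(D) \subset \mathcal A_{\trig}(D)$ and is $\sup$-norm close to $f$.

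The main obstacle is the compatibility condition $L_\xi(\mathfrak a_\iota \cap \mathfrak a_\xi) \subset \mathcal A_\iota(D)$. The $G$-invariance of $\mathfrak a_\iota$ (assumption (1)) yields a well-defined holomorphic Banach subbundle $C_{\mathfrak a_\iota}X_0 \subset C_{\mathfrak a}X_0$ over the Stein manifold $X_0$, but the operator $L_\xi$ of Theorem \ref{interp}, built from an arbitrary direct-summand trivialization of $C_{\mathfrak a}X_0$, need not respect this subbundle. I would secure the compatibility by constructing the direct-summand trivializations from \cite{Obz} used in the proof of Theorem \ref{interp} in a manner respecting the nesting of the subbundles across the directed system: applying \cite{Obz} to each $C_{\mathfrak a_\iota}X_0$ yields extension operators $L_\xi^{(\iota)}: \mathfrak a_{\iota,\xi} \to \mathcal A_\iota(D)$ with $\sup_\xi \|L_\xi^{(\iota)}\| < \infty$, and these can be assembled into a single bounded family $\{L_\xi\}$ on $\mathfrak a_\xi$ that simultaneously extends each $L_\xi^{(\iota)}$. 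With this compatibility in place, the combined Riemann-sum error and $\mathfrak a$-level approximation error are controlled uniformly on $\bar D$, yielding the density of $\mathcal A_{\trig}(D)$ in $\mathcal A_{\mathfrak a}(D)$.
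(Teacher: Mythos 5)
Your strategy has two genuine gaps, and the second one is fatal as written. First, the Riemann-sum step does not converge in the norm of $\mathcal A_{\mathfrak a}(D)$: the Leray kernel $K(z,\xi)$ for $D_0$ is singular on the diagonal $z=\xi\in\partial D_0$, so the individual terms $c_j\,L_{\xi_j}(\cdot)\,p^*K(\cdot,\xi_j)$ are not bounded on $D$ (hence not in $\mathcal A_{\mathfrak a}(D)$ at all), and the sums approximate the integral only uniformly on compact subsets of $D_0$, not up to $\partial D_0$. This is repairable, but only by first replacing $f$ by a function holomorphic on a neighbourhood $D_0'\Supset \bar D_0$ and running the representation formula for the larger domain — which is exactly the role of the bounded Henkin--Leiterer approximation operator (Lemma \ref{hl2}) in the paper's proof, and which your argument omits.

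The fatal gap is the compatibility condition $L_\xi(\mathfrak a_\iota\cap\mathfrak a_\xi)\subset\mathcal A_\iota(D)$, which you correctly identify as the main obstacle but do not actually resolve. Applying \cite{Obz} separately to each $C_{\mathfrak a_\iota}X_0$ produces operators $L_\xi^{(\iota)}$ built from unrelated complemented embeddings; for $\iota\le\iota'$ there is no reason that $L_\xi^{(\iota')}$ restricts to $L_\xi^{(\iota)}$ on $\mathfrak a_{\iota,\xi}$, so the family cannot be ``assembled'' into a single operator extending all of them, and even a coherent choice on the directed system would still need a bound uniform in $\iota$ to pass to the closure $\mathfrak a=\overline{\bigcup_\iota\mathfrak a_\iota}$ — \cite{Obz} provides no such control. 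Nor can you simply substitute $L^{(\iota)}_{\xi_j}(h_j)$ for $L_{\xi_j}(f|_{p^{-1}(\xi_j)})$ in the Riemann sum: two bounded extensions of (nearly) the same fibre data agree on the fibre $p^{-1}(\xi_j)$ but can differ by an arbitrary element vanishing there, so the substitution destroys the approximation away from that fibre. The paper's proof is structured precisely to avoid ever applying a global extension operator to $\mathfrak a$-valued data: it approximates $f$ locally on polydisks by Taylor polynomials, replaces the finitely many Taylor coefficients (which lie in $\mathfrak a$) by elements of a single $\mathfrak a_{\iota_\varepsilon}$, and only then glues, using a bounded $\bar\partial$-solution operator applied to a cocycle that already takes values in the fixed subbundle $C_{\mathfrak a_{\iota_\varepsilon}}X_0$. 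You would need either that coefficient-level device or an honest construction of a subbundle-compatible family $\{L_\xi\}$ to close your argument.
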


First, we deduce Theorem \ref{approxthm} from Theorem \ref{approxthm0} and then prove the latter.

By $C_{\mathfrak a_{\iota}}X_0$ ($\iota \in I$) we denote the holomorphic Banach vector bundle associated to the principal fibre bundle $p:X \rightarrow X_0$ and having fibre $\mathfrak a_\iota$ (see~(\ref{extrem3})). 
For a given open subset $D_0 \subset X_0$ by  $\mathcal O(D_0,C_{\mathfrak a_\iota}X_0)$ we denote the space of holomorphic sections of bundle $C_{\mathfrak a_{\iota}}X_0$ over $D_0$ endowed with the topology of uniform convergence on compact subsets of $D_0$ which makes it a Fr\'{e}chet space.
We have an isomorphism of Fr\'{e}chet spaces
\begin{equation}
\label{isoo}
\mathcal O_{\mathfrak a_\iota}(D) \overset{\cong}{\rightarrow} \mathcal O(D_0,C_{\mathfrak a_\iota}X_0)
\end{equation}
(the proof repeats literally that of Proposition \ref{iso1prop}).

Let $X_0$ be a Stein manifold, $Y_0 \Subset X_0$ be open such that $\bar{Y}_0$ is holomorphically convex, and $D_0 \subset X_0$ be an open neighbourhood of $\bar{Y}_0$. We set $Y:=p^{-1}(Y_0)$.

\begin{proposition}
\label{propapprox2}
Let $f \in \mathcal O_{\mathfrak a_\iota}(D)$.
For every $\varepsilon>0$ there exists $h \in \mathcal O_{\mathfrak a_\iota}(X)$ such that $\sup_{z \in Y}|f(z)-h(z)|<\varepsilon$.
\end{proposition}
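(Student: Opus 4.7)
The plan is to transport the problem, via the isomorphism (\ref{isoo}), from functions on the covering to holomorphic sections of the Banach vector bundle $C_{\mathfrak a_\iota}X_0\rightarrow X_0$, and then reduce the approximation of these sections on $\bar{Y}_0$ to a classical Oka--Weil-type approximation theorem for Banach-valued holomorphic functions on the Stein manifold $X_0$.

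First, via (\ref{isoo}) identify $f$ with a holomorphic section $\tilde f\in\mathcal O(D_0,C_{\mathfrak a_\iota}X_0)$. Unwinding the construction of $C_{\mathfrak a_\iota}X_0$ (see (\ref{extrem3})), for any subset $S\subset D_0$ one has
\[
\sup_{z\in p^{-1}(S)}|f(z)|=\sup_{x\in S}\|\tilde f(x)\|_{\mathfrak a_\iota},
\]
so uniform approximation of sections translates into uniform approximation of functions on preimages. Next, because $X_0$ is Stein, Bungart's theorem (reference \cite{Obz}, already invoked in the arguments following Proposition \ref{simpleext} and in the proof of Theorem \ref{hartogsthm}) gives a holomorphic Banach vector bundle $E_1$ and a Banach space $B_2$ together with a holomorphic bundle isomorphism $E_1\oplus C_{\mathfrak a_\iota}X_0\cong X_0\times B_2$. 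Let $i:C_{\mathfrak a_\iota}X_0\hookrightarrow X_0\times B_2$ and $q:X_0\times B_2\rightarrow C_{\mathfrak a_\iota}X_0$ be the corresponding bounded holomorphic embedding and projection, with $q\circ i=\Id$, and set $M:=\max\{\|i\|,\|q\|\}<\infty$.

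The composition $F:=i\circ\tilde f$ is then a $B_2$-valued holomorphic function on $D_0$. Since $\bar Y_0$ is holomorphically convex in the Stein manifold $X_0$, the Oka--Weil approximation theorem for Banach-valued holomorphic functions (a routine extension of the scalar statement; it follows, e.g., by embedding $X_0$ as a closed complex submanifold of some $\mathbb C^N$ and approximating by $B_2$-valued polynomials on the polynomially convex hull, the Taylor expansion being uniformly convergent in $B_2$-norm) provides $H\in\mathcal O(X_0,B_2)$ with
\[
\sup_{x\in\bar Y_0}\|H(x)-F(x)\|_{B_2}<\frac{\varepsilon}{M}.
\]
Set $\tilde h:=q\circ H\in\mathcal O(X_0,C_{\mathfrak a_\iota}X_0)$ and let $h\in\mathcal O_{\mathfrak a_\iota}(X)$ be the function corresponding to $\tilde h$ via (\ref{isoo}) applied to $X$ in place of $D$. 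Since $\tilde h-\tilde f=q\circ(H-F)$ on $Y_0$ and $q\circ i=\Id$, the estimate
\[
\sup_{z\in Y}|h(z)-f(z)|=\sup_{x\in Y_0}\|\tilde h(x)-\tilde f(x)\|_{\mathfrak a_\iota}\leq \|q\|\cdot\sup_{x\in\bar Y_0}\|H(x)-F(x)\|_{B_2}<\varepsilon
\]
yields the claim.

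The main obstacle is the Banach-valued Oka--Weil step; the bundle-theoretic reduction via $i$ and $q$ is standard once Bungart's result is invoked, and the norm bookkeeping is automatic because the fibre norms on $C_{\mathfrak a_\iota}X_0$ are precisely the $\sup$-norms on the $G$-orbits.
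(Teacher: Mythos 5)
Your proposal is correct and follows essentially the same route as the paper: pass to sections of $C_{\mathfrak a_\iota}X_0$ via (\ref{isoo}), split off a complement so that $E_1\oplus C_{\mathfrak a_\iota}X_0\cong X_0\times B_2$, approximate the resulting $B_2$-valued holomorphic function on $\bar Y_0$, and project back with $q$. The only difference is that the Banach-valued Oka--Weil step you sketch by hand is, in the paper, simply quoted as Theorem~C of Bungart \cite{Bu2}.
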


\begin{proof}
We need the following approximation result established in \cite[Theorem~C]{Bu2}. 

Let $B$ be a complex Banach space and $\mathcal O(X_0,B)$ the space of $B$-valued holomorphic functions on $X_0$.

\begin{itemize}
\item[($\ast$)]
\textit{Let $\hat{f} \in \mathcal O(D_0,B)$. For every $\varepsilon>0$ there exists $\hat{h}\in \mathcal O(X_0,B)$ such that $\sup_{z \in Y_0}\|\hat{f}(z)-\hat{h}(z)\|_B<\varepsilon$.}
\end{itemize}

Further, since $X_0$ is a Stein manifold,
there exist holomorphic Banach vector bundles $p_1:E_1 \rightarrow X_0$ and $p_2:E_2 \rightarrow X_0$ with fibres $B_1$ and $B_2$ such that $E_2=E_1 \oplus C_{\mathfrak a_\iota}X_0$ and $E_2$ is holomorphically trivial, i.e., $E_2 \cong X_0\times B_2$ (cf. the proof of Theorem \ref{hartogsthm}). 
Thus, any holomorphic section of $E_2$ can be naturally identified with a $B_2$-valued holomorphic function on $X_0$. 
By $q:E_2 \rightarrow C_{\mathfrak a_\iota}X_0$ and $i:C_{\mathfrak a_\iota}X_0 \rightarrow E_2$ we denote the corresponding quotient and embedding homomorphisms of these bundles so that 
$q \circ i=\Id$.
Given a function $f \in \mathcal O_{\mathfrak a_\iota}(D)$ by  $\hat{f} \in \mathcal O(D_0,C_{\mathfrak a_\iota}X_0)$ we denote its image under isomorphism (\ref{isoo}).
Set $\tilde{f}:=i(\hat{f}) \in \mathcal O(D_0,B_2)$. By ($\ast$) for every $\tilde{\varepsilon}>0$ there exists a function $\tilde{h} \in \mathcal O(X_0,B_2)$ such that $\sup_{z \in Y_0}\|\tilde{f}(z)-\tilde{h}(z)\|_{B_2}<\tilde{\varepsilon}.$
We define $\hat{h}:=q(\tilde{h}) \in \mathcal O(X_0,C_{\mathfrak a_\iota}X_0)$ and by $h \in \mathcal O_{\mathfrak a_\iota}(X)$ denote the image of $\hat{h}$ under the inverse isomorphism  in (\ref{isoo}). The continuity of $i$ and $q$ and the compactness of $\bar{Y}_0$ now imply that $\sup_{z \in Y}|f(z)-h(z)|<C\tilde{\varepsilon}$
for some $C>0$ independent of $\hat{f}$ and $\tilde{\varepsilon}$.
\end{proof}

Using this proposition we complete the proof of Theorem \ref{approxthm} as follows. 

Let $f \in \mathcal O_{\mathfrak a}(X)$. 
It suffices to show that for a sequence $Y_{0,1}\Subset\cdots\Subset Y_{0,k}\Subset\cdots$ of open subsets of $X_0$ such that $\cup_{k}Y_{0,k}=X_0$ and for any $\varepsilon>0$ there exist functions $h_k \in \mathcal O_{0}(X)$ such that $\sup_{x \in Y_{k}}|f(x)-h_k(x)|<\frac{\varepsilon}{k}$, where $Y_k:=p^{-1}(Y_{0,k})$.
Since $X_0$ is a Stein manifold, we may assume without loss of generality that each $\bar{Y}_{0,k}$ is holomorphically convex. 
Then there is a strictly pseudoconvex open neighbourhood $D_{0,k} \Subset X_0$ of $\bar{Y}_{0,k}$, $k \geq 1$ (see, e.g., \cite{HL}). Since restriction $f|_{\bar{D}_k} \in \mathcal A_{\mathfrak a}(D_k)$, $D_k:=p^{-1}(D_{0,k})$, by Theorem \ref{approxthm0} there exist functions $h_k' \in \mathcal A_{0}(D_k)$ such that 
$\sup_{x \in D_k}|f(x)-h_k'(x)|<\frac{\varepsilon}{2k}$, $k\ge 1$. By the definition of space $\mathcal A_{0}(D_k)$, there exists $\iota_k \in I$ such that $h_k' \in \mathcal A_{\iota_k}(D_k)$. Now, by Proposition \ref{propapprox2} there exists $h_k \in \mathcal O_{\mathfrak a_{\iota_k}}(X)$ such that $\sup_{x \in Y_k}|h_k'(x)-h_k(x)|<\frac{\varepsilon}{2k}$. Thus, $\sup_{x \in Y_k}|f(x)-h_k(x)|<\frac{\varepsilon}{k}$. Since $\mathcal O_{\mathfrak a_{\iota_k}}(X) \subset \mathcal O_{0}(X)$, this implies the required result modulo Theorem \ref{approxthm0}.

\begin{proof}[Proof of Theorem \ref{approxthm0}]
By $\mathcal A(D_0,C_{\mathfrak a}X_0)$ and $\mathcal A(D_0,C_{\mathfrak a_\iota}X_0)$ we denote spaces of sections of bundles $C_{\mathfrak a}X_0|_{\bar{D}_0}$ and $C_{\mathfrak a_\iota}X_0|_{\bar{D}_0}$ continuous over $\bar{D}_0$ and holomorphic on $D_0$.
We equip $\mathcal A(D_0,C_{\mathfrak a}X_0)$ with norm $\|f\|:=\sup_{x \in \bar{D}_0}\|f(x)\|_{\mathfrak a}$ which makes it a Banach space. Then $\mathcal A(D_0,C_{\mathfrak a_\iota}X_0)$ is a closed subspace of $\mathcal A(D_0,C_{\mathfrak a}X_0)$.
Also, we define linear space $\mathcal A_0(D_0,C_{\mathfrak a}X_0):=\bigcup_{\iota \in I}\mathcal A(D_0,C_{\mathfrak a_\iota}X_0)$.
We have natural isomorphisms of vector spaces defined similarly to that of Proposition \ref{iso1prop}:
\begin{equation}
\label{iso2prop}
\mathcal A_{\mathfrak a_\iota}(D) \overset{\cong}{\rightarrow} \mathcal A(D_0,C_{\mathfrak a_\iota}X_0), \quad \mathcal A_0(D)  \overset{\cong}{\rightarrow} \mathcal A_0(D_0,C_{\mathfrak a}X_0)
\end{equation}
(the proof is analogous to the proof of Proposition \ref{iso1prop}).
In view of (\ref{iso2prop}), Theorem \ref{approxthm0} can be restated as follows: 

\medskip

\begin{enumerate}
\item[($\ast$)]\textit{Suppose that $X_0$ is a Stein manifold and $D_0 \Subset X_0$ is a strictly pseudoconvex subdomain. Then $\mathcal A_0(D_0,C_{\mathfrak a}X_0)$ is dense in $\mathcal A(D_0,C_{\mathfrak a}X_0)$.}
\end{enumerate}

\medskip

For the proof of this claim we need the following results.

As before, we define $\mathcal A(D_0,B):=\mathcal O(D_0,B) \cap C(\bar{D}_0,B)$ and endow this space 
with norm $\|f\|_{\bar{D}_0}:=\sup_{x \in \bar{D}_0}\|f(x)\|_B$.
The next result follows easily from a similar result in \cite{HL} (in case $B=\mathbb C$) since all integral formulas and estimates
used in its proof are preserved when passing to Banach-valued forms.

\begin{lemma}
\label{hl2}
Let $K \subset \mathcal A(D_0,B)$ be compact. 
For every $\varepsilon>0$ there exist an open neighbourhood $D_0' \Subset X_0$ of $\bar{D}_0$ and a bounded linear operator $A_{K,\varepsilon}=A_{D_0,K,\varepsilon} \in \mathcal L\bigl(\mathcal A(D_0,B), \mathcal A(D_0',B)\bigr)$ such that  $\|f-Af|_{\bar{D}_0}\|_{\bar{D}_0}<\varepsilon$ for each $f \in K$.
\end{lemma}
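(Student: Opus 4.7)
The plan is to reduce to the scalar ($B=\mathbb C$) result of \cite{HL}, extend its operator to Banach-valued functions via Bochner integration, and transfer the uniform error bound by a Hahn-Banach duality argument. The scalar statement in \cite{HL} produces, for any compact $\tilde K\subset \mathcal A(D_0)$ and $\eta>0$, a neighbourhood $D_0'\Subset X_0$ of $\bar D_0$ and a bounded operator $A_0\in \mathcal L\bigl(\mathcal A(D_0),\mathcal A(D_0')\bigr)$ of Leray-Koppelman type, representable as a finite sum of integrals $A_0 g(z)=\sum_j\int_{\Gamma_j} g(\zeta)\,\omega_j(\zeta,z)$ against scalar differential forms $\omega_j$ in $\zeta$ whose coefficients are continuous on $\bar D_0\times \bar D_0'$ and holomorphic in $z\in D_0'$, with $\|g-A_0 g\|_{\bar D_0}<\eta$ for all $g\in\tilde K$.

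First I would define $A\colon \mathcal A(D_0,B)\rightarrow \mathcal A(D_0',B)$ by the same formula, interpreted as Bochner integration of the $B$-valued $f$ against the scalar forms $\omega_j$. The inequality $\bigl\|\int g\,d\mu\bigr\|_B\leq \int \|g\|_B\,d|\mu|$ gives $\|A\|\leq\|A_0\|$, and differentiation under the Bochner integral, justified by continuity of $\bar\partial_z\omega_j$ on $\bar D_0\times \bar D_0'$ together with the majorant $\|f(\zeta)\|_B\leq\|f\|_{\bar D_0}$, shows that $Af$ is $B$-valued holomorphic on $D_0'$ and continuous on $\bar D_0'$. Next, to get the error bound uniformly over the prescribed compact $K\subset\mathcal A(D_0,B)$, I would apply the scalar result with $\eta=\varepsilon$ to the closure of the scalarised family $\tilde K:=\{\varphi\circ f:f\in K,\ \varphi\in B^*,\ \|\varphi\|\leq 1\}\subset \mathcal A(D_0)$. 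Arzel\`a-Ascoli shows $\tilde K$ is relatively compact in the sup-norm Banach space $\mathcal A(D_0)$: compactness of $K$ in $\mathcal A(D_0,B)$ yields equicontinuity of $K$ on $\bar D_0$ (transferred to $\tilde K$ via $|\varphi\circ f(x)-\varphi\circ f(y)|\leq \|f(x)-f(y)\|_B$) and pointwise boundedness of $\{\varphi(f(x)):f\in K,\ \|\varphi\|\leq 1\}$ in $\mathbb C$; closedness of $\mathcal A(D_0)$ in $C(\bar D_0)$ places its sup-norm closure inside $\mathcal A(D_0)$.

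Finally, the Bochner identity $\varphi\circ(Af)=A_0(\varphi\circ f)$ combined with Hahn-Banach yields
\[
\|f-Af\|_{\bar D_0}=\sup_{z\in\bar D_0,\ \varphi\in B^*,\ \|\varphi\|\leq 1}\bigl|(\varphi\circ f)(z)-A_0(\varphi\circ f)(z)\bigr|<\varepsilon
\]
for every $f\in K$, since $\varphi\circ f\in\tilde K$. The main delicate point I anticipate is confirming relative compactness of $\tilde K$ in the \emph{sup-norm} topology of $\mathcal A(D_0)$ rather than merely in the compact-open topology on $\mathcal O(D_0)$; this requires equicontinuity of $K$ up to $\partial D_0$, which is automatic from the compactness hypothesis in the Banach space $\mathcal A(D_0,B)=\mathcal O(D_0,B)\cap C(\bar D_0,B)$ equipped with its sup norm.
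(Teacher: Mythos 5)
Your proof is correct and takes essentially the same route as the paper, whose entire argument is the remark that the scalar approximation result of \cite{HL} carries over because ``all integral formulas and estimates used in its proof are preserved when passing to Banach-valued forms.'' Your Bochner-integral extension of the Leray-type kernel together with the Hahn--Banach/Arzel\`{a}--Ascoli transfer of the error bound is a precise way of justifying exactly that claim.
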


We prove assertion ($\ast$) in three steps.

\medskip

(\textit{1}) Let $f \in \mathcal A(D_0,C_{\mathfrak a}X_0)$.
Using the construction of subsection \ref{nonselfadj}.1 (cf. the proof of Theorem \ref{hartogsthm}) and Lemma \ref{hl2} we may assume without loss of generality that $f=f'|_{\bar{D}_0}$, where $f' \in \mathcal O(D_0',C_{\mathfrak a}X_0)$ and $D_0' \Subset X_0$ is an open neighbourhood of $\bar{D}_0$.

We have to show that for every $\varepsilon>0$ there exists a section $F \in \mathcal A_0(D_0,C_{\mathfrak a}X_0)$ such that $\sup_{x \in \bar{D}_0}\|f(x)-F(x)\|_{\mathfrak a}<\varepsilon$.

\medskip

(\textit{2}) Let $\mathcal U=\{U_k\}_{k=1}^M$, where each $U_k \Subset D_0'$ is open biholomorphic to a polydisk in $\mathbb C^n$, and $D_0 \Subset \cup_{k=1}^M U_k$.

\begin{lemma}
\label{lem2}
For every $\varepsilon>0$ there exist a subspace $\mathfrak a_{\iota_\varepsilon} \subset \mathfrak a$ {\rm (}$\iota_\varepsilon \in I${\rm)} and sections $F_{\varepsilon,k} \in \mathcal A\bigl(U_k,C_{\mathfrak a_{\iota_\varepsilon}}X_0 \bigr)$ such that
\begin{equation}
\label{feps}
\|f'(x)-F_{\varepsilon,k}(x)\|_{\mathfrak a}<\varepsilon \quad \text{ for all }\quad x \in U_k, \quad 1 \leq k \leq M.
\end{equation}
\end{lemma}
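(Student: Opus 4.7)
My plan is to argue locally on each polydisk $U_k$ via trivialization, reducing Lemma \ref{lem2} to a Banach-valued approximation statement, and then exploit the directedness of $\{\mathfrak{a}_\iota\}$ together with density of $\mathfrak{a}_0$ in $\mathfrak{a}$ to approximate finitely many Taylor coefficients simultaneously in a single subspace $\mathfrak{a}_{\iota_\varepsilon}$.

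First, fix $k$. Since $U_k \Subset D_0'$ is biholomorphic to a polydisk (so, in particular, simply connected), over $U_k$ the bundle $C_{\mathfrak{a}}X_0$ from \eqref{extrem3} admits a holomorphic trivialization $\Psi_k:C_{\mathfrak{a}}X_0|_{U_k}\xrightarrow{\cong} U_k\times \mathfrak{a}$; moreover, because right translations $R_g$ act by isometries of $\mathfrak{a}$, the identification is fibrewise isometric. Through $\Psi_k$ the section $f'|_{U_k}$ becomes a holomorphic $\mathfrak{a}$-valued map $\tilde f'_k:U_k\to \mathfrak{a}$ that extends continuously to $\bar U_k$ (since $f'\in \mathcal{O}(D_0', C_{\mathfrak{a}}X_0)$ and $\bar U_k\subset D_0'$). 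Invariance of $\mathfrak{a}_\iota$ under right translations (assumption (1) in subsection \ref{approx}) guarantees that $C_{\mathfrak{a}_\iota}X_0$ is a holomorphic subbundle of $C_{\mathfrak{a}}X_0$ and $\Psi_k$ restricts to a trivialization $C_{\mathfrak{a}_\iota}X_0|_{U_k}\cong U_k\times \mathfrak{a}_\iota$. Thus it suffices to produce, for each $k$, an $\mathfrak{a}_{\iota_\varepsilon}$-valued holomorphic map $\tilde F_{\varepsilon,k}:U_k\to \mathfrak{a}_{\iota_\varepsilon}$, continuous up to $\bar U_k$, with $\|\tilde f'_k(z)-\tilde F_{\varepsilon,k}(z)\|_\mathfrak{a}<\varepsilon$ for all $z\in U_k$; then $F_{\varepsilon,k}:=\Psi_k^{-1}\tilde F_{\varepsilon,k}$ does the job.

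Next, since $\tilde f'_k$ is an $\mathfrak{a}$-valued holomorphic function on the polydisk $U_k$ that is bounded on a slightly larger polydisk (because it extends holomorphically past $\bar U_k$), its Taylor series $\tilde f'_k(z)=\sum_{\alpha}c_{\alpha,k}z^{\alpha}$, with $c_{\alpha,k}\in\mathfrak{a}$, converges to $\tilde f'_k$ uniformly on $\bar U_k$ in the norm of $\mathfrak{a}$. Choose $N=N(\varepsilon)$ so large that the polynomial $p_{N,k}(z):=\sum_{|\alpha|\le N}c_{\alpha,k}z^{\alpha}$ satisfies $\sup_{z\in \bar U_k}\|\tilde f'_k(z)-p_{N,k}(z)\|_\mathfrak{a}<\varepsilon/2$ for every $1\le k\le M$ (using that only finitely many $k$'s are involved, the same $N$ works for all).

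Finally, let $S:=\max_{z\in\bar U_k,\,|\alpha|\le N,\,1\le k\le M}|z^{\alpha}|$, a finite constant, and set $\delta:=\varepsilon/\bigl(2S\cdot \#\{\alpha:|\alpha|\le N\}\bigr)$. Since the union $\mathfrak{a}_0=\bigcup_{\iota\in I}\mathfrak{a}_\iota$ is dense in $\mathfrak{a}$ and $\{\mathfrak{a}_\iota\}$ is directed by inclusion, for each of the finitely many coefficients $c_{\alpha,k}$ ($|\alpha|\le N$, $1\le k\le M$) we can first pick approximants $c'_{\alpha,k}\in \mathfrak{a}_{\iota_{\alpha,k}}$ with $\|c_{\alpha,k}-c'_{\alpha,k}\|_\mathfrak{a}<\delta$, then use directedness to find a single $\iota_\varepsilon\in I$ containing all the $\iota_{\alpha,k}$, so that every $c'_{\alpha,k}$ lies in $\mathfrak{a}_{\iota_\varepsilon}$. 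Define $\tilde F_{\varepsilon,k}(z):=\sum_{|\alpha|\le N}c'_{\alpha,k}z^{\alpha}$. Then $\tilde F_{\varepsilon,k}$ takes values in $\mathfrak{a}_{\iota_\varepsilon}$, is holomorphic on $U_k$ and continuous on $\bar U_k$, and the triangle inequality together with the choice of $\delta$ gives $\|\tilde f'_k-\tilde F_{\varepsilon,k}\|_\infty<\varepsilon/2+\varepsilon/2=\varepsilon$ on $\bar U_k$, proving \eqref{feps}.

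The main conceptual point (and the only place where the structure of the direct system matters) is the last paragraph: one must pass from pointwise density of $\mathfrak{a}_0$ in $\mathfrak{a}$ to a uniform statement on a finite set of vectors within a single subspace $\mathfrak{a}_{\iota_\varepsilon}$, which is exactly what the directedness assumption provides. Everything else is a routine application of trivialization over a simply connected base and Taylor expansion for Banach-valued holomorphic functions on a polydisk; no deeper tools (such as Theorems \ref{thmA}, \ref{thmB}) are needed at this local stage — those will enter when the local approximations $F_{\varepsilon,k}$ are subsequently glued into a global section of $C_{\mathfrak{a}_{\iota_\varepsilon}}X_0$.
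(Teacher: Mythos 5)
Your proof is correct and takes essentially the same route as the paper's: trivialize $C_{\mathfrak a}X_0$ over each simply connected polydisk $U_k$, approximate the resulting $\mathfrak a$-valued holomorphic function uniformly by a Taylor polynomial of sufficiently high degree (using that $f'$ is defined on a neighbourhood of $\bar U_k$), and then approximate the finitely many coefficients by elements of $\mathfrak a_0$, invoking directedness of $\{\mathfrak a_\iota\}$ to place them all in a single $\mathfrak a_{\iota_\varepsilon}$. The only difference is expository: you make the fibrewise isometry of the trivialization and the subbundle property of $C_{\mathfrak a_\iota}X_0$ explicit, which the paper leaves implicit.
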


\begin{proof}
Since each $U_k$, $1 \leq k \leq M$, is simply connected, bundles $C_{\mathfrak a}X_0$, $C_{\mathfrak a_\iota}X_0$ ($\iota \in I$) admit holomorphic trivializations over $U_k$. Using these trivializations we identify sections of these bundles over $U_k$ with $\mathfrak a$-valued and $\mathfrak a_{\iota}$-valued functions on $U_k$.

By our assumption, for every $1 \leq k \leq M$ there exists a biholomorphism between $U_k$ and an open polydisk $\Delta \subset \mathbb C^n$ centered at $0$.
Without loss of generality we may assume that $f'_k:=f'|_{U_k}$ is defined over an open neighbourhood of $\bar{\Delta}$.
Then $f'_k$ can be identified by means of the corresponding holomorphic trivialization of bundle $C_{\mathfrak a}X_0$ with a holomorphic $\mathfrak a$-valued function defined on an open neighbourhood of $\bar{\Delta}$. 
 
For a given function $h \in \mathcal O\bigl(\Delta,\mathfrak a\bigr)$ by $T^N_0 h$ we denote its Taylor polynomial of degree $N$ at $x=0$.
Choose $N$ so large that 
\begin{equation*}
\|f'_{k}(x)-T^N_0 f'_k(x)\|_{\mathfrak a}<\frac{\varepsilon}{2} \quad \text{ for all }\quad x \in \Delta, \quad  1 \leq k \leq M,
\end{equation*}
where $T^N_0 f'_k(x):=\sum_{|\alpha| \leq N} a_{k,\alpha} x^\alpha,$ $a_{k,\alpha} \in \mathfrak a$, and $\alpha$ is a multi-index.
Since $\mathfrak a_0$ is dense in $\mathfrak a$, for every $\delta>0$ and all $1 \leq k \leq M$, $|\alpha| \leq N$, there exist $a_{k,\alpha}^\varepsilon \in \mathfrak a_0$ such that $\|a_{k,\alpha}-a_{k,\alpha}^\varepsilon\|_{\mathfrak a}<\delta.$
We choose $\delta>0$ to be sufficiently small
so that 
$$\sup_{x\in\Delta}\left\|\sum_{|\alpha| \leq N} a_{k,\alpha} x^\alpha-F_{\varepsilon,k}(x)\right\|_{\mathfrak a}<\frac{\varepsilon}{2},$$
where 
$F_{\varepsilon,k}(x):=\sum_{|\alpha| \leq N} a^\varepsilon_{k,\alpha} x^\alpha.$
Therefore,
$$\|f'_{k}(x)-F_{\varepsilon,k}(x)\|_{\mathfrak a}<\varepsilon \quad \text{ for all }\quad x \in \Delta, \quad 1 \leq k \leq M.$$
By definition, there exists $\iota_\varepsilon \in I$ such that $\mathfrak a_{\iota_\varepsilon}$ contains all $a^\varepsilon_{k,\alpha}$ ($1 \leq k \leq M$, $|\alpha|\leq N)$; hence 
$F_{\varepsilon,k} \in  \mathcal A\bigl(\Delta,\mathfrak a_{\iota_\varepsilon} \bigr)$.
\end{proof}

(\textit{3}) We also need the following result.

\begin{lemma}
\label{lem3r}
In notation of Lemma \ref{lem2},
for every $\varepsilon>0$ there exists a section $F \in \mathcal A(D_0,C_{\mathfrak a_{\iota_\varepsilon}}X_0) \subset \mathcal A_0(D_0,C_{\mathfrak a}X_0)$ such that 
\begin{equation*}
\left\|F(x)-F_{\varepsilon,k}(x)\right \|_{\mathfrak a}<C\varepsilon \quad \text{ for all }\quad x \in U_k \cap \bar{D}_0, \quad 1 \leq k \leq M,
\end{equation*}
for some $C>0$ independent of section $f' \in \mathcal A_0(D'_0,C_{\mathfrak a}X_0)$ and $\varepsilon>0$.
\end{lemma}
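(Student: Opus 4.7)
The natural strategy is a Cartan--B type Cousin--I resolution with quantitative $\bar\partial$-estimates. The holomorphic \v{C}ech 1-cocycle
\[
g_{kl}:=F_{\varepsilon,k}-F_{\varepsilon,l}\in \mathcal O(U_k\cap U_l,C_{\mathfrak a_{\iota_\varepsilon}}X_0)
\]
satisfies $\sup_{U_k\cap U_l\cap\bar D_0}\|g_{kl}\|_{\mathfrak a}\le 2\varepsilon$ by Lemma \ref{lem2} and the triangle inequality. It suffices to produce holomorphic sections $h_k$ of $C_{\mathfrak a_{\iota_\varepsilon}}X_0$ on $U_k\cap D_0$, continuous up to $U_k\cap\bar D_0$, satisfying $h_k-h_l=g_{kl}$ on overlaps and $\|h_k\|_{\mathfrak a}\le C\varepsilon$: then $F|_{U_k}:=F_{\varepsilon,k}-h_k$ is a well-defined global element of $\mathcal A(D_0,C_{\mathfrak a_{\iota_\varepsilon}}X_0)$ with $\|F-F_{\varepsilon,k}\|_{\mathfrak a}\le C\varepsilon$ on $U_k\cap\bar D_0$.

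To build the $h_k$, I would fix once and for all a smooth partition of unity $\{\chi_k\}$ subordinate to $\mathcal U$ on a neighbourhood of $\bar D_0$ (depending only on $\mathcal U$, not on $\iota_\varepsilon$ or $f'$) and form the smoothings $\tilde h_k:=\sum_l\chi_l\,g_{kl}\in C^\infty(U_k,C_{\mathfrak a_{\iota_\varepsilon}}X_0)$. The cocycle identity gives $\tilde h_k-\tilde h_l=g_{kl}$, so the forms $\bar\partial\tilde h_k$ glue to a globally defined, $\bar\partial$-closed $(0,1)$-form $\omega$ with values in $C_{\mathfrak a_{\iota_\varepsilon}}X_0$ and $\|\omega\|\le C_1\varepsilon$, with $C_1$ depending only on $\mathcal U$ and $\{\chi_k\}$. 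Next I would solve $\bar\partial u=\omega$ for a section $u\in\mathcal A(D_0,C_{\mathfrak a_{\iota_\varepsilon}}X_0)$ via the Bungart trick of Proposition \ref{propapprox2}: pick holomorphic Banach bundles with $E_2=E_1\oplus C_{\mathfrak a_{\iota_\varepsilon}}X_0$ and $E_2\cong X_0\times B$ trivial, together with the associated morphisms $i:C_{\mathfrak a_{\iota_\varepsilon}}X_0\hookrightarrow X_0\times B$ and $q:X_0\times B\to C_{\mathfrak a_{\iota_\varepsilon}}X_0$, $q\circ i=\mathrm{Id}$; apply $R_{D_0,B}$ from Lemma \ref{hl1} to $i_*\omega$ to obtain $\tilde u\in\mathcal A(D_0,B)$ with $\bar\partial\tilde u=i_*\omega$ and $\|\tilde u\|\le\|R_{D_0,B}\|\cdot\|i\|\cdot\|\omega\|$; set $u:=q(\tilde u)$. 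Then $h_k:=\tilde h_k-u$ satisfies $\bar\partial h_k=\omega-\omega=0$, is continuous up to $U_k\cap\bar D_0$, resolves the cocycle, and obeys $\|h_k\|_{\mathfrak a}\le\|\tilde h_k\|+\|q\|\cdot\|\tilde u\|\le C\varepsilon$.

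The main technical obstacle is to make $C$ independent of $\iota_\varepsilon$. The Henkin--Leiterer norm $\|R_{D_0,B}\|$ is independent of $B$ because the operator in Lemma \ref{hl1} is built from integration against a universal kernel on $D_0$, so the scheme yields identical bounds for arbitrary Banach fibres; the partition-of-unity contribution $C_1$ and the combinatorics of the finite cover $\mathcal U$ are fixed. The delicate point is the Bungart norms $\|i\|$ and $\|q\|$ attached to the decomposition of $C_{\mathfrak a_{\iota_\varepsilon}}X_0$, which a priori vary with $\iota_\varepsilon$; uniform control has to be arranged by choosing the Bungart decompositions compatibly across the directed family $\{C_{\mathfrak a_\iota}X_0\}_{\iota\in I}$, exploiting the closed $G$-invariant inclusions $\mathfrak a_\iota\hookrightarrow\mathfrak a$ together with the fixed decomposition of the ambient bundle $C_{\mathfrak a}X_0$ so that the restrictions $i|_{C_{\mathfrak a_\iota}X_0}$ and the induced retractions have norms bounded by a constant depending only on $D_0$ and on the ambient decomposition.
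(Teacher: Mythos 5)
Your proof is essentially the paper's: the same Cousin--I cocycle $g_{kl}$, the same partition-of-unity smoothing, the same passage through a Bungart complement $C_{\mathfrak a_{\iota_\varepsilon}}X_0\oplus E=X_0\times B$ with the morphisms $i,q$, the Henkin--Leiterer operator of Lemma \ref{hl1} applied to $i(\omega)$, and the same reassembly $F=F_{\varepsilon,k}-\tilde g_k+\eta$. The only point of divergence is your final paragraph on the uniformity of $C$ in $\iota_\varepsilon$: the paper takes the Bungart decomposition of $C_{\mathfrak a_{\iota_\varepsilon}}X_0$ itself and records only that its constants are independent of $\omega$, without discussing their dependence on $\iota_\varepsilon$ (which does vary with $f'$ and $\varepsilon$), so your concern is legitimate rather than an artifact of your write-up. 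Be aware, however, that your sketched remedy is incomplete as stated: restricting a single fixed decomposition of the ambient bundle $C_{\mathfrak a}X_0$ controls $\|i\|$, but the associated retraction $q$ lands in $C_{\mathfrak a}X_0$ and not in the subbundle $C_{\mathfrak a_{\iota_\varepsilon}}X_0$, so obtaining "induced retractions" with uniformly bounded norms would require uniformly bounded $G$-equivariant projections $\mathfrak a\to\mathfrak a_\iota$ -- which is precisely the extra approximation-property hypothesis the paper discusses separately in its Comments section and does not assume in Theorem \ref{approxthm}.
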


\begin{proof}
There exists an open neighbourhood $D_0'' \Subset D_0'$ of $\bar{D}_0$ such that 
$D_0'' \Subset \cup_{k=1}^M U_k$. We may assume without loss of generality that $D_0''$ is strictly pseudoconvex. 
Let $B$ be  a complex space. By $\Lambda_b^{(0,q)}(D_0'',B)$, $q \geq 0$, we denote the space of bounded continuous $B$-valued $(0,q)$-forms on $D_0''$ endowed with norm $\|\cdot\|_{D_0'',B}^{(0,q)}$ defined by formula (\ref{approxsup}) with respect to local coordinates on the cover $\{U_k\}_{k=1}^M$ of $D_0''$.

Next, we define a holomorphic $1$-cocycle as follows. If $U_k \cap U_l \ne \varnothing$, then
$$g_{kl}:=F_{\varepsilon ,k}|_{U_k \cap U_l \cap D_0''}-F_{\varepsilon ,l}|_{U_k \cap U_l \cap D_0''} \in \mathcal A(U_k \cap U_l \cap D_0'',C_{\mathfrak a_{\iota_\varepsilon}}X_0),$$
and $g_{kl}:=0$ if $U_k \cap U_l \cap D_0''=\varnothing$.

Let $\{\rho_k\}_{k=1}^M \subset C^\infty(X_0)$ be a collection of nonnegative functions such that \penalty-10000 $\supp(\rho_k) \Subset U_k$, $1 \leq k \leq M$, and $\sum_{k=1}^m \rho_k \equiv 1$ on $\bar{D}_0''$. 

We set
$\tilde{g}_l:=\sum_{k=1}^M \rho_k g_{kl} \in C^\infty(U_l \cap D_0'')$
so that $g_{kl}=\tilde{g}_k-\tilde{g}_l$ on $U_k \cap U_l \cap D_0''$. Then the family
$\{\bar{\partial} \tilde{g}_l\}$
determines a $\bar{\partial}$-closed $(0,1)$-form $\omega$ on $D_0''$, $\omega:=\bar{\partial} \tilde{g}_l$ on $U_l \cap D_0''$, taking values in bundle $C_{\mathfrak a_{\iota_\varepsilon}}X_0$.

Recall that since $X_0$ is a Stein manifold, there exists a holomorphic Banach vector bundle $E$ such that
$
C_{\mathfrak a_{\iota_\varepsilon}}X_0 \oplus E=X_0 \times B
$
for some Banach space $B$. By $q:X_0 \times B \rightarrow C_{\mathfrak a_{\iota_\varepsilon}}X_0$ and $i:C_{\mathfrak a}X_0 \rightarrow X_0 \times B$, $q \circ i=\Id$, we denote the corresponding bundle morphisms.

Let $\tilde{\omega}:=i(\omega) \in \Lambda_b^{(0,1)}(D_0'',B)$.  Since $q$ is a holomorphic bundle morphism, $\tilde{\omega}$ is  $\bar{\partial}$-closed. Moreover, according to Lemma \ref{lem2} 
\begin{equation*}
\sup_{x \in U_k \cap U_l \cap D_0''}\|g_{kl}(x)\|_{\mathfrak a}<2\varepsilon\quad\text{for all}\quad k,l.
\end{equation*}
Therefore by the construction of $\omega$ and by continuity of $i$ and the fact that $D_0''\Subset X_0$ we obtain that for some $c>0$ independent of $\omega$,
$$
\|\tilde{\omega}\|_{D_0'',B}^{(0,1)} \leq c\varepsilon.
$$
Then by Lemma \ref{hl1} there exists a function $\tilde{\eta} \in \Lambda_b^{0,0}(D_0'',B)$  such that $\bar{\partial} \tilde{\eta}=\tilde{\omega}$ and $$\|\tilde{\eta}\|_{D_0'',B}^{(0,0)} \leq C_1\|\tilde{\omega}\|_{D_0'',B}^{(0,1)} \leq C_1c\varepsilon$$ for some $C_1>0$ independent of $\omega$. We set $\eta:=q(\tilde{\eta}) \in C^1(D_0'',C_{\mathfrak a_{\iota_\varepsilon}}X_0)$. Since $q$ is a holomorphic bundle morphism and $D_0''\Subset X_0$,  $$\bar{\partial}\eta=\omega \quad \text{ and } \quad
\sup_{x\in D_0''}\|\eta(x)\|_{\mathfrak a} \leq C_2C_1c\varepsilon$$ 
for some $C_2>0$ independent of $\omega$.

\noindent Since $D_0 \Subset D_0''$, the restriction $\eta|_{\bar{D}_0}$ is continuous on $\bar{D}_0$.
We define 
\begin{equation*}
F|_{U_k\cap\bar{D}_0}:=F_{\varepsilon,k}|_{U_k \cap \bar{D}_0}-\tilde{g}_k|_{U_k \cap \bar{D}_0}+\eta|_{U_k \cap \bar{D}_0}, \quad 1 \leq k \leq M.
\end{equation*}
It follows that $F \in \mathcal A(D_0,C_{\mathfrak a_{\iota_\varepsilon}}X_0)$ and
\begin{equation*}
\sup_{x \in \bar{D}_0}\|F-F_{\varepsilon,k}\|_{\mathfrak a} \leq 2M\varepsilon+C_2C_1c\varepsilon=: C\varepsilon,
\end{equation*}
as required. This completes the proof of the lemma.
\end{proof}
Assertion ($\ast$) now follows from Lemmas \ref{lem2} and \ref{lem3r}.
The proof of Theorem \ref{approxthm0} is complete.
\end{proof}

\section{Proofs of Propositions \ref{idcohlem}, \ref{closprop}, \ref{equivprop2}, \ref{poincarelem}, Theorems \ref{cartansubm}, \ref{corona} and Lemmas \ref{doesnotdependlem}, \ref{finesheaflem}}

\label{structproofsect}

\SkipTocEntry\subsection{Proof of Proposition \ref{idcohlem}}
According to Proposition \ref{localstructap}, it suffices to prove coherence of the ideal sheaf $I_{Z}\,(\subset \mathcal O_{V_0 \times K})$ of the complex submanifold $Z:=Z_0 \times K$ of $V_0 \times K$, where $V_0 \subset X_0$ and $K \subset \hat{G}_{\mathfrak a}$ are open, $K \in \mathfrak Q$ is an element of the basis of topology $ \mathfrak Q$ of $\hat{G}_{\mathfrak a}$ (see~subsection \ref{topsect}) and
$Z_0 \subset V_0$ is a complex submanifold. Here $\mathcal O_{V_0 \times K}$ denotes the structure sheaf of $V_0 \times K$ (see~subsection \ref{charts}); also, by ${\mathcal O}_{V_0}$ we denote the structure sheaf of $V_0$ and by ${I}_{Z_0} \subset {\mathcal O}_{V_0}$ the ideal sheaf of $Z_0 \subset V_0$. 

By Cartan's theorem (see, e.g.,~\cite{Gun3}) every point in $V_0$ has a neighbourhood over which ${I}_{Z_0}$ has a free resolution. Replacing $V_0$ by a smaller subset, if necessary, we may assume without loss of generality that such resolution is defined over $V_0$:
\begin{equation}
\label{extlemres}
0 \to {\mathcal O}_{V_0}^{m_N} \overset{{\varphi}_{N-1}}{\to} \dots \to 
{\mathcal O}_{V_0}^{m_1} \overset{{\varphi}_{0}}{\to} {I}_{Z_0} \rightarrow 0.
\end{equation}
Further, by the classical Cartan theorem B (see, e.g.,~\cite{Gun3}) every point in $V_0$ has a neighbourhood $U_0 \subset V_0$ biholomorphic to an open polydisk in $\mathbb C^n$ such that the sequence of sections induced by (\ref{extlemres})
\begin{equation}\label{eq8.31}
0 \to \Gamma(U_0,{\mathcal O}_{V_0}^{m_N}) \overset{\bar{\varphi}_{N-1}}{\to} \dots \to 
\Gamma(U_0,{\mathcal O}_{V_0}^{m_1}) \overset{\bar{\varphi}_{0}}{\to} \Gamma(U_0,{I}_{Z_0}) \rightarrow 0
\end{equation}
is exact. 

For an open subset $L \subset K$, $L \in \mathfrak Q$, by $C(L)$ we denote the Fr\'{e}chet space of complex continuous functions on $L$ endowed with the topology of uniform convergence on compact subsets ${N}_k \subset L$, $k\in\mathbb N$, that form an exhaustion of $L$, i.e., ${N}_k \subset {N}_{k+1}$ for all $k$ and $\cup_{k\in\mathbb N}\, {N}_k=L$ (such sets exist by Lemma 7.4(1) in \cite{BK8}). We endow the space $\mathcal O(U_0 \times L)$ defined in subsection \ref{charts} with the topology of uniform convergence on subsets ${W}_{0,k} \times {N}_k$, where $\{{W}_{0,k}\}_{k\in\mathbb N}$ is an exhaustion of $U_0$ by compact subsets, which makes it a Fr\'{e}chet space. 
Then we have 
\begin{equation}
\label{eq8.32}
\Gamma(U_0 \times L,\mathcal O_{V_0 \times K})=: \mathcal O(U_0 \times L)\cong C(L)\otimes\mathcal O(U_0):= C(L) \otimes \Gamma(U_0,{\mathcal O}_{V_0}),
\end{equation}
where $\otimes$ stands for the completion of the symmetric tensor product in the corresponding Fr\'{e}chet space. 

Next, we may assume without loss of generality that $V_0$ is an open polydisk in $\mathbb C^n$ and $Z_0$ is the intersection of a complex subspace of $\mathbb C^n$ with $V_0$. Then using the Taylor series expansion of a holomorphic function on $U_0 \times L$ vanishing on $Z_0 \times K$ (i.e.,~an element of $\Gamma(U_0 \times L,I_{Z_0 \times K})$), we easily obtain that
\begin{equation}
\label{sym1}
\Gamma(U_0 \times L,I_{Z_0 \times K}) \cong C(L) \otimes \Gamma(U_0,{I}_{V_0}).
\end{equation}
By Theorem B in \cite{Bu2} the operation $\otimes$ is an exact functor. Thus, from \eqref{eq8.31}, \eqref{eq8.32} and \eqref{sym1} we obtain that every point in $V_0 \times K$ has a neighbourhood of the form $U_0 \times L$ over which the sequence of sections
\begin{equation}
\label{sym2}
0 \to \Gamma(U_0 \times L,\mathcal O_{V_0 \times K}^{m_N}) \overset{\hat{\varphi}_{N-1}}{\to} \dots \to 
\Gamma(U_0 \times L,\mathcal O_{V_0 \times K}^{m_1}) \overset{ \hat{\varphi}_{0}}{\to} \Gamma(U_0 \times L,I_{Z_0 \times K}) \rightarrow 0
\end{equation}
is exact, where morphisms $\hat{\varphi}_i$ are defined on the corresponding symmetric tensor products by the formula $$\hat{\varphi_i}\left(\sum_{i=1}^l f_i \otimes g_i\right)=\sum_{i=1}^l f_i \otimes \bar{\varphi}_i(g_i), \quad f_i \in C(L), \quad g_i \in \Gamma(U_0,\mathcal O_{V_0}^{m_{i+1}}),$$
and then extended to $C(L) \otimes \Gamma(U_0,\mathcal O_{V_0}^{m_{i+1}})$ by continuity.
Hence, the sequence of sheaves generated by \eqref{sym1}
$$
0 \to \mathcal O_{V_0 \times K}^{m_N} \rightarrow \dots \rightarrow
\mathcal O_{V_0 \times K}^{m_1} \rightarrow I_{Z_0 \times K} \rightarrow 0
$$
is exact. This shows that the sheaf $I_Z$ is coherent.

\SkipTocEntry\subsection{Proof of Proposition \ref{closprop}}

Let us prove the first assertion. 

Let $Y \subset c_{\mathfrak a}X$ be the closure of $\iota(Z)$, where $Z\subset X$ is a complex $\mathfrak a$-submanifold, in $c_{\mathfrak a}X$. We fix a point $y \in Y$ and use notation of Definition \ref{manifolddef}. Since the open cover $\mathcal V$ in the definition of $Z$ is of class ($\mathcal T_{\mathfrak a}$) (and, hence, is the pullback by $\iota$ of an open cover of $c_{\mathfrak a}X$, see~Definition \ref{tfinedef}), there exist an open subset $V \in \mathcal V$,  $V=\iota^{-1}(U)$ for an open neighbourhood $U \subset c_{\mathfrak a}X$ of $y$, and functions $h_i \in \mathcal O_{\mathfrak a}(V)$, $1\le i\le k$, determining $Z\cap V$, i.e., satisfying conditions (1), (2) of Definition \ref{manifolddef}.
By Proposition \ref{basicpropthm}(1)
there exist (uniquely determined) functions $\hat{h}_i \in \mathcal O(U)$ such that $h_i=\iota^*\hat{h}_i$ for all $i$.
It follows from condition (2) of Definition \ref{manifolddef} and the fact that $\iota(V)$ is dense in $U$ that functions $\hat{h}_i$ satisfy condition (2) of Definition \ref{defap} at points of $U\cap Y$. Therefore, since $y \in Y$ is arbitrary, to complete the proof it suffices to show that $U \cap Y=\hat{Y}_U$, where $\hat{Y}_U \subset U$ denotes the common zero locus of functions $\hat{h}_i|_{U}$, $1\le i\le k$. 

Indeed, using the argument of the proof of Proposition \ref{localstructap} and shrinking $U$, if necessary, we obtain that there exists a biholomorphism $\Phi \in \mathcal O(U_0 \times K,U)$, where $U_0 \subset X_0$, $K \subset \hat{G}_{\mathfrak a}$ are open, and a closed submanifold $Z_0 \subset U_0$ such that
$\Phi^{-1}(\hat{Y}_U)=Z_0 \times K$ and $\Phi(U_0 \times (K \cap j(G)))=U \cap \iota(X)$.
In particular, since $h_i=\iota^*\hat{h}_i$ for all $i$, we have $\Phi(Z_0 \times (K \cap j(G)))=U \cap \iota(Z)$. Hence, since $Z_0 \times (K \cap j(G))$ is dense in $Z_0 \times K$ (see~subsection \ref{constrsect}), $U \cap \iota(Z)$ is dense in $\hat{Y}_U$, i.e.,~$U \cap Y=\hat{Y}_U$, as required. The proof of the first assertion is complete.

The second assertion follows easily from Definitions \ref{manifolddef}, \ref{defap} and Proposition \ref{basicpropthm}(2).

\SkipTocEntry\subsection{Proof of Proposition \ref{equivprop2}}

First, let $f$ be a holomorphic $\mathfrak a$-function on $Z:=\iota^{-1}(Y)$ in the sense of Definition \ref{holdef2}, i.e., there is a function $F \in C_{\mathfrak a}(X)$ such that $F|_Z=f$. 
By Proposition \ref{basicpropthm}(1) there exists a function $\hat{F} \in C(c_{\mathfrak a}X)$ such that $\iota^*\hat{F}=F$. We set $\hat{f}:=\hat{F}|_{Y}$. Since $\iota^*\hat{f}=f$, we obtain $\hat{f} \in \mathcal O(Y)$ (see~Definition \ref{defa}), as required.

Now, let $\hat{f} \in \mathcal O(Y)$. 
Since $c_{\mathfrak a}X$ is a normal space, by the Tietze-Urysohn extension theorem there exists a function $\hat{F} \in C(c_{\mathfrak a}X)$ such that 
$\hat{F}|_{Y}=\hat{f}$. By Proposition \ref{basicpropthm}(2) $F:=\iota^*\hat{F}$ belongs to $C_{\mathfrak a}(X)$. Since $F|_{Z}=f$, function $f$ $(=\iota^*\hat{f})$ is a holomorphic $\mathfrak a$-function on $Z$ in the sense of Definition \ref{holdef2}.

\SkipTocEntry\subsection{Proof of Theorem \ref{cartansubm}}

We will use notation and results of subsection \ref{charts}.

By definition, every point in $Y$ has a neighbourhood $V \subset Y$ over which, for every $N \geq 1$, there exists a free resolution
\begin{equation}
\label{coh0_}
\mathcal O_Y^{m_{4N}}|_V \overset{\varphi_{4N-1}}{\to} \dots \overset{\varphi_2}{\to} \mathcal O_Y^{m_{2}}|_V \overset{\varphi_1}{\to} \mathcal O_Y^{m_{1}}|_V \overset{\varphi_0}{\to} \mathcal A|_V \to 0.
\end{equation}
We need to show that sheaf $\tilde{\mathcal A}$ is coherent on $c_{\mathfrak a}X$, i.e., that every point $x \in c_{\mathfrak a}X$ has a neighbourhood $U \subset c_{\mathfrak a}X$ over which sheaf $\tilde{\mathcal A}|_U$ has free resolutions of any finite length. If $x \in c_{\mathfrak a}X \setminus Y$, then we can choose $U$ such that $U \cap Y=\varnothing$; hence, $\tilde{\mathcal A}|_U=0$ trivially has free resolutions of any finite length. Now, let $x \in Y$. Shrinking $V$, if necessary, and applying Proposition \ref{localstructap} we can choose $U \ni x$ such that $V=Y \cap U$ and there exists a biholomorphism that maps $U$ onto $U_0 \times K$, where $U_0 \subset X_0$ is biholomorphic to an open polydisk in $\mathbb C^n$, $K \subset \hat{G}_{\mathfrak a}$ is open, and $V$ is mapped onto $V_0 \times K$, where $V_0 \subset U_0$ is a  complex submanifold. Thus, applying this biholomorphism we may assume that $$U=U_0 \times K, \quad V=V_0 \times K.$$

We will need the following

\begin{lemma}
\label{cartansubmlem}
The trivial extension $\widetilde{\mathcal O_Y}$ of $\mathcal O_Y$ has free resolutions of any finite length over $U$.
\end{lemma}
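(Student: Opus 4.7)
The plan is to reduce the lemma to the coherence of the ideal sheaf already established in Proposition \ref{idcohlem}. The first step is to identify $\widetilde{\mathcal O_Y}|_U$ with the quotient sheaf $\mathcal O_U / I_V$, where $I_V \subset \mathcal O_U$ is the ideal sheaf of the complex submanifold $V = V_0 \times K$ of $U = U_0 \times K$. Indeed, at a point $y \in V$ the stalk $(\mathcal O_U/I_V)_y$ is the local ring of $V$ at $y$, which coincides with $(\mathcal O_Y)_y = (\widetilde{\mathcal O_Y})_y$; at a point outside $V$ one has $(I_V)_y = \mathcal O_{U,y}$, so the quotient vanishes, matching $\widetilde{\mathcal O_Y}|_{U \setminus V} = 0$. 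Under this identification one has the tautological short exact sequence
$$0 \to I_V \to \mathcal O_U \to \widetilde{\mathcal O_Y}|_U \to 0.$$

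The second step is to produce free resolutions of $I_V$ over the whole of $U$ of any prescribed finite length. The proof of Proposition \ref{idcohlem} already carries this out: starting from the classical Oka coherence of $I_{V_0} \subset \mathcal O_{U_0}$ on the polydisk $U_0$ and applying Cartan's theorem B on $U_0$ (which is Stein), one obtains, for every $N \geq 1$, a global exact sequence of sections $\Gamma(U_0,\mathcal O_{U_0}^{m_N}) \to \cdots \to \Gamma(U_0,\mathcal O_{U_0}^{m_1}) \to \Gamma(U_0, I_{V_0}) \to 0$. Then the completed topological tensor product with $C(K)$, combined with the identifications $\mathcal O(U_0 \times K) \cong C(K) \otimes \mathcal O(U_0)$ and $\Gamma(U_0 \times K, I_V) \cong C(K) \otimes \Gamma(U_0, I_{V_0})$ from the proof of Proposition \ref{idcohlem}, and the exactness of $\otimes$ from Theorem B in \cite{Bu2}, upgrades the above to a global free resolution of $I_V$ on all of $U$:
$$\mathcal O_U^{m_N} \to \cdots \to \mathcal O_U^{m_1} \to I_V \to 0.$$

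Splicing this resolution with the short exact sequence from the first step produces a free resolution of $\widetilde{\mathcal O_Y}|_U$ of length $N+1$, and since $N$ is arbitrary this completes the proof. The substantive work has already been accomplished in Proposition \ref{idcohlem}; the only point specific to the present lemma is the sheaf-theoretic identification $\widetilde{\mathcal O_Y}|_U \cong \mathcal O_U/I_V$, which is straightforward. The main obstacle was really the exactness of the topological tensor product needed to pass from ordinary Cartan B on the polydisk $U_0$ to its analogue on the product $U_0 \times K$, and this is exactly what \cite{Bu2} supplies.
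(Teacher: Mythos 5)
Your proposal is correct and follows essentially the same route as the paper: identify $\widetilde{\mathcal O_Y}|_U$ with $\mathcal O_U/I_V$ via the tautological short exact sequence, obtain free resolutions of $I_V$ over $U$ of arbitrary finite length by the tensor-product argument of Proposition \ref{idcohlem} (exactness of $\otimes$ from \cite{Bu2}), and splice to get a resolution of $\widetilde{\mathcal O_Y}|_U$ of length $N+1$. The extra detail you supply on the stalkwise identification is consistent with, and slightly more explicit than, the paper's one-line assertion.
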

\begin{proof}
By definition, $\widetilde{\mathcal O_Y}|_U$ is isomorphic to the quotient sheaf $\mathcal O_U/I_V$, where $\mathcal O_U:=\mathcal O|_U$ is the sheaf of germs of holomorphic functions on $U$, $I_V \subset \mathcal O_U$ is the ideal sheaf of $V \subset U$, i.e., we have an exact sequence
\begin{equation}
\label{cartansubm1}
0 \rightarrow I_V \rightarrow \mathcal O_U \rightarrow \widetilde{\mathcal O_Y}|_U \rightarrow 0.
\end{equation}
Following the argument of the proof of Proposition \ref{idcohlem}, we obtain that sheaf $I_V$ has free resolutions of any finite length over $U$. Then using a free resolution of $I_V$ over $U$ of length $N$, we extend (\ref{cartansubm1}) to a free resolution of $\widetilde{\mathcal O_Y}|_U$ of length $N+1$. Since $N$ was chosen arbitrarily, this completes the proof.
\end{proof}

Now we finish the proof of Theorem \ref{cartansubm}. Since sequence (\ref{coh0_}) is exact, the corresponding sequence of trivial extensions 
\begin{equation}
\label{coh0__}
\widetilde{\mathcal O}_Y^{m_{4N}}|_U \overset{\tilde{\varphi}_{4N-1}}{\to} \dots \overset{\tilde{\varphi}_2}{\to} \widetilde{\mathcal O}_Y^{m_{2}}|_U \overset{\tilde{\varphi}_1}{\to} \widetilde{\mathcal O}_Y^{m_{1}}|_U \overset{\tilde{\varphi}_0}{\to} \tilde{\mathcal A}|_U \to 0
\end{equation}
is also exact; here $\tilde{\varphi}_i:=e \circ \varphi_i \circ r_{U,V}$, where $r_{U,V}:\widetilde{\mathcal O}_Y|_U \rightarrow \mathcal O_Y|_{V}$ is the restriction homomorphism, and $e:\mathcal B \rightarrow \tilde{\mathcal B}$ is the canonical homomorphism that maps an analytic sheaf $\mathcal B$ on $V$ to its trivial extension $\tilde{\mathcal B}$ on $U$.

By Lemmas 7.15 and 7.17 in \cite{BK8} sequence (\ref{coh0_}) truncated to the $N$-th term is completely exact over $V$ (i.e., the corresponding sequence of sections is exact, see~Definition 7.22 in \cite{BK8}), therefore sequence (\ref{coh0__}) truncated to the $N$-th term is completely exact as well. Since by Lemma \ref{cartansubmlem} each sheaf $\widetilde{\mathcal O}_Y^{m_i}$ in (\ref{coh0__}) has free resolutions over $U$ of any finite length, Lemma 9.3 in \cite{BK8} implies that $\tilde{\mathcal A}|_U$ has free resolutions over $U$ of any finite length as well. This implies that sheaf $\tilde{\mathcal A}$ is coherent on $c_{\mathfrak a}X$.

\SkipTocEntry\subsection{Proof of Lemma \ref{doesnotdependlem}}
We use the following consequence of Theorem 4.6 in \cite{BK8}:

\begin{lemma}
\label{le21.1}
Let $V_{0,1}$, $V_{0,2} \subset \mathbb C^n$ be open and connected and $K_1$, $K_2 \subset \hat{G}_{\mathfrak a}$ be open. A map $F \in \mathcal O(V_{0,1} \times K_1,V_{0,2} \times K_2)$ admits presentation
$
F(z,\omega)=\bigl(f_\omega(z),h(\omega) \bigr)$, $(z,\omega) \in V_{0,1} \times K_1,
$
where $f_\omega \in \mathcal O(V_{0,1},V_{0,2})$ depend continuously on $\omega \in K_1$ and $h \in C(K_1,K_2)$.
\end{lemma}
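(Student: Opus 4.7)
The plan is to decompose $F$ coordinate-wise as $F=(F_1,F_2)$, with $F_1\in C(V_{0,1}\times K_1,V_{0,2})$ and $F_2\in C(V_{0,1}\times K_1,K_2)$, and to show that $F_2$ depends only on $\omega$. The starting point is the observation that for every $\phi\in C(K_2)$ the pullback $\pi_2^*\phi$ under the projection $\pi_2:V_{0,2}\times K_2\rightarrow K_2$ belongs to $\mathcal O(V_{0,2}\times K_2)$: indeed, $\tilde j^*(\pi_2^*\phi)(z,g)=\phi(j(g))$ is constant in $z$ on every connected slice $V_{0,2}\times\{g\}\subset V_{0,2}\times j^{-1}(K_2)$, hence holomorphic there. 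By Definition~\ref{defholmap}, $F^*(\pi_2^*\phi)=\phi\circ F_2$ then lies in $\mathcal O(V_{0,1}\times K_1)$.

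Next I would establish a slicing property: every $f\in\mathcal O(V_{0,1}\times K_1)$ satisfies $f(\cdot,\omega)\in\mathcal O(V_{0,1})$ for each $\omega\in K_1$. For $\omega=j(g)$ with $g\in j^{-1}(K_1)$ this is immediate from the definition of holomorphy on $V_{0,1}\times K_1$. For general $\omega$, using density of $j(G)$ in $\hat G_{\mathfrak a}$ one picks a net $g_\alpha\in j^{-1}(K_1)$ with $j(g_\alpha)\to\omega$; uniform continuity of $f$ on a compact product $\bar W\times N\subset V_{0,1}\times K_1$ (with $W\Subset V_{0,1}$ and $N\Subset K_1$ a compact neighbourhood of $\omega$) yields $f(\cdot,j(g_\alpha))\to f(\cdot,\omega)$ uniformly on $\bar W$, and Montel's theorem makes the limit holomorphic.

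Applied to $\phi\circ F_2$, this shows that for every $\omega\in K_1$ and every $\phi\in C(K_2)$ the map $z\mapsto\phi(F_2(z,\omega))$ is holomorphic on the connected manifold $V_{0,1}$. Now I would restrict to \emph{real-valued} $\phi\in C(K_2)$: by Urysohn's lemma in the compact Hausdorff space $\hat G_{\mathfrak a}$ such $\phi$ separate points of $K_2$. For real-valued $\phi$ the holomorphic function $z\mapsto\phi(F_2(z,\omega))$ takes values in $\mathbb R\subset\mathbb C$, a set with empty interior; the open mapping theorem for holomorphic functions on connected open subsets of $\mathbb C^n$ forces it to be constant. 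Since these $\phi$ separate points of $K_2$, $F_2(\cdot,\omega)$ is itself constant for each $\omega$, so $F_2(z,\omega)=h(\omega)$ for a well-defined $h:K_1\to K_2$; continuity of $h$ is inherited from continuity of $F$.

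Setting $f_\omega(z):=F_1(z,\omega)$, pulling back the coordinate functions $w_i$ on $V_{0,2}$ (which are holomorphic on $V_{0,2}\times K_2$) gives that each $(f_\omega)_i$ is holomorphic on $V_{0,1}\times K_1$; the slicing property above yields $f_\omega\in\mathcal O(V_{0,1},V_{0,2})$ for every $\omega$, and continuous dependence of $\omega\mapsto f_\omega$ in the compact-open topology follows from uniform continuity of $F$ on the same compact products $\bar W\times N$. The main obstacle is the constancy of $F_2(\cdot,\omega)$: the decisive ingredient is the combination of the open mapping theorem with Urysohn-type separation by real-valued continuous functions, which is what converts the absence of a complex structure on $\hat G_{\mathfrak a}$ into rigidity of holomorphic maps into $K_2$.
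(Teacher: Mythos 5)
Your proof is correct, but it takes a genuinely different route from the paper at the decisive step. The paper's proof is essentially a one-line reduction: it cites Theorem~4.6 of \cite{BK8} (Part~I) for the fact that $(\pi^2\circ F)(\cdot,\omega)$ is constant for each $\omega$, and then simply names $h$ and $f_\omega$ as the two components. You instead prove this constancy from scratch inside the framework of the present paper: you observe that $\pi_2^*\phi\in\mathcal O(V_{0,2}\times K_2)$ for every $\phi\in C(K_2)$, pull back under $F$ using Definition~\ref{defholmap}, invoke the slicing property (which the paper itself records at the start of subsection~\ref{submanifoldsect} via density of $j(j^{-1}(K))$ and Montel), and then kill the real-valued holomorphic functions $z\mapsto\phi(F_2(z,\omega))$ by the open mapping theorem, with point separation supplied by complete regularity of the compact Hausdorff space $\hat G_{\mathfrak a}$. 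All of these steps are sound (the ``uniform continuity on $\bar W\times N$'' argument is legitimate since $\hat G_{\mathfrak a}$ carries the uniformity of a compact Hausdorff space, and Montel gives the same conclusion). What your approach buys is self-containedness: the lemma no longer depends on an external result from Part~I, only on facts already established or easily available in this paper. What it costs is length; the paper's version is shorter precisely because the rigidity statement has been outsourced. The remaining bookkeeping (defining $h$, checking $f_\omega\in\mathcal O(V_{0,1},V_{0,2})$ and its continuous dependence on $\omega$) matches the paper's treatment.
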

\begin{proof}
Let $\pi^1:V_{0,2} \times K_2 \rightarrow V_{0,2}$, $\pi^2:V_{0,2} \times K_2 \rightarrow K_2$ be the natural projections. By \cite[Thm.~4.6]{BK8} $(\pi^2 \circ F)(\cdot,\omega)\equiv const$ for all $\omega\in K_1$. Thus, we define $h\in C(K_1,K_2)$ as $h(\omega):=(\pi^2 \circ F)(\cdot,\omega)$, $\omega\in K_1$, and $f_\omega(z):=(\pi^1 \circ F)(z,\omega)$, $(z,\omega) \in V_{0,1} \times K_1$.
\end{proof}

Now, suppose $\varphi_i \in \mathcal O(V,V_{i} \times K_i)$, where $V_{i} \subset \mathbb C^n$ are open and connected, $K_i \subset \hat{G}_{\mathfrak a}$ are open ($i=1,2$), are coordinate maps of an open  subset $V\subset Y$.
Let $F:=\varphi_2 \circ \varphi_1^{-1}$. By the above lemma 
$F(z,\omega)=\bigl(f_\omega(z),h(\omega) \bigr)$ ($(z,\omega) \in V_{1} \times K_1$), where $f_\omega \in \mathcal O(V_{1},V_{2})$ depend  continuously on $\omega$ and $h \in C(K_1,K_2)$. Since $F$ is a biholomorphism, $h: K_1\rightarrow K_2$ is a homeomorphism. Then replacing $F$, if necessary,  by  the holomorphic map $G\circ F$, where $G(z,\omega):=\bigl(z,h^{-1}(w)\bigr)$, $(z,\omega)\in V_2\times K_2$, we may assume without loss of generality that $K_2=K_1=:K$ and $h=\Id$. 

In order to prove the lemma it suffices to show that for each $p \in C^\infty(V_{0,2} \times K)$ its pullback $F^*p \in C^\infty(V_{0,1} \times K)$. Indeed, we have
$(F^*p)(z,\omega)=p(f_\omega(z),\omega)$.
Since $V_{1} \ni z \mapsto f_{\cdot}(z)=F(z,\cdot)$ is a holomorphic and, hence, a $C^\infty$ function taking values in the Fr\'{e}chet space $C(K)$, the required result follows by the chain rule.

\SkipTocEntry\subsection{Proof of Lemma \ref{finesheaflem}}

(For similar arguments, see \cite{Br6}.)

\begin{lemma}
For an open cover $\mathcal U=\{U_\gamma\}$ of $Y$ there exists an open cover $\mathcal V=\{\hat{\Pi}(V_{0,\alpha},K_{\alpha,\beta}) : V_{0,\alpha} \subset X_0,\, K_{\alpha,\beta}\subset \hat{G}_{\mathfrak a}\ \text{are open}\}$ of $c_{\mathfrak a}X$ such that $\{V_{0,\alpha}\}$ is a locally finite open cover of $X_0$, for each $\alpha$ the number of distinct sets $K_{\alpha,\beta}$ is finite and $\cup_{\beta}\, \hat{\Pi}(V_{0,\alpha},K_{\alpha,\beta})=\bar{p}^{-1}(V_{0,\alpha})$, and 
$\{Y \cap \hat{\Pi}(V_{0,\alpha},K_{\alpha,\beta})\}$ is a refinement of $\mathcal U$.
\end{lemma}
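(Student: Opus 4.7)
The plan is to build $\mathcal V$ in two stages: first a local construction over each fiber of $\bar p$, then a global assembly via paracompactness of $X_0$.

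In the local stage, fix $x_0\in X_0$ and choose a simply connected open coordinate chart $W_{x_0}\subset X_0$ containing $x_0$, together with the associated biholomorphic trivialization $\bar\psi\colon \bar p^{-1}(W_{x_0})\to W_{x_0}\times \hat G_{\mathfrak a}$ from subsection \ref{charts}. The fiber $\bar p^{-1}(x_0)$ is compact, so the aim is to cover it by finitely many basic open sets $\hat\Pi(V_0,K)$ from $\mathfrak B$ (see (\ref{base2})), each chosen so that its trace on $Y$ is contained in some member of $\mathcal U$. For each $y\in \bar p^{-1}(x_0)$ proceed as follows. If $y\notin Y$, then since $Y$ is closed in $c_{\mathfrak a}X$ I take a basic neighborhood of $y$ disjoint from $Y$. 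If $y\in Y$, I pick some $U_\gamma\in\mathcal U$ containing $y$ and write $U_\gamma=Y\cap \tilde W_\gamma$ for an open $\tilde W_\gamma\subset c_{\mathfrak a}X$; I then shrink to a basic neighborhood of $y$ sitting inside $\tilde W_\gamma\cap \bar p^{-1}(W_{x_0})$, which automatically satisfies $Y\cap \hat\Pi(V_0,K)\subset U_\gamma$. By compactness of $\bar p^{-1}(x_0)$, finitely many such neighborhoods $\hat\Pi(V_0^{(i)},K^{(i)})$, $1\le i\le m$, suffice; in particular $\bigcup_{i=1}^m K^{(i)}=\hat G_{\mathfrak a}$. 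Setting $V_{x_0}:=\bigcap_{i=1}^m V_0^{(i)}\subset W_{x_0}$, the sets $\hat\Pi(V_{x_0},K^{(i)})$ cover $\bar p^{-1}(V_{x_0})$ and still meet $Y$ only inside members of $\mathcal U$.

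In the global stage, the paracompactness of the complex manifold $X_0$ provides a locally finite open refinement $\{V_{0,\alpha}\}_\alpha$ of the cover $\{V_{x_0}\}_{x_0\in X_0}$ of $X_0$. For each $\alpha$, I pick $x_0(\alpha)\in X_0$ with $V_{0,\alpha}\subset V_{x_0(\alpha)}$ and declare the family $\{K_{\alpha,\beta}\}_\beta$ to be the finite collection $\{K^{(i)}\}_{i=1}^{m(x_0(\alpha))}$ produced in the local stage at $x_0(\alpha)$. The restriction of the trivialization $\bar\psi_{x_0(\alpha)}$ to $V_{0,\alpha}$ yields well-defined open sets $\hat\Pi(V_{0,\alpha},K_{\alpha,\beta})$, and all four required properties then follow: local finiteness of $\{V_{0,\alpha}\}$ from the refinement; finiteness of $\{K_{\alpha,\beta}\}_\beta$ for each $\alpha$ from the local stage; the identity $\bigcup_\beta \hat\Pi(V_{0,\alpha},K_{\alpha,\beta})=\bar p^{-1}(V_{0,\alpha})$ from $\bigcup_\beta K_{\alpha,\beta}=\hat G_{\mathfrak a}$; and the refinement property $Y\cap \hat\Pi(V_{0,\alpha},K_{\alpha,\beta})\subset Y\cap \hat\Pi(V_{x_0(\alpha)},K_{\alpha,\beta})\subset U_\gamma$ (or empty) from the local construction.

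The only point requiring genuine care—and the main obstacle to a naive one-step construction—is the interaction between two different finiteness conditions: compactness of the fiber $\hat G_{\mathfrak a}$ on the one hand and paracompactness of the base $X_0$ on the other. Decoupling them by first working with a fixed trivialization $\bar\psi_{x_0}$ (so that the various $K^{(i)}\subset \hat G_{\mathfrak a}$ can be honestly compared and intersected) and only afterwards refining on the base sidesteps the fact that different trivializations identify fibers with $\hat G_{\mathfrak a}$ in inequivalent ways; with this structuring, everything reduces to elementary bookkeeping.
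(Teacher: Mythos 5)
Your proof is correct and follows essentially the same route as the paper's: extend each $U_\gamma$ to an open subset of $c_{\mathfrak a}X$, use closedness of $Y$ to handle points off $Y$, refine by basic sets $\hat{\Pi}(V_0,K)$, and exploit compactness of the fibres together with paracompactness of $X_0$. The paper compresses the fibre-by-fibre compactness argument and the subsequent locally finite refinement into a single sentence (``we may choose this refinement so that\dots''), whereas you spell out exactly that two-stage construction; the content is the same.
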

\begin{proof}
By the definition of the relative topology of $Y$, there exists a collection $\tilde{\mathcal U}=\{\tilde{U}_\gamma\}$ of open subsets of $c_{\mathfrak a}X$ such that $U_\gamma=\tilde{U}_\gamma \cap Y$ for all $\gamma$. Further, since $Y\subset c_{\mathfrak a}X$ is closed,  $\tilde{\mathcal U}\cup\{c_{\mathfrak a}X\setminus Y\}$ is an open cover of $c_{\mathfrak a}X$. By the definition of topology on $c_{\mathfrak a}X$ the latter cover admits a refinement by sets of the form $\hat{\Pi}(V_{0},K)$, where $V_{0} \subset X_0$, $K\subset \hat{G}_{\mathfrak a}$ are open. Since $c_{\mathfrak a}X$ has compact fibres, we may choose this refinement $\{\hat{\Pi}(V_{0,\alpha},K_{\alpha,\beta})\}$ so that $\{V_{0,\alpha}\}$ is a locally finite open cover of $X_0$ and for each $\alpha$ the number of distinct sets $K_{\alpha,\beta}$ is finite and $\cup_{\beta}\, \hat{\Pi}(V_{0,\alpha},K_{\alpha,\beta})=\bar{p}^{-1}(V_{0,\alpha})$. By our construction, $\{Y \cap \hat{\Pi}(V_{0,\alpha},K_{\alpha,\beta})\}$ is a refinement of $\mathcal U$.
\end{proof}

The open cover $\mathcal V$ introduced in the lemma admits a subordinate partition of unity $\{\nu_{\alpha,\beta}\}$,
$\nu_{\alpha,\beta}:=\bar{p}^*\rho_\alpha\cdot\pi_\alpha^*\mu_{\alpha,\beta}$,
 where $\{\rho_\alpha\} \subset C^\infty(V_{0,\alpha})$ is a partition of unity subordinate to cover $\{V_{0,\alpha}\}$ of $X_0$, $\{\mu_{\alpha,\beta}\} \subset C(\bar{p}^{-1}(x_\alpha))$, $x_\alpha\in V_{0,\alpha}$ is fixed,  is a partition of unity subordinate to cover $\{\hat{\Pi}(V_{0,\alpha},K_{\alpha,\beta})\cap\bar{p}^{-1}(x_\alpha)\}_\beta$ of  $\bar{p}^{-1}(x_\alpha)$ and $\pi_\alpha: \bar{p}^{-1}(V_{0,\alpha})\rightarrow \bar{p}^{-1}(x_\alpha)$ is the continuous projection defined in local coordinates $(x,\omega)$ on $\bar{p}^{-1}(V_{0,\alpha})\, (\cong V_{0,\alpha}\times \hat{G}_{\mathfrak a})$ as $\pi_\alpha(x,\omega):=(x_{\alpha},\omega)$.
By definition $\nu_{\alpha,\beta} \in C^\infty(\hat{\Pi}(V_{0,\alpha},K_{\alpha,\beta}))$; hence, the restriction of $\{\nu_{\alpha,\beta}\}$ to $Y$ is a $C^\infty$ partition of unity subordinate to $\mathcal U$ (see~subsection \ref{derhamsect}).

\SkipTocEntry\subsection{Proof of Proposition \ref{poincarelem}}
\label{poincare}

For a point $x\in Y$ consider its open neighbourhood $V$ for which there exists a biholomorphic map $\varphi:V\rightarrow Z_0 \times K$, where $Z_0 \subset \mathbb C^p$ is an open ball and $K \subset \hat{G}_{\mathfrak a}$ is open (see Proposition \ref{localstructap}). We choose an open neighbourhood $W\Subset V$ of $x$ so that $\varphi(W)=Z_0'\times K'$, where $Z_0'\Subset Z_0$ is an open ball of the same center as $Z_0$ and $K' \Subset K$ is an open subset. Then under the identification of $V$ with $Z_0\times K$ by $\varphi$ the restriction to $W$ of the space of $C^\infty$ $\bar{\partial}$-closed $(p,k+1)$-forms on $V$ is identified with a subspace of the space of 
$C^\infty$ $\bar{\partial}$-closed $(p,k+1)$-forms on $Z_0'$ with values in the Banach space $C_b(K')$ of bounded continuous functions on $K$ endowed with $\sup$-norm (see subsection \ref{derhamsect} for the corresponding definitions). According to Lemma \ref{hl1} such Banach-valued forms on $Z_0'$ are $\bar{\partial}$-exact. This completes the proof of the proposition.

\SkipTocEntry\subsection{Proof of Theorem \ref{corona}}



\begin{lemma}
\label{divlem}
Let $U_0 \subset X_0$, $K \subset \hat{G}_{\mathfrak a}$ be open, $f \in \mathcal O(U_0 \times K)$ 
be such that $\nabla_z f(z,\eta) \neq 0$ for all $(z,\eta) \in Z_f:=\{(z,\eta) \in U_0 \times K: f(z,\eta)=0\}$. 

If $g \in \mathcal O(U_0 \times K)$ vanishes on $Z_f$, then $h:=g/f \in \mathcal O(U_0 \times K)$.
\end{lemma}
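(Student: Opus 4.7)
The plan is to reduce to a local computation near $Z_f$, where I straighten $f$ using the inverse function theorem with parameter (Theorem \ref{ift}), and then recover the quotient via a Cauchy integral in the straightened coordinates.

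First, $h = g/f$ is clearly in $\mathcal O\bigl((U_0 \times K) \setminus Z_f\bigr)$, so the task is to extend it as an element of $\mathcal O$ across $Z_f$. Fix a point $(z_0, \eta_0) \in Z_f$. By the non-degeneracy hypothesis we may permute coordinates so that $\partial f/\partial z_1(z_0, \eta_0) \neq 0$. Apply Theorem \ref{ift} to the map
\[
G(z,\eta) := \bigl(f(z,\eta),\, z_2 - (z_0)_2,\, \dots,\, z_n - (z_0)_n\bigr),
\]
whose $z$-Jacobian is nondegenerate at $(z_0,\eta_0)$. This yields a neighbourhood $W \subset U_0 \times K$ of $(z_0,\eta_0)$, an open set $V_0 \times K'$ with $V_0$ a polydisc around $0 \in \mathbb C^n$ and $K' \subset K$ a neighbourhood of $\eta_0$, and a biholomorphism $\Phi \in \mathcal O(V_0 \times K', W)$ (in the category $\mathcal M$ of subsection \ref{charts}) with $(f \circ \Phi)(w,\eta) = w_1$. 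Thus $\tilde g := g \circ \Phi \in \mathcal O(V_0 \times K')$ and $\tilde g(0, w_2, \dots, w_n, \eta) = 0$ for all admissible $(w_2,\dots,w_n,\eta)$, since $\Phi$ maps the locus $\{w_1 = 0\}$ into $Z_f$.

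Next, shrink $V_0$ to a product polydisc so that for some $r>0$ the circle $\{|\zeta| = r\}$ lies in the first factor for all points considered. For $|w_1| < r$ and each $\eta \in K'$, the slice function $\tilde g(\cdot,\eta)$ is holomorphic on $V_0$, vanishes at $w_1 = 0$, and hence by the standard Cauchy formula
\[
\tilde h(w,\eta) := \frac{\tilde g(w,\eta)}{w_1} = \frac{1}{2\pi i}\oint_{|\zeta|=r} \frac{\tilde g(\zeta, w_2,\dots,w_n, \eta)}{\zeta\,(\zeta - w_1)}\, d\zeta.
\]
The integrand is jointly continuous on $\{|\zeta|=r\} \times V_0' \times K'$ (for a slightly smaller polydisc $V_0' \subset V_0$) and, for each fixed $\eta$, holomorphic in $w$. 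Integration therefore produces a function $\tilde h \in C(V_0' \times K')$ with $\tilde h(\cdot,\eta) \in \mathcal O(V_0')$ for every $\eta$, i.e.\ $\tilde h \in \mathcal O(V_0' \times K')$ in the sense of subsection \ref{charts}. Pulling back via $\Phi^{-1}$ gives a function in $\mathcal O\bigl(\Phi(V_0' \times K')\bigr)$ that agrees with $g/f$ off $Z_f$, hence extends $h$ holomorphically across $Z_f$ in a neighbourhood of $(z_0,\eta_0)$.

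The only genuine obstacle is confirming that the Cauchy-integral construction preserves membership in the category $\mathcal O$ (rather than merely in the class of bounded continuous functions that are slice-holomorphic in $z$): this is immediate here because continuity of $\tilde g$ in the product topology and uniform convergence of Riemann sums (over the compact contour $\{|\zeta|=r\}$) deliver joint continuity of $\tilde h$, while for each fixed $\eta$ the classical Cauchy formula yields holomorphy in $w$. Since local extendability to an element of $\mathcal O$ holds at every point of $Z_f$, and the extension is uniquely determined on $(U_0 \times K) \setminus Z_f$ (by continuity on slices, using that $j^{-1}(K)$ is dense in $K$), the local extensions glue to a global $h \in \mathcal O(U_0 \times K)$ with $h\cdot f = g$.
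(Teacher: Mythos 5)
Your proof is correct and follows the same route the paper intends: the paper's entire proof of this lemma is the parenthetical remark that it ``follows straightforwardly from Proposition \ref{localstructap}'', i.e.\ from the parametric straightening $f\circ\Phi=w_1$ obtained via the inverse function theorem with parameter (Theorem \ref{ift}), which is precisely your reduction. Your Cauchy-integral computation of $\tilde g/w_1$ and the gluing over the nowhere dense set $Z_f$ simply make explicit the division step that the paper leaves to the reader.
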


(The proof follows straightforwardly from Proposition \ref{localstructap}.)


\begin{proof}[Proof of Theorem]
By Proposition \ref{basicpropthm}(2) $M_X$ is homeomorphic to the maximal ideal space of $\mathcal O(c_{\mathfrak a}X)$. 
It follows, e.g., from Theorem \ref{extapthm}, that algebra $\mathcal O(c_{\mathfrak a}X)$ separates points of $c_{\mathfrak a}X$, therefore we have a continuous injection $c_{\mathfrak a}X \hookrightarrow M_X$ defined via point evaluation homomorphisms. 
Let us show that this map is surjective.

The transpose to the pullback homomorphism $\bar{p}^*:\mathcal O(X_0)\rightarrow  \mathcal O(c_{\mathfrak a}X)$ is a map $\bar{p}_*: M_X\rightarrow M_{X_0}$, where the latter is the maximal ideal space of algebra $\mathcal O(X_0)$. Since $X_0$ is Stein, $M_{X_0}$ can be naturally identified with $X_0$ (see, e.g., \cite{GR}) so that $\bar{p}_*|_{c_{\mathfrak a}X}=\bar{p}$. Hence, for $\varphi\in M_X$ there exists a point $x_0\in X_0$ such that $\bar{p}_*(\varphi)=\delta_{x_0}$, the evaluation homomorphism at point $x_0$.

Next, there exists a function $h \in \mathcal O(X_0)$ such that $X_0^{n-1}:=\{x \in X_0: h(x)=0\}$, $n:={\rm dim}_{\mathbb C}X$, is a non-singular complex hypersurface, $dh(z)\ne 0$ for each $z\in X_0^{n-1}$, and $x_0 \in X_0^{n-1}$, see \cite{Forst}. We set $X^{n-1}:=p^{-1}(X_0^{n-1})$ and $c_{\mathfrak a}X^{n-1}:=\bar{p}^{-1}(X_0^{n-1})$. 
Now, if $f \in \mathcal O(c_{\mathfrak a}X)$ is identically zero on $c_{\mathfrak a}X^{n-1}$, then $\varphi(f)=0$. Indeed, by Lemma \ref{divlem} function $\tilde{f}:=f/\bar{p}^*h \in \mathcal O(c_{\mathfrak a}X)$, hence, $$\varphi(f)=\varphi(\tilde{f})\varphi(\bar{p}^*h)=\varphi(\tilde{f})\delta_{x_0}(h)=0.$$
Thus, there exists a homomorphism $\varphi_1$ of the quotient algebra $\mathcal O(c_{\mathfrak a}X)/I_{c_{\mathfrak a}X^{n-1}}$, where $I_{c_{\mathfrak a}X^{n-1}}$ is the ideal of holomorphic functions in $\mathcal O(c_{\mathfrak a}X)$ vanishing on $c_{\mathfrak a}X^{n-1}$, such that $\varphi=\varphi_1\circ q_1$, where $q_1: \mathcal O(c_{\mathfrak a}X)\rightarrow \mathcal O(c_{\mathfrak a}X)/I_{c_{\mathfrak a}X^{n-1}}$ is the quotient homomorphism. According to Theorem \ref{extapthm}, we have a natural isomorphism $$\mathcal O(c_{\mathfrak a}X)/I_{c_{\mathfrak a}X^{n-1}} \cong \mathcal O(c_{\mathfrak a}X^{n-1})$$
defined by restrictions of functions in $\mathcal O(c_{\mathfrak a}X)$ to $c_{\mathfrak a}X^{n-1}$; hence $\varphi_1$ can be identified with an element of the maximal ideal space of algebra $\mathcal O(c_{\mathfrak a}X^{n-1})$.

Starting with $c_{\mathfrak a}X^{n-1}$ instead of $c_{\mathfrak a}X$ we proceed similarly to
define flags of complex submanifolds $X_0^k\subset X_0$, $X^k\subset X$, $c_{\mathfrak a}X^k\subset c_{\mathfrak a}X$ of codimension $n-k$ and homomorphisms $\varphi_{n-k}:  \mathcal O(c_{\mathfrak a}X^{k})\rightarrow\mathbb C$ such that
$\varphi_{n-k-1}=\varphi_{n-k}\circ q_{n-k}$ ($0 \leq k \leq n-1$), where $q_{n-k}:\mathcal O(c_{\mathfrak a}X^{k+1})\rightarrow \mathcal O(c_{\mathfrak a}X^{k+1})/I_{c_{\mathfrak a}X^{k}}= \mathcal O(c_{\mathfrak a}X^{k})$ are the quotient homomorphisms.

By the definition, $\varphi_n$ is an element of the maximal ideal space of algebra $\mathcal O(c_{\mathfrak a}X^0)$, where $X_0^0=\{x_0,x_1,\dots\}$ is a discrete set. Clearly, $\mathcal O(c_{\mathfrak a}X^0) \cong \sqcup_{i \geq 0} C(\bar{p}^{-1}(x_i))$. 
Moreover, if $f\in I_{x_0}\subset \mathcal O(c_{\mathfrak a}X^0)$, the ideal of functions vanishing on $\bar{p}^{-1}(x_0)$, then $f=f\cdot\bar{p}^*g_{x_0}$, where $g_{x_0}\in \mathcal O(X_0^0)$, $g_{x_0}(x_i)=1-\delta_{0i}$ (Kronecker delta), so that
\[
\varphi_n(f)=\varphi_n(f)\varphi_n(\bar{p}^*g_{x_0})=\varphi_n(f)\varphi(\bar{p}^*g)=\varphi_n(f)g_{x_0}(x_0)=0;
\]
here $g\in\mathcal O(X_0)$ is such that $g|_{X_0^0}=g_{x_0}$.

Thus, there exists a homomorphism $\varphi_{n+1}:\mathcal O(c_{\mathfrak a}X^0)/I_{x_0}=C(\bar{p}^{-1}(x_0))\rightarrow\mathbb C$ such that
$\varphi_n=\varphi_{n+1}\circ q_{n+1}$, where $q_{n+1}:\mathcal O(c_{\mathfrak a}X^0)\rightarrow C(\bar{p}^{-1}(x_0))$ is the quotient homomorphism. Since $\bar{p}^{-1}(x_0)$ is compact Hausdorff, the maximal ideal space of $C(\bar{p}^{-1}(x_0))$ is homeomorphic to $\bar{p}^{-1}(x_0)$.
In particular, $\varphi_{n+1}=\delta_{\omega}$ for some $\omega\in \bar{p}^{-1}(x_0)$.

Finally, we have $\varphi(f)=\varphi_{n+1}(f|_{\bar{p}^{-1}(x_0)})=f(\omega)=\delta_\omega(f)$, $f\in \mathcal O(c_{\mathfrak a}X)$ as required.
This shows that the natural map $c_{\mathfrak a}X\rightarrow M_X$ is a continuous bijection. It is easily seen that this map is a homeomorphism since $c_{\mathfrak a}X$ is locally compact and $X_0\rightarrow M_{X_0}$ is a homeomorphism. 

The proof of the theorem is complete.
\end{proof}


\begin{thebibliography}{99}







\bibitem[Bes]{Bes}
A.\,S.\, {Besicovich},
\newblock {Almost Periodic Functions}.
\newblock {\em Dover Publications}, 1958.



\bibitem[Bo]{Bo}
H.~{Bohr},
\newblock Almost Periodic Functions. \newblock {\em Chelsea}, 1951.




\bibitem[Bog]{Bog}
A.~{Boggess},
\newblock CR Manifolds and the Tangential Cauchy-Riemann Complex.
 \newblock {\em CRC Press}, 1991.
 
%

\bibitem[Bre]{Bre}
G.~E.~{Bredon}, 
\newblock Sheaf theory. Second edition. Graduate Texts in Mathematics, 170.
\newblock {\em Springer-Verlag}, 1997.


\bibitem[Br1]{Br}
A.~{Brudnyi},
\newblock Projections in the space $H^\infty$ and the Corona Theorem for coverings of bordered Riemann surfaces.
\newblock {\em Ark.~Mat.}, 42:31--59, 2004.


\bibitem[Br2]{Br2}
A.~{Brudnyi},
\newblock On holomorphic functions of slow growth on coverings of strongly pseudoconvex manifolds.
\newblock {\em J.~Funct.~Anal.}, 249:354--371, 2007.



\bibitem[Br3]{Br3}
A.~{Brudnyi},
\newblock Hartogs-type theorems on coverings of Stein manifolds.
\newblock {\em Internat. J. Math.}, 17:339--349, 2006.

\bibitem[Br4]{Br4}
A.~{Brudnyi},
\newblock Representation of holomorphic functions on coverings of pseudoconvex domains in Stein manifolds via integral formulas on these domains.
\newblock {\em J. Funct. Anal.}, 231:418--437, 2006.


\bibitem[Br5]{Br5}
A.~{Brudnyi},
\newblock Holomorphic $L^p$-functions on coverings of strongly pseudoconvex manifolds.
\newblock {\em Proc. Amer. Math. Soc.}, 137:227--234, 2009.


\bibitem[Br6]{Br6}
A.~{Brudnyi},
\newblock Banach-valued holomorphic functions on the maximal ideal space of $H^\infty$.
\newblock {\em Preprint, Arxiv:1103.2347}, 2011.




\bibitem[Br7]{Br8}
A.~{Brudnyi},
\newblock Grauert- and Lax-Halmos-type theorems and extension of matrices with entries in $H^\infty$. 
\newblock {\em J.~Funct.~Anal.}, 206:87-108, 2006.

\bibitem[Br8]{BrBanach}
A.~{Brudnyi},
\newblock Banach-valued holomorphic functions on the maximal ideal space of $H^\infty$.
\newblock {\em Invent. Math.}, 193:187-227, 2013.




\bibitem[BrK1]{BK2}
A.~{Brudnyi} and D.~{Kinzebulatov},
\newblock Holomorphic almost periodic functions on coverings of complex manifolds.
\newblock {\em New York J. Math.}, 17a: 267-300, 2011.






\bibitem[BrK2]{BrK2}
A.~{Brudnyi} and  D.~{Kinzebulatov},
\newblock Holomorphic semi-almost periodic functions. \newblock {\em Integr. Equ. Operat. Theory}, 66:293-325, 2010.


\bibitem[BrK3]{BK8}
A.~{Brudnyi} and D.~{Kinzebulatov},
\newblock Towards Oka-Cartan theory for algebras of holomorphic functions on coverings of Stein manifolds I.
\newblock {\em Preprint}, 2013.




\bibitem[Bu]{Bu2}
L.~{Bungart},
\newblock Holomorphic functions with values in locally convex spaces and applications  to integral formulas.
\newblock {\em Proc. Amer. Math. Soc}, 111:317-344, 1964; Errata, ibid. 113, 1964.








\bibitem[Dem]{Dem}
J.~P.~Demailly, \newblock Complex Analytic and Differential Geometry. \newblock{\em Grenoble}, 2009.




\bibitem[Fav1]{Fav}
S.~{Favorov},
\newblock Zeros of holomorphic almost periodic functions.
\newblock {\em J. d'Analyse Math.}, 84:51-66, 2001.

\bibitem[Fav2]{Fav2}
S.~{Favorov},
\newblock Almost periodic divisors, holomorphic functions, and holomorphic mappings.
\newblock {\em Bull. Sci. Math.}, 127:859-883, 2003.


\bibitem[FR]{FR}
S.~{Favorov} and A.~{Rashkovskii},
\newblock Holomorphic almost periodic functions.
\newblock {\em Acta Appl. Math.}, 65:217-235, 2003.

\bibitem[FRR1]{FRR}
S.~{Favorov}, A.~{Rashkovskii} and L.~{Ronkin},
\newblock Almost periodic divisors in a strip.
\newblock {\em J. d'Analyse Math.}, 74:325-345, 1998.


\bibitem[FRR2]{F2R}
S.~{Favorov}, A.~{Rashkovskii} and L.~{Ronkin},
\newblock Almost periodic currents and holomorphic chains.
\newblock{\em C. R. Acad. Sci. Paris}, 327:302-307, 1998.


\bibitem[Fre]{Fre}
H.~Freudenthal,
\newblock \"{U}ber die Enden topologischer R\"{a}ume und Gruppen.
\newblock {\em Math.~Zeit.}, 33: 692-713, 1931.

\bibitem[Fo]{For}
O.~Forster,
\newblock Lectures on Riemann Surfaces.
\newblock {\em Springer-Verlag}, 1981.


\bibitem[For1]{Forst}
F.~Forstneri\v{c},
\newblock Noncritical holomorphic functions on Stein manifolds.
\newblock {\em Acta Math.}, 191:143-189, 2003.


\bibitem[For2]{Forst2}
F.~Forstneri\v{c},
\newblock Stein Manifolds and Holomorphic Mappings. The Homotopy Principle in Complex Analysis.
\newblock {\em Springer}, 2011.



\bibitem[Gam]{Gam}
T.~{Gamelin},
\newblock Uniform Algebras.
\newblock {\em AMS Chelsea}, 1984.


\bibitem[Gle]{Gle}
A.~{Gleason},
\newblock The abstract theorem of Cauchy-Weil.
\newblock {\em Pacific J. Math.}, 12:511-525, 1962.



\bibitem[GrR]{GrRe}
H.~{Grauert} and R.~{Remmert},
\newblock Theory of Stein Spaces. \newblock {\em Springer-Verlag}, 1979.


\bibitem[GH]{GH} 
P.~Griffiths and J.~Harris,
\newblock Principles of Algebraic Geometry, 
\newblock{\em Wiley}, 1994.


\bibitem[Gri]{Gri}
S.~{Grigoryan},
\newblock Generalized analytic functions.
\newblock {\em Russian Math. Surveys}, 49:1-40, 1994.


\bibitem[GHS]{GHS}
M.~Gromov, G.~Henkin and M.~Shubin,
\newblock {Holomorphic $L^2$ functions on coverings of pseudoconvex manifolds.}
\newblock {\em Geom. Funct. Anal.}, 8:552-585, 1998.



\bibitem[Gun]{Gun3}
R.~{Gunning},
\newblock Introduction to Holomorphic Functions of Several Variables. III. 
\newblock {\em Wadsworth \& Brooks/Cole}, 1991.



\bibitem[GR]{GR}
R.~{Gunning} and H.~{Rossi},
\newblock Analytic Functions of Several Complex Variables. \newblock {\em Prentice-Hall}, 1965.




\bibitem[HL]{HL}
G.~{Henkin} and J.~{Leiterer},
\newblock Theory of Functions on Complex Manifolds. \newblock {\em Birkh\"{a}user}, 1984.


\bibitem[HrLa]{HarvLaw}
R.~{Harvey} and H.~B.~{Lawson},
\newblock On boundaries of complex analytic varieties. I
\newblock {\em Ann.~Math.}, 102:223-290, 1975.


\bibitem[Hz]{Hirz}
F.~{Hirzebruch},
\newblock Topological Methods in Algebraic Geometry. \newblock {\em Springer-Verlag}, 1966.





\bibitem[JT]{JT}
B.~Jessen and H.~Tornehave,
\newblock Mean motions and zeros of almost periodic
functions.
\newblock {\em Acta Math.} 77:137-279, 1945.


%
%



\bibitem[KA]{KA}
L.~V.~Kantorovich and G.~P.~Akilov,
\newblock Functional Analysis.
\newblock {\em Pergamon Press}, 1982.


\bibitem[La]{La}
F.~L\'{a}russon,
\newblock Holomorphic functions of slow growth on nested covering spaces of compact manifolds.
\newblock {\em Canad. J. Math.}, 52:982-998, 2000.





\bibitem[Lem]{Lemp}
L.~{Lempert},
\newblock Analytic sheaves in Banach spaces. \newblock {\em Preprint, Arxiv:0507549}, 2005.


\bibitem[Lev]{Lv}
B.~{Levin},
\newblock Distribution of Zeros of Entire Functions.
\newblock {\em AMS}, 1980.
%









\bibitem[Li]{Lin}
V.~Lin,
\newblock Liouville coverings of complex spaces, and amenable groups.
\newblock {\em Math. USSR-Sb.}, 60:197-216, 1988.





\bibitem[LS]{LS}
T.~Lyons, D.~Sullivan, \newblock Function theory, random paths and covering spaces.
\newblock {\em J.~Diff.~Geom.}, 19: 229-323, 1984.

\bibitem[Ma]{Ma}
W.~Maak,
\newblock Eine neue Definition der fastperiodischen Funktionen.
\newblock {\em Abh. Math. Sem. Univ. Hamburg}, 11:240-244, 1936.



\bibitem[Ne]{N}
J. von Neumann,
\newblock Almost periodic functions in a group. I.
\newblock {\em Trans. Amer. Math. Soc. }, 36:445-492, 1934.





%

\bibitem[Ron]{Rn}
L.~Ronkin,
\newblock Jessen's theorem for holomorphic almost periodic functions in
tube domains.
\newblock {\em Sibirskii Mat. Zh.}, 28:199-204, 1987.



\bibitem[Sar]{Sar}
D.~Sarason,
\newblock Toeplitz operators with semi-almost periodic kernels.
\newblock {\em Duke Math J.}, 44:357-364, 1977. 



\bibitem[S]{Shab}
B.~Shabat,
\newblock Introduction to Complex Analysis: Functions of Several Variables: Part II.
\newblock {\em AMS}, 1992.

\bibitem[Sh]{Sh}
M.~ Shubin,
\newblock Almost periodic functions and partial differential operators.
\newblock {\em Russian Math. Surveys}, 33:1-52, 1978.



%




\bibitem[To]{Torn}
H.~Tornehave, 
\newblock On the zeros of entire almost periodic function. 
\newblock {\em Math. Fys. Medd. Danske}, 42:125-142, 1989.


\bibitem[Ves]{Ves}
E.~Vesentini,
\newblock Holomorphic almost periodic functions and positive-definite functions on Siegel domains.
\newblock {\em Annali di Mat. Pura ed Appl.}, 102:177-202, 1973.  

\bibitem[We]{W}
A.~Weil,
\newblock L'integration dans les groupes topologiques et ses applications.
\newblock {\em Hermann, Paris}, 1965.





\bibitem[ZKKP]{Obz}
M.~Zaidenberg, S.~G.~Krein, P.~Kuchment and A.~Pankov,
\newblock Banach bundles and linear operators.
\newblock {\em Russian Math. Surveys}, 30:115-175, 1975.

\end{thebibliography}
\end{document}